\documentclass{article}
\usepackage[utf8]{inputenc}
\usepackage{amsmath}
\usepackage{amssymb}
\usepackage{amsthm}
\usepackage{color}
\usepackage{hyperref}
\hypersetup{
	hidelinks,
	pdftitle={Semiparametric Bernstein-von Mises theorems from Stein's method},
	pdfauthor={Paul Rosa}
}
\usepackage{bbm}
\usepackage{graphicx}
\usepackage{subcaption}
\usepackage[margin=0.8in]{geometry}
\usepackage[backend=biber,
style=alphabetic,
]{biblatex}
\newcommand{\RR}{\mathbb{R}}

\newcommand{\Pro}{\mathbb{P}}

\renewcommand{\AA}{\mathcal{A}}

\newcommand{\CC}{\mathcal{C}}

\newcommand{\FF}{\mathcal{F}}

\newcommand{\HH}{\mathcal{H}}

\newcommand{\MM}{\mathcal{M}}

\newcommand{\NN}{\mathcal{N}}

\newcommand{\XX}{\mathcal{X}}

\newcommand{\norm}[1]{\left\Vert #1\right\Vert}

\newcommand{\floor}[1]{\lfloor #1\rfloor}

\renewcommand{\b}[1]{\left\{ #1\right\}}
\renewcommand{\c}[1]{\left[ #1\right]}
\newcommand{\p}[1]{\left( #1\right)}
\newcommand{\abs}[1]{\left| #1\right|}
\newcommand{\iid}{\overset{i.i.d}{\sim}}
\newcommand{\ind}[1]{\mathbbm{1}_{#1}}

\newcommand{\inner}[1]{\left\langle#1\right\rangle}
\DeclareMathOperator{\Span}{span}
\DeclareMathOperator{\Supp}{supp}
\DeclareMathOperator{\Tr}{Tr}
\DeclareMathOperator{\Cov}{Cov}
\DeclareMathOperator{\Var}{Var}
\DeclareMathOperator{\Div}{Div}
\DeclareMathOperator{\Diag}{Diag}
\DeclareMathOperator{\Dir}{Dir}

\DeclareMathOperator{\Lip}{Lip}
\DeclareMathOperator{\vol}{vol}

\title{Semiparametric Bernstein-von Mises theorems from Stein's method}
\author{Paul Rosa \\ Statistical Laboratory, University of Cambridge}
\date{}

\newtheorem{theorem}{Theorem}
\newtheorem{lemma}{Lemma}
\newtheorem{definition}{Definition}
\newtheorem{proposition}{Proposition}
\newtheorem{corollary}{Corollary}
\newtheorem{remark}{Remark}

\newtheorem{assumption}{Assumption}
\newtheorem{prior}{Prior}

\begin{document}
	
\maketitle

\begin{abstract}
We introduce a novel proof strategy for semiparametric Bernstein–von Mises theorems based on Stein's method combined with an information-geometric framework. Rather than controlling the effect of the prior on the asymptotic marginal posterior distribution of the functional of interest through the stability of an integrated likelihood under a suitable perturbation, we characterise its influence through the prior-weighted divergence of suitable vector fields over the statistical model. Applying Stein's method in this setting instead of the usual Laplace-transform approach yields explicit non-asymptotic upper bounds on the bounded-Lipschitz distance between marginal posterior distributions and the corresponding Gaussian limits predicted by semiparametric efficiency theory. These bounds consist entirely of local scalar differential quantities associated with the functional, the prior and a chosen vector field—typically related to the efficient influence function—evaluated over posterior contraction sets. We apply the theory to quadratic functionals in Gaussian white-noise models and to linear, quadratic and general integral functionals in histogram density models, with both conjugate and non-conjugate priors. For linear and quadratic functionals, the same theory identifies the differential term responsible for posterior bias and allows us to remove it through a natural functional correction, yielding Bernstein–von Mises-type theorems in regimes where they may fail for the original uncorrected functional.
\end{abstract}

\tableofcontents

\section{Introduction}\label{section:intro}

Bernstein--von Mises (hereafter BvM) theorems provide a frequentist justification for Bayesian uncertainty quantification by showing that suitably centred and rescaled posterior distributions are asymptotically Gaussian. While this phenomenon is classical in regular finite-dimensional models \cite{vaartAsymptoticStatistics1998}, its extension to infinite-dimensional models is substantially more delicate: posterior contraction alone does not in general guarantee that Bayesian credible sets have asymptotically correct frequentist coverage, see e.g. \cite{freedmanWaldLectureBernsteinvon1999} or \cite{rivoirardBernsteinMisesTheorem2012,castilloBernsteinVonMisesTheorem2015} for negative results. Semiparametric BvM theorems address this question for low-dimensional functionals $\Psi(f)$ of an infinite-dimensional parameter $f$, establishing Gaussian limits for marginal posterior distributions centred at efficient estimators \cite{shenAsymptoticNormalitySemiparametric2002,castilloSemiparametricBernsteinvonMises2012a,rivoirardBernsteinMisesTheorem2012,castilloBernsteinVonMisesTheorem2015} and in particular the approach of \cite{castilloBernsteinVonMisesTheorem2015} became the standard one in the literature. Related nonparametric BvM results establish Gaussian approximations for the full posterior in suitably weak infinite-dimensional topologies \cite{castilloNonparametricBernsteinMises2013,castilloBernsteinVonMisesPhenomenon2014}, and rely on a combination of the approach of \cite{rivoirardBernsteinMisesTheorem2012,castilloBernsteinVonMisesTheorem2015} together with a suitable tightness criterion. These theories have since been developed in a variety of nonparametric and semiparametric models; see, for instance, \cite{rousseauFrequentistPropertiesBayesian2016,ghosalFundamentalsNonparametricBayesian2017,castilloBayesianNonparametricStatistics2024} and the references therein.

The proof strategy for semiparametric BvM theorems outlined in \cite{rivoirardBernsteinMisesTheorem2012,castilloBernsteinVonMisesTheorem2015} relies on pointwise convergence in probability of the posterior Laplace transform of $\sqrt{n}\p{\Psi(f)-\hat\Psi_0}$ to that of the limiting Gaussian distribution where here $\hat \Psi_0$ is an efficient estimator of the quantity of interest $\Psi\p{f_0}$ and $f$ is a posterior draw, from which weak convergence in probability of the corresponding marginal posterior distribution can be deduced. In the weak nonparametric setting \cite{castilloNonparametricBernsteinMises2013,castilloBernsteinVonMisesPhenomenon2014}, a related argument is used to establish convergence of finite-dimensional posterior marginals, in combination with an additional tightness criterion. A central ingredient of the proof strategy is a local deformation of the statistical model: given a local parametrisation $\eta$ by a Hilbert space $\mathcal H$, one considers a change of variables of the form $\eta \mapsto \eta_t = \eta - ts / \sqrt{n}$ for a suitably chosen direction $s$, typically related to an approximation of the efficient influence function. Taylor expansions of the likelihood and the functional along this path reduce the problem, among other requirements, to conditions $(2.12)$ and $(2.14)$ of \cite{castilloBernsteinVonMisesTheorem2015}. While the former controls the relevant Taylor remainders, the latter is the condition in which the prior enters explicitly. Often referred to as the ``change of variables'' or ``ratio of integrals'' condition, it requires a suitable asymptotic stability of the integrated likelihood under the local deformation and in turn controls the influence of the prior on the limiting posterior distribution.

The present paper develops a complementary, infinitesimal alternative to this proof technique when considering high-dimensional priors. A ratio of integrals under a local change of variables measures the effect of a small but macroscopic deformation of the statistical model; from a differential-geometric perspective and provided that a suitable Riemannian metric over the statistical model has been fixed, an infinitesimal analog can be naturally described by a vector field $V$ (encoding the direction of perturbation) and the corresponding change of prior mass by its weighted divergence $\Div_\pi V := \Div V + \inner{\nabla \ln \pi | V}$, where $\pi$ is the density of the prior with respect to the Riemannian volume.\footnote{Under suitable assumptions, for a distribution $\Pi$, a vector field $V$ and a family of diffeomorphisms $\varphi_t$ over a Riemannian manifold such that $\varphi_0 = I$ and $\dot \varphi_0 = V$, using Jacobi's formula one can see that the induced Radon-Nikodym derivative $L_t = d\p{\Pi \circ \varphi_t^{-1}}/d\Pi$ satisfies $\dot L_0 = - \Div_\pi V$, see Proposition \ref{proposition:liouvilles_formula} for a proof and an extension to the case $t \neq 0$ when considering vector flows.} This viewpoint also removes the dependence of the formulation on a particular representation of the local deformation. Indeed, the property that $t\mapsto\eta_t$ is linear depends on the chosen parametrisation. For example, in a density model one may parametrise $f$ around $f_0$ either through the score $\eta = \ln \p{f/f_0} - \int \ln \p{f/f_0} f_0$ or $\eta' = f/f_0 - 1$, and a path which is linear in the first coordinate system is generally nonlinear in the second. This coordinate dependence is not an obstruction to the classical approach, since convenient or canonical parametrisations are available in many models, but it motivates asking whether the relevant prior-stability mechanism can be formulated intrinsically in terms of the infinitesimal direction of perturbation itself. This is closely related to the discussion following Corollary \(1\) of \cite{castilloBernsteinVonMisesTheorem2015}, where nonlinear perturbations are mentioned as a possible route towards more general results.

Information geometry provides a natural framework for such a formulation. Tangent vectors describe first-order deformations independently of the particular curves used to realise them, while the Fisher information equips the statistical model with a canonical Riemannian metric. In the approach developed below, the prior-dependent contribution is encoded by the weighted divergence $\Div_{\pi}V$ of an appropriate vector field $V$, where $\pi$ denotes the density of the prior with respect to the Riemannian volume (i.e. Jeffreys' prior in this context). The resulting sufficient conditions therefore have a different structure from the classical ratio-of-integrals condition: rather than controlling a ratio of integrated likelihood under a specified macroscopic perturbation, they involve explicit local differential quantities which can be controlled directly on sets over which the posterior concentrates. The weighted-divergence formulation can thus be viewed as an infinitesimal counterpart to the prior-stability mechanism captured by the classical ratio-of-integrals condition.

A second motivation for this differential formulation is quantitative. Laplace-transform arguments are particularly well suited to establishing weak convergence of posterior distributions, but pointwise convergence of the Laplace transform does not by itself provide a finite-$n$ bound on a probability metric measuring the quality of the Gaussian approximation guaranteed by the BvM theorem. By combining the preceding geometric formulation with Stein's method \cite{steinBoundErrorNormal1972,rossFundamentalsSteinsMethod2011}, we instead obtain explicit upper bounds on the bounded-Lipschitz distance (a common choice within the Bayesian nonparametric literature) between the posterior distribution of $\sqrt n\p{\Psi(f)-\hat\Psi_0}$ and its Gaussian limit. These bounds are expressed in terms of local differential quantities that can be directly controlled on posterior contraction sets. Besides yielding BvM theorems whenever these bounds vanish asymptotically, their explicit form makes it possible to quantify the quality of the Gaussian approximation. This quantitative perspective is related in spirit to \cite{katsevichImprovedDimensionDependence2025}, where the author obtains high-probability bounds in total variation for high-dimensional parametric BvM theorems; the present work instead focuses on semiparametric functionals and on bounds whose structure explicitly reflects posterior concentration, the geometry of the functional and the prior.

Finally, the differential formulation also provides a natural perspective on the biases induced by Bayesian methods in semiparametric contexts. The weighted-divergence term explicitly identifies a contribution of the prior to the centring error and, when this contribution does not vanish at the required scale, suggests a corresponding prior-dependent correction of the functional. In Section \ref{section:applications}, we make this mechanism explicit for linear and quadratic functionals and relate the resulting correction to the bias-correction technique introduced in \cite{castilloBernsteinVonMisesTheorem2015}.

The rest of the paper is organised as follows: in Section \ref{section:general_results} we introduce the necessary information geometry formalism (Section \ref{section:information_geometry}) before presenting the main results of this paper (Section \ref{section:general_bvm_stein}), a new proof technique for the semiparametric BvM theorem that relies on an application of Stein's method. We then present several applications in Section \ref{section:applications} to white noise (Section \ref{section:white_noise}) and histogram (Section \ref{section:histograms}) models. Section \ref{section:simulation_study} contains numerical illustrations for some of the theoretical claims in Section \ref{section:applications}.

\section{General results}\label{section:general_results}

\subsection{Information geometry}\label{section:information_geometry}

We consider a statistical model $\MM = \b{P_\theta : \theta \in \Theta}$ over a measurable space $\p{\XX,\AA}$, which we assume to be dominated by some measure $\lambda$. This allows us to define the density $p_\theta := \frac{dP_\theta}{d\lambda}$ and the log-likelihood $\ell = \ell(\theta,x) = \ln p_\theta\p{x}$. We assume that $\Theta$ is a manifold without boundary of finite dimension $p$ (although in practice $p$ will be allowed to grow with the sample size $n$). We equip $\Theta$ with the Fisher information metric $g$ (which we assume to be well-defined) given in local coordinates as $g_{\mu \nu} = P_\theta \c{\p{\partial_\mu \ell} \p{\partial_\nu \ell}}$ where $\partial_\mu \ell = \frac{\partial}{\partial \theta^\mu} \ell\p{\theta,\cdot}$, as well as the resulting Levi-Civita connection $\nabla$. In the last display we have used the notation $\mu\c{f} := \int f d\mu$ for any measure $\mu$ and integrable or nonnegative measurable function $f$ over some measurable space that we also use throughout the rest of this paper. We also drop the explicit dependence of the log-likelihood in the observed $x$ and simply consider it as a function of $\theta$ alone. The tangent space $T_\theta$ of $\Theta$ at $\theta \in \Theta$ can be described concretely as
\[
T_\theta = \Span \b{\partial_\mu \ell : 1 \leq \mu \leq p} \subset H_\theta := \b{s \in L^2\p{P_\theta} : P_\theta \c{s} = 0},
\]
and the Christoffel symbols $\Gamma_{~\mu \nu}^\rho$ of the Levi-Civita connection are then given by
\[
\Gamma_{~\mu \nu}^\rho = g^{\rho \tau}P_\theta\c{\p{\partial_\tau \ell}\p{\partial_{\mu \nu} \ell + \frac{1}{2} \p{\partial_\mu \ell} \p{\partial_\nu \ell}}} = \frac{1}{2} g^{\rho \tau} \p{\partial_\mu g_{\tau \nu} + \partial_\nu g_{\mu \tau} - \partial_\tau g_{\mu \nu}},
\]
where $\partial_{\mu \nu} \ell := \frac{\partial^2}{\partial \theta^\mu \partial \theta^\nu} \ell\p{\theta,\cdot}$, see e.g. \cite{amariDifferentialGeometryStatistical1987}.\footnote{In particular the connection $\nabla$ is induced by the ambient connection $\bar \nabla_{\partial_\mu \ell} \partial_\nu \ell = \partial_{\mu \nu} \ell + \frac{1}{2} \p{\partial_\mu \ell}\p{\partial_\nu \ell} + \frac{1}{2} g_{\mu \nu}$ over the Hilbert bundle $\bigsqcup_{\theta \in \Theta} H_\theta$ by the orthogonal projection of $H_\theta$ onto $T_\theta$.} In the last display and throughout the rest of this paper we employ Einstein's summation convention on repeated indices. In this context if $V = V^\mu \partial_\mu \ell$ is a vector field its squared norm is given by $\abs{V}^2 = \Var \p{V} = V^\mu V^\nu g_{\mu \nu}$, in the sense that $\abs{V}^2(\theta) = P_\theta\c{V^2}$ at any $\theta \in \Theta$, where $\Var_\theta$ denotes the variance under $P_\theta$. By definition we also have $\nabla f =\p{ \nabla^\mu f} \p{\partial_\mu \ell} = g^{\mu \nu} \p{\partial_\nu f} \partial_\mu \ell = \inner{\nabla f | \nabla \ell(\cdot)}$ for any differentiable function $f$ on $\Theta$. This is consistent as $\nabla f$ is an element of $T_\theta = \Span \b{\partial_\mu \ell : 1 \leq \mu \leq p}$ and the inner product between two vector fields $X$ and $Y$ is the scalar given by the expression $\inner{X|Y} = g_{\mu \nu} X^\mu Y^\nu = g^{\mu \nu} X_\mu Y_\nu$ in local coordinates. In particular for any given sample $x^n = \p{x_1,\ldots,x_n} \in \XX^n$ of size $n \geq 1$, by definition of $\nabla \ell_n$ we have
\[
\frac{1}{n} \nabla \ell_n = g^{\mu \nu} \p{\frac{1}{n} \sum_{i=1}^n \p{\partial_\nu \ell}(x_i) } \partial_\mu \ell = \frac{1}{n} \sum_{i=1}^n g^{\mu \nu} \p{\partial_\nu \ell}(x_i) \partial_\mu \ell = \frac{1}{n} \sum_{i=1}^n \nabla \ell\p{x_i} = \Pro_n \c{\nabla \ell},
\]
as equality between vector fields over $\Theta$, where $\Pro_n = \frac{1}{n} \sum_{i=1}^n \delta_{x_i}$ is the empirical measure.\\

We equip $\Theta$ with a prior $\Pi$, i.e. a measure on $\Theta$, and we assume that $\Pi$ admits a density $\pi$ with respect to the Riemannian volume element $\vol_{\Theta}\p{d\theta} = \sqrt{\abs{g(\theta)}} d\theta$ i.e. Jeffreys' prior (here $\abs{g(\theta)}$ denotes the determinant of the metric tensor $g(\theta)$ in coordinate $\theta$); we refer to such priors as ``absolutely continuous''. Since $\sqrt{\abs{g}}$ is everywhere positive it follows that $\Pi$ also admits a density in coordinates $\theta$ with respect to the Lebesgue measure $d\theta$, that we simply write as $\Pi(\theta)$. Notice that the prior can be improper. We assume that $\pi$ is everywhere positive on $\Theta$ (priors that vanish outside a domain---for instance priors over log-densities that are uniformly bounded by a fixed constant---can also be considered in this framework by replacing $\Theta$ with the support $\Supp \Pi := \b{\theta : \pi\p{\theta} > 0}$ and taking care of the boundary effects). Using Bayes' formula we define the Bayesian posterior $\Pi_n$ as the probability distribution
\[
\Pi_n = \Pi \c{\cdot | X^n} = \frac{e^{\ell_n}}{\Pi \c{e^{\ell_n}}} \cdot \Pi
\]
and $\pi_n = \pi e^{\ell_n}/\Pi\c{e^{\ell_n}}$ its density with respect to the Riemannian volume form. Implicitly we assume that the normalising factor $\Pi \c{e^{\ell_n}}$ is well defined, i.e. $e^{\ell_n} \in L^1\p{\Pi}$, which is the case in all the examples investigated in this paper. In contrast to $\pi$, $\theta \mapsto \Pi\p{\theta}$ is not an invariant scalar (i.e. a function of $\theta$) but rather a scalar density on $\Theta$, in the sense that for two coordinate systems $\theta,\eta$ it satisfies the change of variable formula
\[
\Pi(\eta) = \Pi(\theta) \abs{\frac{\partial \theta}{\partial \eta}}.
\]
The same observation goes for $\Pi_n$. The covector field $\nabla \ln \pi$ can be expressed in local coordinates as
\[
\nabla_\mu \ln \pi = \partial_\mu \ln \Pi - \partial_\mu \ln \sqrt{\abs{g}} = \partial_\mu \ln \Pi - \Gamma_{~\mu \nu}^\nu = \partial_\mu \ln \Pi - g^{\rho \nu} \Gamma_{\rho \mu \nu},
\]
where $\Gamma_{\rho \mu \nu}$ (resp. $\Gamma_{~\mu \nu}^\rho$) are the Christoffel symbols of the first (resp. second) kind of the connection $\nabla$ and where we have used the identity $\partial_\mu \ln \sqrt{\abs{g}} = \Gamma_{~\mu \nu}^\nu$ (see e.g. Chapter $3$ in \cite{carrollSpacetimeGeometryIntroduction2019}, this can easily be proved using Jacobi's formula for the log-derivative of the determinant and the definition of the Christoffel symbols). We use the rules of Ricci calculus in order to raise and lower indices of tensors using the metric, for instance $\nabla \ln \pi$ can also be considered as a vector field with components $\nabla^\mu \ln \pi = g^{\mu \nu} \nabla_\nu \ln \pi$. We refer to \cite{schoutenRiccicalculusIntroductionTensor1954,carrollSpacetimeGeometryIntroduction2019} for an introduction to Ricci calculus in Riemannian and pseudo-Riemannian geometry. We also define the tensor $\nabla^2 \ln \pi = \nabla \nabla \ln \pi$, i.e. the Hessian of $\ln \pi$. For a vector field $V$ we define its divergence $\Div V$ by
\[
\Div V = \nabla_\mu V^\mu = \partial_\mu V^\mu + \Gamma_{~\mu \nu}^\mu V^\nu
\]
and given a positive function $h$ its $h$-divergence $\Div_h V$ by $\Div_h V = h^{-1} \Div\p{hV} = \Div V + \inner{V|\nabla \ln h}$. When $h = \pi$ the expression in local coordinates simplifies to
\[
\Div_\pi V = \Div V + \inner{V|\nabla \ln \pi} = \partial_\mu V^\mu + \b{\Gamma_{~\mu \nu}^\mu + \partial_\nu \ln \pi } V^\nu = \partial_\mu V^\mu + \p{\partial_\mu \ln \Pi}V^\mu,
\]
where $\inner{U|V} := g\p{U,V}$ for any vector fields $U,V$. Crucially if the density $\Pi\p{\cdot}$ of the prior in a set of local coordinates is available then the last display provides a tractable formula for the value of $\Div_\pi V$ that does not require the preliminary calculation of the Christoffel symbols but only of the components $V^\mu$ of $V$ and their partial derivatives; we will use this property in Section \ref{section:histograms}. Similarly for a functional $\Phi$ we define its Laplacian $\Delta \Phi$ by
\[
\Delta \Phi = \Div \nabla \Phi = \Tr \nabla^2 \Phi = g^{\mu \nu} \nabla_{\mu \nu}^2 \Phi
\]
and given a positive function $h$ its $h$-Laplacian by
\[
\Delta_h \Phi = \Div_h \nabla \Phi = h^{-1} \Div\p{h \nabla \Phi} = \Div \nabla \Phi + \inner{\nabla \ln h | \nabla \Phi} = \Delta \Phi + \inner{\nabla \ln h | \nabla \Phi} ,
\]
which in local coordinates and when $h = \pi$ simplifies to $\Delta_\pi \Phi = \partial_\mu \nabla^\mu \Phi + \p{\partial_\mu \ln \Pi}\p{\nabla^\mu \Phi}$. The operators $\Div_\pi$ and $\Delta_\pi$ are well known in analysis over weighted manifolds, see e.g. \cite{grigoryanHeatKernelAnalysis2013}. Finally, we define the precise meaning of BvM theorems that will be used throughout this paper:

\begin{definition}
We say that a sequence of functionals $\Phi_n : \Theta \to \RR$ satisfies a BvM phenomenon with centring $\hat \Psi_{n0}$ (an efficient estimator of the semiparametric functional of interest) and asymptotic variance $\sigma^2 > 0$ if and only if
\begin{equation}\label{equation:bvm_type_theorem}
	d_{BL}\p{\Pi_n \circ Z_n^{-1},\NN\p{0,\sigma^2}} = o_p(1), \quad Z_n := \sqrt{n}\p{\Phi_n - \hat \Psi_{n0}}.
\end{equation}
\end{definition}
Obviously, such a definition finds its usefulness when $\hat \Psi_{n0}$ is itself an estimator which is asymptotically normal around the quantity of interest $\Psi\p{f_0}$ with variance $\sigma^2/n$. Depending on the case, the functional $\Phi$ may or may not be equal to the original one $\Psi$, which is not per se a problem as long as $\Phi$ can be explicitly computed and $\hat \Psi_{n0}$ is still an efficient estimator of $\Psi\p{f_0}$. The statistician will then simply base its inference about $\Psi\p{f_0}$ on the posterior credible intervals of $\Phi(f)$ and not the ones of $\Psi(f)$. This idea of corrected functionals for semiparametric and nonparametric inference has a precedent in the Bayesian literature, for instance in \cite{castilloBernsteinVonMisesTheorem2015} where a bias-corrected quadratic functional in the white noise model is considered, or in \cite{castilloMultiscaleBayesianSurvival2021} where a projected posterior is used for weak nonparametric inference in a survival analysis model. See also \cite{yiuSemiparametricPosteriorCorrections2025} for an alternative technique that relies on a Bayesian analog of one-step estimators for efficient semiparametric inference.

We make the frequentist assumption $X^n = \p{X_1,\ldots,X_n} \iid \Pro_0$ for some probability distribution $\Pro_0$ on $\p{\XX,\AA}$ (that need not belong to the model $\MM$), and we study the behaviour of the posterior distribution $\Pi_n \c{d\theta} \propto e^{\ell_n\p{\theta,X^n}} \Pi\c{d\theta}$ in the frequentist sense, i.e. under $X^n \sim \Pro_0^n$. All $o_p$ or $O_p$ terms are understood with respect to $X^n \sim \Pro_0^n$. Finally, for a measure $\Xi$ and a nonnegative integrable function $\chi$ for which $\Xi \c{\chi} > 0$, we define $\Xi^\chi$ as the probability distribution $A \mapsto \Xi^\chi\c{A} = \Xi\c{\chi \ind{A}}/\Xi\c{\chi}$. In the important case where $\Xi = \Pi \c{\cdot | X^n}$ is a Bayesian posterior distribution, notice that $\Pi^\chi\c{\cdot | X^n} = \Pi \c{\cdot | X^n}^\chi$. If $\chi = \ind{B}$ for some measurable set $B$ we simply write $\Xi^B$ for $\Xi^{\ind{B}}$.

\subsection{Bernstein-von Mises theorems from Stein's method}\label{section:general_bvm_stein}

We now present the main abstract results of this paper. We first give an integration-by-parts argument motivating the sufficient conditions of our main result \ref{corollary:bvm_without_bias}. This argument can be formulated in terms of posterior Laplace transforms and provides a useful connection with classical proofs of semiparametric BvM theorems. It is, however, only motivational here: the BvM results of this section are proved directly using Stein's method, which additionally yields quantitative bounds on the quality of the Gaussian approximation. Additionally, a rigorous version of the arguments presented here is used later for proving posterior contraction results (see Lemma \ref{lemma:laplace_transform}).

Throughout this section, the functional $\Phi$, the estimator $\hat\Psi_0$ and the vector field $V$ may implicitly depend on $n$. We suppress this dependence from the notation for readability.

\paragraph{Heuristic derivation.} Consider the posterior cumulant generating function
\[
g\p{t} := \ln \Pi_n \c{\exp\p{t\sqrt{n}\p{\Phi - \hat \Psi_0}}}, \quad t \in \RR.
\]
Then at least formally we have
\[
g'\p{t} = \Pi_{n,t} \c{\sqrt{n}\p{\Phi - \hat \Psi_0}}, \quad \frac{d\Pi_{n,t}}{d\Pi_n} \propto \exp\p{t\sqrt{n}\p{\Phi - \hat \Psi_0}}.
\]
Let $V$ be a $\CC^1$ vector field over $\Theta$ and define the data-dependent functional $\hat\Phi := \Phi+n^{-1}\inner{V|\nabla\ell_n}$. The choice of vector field $V$ is specified in the applications below and will always be either $\nabla \Phi$ or a perturbation thereof. For now, it should be viewed as an infinitesimal direction along which the likelihood and the functional can be differentiated in order to relate the posterior distribution of $\Phi$ to the local geometry of the statistical model. Then we have
\[
\Div_{\pi_{n,t}} V = \Div_\pi V + \inner{V | \nabla \ell_n} + \inner{V | \nabla t\sqrt{n}\p{\Phi - \hat \Psi_0}} = \Div_\pi V + \inner{V | \nabla \ell_n} + t\sqrt{n}\inner{V | \nabla \Phi},
\]
so that
\begin{align*}
	g'\p{t} & = \sqrt{n} \Pi_{n,t} \c{\hat \Phi - \frac{1}{n} \inner{V | \nabla \ell_n} - \hat \Psi_0} \\
	& = t \Pi_{n,t} \c{\inner{V | \nabla \Phi}} + \sqrt{n}\Pi_{n,t} \c{\hat \Phi - \hat \Psi_0 + \frac{1}{n} \Div_\pi V} - \frac{1}{\sqrt{n}} \Pi_{n,t} \c{\Div_{\pi_{n,t}}V}.
\end{align*}
But using the divergence theorem, under suitable growth assumptions on $V$ we have
\[
\Pi_{n,t} \c{\Div_{\pi_{n,t}}V} = \int \p{\Div_{\pi_{n,t}}V}(\theta) \pi_{n,t}(\theta) \vol_\Theta \p{d\theta} = \int \p{\Div \p{\pi_{n,t} V}}(\theta) \vol_\Theta \p{d\theta} = 0.
\]
As a result if
\begin{equation}\label{equation:heuristic_condition}
	\sup_\Theta \sqrt{n}\abs{\hat \Phi - \hat \Psi_0 + \frac{1}{n} \Div_\pi V} + \abs{\sigma^2 - \inner{V | \nabla \Phi}} = o_p(1)
\end{equation}
then we obtain $\sup_t \abs{g'\p{t} - \sigma^2 t} = o_p(1)$, which yields $g\p{t} = \sigma^2 t^2/2 + o_p(1)$ for every $t$ by integration and therefore implies the BvM theorem as in \cite{castilloBernsteinVonMisesTheorem2015}. The derivation above is made rigourous in Lemma \ref{lemma:laplace_transform}, which we use in our proofs of posterior contraction by applying it to sufficently many linear functionals, see also \cite{castilloBayesianSupremumNorm2014} and \cite{castilloNonparametricBernsteinMises2013,castilloBernsteinVonMisesPhenomenon2014,castilloMultiscaleBayesianSurvival2021} for related ideas. When Condition (\ref{equation:heuristic_condition}) holds only on a posterior contraction set $B_n \subsetneq \Theta$, the argument must be localised. This produces an additional boundary contribution in the analysis which does not appear to be easy to control using this technique. Nevertheless it is possible to prove a quantitative, non-asymptotic BvM theorem using Stein's method combined with localisation by multiplication against a smooth cutoff function (instead of a hard thresholding on a posterior contraction set), which relies on the same condition (\ref{equation:heuristic_condition}), with two additional ones related to the localisation penalty.

\paragraph{Formal argument.} We now formulate a rigorous proof. For any probability distributions $\mu$ and $\nu$ over $\RR$ define their bounded Lipschitz distance as
\[
d_{BL}\p{\mu,\nu} := \sup_{f \in \Lip_1} \abs{\int fd\p{\mu - \nu}}, \quad \Lip_1 := \b{f \in \CC_b\p{\RR,\c{-1,1}}, \quad \sup_{x \neq y} \frac{\abs{f(x) - f(y)}}{\abs{x-y}} \leq 1},
\]
and for any probability distributions $\mu,\nu$ over a measurable set $\p{\Omega,\AA}$ define their total variation distance as
\[
d_{TV}\p{\mu,\nu} := \sup_{A \in \AA} \abs{\p{\mu - \nu}\c{A}}.
\]
Notice that $d_{BL}(\mu,\nu) \leq 2d_{TV}(\mu,\nu)$ for any probability distributions $\mu,\nu$ over $\RR$.\footnote{This follows by the triangle inequality using the classical representation of $d_{TV}\p{\mu,\nu}$ as $\frac{1}{2} \int \abs{\frac{d\mu}{d\p{\mu + \nu}} - \frac{d\nu}{d\p{\mu + \nu}}} d\p{\mu + \nu}$.} For any smooth compactly supported cutoff function $\chi_n : \Theta \to \c{0,1}$, using $d_{TV}\p{\Pi_n^{\chi_n},\Pi_n} \leq \Pi_n\c{1-\chi_n}$\footnote{Exchanging the roles of $A$ and $A^c$ we have $d_{TV}\p{\Pi_n^{\chi_n},\Pi_n} = \sup_A \abs{\Pi_n\c{A} - \Pi_n^{\chi_n}\c{A}} = \sup_A \Pi_n \c{A} - \Pi_n^{\chi_n}\c{A} = \sup_A \Pi_n \c{\p{1-\chi_n}\ind{A}} - \p{1 - \Pi_n \c{\chi_n}}\Pi_n^{\chi_n} \c{A} \leq \Pi_n\c{1-\chi_n}$} and the data processing inequality $d_{TV}\p{\Pi_n^{\chi_n} \circ Z_n^{-1},\Pi_n \circ Z_n^{-1}} \leq d_{TV}\p{\Pi_n^{\chi_n},\Pi_n}$ we have
\begin{align*}
d_{BL}\p{\Pi_n \circ Z_n^{-1},\NN\p{0,\sigma^2}} & \leq d_{BL}\p{\Pi_n^{\chi_n} \circ Z_n^{-1},\NN\p{0,\sigma^2}} + d_{BL}\p{\Pi_n^{\chi_n} \circ Z_n^{-1},\Pi_n \circ Z_n^{-1}} \\
& \leq d_{BL}\p{\Pi_n^{\chi_n} \circ Z_n^{-1},\NN\p{0,\sigma^2}} + 2d_{TV}\p{\Pi_n^{\chi_n},\Pi_n} \\
& \leq d_{BL}\p{\Pi_n^{\chi_n} \circ Z_n^{-1},\NN\p{0,\sigma^2}} + 2\Pi_n \c{1-\chi_n}.
\end{align*}
By definition of the bounded Lipschitz metric we have
\[
d_{BL}\p{\Pi_n^{\chi_n} \circ Z_n^{-1}, \NN\p{0,\sigma^2}} = \sup_{h \in \Lip_1} \abs{\Pi_n^{\chi_n} \c{h\p{Z_n}} - \NN\p{0,\sigma^2}\c{h}}.
\]
Now, following Stein's method \cite{steinBoundErrorNormal1972,rossFundamentalsSteinsMethod2011}, for any $h \in \Lip_1$ let $f_h : \RR \to \RR$ be the function
\[
f_h : x \mapsto - e^{\frac{x^2}{2\sigma^2}} \int_x^\infty e^{-\frac{s^2}{2\sigma^2}} \p{h(s) - \NN\p{0,\sigma^2}\c{h}} ds,
\]
which satisfies the Stein's equation
\[
\sigma^2 f_h'\p{x} - x f_h(x) = \sigma^2 \p{h(x) - \NN\p{0,\sigma^2}\c{h}}, \quad x \in \RR.
\]
Then
\begin{align*}
	& d_{BL}\p{\Pi_n^{\chi_n} \circ Z_n^{-1}, \NN\p{0,\sigma^2}} = \sup_{h \in \Lip_1} \abs{\Pi_n^{\chi_n} \c{h\p{Z_n}} - \NN\p{0,\sigma^2}\c{h}} = \sup_{h \in \Lip_1} \abs{\Pi_n^{\chi_n} \c{h\p{Z_n} - \NN\p{0,\sigma^2}\c{h}}} \\
	& = \sigma^{-2} \sup_{h \in \Lip_1} \abs{\Pi_n^{\chi_n} \c{\sigma^2 f_h'\p{Z_n} - Z_n f_h\p{Z_n}}} = \sigma^{-2} \sup_{h \in \Lip_1} \frac{\abs{\Pi_n \c{\chi_n\p{\sigma^2 f_h'\p{Z_n} - Z_n f_h\p{Z_n}}}}}{\Pi_n \c{\chi_n}}.
\end{align*}
The numerator in the supremum of the last display can be expressed as
\begin{align*}
	\Pi_n \c{\chi_n \p{\sigma^2 f_h'\p{Z_n} - Z_nf_h\p{Z_n}}} & = \Pi_n \c{\chi_n \p{f_h'\p{Z_n}\p{\sigma^2 - \inner{\nabla \Phi | V}} + f_h'\p{Z_n} \inner{\nabla \Phi | V} - Z_nf_h\p{Z_n}}} \\
	& = \Pi_n \c{\chi_n \p{f_h'\p{Z_n}\p{\sigma^2 - \inner{\nabla \Phi | V}} - \p{Z_n + \frac{1}{\sqrt{n}}\Div_{\pi_n} V}f_h\p{Z_n}}} \\
	& + \Pi_n\c{\chi_n \p{f_h'\p{Z_n} \inner{\nabla \Phi | V} + f_h\p{Z_n}\frac{1}{\sqrt{n}}\Div_{\pi_n} V}},
\end{align*}
and since
\[
Z_n + \frac{1}{\sqrt{n}}\Div_{\pi_n} V = \sqrt{n}\p{\Phi - \hat \Psi_0 + \frac{1}{n} \Div_\pi V + \frac{1}{n} \inner{V|\nabla \ell_n}} = \sqrt{n}\p{\hat \Phi - \hat \Psi_0 + \frac{1}{n} \Div_\pi V},
\]
we obtain
\begin{align*}
	\Pi_n \c{\chi_n \p{\sigma^2 f_h'\p{Z_n} - Z_nf_h\p{Z_n}}} & = \Pi_n \c{\chi_n \p{f_h'\p{Z_n}\p{\sigma^2 - \inner{\nabla \Phi | V}} - \sqrt{n}\p{\hat \Phi - \hat \Psi_0 + \frac{1}{n} \Div_\pi V}f_h\p{Z_n}}} \\
	& + \Pi_n\c{\chi_n \p{f_h'\p{Z_n} \inner{\nabla \Phi | V} + f_h\p{Z_n}\frac{1}{\sqrt{n}}\Div_{\pi_n} V}}.
\end{align*}
Moreover because
\begin{align*}
	\Div_{\pi_n} \p{\chi_n f_h\p{Z_n} V} & = \chi_n f_h\p{Z_n} \Div_{\pi_n} V + \inner{\nabla \p{\chi_n f_h\p{Z_n}} | V} \\
	& = \chi_n f_h\p{Z_n} \Div_{\pi_n} V + \sqrt{n} \chi_n f_h'\p{Z_n} \inner{\nabla \Phi | V} + f_h\p{Z_n} \inner{\nabla \chi_n | V},
\end{align*}
by Stokes' theorem (which applies since $\chi$ has compact support) we obtain
\begin{align*}
	0 & = \int \Div \p{\pi_n \chi_n f_h\p{Z_n} V} = \Pi_n \c{\Div_{\pi_n}\p{\chi_n f_h\p{Z_n} V}} \\
	& = \Pi_n \c{\chi_n f_h\p{Z_n} \Div_{\pi_n} V + \sqrt{n} \chi_n f_h'\p{Z_n} \inner{\nabla \Phi | V} + f_h\p{Z_n} \inner{\nabla \chi_n | V}}.
\end{align*}
Therefore
\[
\Pi_n\c{\chi_n \p{f_h'\p{Z_n} \inner{\nabla \Phi | V} + f_h\p{Z_n}\frac{1}{\sqrt{n}}\Div_{\pi_n} V}} = - \Pi_n \c{f_h\p{Z_n} \frac{1}{\sqrt{n}}\inner{\nabla \chi_n | V}},
\]
which yields
\begin{align*}
	& \Pi_n \c{\chi_n \p{\sigma^2 f_h'\p{Z_n} - Z_nf_h\p{Z_n}}} \\
	& = \Pi_n \c{\chi_n \p{f_h'\p{Z_n}\p{\sigma^2 - \inner{\nabla \Phi | V}} - \sqrt{n}\p{\hat \Phi - \hat \Psi_0 + \frac{1}{n} \Div_\pi V}f_h\p{Z_n}} - f_h\p{Z_n} \frac{1}{\sqrt{n}}\inner{\nabla \chi_n | V}}.
\end{align*}
Finally the function $f_h$ satisfies (see e.g. \cite{rossFundamentalsSteinsMethod2011})
\[
\norm{f_h}_\infty \leq \sigma \sqrt{\frac{\pi}{2}} \norm{h - \NN\p{0,\sigma^2}\c{h}}_\infty \leq \sigma\sqrt{2\pi}, \quad \norm{f_h'}_\infty \leq 2\norm{h - \NN\p{0,\sigma^2}\c{h}}_\infty \leq 4,
\]
which implies the following fundamental estimate by the triangle inequality, taking the supremum over all $h \in \Lip_1$:

\begin{lemma}\label{lemma:stein}
If $\sigma>0$, $\Phi : \Theta \to \RR$ is a $\CC^1$ functional, $V$ is a $\CC^1$ vector field over $\Theta$, $\chi_n : \Theta \to \c{0,1}$ is a sequence of $\CC^1$ functions over $\Theta$ with compact supports, $\hat \Psi_0$ is an estimator (i.e. real valued and $X^n$-measurable random variable) and $Z_n = \sqrt{n}\p{\Phi - \hat \Psi_0}$ we have
\begin{align*}
& d_{BL}\p{\Pi_n \circ Z_n^{-1}, \NN\p{0,\sigma^2}} \leq \sigma^{-1} \sqrt{2n\pi} \Pi_n^{\chi_n} \c{\abs{\hat \Phi - \hat \Psi_0 + \frac{1}{n} \Div_\pi V}} + 4\sigma^{-2}\Pi_n^{\chi_n} \c{\abs{\sigma^2 - \inner{\nabla \Phi | V}}} \\
& + \sigma^{-1} \sqrt{\frac{2\pi}{n}} \frac{\Pi_n \c{\abs{\inner{\nabla \chi_n | V}}}}{\Pi_n\c{\chi_n}} + 2\Pi_n \c{1-\chi_n},
\end{align*}
where $\hat \Phi$ is the random functional $\hat \Phi = \Phi + n^{-1} \inner{V | \nabla \ell_n}$.
\end{lemma}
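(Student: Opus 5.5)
The proof assembles the computation carried out just before the statement into a single chain of inequalities. It has three steps.

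\emph{Step 1: localisation.} Apply the triangle inequality for $d_{BL}$. Then use $d_{BL}\le 2d_{TV}$, the data processing inequality for the push-forward by $Z_n$, and the elementary bound $d_{TV}(\Pi_n^{\chi_n},\Pi_n)\le \Pi_n[1-\chi_n]$. Together these reduce the problem to bounding $d_{BL}(\Pi_n^{\chi_n}\circ Z_n^{-1},\NN(0,\sigma^2))$, at the cost of the term $2\Pi_n[1-\chi_n]$.

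\emph{Step 2: Stein's equation.} For each $h\in\Lip_1$, introduce the Stein solution $f_h$. This rewrites the localised distance as $\sigma^{-2}\sup_h|\Pi_n[\chi_n(\sigma^2f_h'(Z_n)-Z_nf_h(Z_n))]|/\Pi_n[\chi_n]$. To handle the numerator:
\begin{itemize}
\item add and subtract $f_h'(Z_n)\inner{\nabla\Phi|V}$ and $n^{-1/2}f_h(Z_n)\Div_{\pi_n}V$;
\item use $\Div_{\pi_n}V=\Div_\pi V+\inner{V|\nabla\ell_n}$, which follows because $\ln\pi_n=\ln\pi+\ell_n-\text{const}$, to identify $Z_n+n^{-1/2}\Div_{\pi_n}V=\sqrt n(\hat\Phi-\hat\Psi_0+n^{-1}\Div_\pi V)$;
\item apply the product rule $\Div_{\pi_n}(\chi_nf_h(Z_n)V)=\chi_nf_h(Z_n)\Div_{\pi_n}V+\sqrt n\chi_nf_h'(Z_n)\inner{\nabla\Phi|V}+f_h(Z_n)\inner{\nabla\chi_n|V}$.
\end{itemize}
Integrating this identity against $\Pi_n$ gives $\int\Div(\pi_n\chi_nf_h(Z_n)V)\,d\vol_\Theta=0$. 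The result is the displayed identity: the numerator equals
\[
\Pi_n\c{\chi_n\p{f_h'(Z_n)(\sigma^2-\inner{\nabla\Phi|V})-\sqrt n(\hat\Phi-\hat\Psi_0+n^{-1}\Div_\pi V)f_h(Z_n)}-n^{-1/2}f_h(Z_n)\inner{\nabla\chi_n|V}}.
\]

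\emph{Step 3: bounds on $f_h$.} Divide by $\Pi_n[\chi_n]$, which turns the $\chi_n$-weighted terms into $\Pi_n^{\chi_n}$-expectations. Then apply $\norm{f_h}_\infty\le\sigma\sqrt{2\pi}$ and $\norm{f_h'}_\infty\le4$, which hold uniformly over $h\in\Lip_1$ because $\norm{h-\NN(0,\sigma^2)[h]}_\infty\le2$. Multiplying by $\sigma^{-2}$ produces the constants $\sigma^{-1}\sqrt{2\pi n}$, $4\sigma^{-2}$ and $\sigma^{-1}\sqrt{2\pi/n}$, which are exactly those in the statement. In the boundary term, drop $\chi_n$ and keep $\Pi_n[|\inner{\nabla\chi_n|V}|]/\Pi_n[\chi_n]$. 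Taking the supremum over $h$ concludes the proof.

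\emph{Main obstacle.} The delicate point is the rigorous justification of Stokes' theorem. The vector field $\pi_n\chi_nf_h(Z_n)V$ must be $\CC^1$ with compact support on the boundaryless manifold $\Theta$. Compact support comes from $\chi_n$, and $\pi_n$ is assumed differentiable. The issue is that $f_h$ is only $\CC^1$: for Lipschitz $h$, $f_h'$ exists, but $f_h$ need not be $\CC^2$. Since the divergence theorem needs only a $\CC^1$ field, and $f_h\circ Z_n$ is $\CC^1$ because $\Phi$ is $\CC^1$, this suffices. If needed, one can instead approximate $h$ by smooth functions in $\Lip_1$ and pass to the limit. A further technical requirement is that $\Pi_n[\chi_n]>0$ and that every term is integrable. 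Both follow from compact support and continuity, and the bound is trivially true when any right-hand term is infinite.
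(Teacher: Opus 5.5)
Your proposal is correct and follows essentially the same route as the paper. It localises through $d_{BL}\le 2d_{TV}$, data processing and $d_{TV}(\Pi_n^{\chi_n},\Pi_n)\le\Pi_n[1-\chi_n]$, rearranges the Stein identity via $\Div_{\pi_n}V=\Div_\pi V+\inner{V|\nabla\ell_n}$ and the divergence theorem applied to $\pi_n\chi_n f_h(Z_n)V$, and closes with $\norm{f_h}_\infty\le\sigma\sqrt{2\pi}$ and $\norm{f_h'}_\infty\le 4$. One small correction to your technical remarks: $\Pi_n[\chi_n]>0$ does not follow from compact support and continuity; it follows from $\chi_n\not\equiv 0$ together with positivity of $\pi_n$, and it is implicitly needed for $\Pi_n^{\chi_n}$ to be defined at all.
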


As we can see, Lemma \ref{lemma:stein} provides a quantitative upper bound on the bounded Lipschitz distance in terms of the same analytical quantities introduced heuristically above (Condition (\ref{equation:heuristic_condition})). It directly implies the following theorem using the triangle inequality. Its first item gives sufficient conditions for a BvM theorem for the functional $\Phi$ centred at $\hat \Psi_0$. Moreover, the form of the condition $(a)$ suggests a natural correction when it fails. Indeed, even when $\hat \Phi-\hat \Psi_0=o_p(n^{-1/2})$ uniformly on the posterior concentration set, the term $n^{-1}\Div_\pi V$ may prevent $(a)$ from holding. In that case it is natural to consider instead the ``divergence corrected'' functional $\tilde \Phi := \Phi-n^{-1}\Div_\pi V$. Applying the same Stein bound to $\tilde \Phi$ removes $\Div_\pi V$ from $(a)$, which is then somehow ``transferred'' to $(b)$. This leads to the second item of the theorem below. As the applications in Section \ref{section:applications} and the simulations in Section \ref{section:simulation_study} will show, the conditions of the second item of Theorem \ref{theorem:generic_bvm} can hold in regimes where the ones of the first item may fail, so that the divergence term simultaneously identifies a possible obstruction for the BvM to hold for the functional $\Phi$ and suggests a corresponding prior-dependent correction to the functional.

\begin{theorem}\label{theorem:generic_bvm}
If $\sigma>0$, $\Phi : \Theta \to \RR$ is a $\CC^1$ functional, $\hat \Psi_0$ is an estimator, $V$ is a $\CC^1$ vector field, $\pi$ is $\CC^1$ and $\chi_n : \Theta \to \c{0,1}$ is a sequence of compactly supported $\CC^1$-functions such that $A_0 = 1 - \Pi_n\c{\chi_n} = o_p(1)$ and $A_1 := \Pi_n \c{\abs{\inner{\nabla \chi_n | V}}} = o_p\p{\sqrt{n}}$ then:
\begin{enumerate}
	\item if
	\begin{enumerate}
		\item $A_2 := \sup_{\Supp \p{\chi_n}} \abs{\hat \Phi - \hat \Psi_0 + n^{-1} \Div_\pi V} = o_p\p{n^{-1/2}}$ where $\hat \Phi = \Phi + n^{-1}\inner{V | \nabla \ell_n}$, 
		\item $A_3 := \sup_{\Supp \p{\chi_n}} \abs{\sigma^2 - \inner{\nabla \Phi | V}} = o_p(1)$,
	\end{enumerate}
	then with $Z_n = \sqrt{n}\p{\Phi - \hat \Psi_0}$ we have
	\[
	d_{BL}\p{\Pi_n \circ Z_n^{-1}, \NN\p{0,\sigma^2}} \leq 4\p{A_0 + \sigma^{-1} \frac{A_1}{\sqrt{n}\p{1-A_0}} + \sigma^{-1}\sqrt{n}A_2 + \sigma^{-2} A_3} = o_p(1),
	\]
	\item if in addition $\Div_\pi V \in \CC^1$
	\begin{enumerate}
		\item $A_2 := \sup_{\Supp \p{\chi_n}} \abs{\hat \Phi - \hat \Psi_0} = o_p\p{n^{-1/2}}$ where $\hat \Phi = \Phi + n^{-1}\inner{V | \nabla \ell_n}$,
		\item $A_3 := \sup_{\Supp \p{\chi_n}} \abs{\sigma^2 - \inner{\nabla \Phi | V} + n^{-1} \inner{\nabla \Div_\pi V|V}} = o_p(1)$,
	\end{enumerate}
	then with $Z_n = \sqrt{n}\p{\Phi - n^{-1} \Div_\pi V - \hat \Psi_0}$ we have
	\[
	d_{BL}\p{\Pi_n \circ Z_n^{-1}, \NN\p{0,\sigma^2}} \leq 4\p{A_0 + \sigma^{-1} \frac{A_1}{\sqrt{n}\p{1-A_0}} + \sigma^{-1} \sqrt{n}A_2 + \sigma^{-2} A_3} = o_p(1).
	\]
\end{enumerate}
\end{theorem}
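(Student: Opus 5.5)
Both items follow from Lemma \ref{lemma:stein} by bounding each term there by the corresponding $A_i$. The second item is the first one applied to the corrected functional.

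\textbf{First item.} The $\Pi_n^{\chi_n}$-averages in Lemma \ref{lemma:stein} are averages against a probability measure supported in $\Supp(\chi_n)$, so each is at most the supremum of its integrand over $\Supp(\chi_n)$. This bounds the first term by $\sigma^{-1}\sqrt{2\pi}\sqrt{n}A_2$ and the second by $4\sigma^{-2}A_3$. For the third term, $\Pi_n[\chi_n]=1-A_0$, so it equals $\sigma^{-1}\sqrt{2\pi}\,A_1/(\sqrt n(1-A_0))$. The last term is $2A_0$. Since $\sqrt{2\pi}\le 4$, all four contributions sit inside $4(A_0+\sigma^{-1}A_1/(\sqrt n(1-A_0))+\sigma^{-1}\sqrt nA_2+\sigma^{-2}A_3)$.

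The $o_p(1)$ conclusion uses the hypotheses as follows. $A_0=o_p(1)$ gives $1-A_0\to 1$ in probability, so $A_1/(\sqrt n(1-A_0))=o_p(1)$ by $A_1=o_p(\sqrt n)$ and the continuous mapping theorem. Also $\sqrt n A_2=o_p(1)$ and $A_3=o_p(1)$. On the event $\{A_0=1\}$, which has vanishing probability, the bound is read as $+\infty$ and is trivially true since $d_{BL}\le 2$.

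\textbf{Second item.} Apply the first item to $\tilde\Phi:=\Phi-n^{-1}\Div_\pi V$, which is $\CC^1$ because $\Div_\pi V\in\CC^1$ is assumed. The same $V$, $\chi_n$, $\hat\Psi_0$ and $\sigma$ are used, so $A_0$ and $A_1$ are unchanged. Three identities translate the conditions.
\begin{itemize}
\item The random functional built from $\tilde\Phi$ is $\hat{\tilde\Phi}=\tilde\Phi+n^{-1}\inner{V|\nabla\ell_n}=\hat\Phi-n^{-1}\Div_\pi V$.
\item Hence $\hat{\tilde\Phi}-\hat\Psi_0+n^{-1}\Div_\pi V=\hat\Phi-\hat\Psi_0$, so condition (a) for $\tilde\Phi$ is exactly condition 2(a).
\item Since $\nabla\tilde\Phi=\nabla\Phi-n^{-1}\nabla\Div_\pi V$, we get $\sigma^2-\inner{\nabla\tilde\Phi|V}=\sigma^2-\inner{\nabla\Phi|V}+n^{-1}\inner{\nabla\Div_\pi V|V}$, so condition (b) for $\tilde\Phi$ is exactly condition 2(b).
\end{itemize}
The variable $Z_n=\sqrt n(\tilde\Phi-\hat\Psi_0)$ is the one in the statement, so the bound and the $o_p(1)$ conclusion carry over verbatim.

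No step is a real obstacle, since all the analytic work (the Stokes identity and the Stein bounds) is already contained in Lemma \ref{lemma:stein}. The points needing care are bookkeeping. The constant $4$ must dominate $\sqrt{2\pi}$ and $2$. The denominator $\Pi_n[\chi_n]$ must be rewritten as $1-A_0$. The lemma's hypotheses ($\CC^1$ functional, $\CC^1$ field, compactly supported cutoff) must hold for $\tilde\Phi$, which is precisely why the extra assumption $\Div_\pi V\in\CC^1$ is imposed.
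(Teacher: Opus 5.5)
Your proposal is correct and follows essentially the paper's route. The paper bounds the $\Pi_n^{\chi_n}$-averages in Lemma \ref{lemma:stein} by suprema over $\Supp(\chi_n)$, writes $\Pi_n[\chi_n]=1-A_0$ and absorbs the constants into $4$, then gets the second item by applying the same bound to $\Phi-n^{-1}\Div_\pi V$, which is where $\Div_\pi V\in\CC^1$ is needed. Your explicit treatment of the event $\Pi_n[\chi_n]=0$ is a small extra piece of care that the paper leaves implicit.
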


\begin{remark}
The upper bound obtained in Theorem \ref{theorem:generic_bvm} is non-asymptotic and in the examples considered in this paper the convergence in probability to $0$ may be strengthened into an almost sure convergence using Borel-Cantelli's lemma (for every example considered here, for any $K>0$ the bounds on the $d_{BL}$-distances hold non-asymptotically with probability $N^{-K}$ up to a multiplicative constant that depends on $K$).
\end{remark}

The ``divergence correction'' step $\Phi \leftrightarrow \Phi - n^{-1} \Div_\pi V$ can also be motivated by a direct analysis of the posterior bias. Indeed, if a vector field $V$ and a functional $\Phi$ are such that $\sup_\Theta \abs{\hat \Phi - \hat \Psi_0} = o_p\p{n^{-1/2}}$ (as will actually be the case for the linear and quadratic functionals considered in Section \ref{section:applications}, where $\hat \Phi = \hat \Psi_0$ identically over $\Theta$) then
\begin{align*}
\Pi_n \c{\Phi - n^{-1} \Div_\pi V} - \hat \Psi_0 & = \Pi_n \c{ - n^{-1} \inner{V | \nabla \ell_n} - n^{-1} \Div_\pi V} + o_p\p{n^{-1/2}} = -n^{-1} \Pi_n \c{\Div_{\pi_n} V} + o_p\p{n^{-1/2}} \\
& = o_p\p{n^{-1/2}},
\end{align*}
where the identity $\Pi_n \c{\Div_{\pi_n} V} = 0$ can be justified by the divergence theorem using localisation by compactly supported functions combined with an approximation argument (as done for instance in the proof of Lemma \ref{lemma:laplace_transform}). Thus considering the divergence corrected functional $\Phi - n^{-1} \Div_\pi V$ instead of $\Phi$ allows in that case to remove every prior induced asymptotic bias at the relevant scale $n^{-1/2}$. In the application to the quadratic functional in Section \ref{section:white_noise} we discuss how such a correction step offers an alternative to the approach suggested in \cite{castilloBernsteinVonMisesTheorem2015} that is valid in a wider range of regimes by considering non-constant and prior-dependent correction terms.

For proving BvM theorems without functional correction, i.e. when $\Phi = \Psi$, the choice $V = \nabla \Psi$ appears to suffice for the examples considered in this paper. In that particular case we obtain the following formulation:

\begin{corollary}\label{corollary:bvm_without_bias}
If $\sigma>0$, $\Phi : \Theta \to \RR$ is a $\CC^1$ functional, $\pi$ is $\CC^1$, $\hat \Psi_0$ is an estimator and $\chi_n : \Theta \to \c{0,1}$ is a sequence of compactly supported $\CC^1$-functions such that $A_0 = 1 - \Pi_n\c{\chi_n} = o_p(1)$ and $A_1 := \Pi_n \c{\abs{\inner{\nabla \chi_n | \nabla \Psi}}} = o_p\p{\sqrt{n}}$ and
\begin{enumerate}
	\item $A_2 := \sup_{\Supp \p{\chi_n}} \abs{\hat \Psi - \hat \Psi_0 + n^{-1} \Delta_\pi \Psi} = o_p\p{n^{-1/2}}$ where $\hat \Psi = \Psi + n^{-1} \inner{\nabla \Psi | \nabla \ell_n}$, 
	\item $A_3 := \sup_{\Supp \p{\chi_n}} \abs{\sigma^2 - \abs{\nabla \Psi}^2} = o_p(1)$,
\end{enumerate}
then with $Z_n = \sqrt{n}\p{\Psi - \hat \Psi_0}$ we have
\[
d_{BL}\p{\Pi_n \circ Z_n^{-1}, \NN\p{0,\sigma^2}} \leq 4\p{A_0 + \sigma^{-1} \frac{A_1}{\sqrt{n}\p{1-A_0}} + \sigma^{-1}\sqrt{n}A_2 + \sigma^{-2} A_3} = o_p(1).
\]
\end{corollary}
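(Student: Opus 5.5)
The corollary is the first item of Theorem \ref{theorem:generic_bvm} specialised to $\Phi = \Psi$ and $V = \nabla \Psi$. The plan is to verify that each hypothesis and each quantity $A_0,\dots,A_3$ of that theorem reduces to the one stated here, and then to quote its bound.

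First we check regularity. The theorem requires $V$ to be a $\CC^1$ vector field. With $V = \nabla\Psi$ this means $\Psi \in \CC^2$, since in coordinates $V^\mu = g^{\mu\nu}\partial_\nu \Psi$. This is the one point needing care. The statement only says $\CC^1$, but $\Delta_\pi \Psi$ appearing in condition 1 is only meaningful when $\nabla\Psi$ is differentiable. We therefore read the hypothesis as $\Psi\in\CC^2$ (implicit in the condition), or else note that Lemma \ref{lemma:stein} only uses $V$ through the Stokes identity $\int \Div(\pi_n\chi_n f_h(Z_n)V)=0$, which holds as soon as $\Div_\pi V$ exists.

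Next we translate the quantities.
\begin{itemize}
\item $\inner{\nabla\Phi|V} = \inner{\nabla\Psi|\nabla\Psi} = \abs{\nabla\Psi}^2$, so the theorem's $A_3$ is the stated $A_3$.
\item $\Div_\pi V = \Div_\pi\nabla\Psi = \Delta_\pi\Psi$ by the definition of the $\pi$-Laplacian.
\item $\hat\Phi = \Psi + n^{-1}\inner{\nabla\Psi|\nabla\ell_n} = \hat\Psi$.
\end{itemize}
The last two items show that the theorem's $A_2$ coincides with the stated $A_2$. Also $\inner{\nabla\chi_n|V} = \inner{\nabla\chi_n|\nabla\Psi}$, so $A_1$ matches, and $A_0$ and $Z_n=\sqrt n(\Psi-\hat\Psi_0)$ are unchanged.

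Finally, invoke item 1 of Theorem \ref{theorem:generic_bvm}, which gives the displayed inequality. The bound is $o_p(1)$ for the following reasons. $A_0=o_p(1)$ gives $(1-A_0)^{-1}=O_p(1)$. Then $A_1/\sqrt n = o_p(1)$, $\sqrt n A_2 = o_p(1)$ and $A_3 = o_p(1)$ by hypothesis. If one wants this self-contained, the constant $4$ comes from Lemma \ref{lemma:stein}: its constants $\sqrt{2\pi}\le 4$ and $2\le 4$ are dominated by $4$, and $\Pi_n^{\chi_n}[\abs{\cdot}]\le\sup_{\Supp\chi_n}\abs{\cdot}$. No step is a real obstacle; the only subtlety is the differentiability bookkeeping above.
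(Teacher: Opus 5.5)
Your proposal is correct and follows the paper's route exactly: the corollary is item 1 of Theorem \ref{theorem:generic_bvm} with $\Phi=\Psi$ and $V=\nabla\Psi$, using $\Div_\pi\nabla\Psi=\Delta_\pi\Psi$, $\inner{\nabla\Psi|\nabla\Psi}=\abs{\nabla\Psi}^2$ and $\hat\Phi=\hat\Psi$. Your regularity remark is apt, and reading the hypothesis as $\Psi\in\CC^2$ is the clean fix; your fallback claim that the Stokes identity holds as soon as $\Div_\pi V$ exists would need more justification than you give.
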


The next section presents applications of Theorem \ref{theorem:generic_bvm} and Corollary \ref{corollary:bvm_without_bias} to specific models and functionals.

\section{Applications}\label{section:applications}

\subsection{White noise model with independent priors}\label{section:white_noise}

As a first application of Theorem \ref{theorem:generic_bvm} for which the geometry is trivial we consider the case of an observation in the Gaussian sequence model

\begin{equation}\label{equation:gaussian_sequence_model}
Y^\mu = \theta_0^\mu + N^{-1/2} \varepsilon^\mu, \quad \varepsilon^\mu \iid \NN\p{0,1}, \quad \mu \geq 1,
\end{equation}
for some $N \geq 1$ and an unknown $\theta_0 \in \ell^2 = \ell^2\p{\mathbb N^*}$. Let $p \geq 1$ and
\[
\pi_p : \eta = \p{\eta^\mu}_{\mu \geq 1} \in \ell^2 \mapsto \pi_p\c{\eta} = \p{\eta^1,\ldots,\eta^p} \in \Theta := \RR^p
\]
be the orthogonal projection on the first $p$ coordinates. For any $\theta \in \Theta$ we then define
\[
P_\theta := \bigotimes_{\mu \geq 1} \NN\p{\pi_p^* \c{\theta}^\mu,N^{-1}},
\]
where $\pi_p^*\c{\theta} = \p{\theta^1,\ldots,\theta^p,0,\ldots} \in \ell^2$, i.e. the law of the observation
\[
Y^\mu = \begin{cases}
	\theta^\mu + N^{-1/2} \varepsilon^\mu, \quad 1 \leq \mu \leq p, \\
	N^{-1/2} \varepsilon^\mu, \quad \mu > p.
\end{cases}
\]
Notice that we only observe $n = 1$ observations even if $N$ has a natural sample size interpretation, and that Model (\ref{equation:gaussian_sequence_model}) is equivalent to the continuous white noise model on $\c{0,1}$ by a choice of orthonormal basis of $L^2\c{0,1}$, see e.g. Chapter $1$ in \cite{gineMathematicalFoundationsInfiniteDimensional2015}. Moreover following Chapter $6$ in \cite{gineMathematicalFoundationsInfiniteDimensional2015} the distributions $\p{P_\theta}_{\theta \in \Theta}$ are all absolutely continuous with respect to $P_0$ and satisfy, for any $\theta \in \Theta$,
\[
p_\theta\p{Y} = \frac{dP_\theta}{dP_0}\p{Y} = \exp\p{N\inner{\pi_p\c{Y}|\theta}_2  - \frac{N}{2} \norm{\theta}_2^2}, \quad P_0-\text{a.s.}
\]
From this we immediately deduce that $g_{\mu \nu} = N\delta_{\mu \nu}$ and $\nabla^\mu \ell = Y^\mu - \theta^\mu$. In particular the coordinates $\theta^\mu$ are affine for $\nabla$, in the sense that $\Gamma_{~\mu\nu}^\rho \p{\theta} = 0$. Throughout this section we assume that the unknown signal $\theta_0$ belongs to the sequence Sobolev space $H^s$ defined as
\[
H^s := \b{\theta \in \ell^2 : \sum_{\mu \geq 1} \mu^{2s} \theta^{\mu2} < +\infty}, \quad s \geq 0.
\]
We consider two types of priors over $\theta$ :

\begin{prior}{\label{prior:independent_series_white_noise}}
The prior $\Pi$ is of the form $\Pi \c{d\theta} = \prod_{\mu = 1}^p \sigma_\mu^{-1} \exp\p{-H\p{\theta^\mu/\sigma_\mu}} d\theta^\mu$ for some $\sigma \in \p{0,\infty}^p, \quad 1 \ll p \ll N/\ln N$ and differentiable function $H : \RR \to \RR$ with $\norm{H'}_\infty <+\infty$.
\end{prior}

\begin{prior}\label{prior:gaussian}
The prior $\Pi$ is of the form $\Pi = \NN\p{0,\Diag\p{\sigma^2}}$ for some $\sigma \in \p{0,\infty}^p$.
\end{prior}

\begin{remark}
Using the special independence structure of the Gaussian sequence model we could extend the analysis to independent priors of the form \ref{prior:independent_series_white_noise} but with non Lipschitz potentials $H$ (e.g. the ones considered in \cite{castilloNonparametricBernsteinMises2013}) but we abstain from such refinements in the present paper.
\end{remark}

\paragraph{Squared norm functional.} We consider the squared norm functional $\Psi(\theta_0) = \norm{\theta_0}_2^2$. When $\theta_0 \in H^s$ for some $s > \frac{1}{4}$ and $N^{\frac{1}{4s}} \ll p \ll N$ the estimator $\hat \Psi_0 = \sum_{\mu = 1}^p \c{Y^{\mu 2} - \frac{1}{N}}$ satisfies
\[
\sqrt{N}\p{\hat \Psi_0 - \norm{\theta_0}_2^2} \xrightarrow[N \to \infty]{(d)} \NN\p{0,4\norm{\theta_0}_2^2}
\]
and the asymptotic variance $\sigma^2 = 4\norm{\theta_0}_2^2$ is optimal in a semiparametric sense, see e.g. \cite{caiOptimalAdaptiveEstimation2006,castilloBernsteinVonMisesTheorem2015}. We now present an application of the second item of Theorem \ref{theorem:generic_bvm} to this setting, compare the results to those of \cite{castilloBernsteinVonMisesTheorem2015} and also discuss what Corollary \ref{corollary:bvm_without_bias} (BvM without functional correction) would imply instead.

\begin{corollary}\label{corollary:squared_norm_functional_white_noise}
Under Model (\ref{equation:gaussian_sequence_model}) and if $p \gg N^{\frac{1}{4s}}$:
\begin{enumerate}
	\item if $\Pi$ is a prior of type \ref{prior:independent_series_white_noise} such that $\norm{H''}_\infty < +\infty$ and $\sigma_- = \min_{1 \leq \mu \leq p} \sigma_\mu \gg N^{-1/2}$ then
	\[
	d_{BL}\p{\Pi_1 \circ Z_N^{-1}, \NN\p{0,\sigma^2}} = O_p\p{p^{-s} + \sqrt{\frac{p \ln N}{N}} + \frac{1}{N\sigma_-^2} + \frac{\sqrt{p}}{N\sigma_-}} = o_p(1)
	\]
	\[
	\text{where }Z_N := \sqrt{N}\p{\sum_{\mu = 1}^p \b{\theta^{\mu 2} + N^{-1} \sigma_\mu^{-1} H'\p{\theta^\mu/\sigma_\mu}\p{Y^\mu + \theta^\mu}} - \frac{2p}{N} - \hat \Psi_0},
	\]
	and in particular if in addition $s > 1/4$ and $N^{\frac{1}{4s}} \ll p \ll N/\ln N$ then the BvM phenomenon with centring $\hat \Psi_0$ holds for the functional
	\[
	\theta \mapsto \sum_{\mu = 1}^p \b{\theta^{\mu 2} + N^{-1} \sigma_\mu^{-1} H'\p{\theta^\mu/\sigma_\mu}\p{Y^\mu + \theta^\mu}} - \frac{2p}{N}.
	\]
	\item If $\Pi$ is a prior of type \ref{prior:gaussian} with $\sigma_\mu \geq c\p{N^{-1/2} \vee \abs{\theta_0^\mu}}$ for a fixed $c>0$ and $p \ll N/\ln N$ then
	\[
	d_{BL}\p{\Pi_1 \circ Z_N^{-1}, \NN\p{0,\sigma^2}} = O_p\p{p^{-s} + \sqrt{\frac{p \ln N}{N}}} = o_p(1),
	\]
	where
	\[
	Z_N := \sqrt{N}\p{\sum_{\mu = 1}^p \b{\theta^{\mu 2} + N^{-1} \sigma_\mu^{-2} \theta^\mu\p{Y^\mu + \theta^\mu}} - \frac{2p}{N} - \hat \Psi_0},
	\]
	and in particular if in addition $s>1/4$ and $N^{\frac{1}{4s}} \ll p \ll N/\ln N$ then the BvM phenomenon with centring $\hat \Psi_0$ holds for the functional
	\[
	\theta \mapsto \sum_{\mu = 1}^p \b{\theta^{\mu 2} + N^{-1} \sigma_\mu^{-2} \theta^\mu\p{Y^\mu + \theta^\mu}} - \frac{2p}{N}.
	\]
\end{enumerate}
\end{corollary}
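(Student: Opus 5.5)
The plan is to apply the second item of Theorem \ref{theorem:generic_bvm} with $n=1$, after rescaling by $\sqrt N$. We take the functional $\sqrt N\,\Phi_0$ and the centring $\sqrt N\,\hat\Psi_0$, and choose $V$ so that the correction $\tilde\Phi=\Phi_0-\Div_\pi V$ is exactly the functional in the statement. Since $g_{\mu\nu}=N\delta_{\mu\nu}$ and $\nabla^\mu\ell=Y^\mu-\theta^\mu$, we have $\inner{V|\nabla\ell}=N\sum_\mu V^\mu(Y^\mu-\theta^\mu)$. We take
\[
V^\mu=\frac{Y^\mu+\theta^\mu}{N},\qquad \Phi_0=\sum_{\mu=1}^p\theta^{\mu2}-\frac pN .
\]
Then $\hat\Phi=\Phi_0+\inner{V|\nabla\ell}=\sum_\mu Y^{\mu2}-p/N=\hat\Psi_0$ identically on $\Theta$, so condition (a) holds with $A_2=0$.

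Because the coordinates are affine, $\Div_\pi V=\partial_\mu V^\mu+(\partial_\mu\ln\Pi)V^\mu$. For Prior \ref{prior:independent_series_white_noise} this gives
\[
\Div_\pi V=\frac pN-\frac1N\sum_\mu\sigma_\mu^{-1}H'(\theta^\mu/\sigma_\mu)(Y^\mu+\theta^\mu).
\]
Hence $\Phi_0-\Div_\pi V$ is exactly the corrected functional appearing in $Z_N$. For Prior \ref{prior:gaussian} the same computation holds with $\sigma_\mu^{-1}H'(\theta^\mu/\sigma_\mu)$ replaced by $\theta^\mu/\sigma_\mu^2$.

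For (b), after multiplying by $N$ (the $\sqrt N$ rescaling squared), the quantity to control is
\[
A_3=\sup_{\Supp\chi}\Bigl|4\norm{\theta_0}_2^2-2\sum_\mu\theta^\mu(Y^\mu+\theta^\mu)+\frac1N\sum_\mu\Bigl[\sigma_\mu^{-2}H''(\theta^\mu/\sigma_\mu)(Y^\mu+\theta^\mu)^2+\sigma_\mu^{-1}H'(\theta^\mu/\sigma_\mu)(Y^\mu+\theta^\mu)\Bigr]\Bigr|.
\]
The bracket comes from $\inner{\nabla\Div_\pi V|V}=N\sum_\mu\partial_\mu(\Div_\pi V)\,V^\mu$. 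We take $\chi$ to be a smooth cutoff equal to $1$ on the ball $\norm{\theta-\pi_p Y}_2\le r_N$ and vanishing outside the ball of radius $2r_N$, with $r_N=C\sqrt{p\ln N/N}$. On its support, $\norm{\pi_pY-\pi_p\theta_0}_2\lesssim\sqrt{p\ln N/N}$ with high probability by Gaussian chi-square concentration. Hence $2\sum_\mu\theta^\mu(Y^\mu+\theta^\mu)=4\norm{\pi_p\theta_0}_2^2+O(\sqrt{p\ln N/N})$, and the truncation error $\sum_{\mu>p}\theta_0^{\mu2}\le p^{-2s}\norm{\theta_0}_{H^s}^2$ is at most of order $p^{-s}$. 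The correction bracket is bounded by
\[
\frac{\norm{H''}_\infty}{N\sigma_-^2}\norm{Y+\theta}_2^2+\frac{\norm{H'}_\infty\sqrt p}{N\sigma_-}\norm{Y+\theta}_2,
\]
using Cauchy--Schwarz for the second term, with $\norm{Y+\theta}_2=O_p(1)$. This yields the rate in item 1.

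In the Gaussian case $H''\equiv 1$ and the bracket equals $N^{-1}\sum_\mu\bigl[\sigma_\mu^{-2}(Y^\mu+\theta^\mu)^2+\sigma_\mu^{-2}\theta^\mu(Y^\mu+\theta^\mu)\bigr]$. We bound it coordinatewise: $(Y^\mu+\theta^\mu)^2\lesssim\theta_0^{\mu2}+\ln N/N$ on the support, so the assumption $\sigma_\mu\ge c(N^{-1/2}\vee\abs{\theta_0^\mu})$ makes each summand $O(\ln N)$. The total is then $O(p\ln N/N)$, which is dominated by $\sqrt{p\ln N/N}$.

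The remaining and main obstacle is the localisation terms. We need $A_0=\Pi_1[1-\chi]$ to be small with high probability, i.e.\ posterior contraction at rate $\sqrt{p\ln N/N}$ around $\pi_pY$. We also need $A_1=\Pi_1[\abs{\inner{\nabla\chi|V}}]\le N\norm{\nabla_\theta\chi}\,\norm{V}\,\Pi_1[\text{annulus}]\lesssim r_N^{-1}\,\Pi_1[\norm{\theta-\pi_pY}_2>r_N]$ to be $o(1)$ after the $\sqrt N$ scaling. Both reduce to an exponential posterior tail bound for $\norm{\theta-\pi_pY}_2$.

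For the prior of type \ref{prior:independent_series_white_noise}, I would obtain this tail bound from Lemma \ref{lemma:laplace_transform}, applied to the linear functionals $\theta^\mu$ with $V=\nabla\theta^\mu$. Here $\Div_\pi V=-\sigma_\mu^{-1}H'/N$ is uniformly bounded because $\norm{H'}_\infty<\infty$, which gives sub-Gaussian posterior coordinates with variance proxy of order $1/N$ (and $\sigma_-\gg N^{-1/2}$ keeps the prior drift negligible). A union bound over $\mu\le p$ and a sub-Gaussian norm bound then give contraction at rate $\sqrt{p\ln N/N}$. For the Gaussian prior the posterior is explicit: it is Gaussian with variances at most $1/N$ and means within $O(\sigma_\mu^{-2}N^{-1}\abs{Y^\mu})$ of $Y^\mu$, so the tail bound is direct.

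Plugging $A_0,\dots,A_3$ into the bound of Theorem \ref{theorem:generic_bvm} gives the stated rates. Under $s>1/4$ and $N^{1/(4s)}\ll p\ll N/\ln N$ the bound is $o_p(1)$, which yields the BvM conclusions.
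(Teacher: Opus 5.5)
Your proposal is correct and is essentially the paper's proof. Like the paper, you apply the second item of Theorem~\ref{theorem:generic_bvm} with $n=1$ after the $\sqrt N$ rescaling, use the same vector field (covariant components proportional to $Y^\mu+\theta^\mu$) so that $\hat\Phi=\hat\Psi_0$ identically, bound $\langle \nabla \Div_\pi V | V\rangle$ the same way, and get posterior tails from Lemma~\ref{lemma:laplace_transform} as in Lemma~\ref{lemma:contraction_rates_white_noise}. The only difference is that the paper uses coordinatewise product cutoffs (boxes of side $\asymp\sqrt{\ln N/N}$) instead of your $\ell^2$ ball.

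There is one small slip. On your $\ell^2$-ball support the coordinatewise bound $(Y^\mu+\theta^\mu)^2\lesssim\theta_0^{\mu2}+\ln N/N$ is not available. The Gaussian-prior term is nevertheless still $O_p(p\ln N/N)$: writing $d=\theta-\pi_p[\theta_0]$, one has $\sum_\mu\sigma_\mu^{-2}d_\mu^2\le c^{-2}N\Vert d\Vert_2^2=O_p(p\ln N)$. Alternatively, use the paper's box cutoff.
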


In the case of Gaussian priors it is instructive to compare the results of Corollary \ref{corollary:squared_norm_functional_white_noise} to those of Theorem $(3.1)$ in \cite{castilloBernsteinVonMisesTheorem2015}: there it is stated that in the case of Gaussian priors \ref{prior:gaussian} a BvM phenomenon holds for the functional $\theta \mapsto \norm{\theta}_2^2 - 2p/N$ and with centring $\hat \Psi_0$ whenever $\sigma_\mu \geq c\p{N^{-1/2} \vee \abs{\theta_0^\mu}}, \quad p = \floor{N/\ln N}$ and
\begin{equation}\label{equation:incorrect_condition_CR15}
\sum_{\mu = 1}^p \sigma_\mu^{-2} \ll N^{3/2}.
\end{equation}
However we prove at the end of Section \ref{section:proofs_white_noise} that this is not exactly correct as stated, probably due to a minor bookkeeping issue in the calculations: under the only assumptions $s>1/4, \quad \sigma_\mu \geq c\p{N^{-1/2} \vee \abs{\theta_0^\mu}}$ and $p = \floor{N/\ln N}$ we have
\[
d_{BL}\p{\Pi_1 \circ Z_1^{-1}, \NN\p{0,4\norm{\theta_0}_2^2}} = o_p(1) \text{ where } Z_1 := \sqrt{N}\p{\norm{\theta}_2^2 - \frac{2p}{N} + \frac{1}{2\sqrt{N}} \sum_{\mu = 1}^p \b{A_\mu + \theta_0^{\mu 2} B_\mu} - \hat \Psi_0},
\]
and
\[
A_\mu := \frac{2\p{3N\sigma_\mu^2 + 2}}{\sqrt{N}\p{N\sigma_\mu^2 + 1}^2} \asymp \frac{\sigma_\mu^{-2}}{N^{3/2}}, \quad B_\mu := \frac{2}{\sqrt{N}} \frac{2\sigma_\mu^2 + N^{-1}}{\p{\sigma_\mu^2 + N^{-1}}^2} \asymp \frac{\sigma_\mu^{-2}}{\sqrt{N}}.
\]
As a result, under the assumption $\sigma_\mu \geq c\p{N^{-1/2} \vee \abs{\theta_0^\mu}}, \quad p = \floor{N/\ln N}$ and the fact that both bias terms are positive, the BvM phenomenon with centring $\hat \Psi_0$ holds for the functional $\theta \mapsto \norm{\theta}_2^2 - 2p/N$ if and only if
\begin{equation}\label{equation:correct_condition_CR15}
\sum_{\mu = 1}^p \sigma_\mu^{-2} \ll N^{3/2} \text{ and } \sum_{\mu = 1}^p \sigma_\mu^{-2} \theta_0^{\mu 2} \ll N^{1/2}.
\end{equation}
With the same proof one can also formulate a variant of this result: under the assumptions
\[
\sigma_\mu \geq c\p{N^{-1/2} \vee \abs{\theta_0^\mu}}, \quad p = \floor{N/\ln N} \text{ and } \sum_{\mu = 1}^p \sigma_\mu^{-2} \theta_0^{\mu 2} \ll N^{1/2}
\]
then a BvM phenomenon with centring $\hat \Psi_0$ holds for the functional
\[
\theta \mapsto \norm{\theta}_2^2 - 2p/N + \p{2\sqrt{N}}^{-1} \sum_{\mu = 1}^p A_\mu
\]
(which is tractable since it does not depend on the truth $\theta_0$) without assuming that $\sum_{\mu = 1}^p \sigma_\mu^{-2} \ll N^{3/2}$. This is the corrected functional that we compare against in Section \ref{section:simulation_study}. Condition (\ref{equation:correct_condition_CR15}) is interestingly consistent with the predictions of Corollary \ref{corollary:squared_norm_functional_white_noise}: indeed one can prove that
\[
\sup_{\theta \in A_N} \abs{\sum_{\mu = 1}^p N^{-1} \sigma_\mu^{-2} \theta^\mu\p{Y^\mu + \theta^\mu}} = O_p\p{\sum_{\mu = 1}^p \frac{\sigma_\mu^{-2}}{N^2} \ln N + \frac{\sigma_\mu^{-2} \theta_0^{\mu 2}}{N}}
\]
where $A_N$ is the set defined in Lemma \ref{lemma:contraction_rates_white_noise} and satisfies $\Pi_1 \c{A_N^c} = o_p(1)$ (actually it is $O_p\p{N^{-K}}$ for any $K$ up to the choice of constant $M>0$ in the definition on $A_N$). Moreover the $\ln N$ term could be avoided by replacing the rectangular contraction set with an ellipsoidal one and using more specific Gaussian/$\chi^2$ concentration inequalities. As a result, using the conclusions of Corollary \ref{corollary:squared_norm_functional_white_noise} under Conditions (\ref{equation:correct_condition_CR15}) the BvM phenomenon holds for the functional $\norm{\theta}_2^2 - 2p/N$ with centring $\hat \Psi_0$ (since in that case the non-constant part of the correction becomes negligible at the $N^{-1/2}$ scale). But importantly considering the full ``divergence corrected'' functional
\[
\theta \mapsto \sum_{\mu = 1}^p \b{\theta^{\mu 2} + N^{-1} \sigma_\mu^{-2} \theta^\mu\p{Y^\mu + \theta^\mu}} - \frac{2p}{N}
\]
does not require any assumption other than $\sigma_\mu \geq c\p{N^{-1/2} \vee \abs{\theta_0^\mu}}$, as a result it allows us to perform Bayesian uncertainty quantification on the value of $\Psi\p{\theta_0} = \norm{\theta_0}_2^2$ for a wider range of priors. We confirm empirically these theoretical findings in Section \ref{section:simulation_study}.

Furthermore, in the case $s>1/2$ one could apply Corollary \ref{corollary:bvm_without_bias} instead of the second item of Theorem \ref{theorem:generic_bvm}, which would allow us to prove a BvM phenomenon for the uncorrected functional $\theta \mapsto \norm{\theta}_2^2$ with centring $\hat \Psi_0$ for instance under the conditions $p = \floor{\sqrt{N}/\ln N}, \quad \sigma_\mu \geq c\p{N^{-1/2} \vee \abs{\theta_0^\mu}}$, as already mentioned in \cite{castilloBernsteinVonMisesTheorem2015}. Similarly for priors of type \ref{prior:independent_series_white_noise} one could prove using Corollary \ref{corollary:bvm_without_bias} that a BvM phenomenon for the uncorrected functional $\theta \mapsto \norm{\theta}_2^2$ with centring $\hat \Psi_0$ holds under the assumptions $s>1/2, \quad p = \floor{\sqrt{N}/\ln N}, \quad \sigma_- \gg \p{N \ln N}^{-1/2}$ and $\sum_{\mu = 1}^p \sigma_\mu^{-1} \abs{\theta_0^\mu} \ll N^{1/2}$ which are stronger than the ones of Corollary \ref{corollary:squared_norm_functional_white_noise}.

\begin{proof}{Corollary \ref{corollary:squared_norm_functional_white_noise}}\\
	We apply the second item of Theorem \ref{theorem:generic_bvm} to the functional $\tilde \Phi = \sqrt{N} \Phi - \frac{p}{\sqrt{N}}, \quad \Phi = \norm{\theta}_2^2$, the estimator $\hat{\tilde \Psi}_0 = \sqrt{N} \hat \Psi_0$ and the vector field
	\[
	V = \nabla \tilde \Phi + \frac{1}{2} \nabla^2 \tilde \Phi.\nabla \ell_1 = \sqrt{N}\p{\nabla \Phi + \frac{1}{2} \nabla^2 \Phi.\nabla \ell_1}
	\]
	which using Proposition \ref{proposition:quadratic_functional_white_noise} can be expressed as $V_\mu = \sqrt{N}\p{Y^\mu + \theta^\mu}$. With this choice of $V$ the random functional $\hat {\tilde \Phi}$ then satisfies
	\begin{align*}
		\hat{\tilde \Phi} & = \tilde \Phi + \frac{1}{1} \inner{V | \nabla \ell_1} = \tilde \Phi + \p{V_\mu}\p{\nabla^\mu \ell_1} = \sqrt{N} \Phi - \frac{p}{\sqrt{N}} + \sum_{\mu = 1}^p \sqrt{N} \p{Y^\mu + \theta^\mu} \p{Y^\mu - \theta^\mu} \\
		& = - \frac{p}{\sqrt{N}} + \sqrt{N}\sum_{\mu = 1}^p \b{\theta^{\mu 2} + Y^{\mu 2} - \theta^{\mu 2}} = \hat{\tilde \Psi}_0 .
	\end{align*}
	To define the cutoff function $\chi_1$ we consider
	\[
	\chi_1\p{\theta} = \begin{cases}
		\prod_{\mu = 1}^p \chi\p{\frac{\theta^\mu - Y^\mu}{\delta_N}} \text{ for a prior of type \ref{prior:independent_series_white_noise}}, \\
		\prod_{\mu = 1}^p \chi\p{\frac{\theta^\mu - \rho_\mu^2 Y^\mu}{\rho_\mu \delta_N}} \text{ for a prior of type \ref{prior:gaussian}},
	\end{cases}
	\]
	where $\delta_N = M \sqrt{\frac{\ln N}{N}}, \quad M>0, \quad \rho_\mu^2 = \frac{N\sigma_\mu^2}{N \sigma_\mu^2 + 1}$ and $\chi : \RR \to \c{0,1}$ is any $\CC^1$ function such that $\Supp\p{\chi} \subset \p{-2,2}$ and $\chi \equiv 1$ on $\c{-1,1}$. Lemma \ref{lemma:contraction_rates_white_noise} provides the existence of $M>0$ such that $\Pi_1 \c{\chi_1} = 1 + o_p(1)$. In particular with $R = \Diag\p{\rho}$ and
	\[
	A_N = \begin{cases}
		\b{\theta : \norm{\theta - \pi_p\c{Y}}_\infty \leq M \sqrt{\frac{\ln N}{N}}} \text{ for a prior of type \ref{prior:independent_series_white_noise}} \\
		\b{\theta : \norm{R^{-1}\c{\theta - R^2 \pi_p\c{Y}}}_\infty \leq M \sqrt{\frac{\ln N}{N}}} \text{ for a Gaussian prior \ref{prior:gaussian}},
	\end{cases}
	\]
	\[
	A_N' = \begin{cases}
		\b{\theta : \norm{\theta - \pi_p\c{Y}}_\infty \leq 2M \sqrt{\frac{\ln N}{N}}} \text{ for a prior of type \ref{prior:independent_series_white_noise}} \\
		\b{\theta : \norm{R^{-1} \c{\theta - R^2 \pi_p\c{Y}}}_\infty \leq 2M \sqrt{\frac{\ln N}{N}}} \text{ for a Gaussian prior \ref{prior:gaussian}},
	\end{cases}
	\]
	we have $A_N \subset \Supp\p{\chi_1} \subset A_N', \quad \chi_1 = 1$ on $A_N$, $\Supp\p{\chi_1} \subset A_N'$ in each case. Moreover for a prior of type \ref{prior:independent_series_white_noise} we have
	\begin{align*}
		\abs{\nabla \chi_1}^2 & = \abs{g^{\mu \nu} \p{\partial_\mu \chi_1} \p{\partial_\nu \chi_1}} = N^{-1} \sum_{\mu = 1}^p \b{\p{\prod_{\nu \neq \mu} \chi\p{\frac{\theta^\nu - Y^\nu}{\delta_N}}} \delta_N^{-1} \chi'\p{\frac{\theta^\mu - Y^\mu}{\delta_N}}}^2 \\
		& \leq N^{-1} \norm{\chi'}_\infty^2 \delta_N^{-2} p \lesssim p/\ln N,
	\end{align*}
	and a similar argument for a prior of type \ref{prior:gaussian} yields
	\[
	\abs{\nabla \chi_1}^2 \leq N^{-1} \norm{\chi'}_\infty^2 \delta_N^{-2} \sum_{\mu = 1}^p 1 + \frac{1}{N\sigma_\mu^2} \leq \p{1+c^{-2}}pN^{-1} \norm{\chi'}_\infty^2 \delta_N^{-2}
	\]
	(because $\sigma_\mu \geq cN^{-1/2}$) so that in any case (and because $p \leq N$) we can find $a > 0$ such that $\sup_\Theta \abs{\nabla \chi_1} \leq N^a$. Because we have shown above that $\hat{\tilde \Phi} = \hat{\tilde \Psi}_0$ identically over $\Theta$, to conclude by applying Theorem \ref{theorem:generic_bvm} we need to appropriately bound
	\[
	\Pi_1 \c{\abs{\inner{\nabla \chi_1 | V}}} \text{ and } \sup_{\Supp\p{\chi_1}} \abs{\sigma^2 - \inner{V|\nabla \tilde \Phi} + \inner{\nabla \Div_\pi V | V}}
	\]
	for both priors. The norm of $V$ is given by $\abs{V}^2 = g^{\mu \nu} V_\mu V_\nu = N^{-1} \sum_{\mu = 1}^p V_\mu^2 = \norm{\pi_p\c{Y} + \theta}_2^2$, so that
	\[
	\ind{E_N} \sup_{\theta \in \Supp\p{\chi_1}} \abs{V} \leq \ind{E_N} \sup_{\theta \in A_N'} \norm{\pi_p\c{Y} + \theta}_2 \leq \ind{E_N} \sup_{\theta \in A_N'} \norm{\pi_p\c{Y} + \theta}_2 \leq b
	\]
	for some fixed $b>0$ in each case, using the conclusion $A_N' \subset \b{\theta : \norm{\pi_p^*\c{\theta} - \theta_0}_2 \leq c\sqrt{\frac{p \ln N}{N}}}$ of Lemma \ref{lemma:contraction_rates_white_noise} in both cases and the definition of the event $E_N$, for some fixed $c>0$. Moreover because $\chi_1 \equiv 1$ on $A_N$ we also have $\nabla \chi_1 = 0$ on $A_N$, which using Cauchy-Schwarz inequality implies that
	\[
	\ind{E_N} \Pi_1 \c{\abs{\inner{\nabla \chi_1 | V}}} \leq \ind{E_N} \Pi_1 \c{\ind{A_N^c} \sup_{\Supp\p{\chi_1}} \abs{\nabla \chi_1} \abs{V}} \leq \ind{E_N} \Pi_1 \c{\ind{A_N^c} N^a b} \leq bN^{a-K}.
	\]
	Thus if $K$ is chosen strictly larger than say $2a$ (which is always possible provided that $M$ is initially chosen large enough itself), we obtain $\ind{E_N} \Pi_1 \c{\abs{\inner{\nabla \chi_1 | V}}} \leq bN^{-K/2} \to 0$, which implies that $\Pi_1 \c{\abs{\inner{\nabla \chi_1 | V}}} = O_p\p{N^{-K/2}}$ since $\Pro_0 \c{E_N} \to 1$. Furthermore because
	\[
	\inner{\nabla \tilde \Phi | V} = g^{\mu \nu} \p{\partial \tilde \Phi} V_\nu = N^{-1} \sum_{\mu = 1}^p 2N^{1/2} \theta_\mu \times N^{1/2}\p{Y^\mu + \theta^\mu} = 2\inner{\theta | \pi_p\c{Y} + \theta}_2
	\]
	and $\sigma^2 = 4\norm{\theta_0}_2^2$, using $\Supp\p{\chi_1} \subset A_N' \subset \b{\theta : \norm{\theta - \pi_p\c{\theta_0}}_2 \leq C \varepsilon_N}$ as shown in Lemma \ref{lemma:contraction_rates_white_noise} we obtain
	\[
	\ind{E_N} \sup_{\Supp\p{\chi_1}} \abs{\sigma^2 - \inner{\nabla \tilde \Phi | V}} \leq c\varepsilon_N,
	\]
	for some fixed $c>0$, proving that $\sup_{\Supp\p{\chi_1}} \abs{\sigma^2 - \inner{\nabla \tilde \Phi | V}} = O_p\p{\varepsilon_N}$ since $\Pro_0 \c{E_N} \to 1$. To conclude we thus only need to show that $\sup_{\Supp\p{\chi_1}} \abs{\inner{\nabla \Div_\pi V | V}} = o_p(1)$ by distinguishing between the two priors.
	\begin{enumerate}
		\item For a prior of type \ref{prior:independent_series_white_noise} using Proposition \ref{proposition:quadratic_functional_white_noise} we have
		\begin{align*}
			\abs{\inner{\nabla \Div_\pi V | V}} & = \abs{g^{\mu \nu} \p{\partial_\mu \Div_\pi V} V_\nu} \\
			& = N^{-1} \abs{\sum_{\mu = 1}^p N^{-1/2} \sigma_\mu^{-1} \b{\p{Y^\mu + \theta^\mu} \sigma_\mu^{-1} H''\p{\theta^\mu/\sigma_\mu} + H'\p{\theta^\mu/\sigma_\mu}} \times \sqrt{N}\p{Y^\mu + \theta^\mu}} \\
			& \leq N^{-1} \sum_{\mu = 1}^p \b{ \sigma_\mu^{-2} \p{Y^\mu + \theta^\mu}^2 \norm{H''}_\infty + \sigma_\mu^{-1} \norm{H'}_\infty \abs{Y^\mu + \theta^\mu}} \\
			& \leq N^{-1} \b{\norm{H''}_\infty \p{\sum_{\mu = 1}^p \sigma_\mu^{-2} \p{Y^\mu + \theta^\mu}^2} + \norm{H'}_\infty \sqrt{p} \p{\sum_{\mu = 1}^p \sigma_\mu^{-2} \p{Y^\mu + \theta^\mu}^2}^{1/2}} \\
			& \leq N^{-1} \b{\norm{H''}_\infty \sigma_-^{-2} \norm{\pi_p\c{Y}+\theta}_2^2 + \norm{H'}_\infty \sqrt{p} \sigma_-^{-1} \norm{\pi_p\c{Y}+\theta}_2}.
		\end{align*}
		where $\sigma_- = \min_{1 \leq \mu \leq p} \sigma_\mu$. Because $p \ll N/\ln N, \quad \Supp\p{\chi_1} \subset A_N' \subset \b{\theta : \norm{\theta}_2 \leq \norm{\theta_0}_2 + \varepsilon_N} \subset \b{\theta : \norm{\theta}_2 \leq 2\norm{\theta_0}_2}$ and as shown above $\ind{E_N} \sup_{\theta \in A_N'} \norm{\pi_p\c{Y} + \theta}_2$ is bounded by a fixed constant, we obtain that
		\[
		\sup_{\Supp\p{\chi_1}} \abs{\inner{\nabla \Div_\pi V | V}} = O_p\p{\frac{1}{N\sigma_-^2} \vee \frac{\sqrt{p}}{N\sigma_-}},
		\]
		which is also $o_p(1)$ since $\sigma_- \gg N^{-1/2} \gg \frac{\sqrt{p}}{N}$. 
		\item For a prior of type \ref{prior:gaussian} we have using Proposition \ref{proposition:quadratic_functional_white_noise}
		\begin{align*}
			& \abs{\inner{\nabla \Div_\pi V | V}} \\
			& = \abs{g^{\mu \nu} \p{\partial_\mu \Div_\pi V} V_\nu} = N^{-1} \abs{\sum_{\mu = 1}^p N^{-1/2} \sigma_\mu^{-2} \p{Y^\mu + 2\theta^\mu} \times \sqrt{N}\p{Y^\mu + \theta^\mu}} \\
			& \leq N^{-1} \sum_{\mu = 1}^p \sigma_\mu^{-2} \abs{\p{Y^\mu + 2 \theta^\mu}\p{Y^\mu + \theta^\mu}} \leq 2N^{-1} \sum_{\mu = 1}^p \sigma_\mu^{-2} \p{\abs{Y^\mu} + \abs{\theta^\mu}}^2 \\
			& \leq 4N^{-1} \sum_{\mu = 1}^p \sigma_\mu^{-2} \p{Y^{\mu 2} + \theta^{\mu 2}},
		\end{align*}
		where we have used the inequality $\p{a+b}^2 \leq 2\p{a^2 + b^2}$, which we continue to use repeatedly below. First because $Y^\mu = \theta_0^\mu + N^{-1/2} \varepsilon^\mu$ and because $\sigma_\mu \geq cN^{-1/2}$ we have
		\[
		\sum_{\mu = 1}^p \sigma_\mu^{-2} Y^{\mu 2} \leq 2\sum_{\mu = 1}^p \sigma_\mu^{-2} \p{\theta_0^{\mu 2} + N^{-1} \varepsilon^{\mu 2}} \leq 2c^{-2}p + 2 \norm{\pi_p\c{\varepsilon}}_2^2,
		\]
		which (since $\pi_p\c{\varepsilon}_2^2 \sim \chi_p^2$ under $\Pro_0$) is $O_p\p{p}$. Moreover decomposing $\theta^\mu = \theta^\mu - \rho_\mu^2 Y^\mu + \rho_\mu^2 Y^\mu$ and $Y^\mu = \theta_0^\mu + N^{-1/2} \varepsilon^\mu$ we obtain
		\begin{align*}
		\sup_{\theta \in A_N'} \sum_{\mu = 1}^p \sigma_\mu^{-2} \theta^{\mu 2} & \leq \frac{8M^2\ln N}{N} \sum_{\mu = 1}^p \sigma_\mu^{-2} \rho_\mu^2 + 4\sum_{\mu = 1}^p \sigma_\mu^{-2} \rho_\mu^4 \theta_0^{\mu 2} + 4N^{-1}\sum_{\mu = 1}^p \sigma_\mu^{-2} \rho_\mu^4 \varepsilon^{\mu 2} \\
		& \leq 8M^2 p \ln N + 4c^{-2}p + 4\norm{\pi_p\c{\varepsilon}}_2^2,
		\end{align*}
		(where we have also used $\rho_\mu^2 \leq 1$), which is $O_p\p{p \ln N}$ for the same reason as above. Thus
		\[
		\sup_{\Supp\p{\chi_1}} \abs{\inner{\nabla \Div_\pi V | V}} \leq \sup_{A_N'} \abs{\inner{\nabla \Div_\pi V | V}} = O_p\p{\frac{p \ln N}{N}} = o_p\p{1}.
		\]
	\end{enumerate}
	We deduce the desired result by applying Theorem \ref{theorem:generic_bvm}, using the expressions
	\[
	\Div_\pi V = \sqrt{N}\p{\frac{p}{N} -N^{-1} \sum_{\mu = 1}^p \sigma_\mu^{-1} \p{Y^\mu + \theta^\mu} H'\p{\theta^\mu/\sigma_\mu}}
	\]
	(in the prior \ref{prior:independent_series_white_noise} case) and
	\[
	\Div_\pi V = \sqrt{N}\p{\frac{p}{N} - N^{-1} \sum_{\mu = 1}^p \sigma_\mu^{-2} \p{Y^\mu + \theta^\mu} \theta^\mu}
	\]
	(in the prior \ref{prior:gaussian} case) proved in Proposition \ref{proposition:quadratic_functional_white_noise}. The quantitative estimates on the bounded Lipschitz distances follow by examining the respective orders of magnitude of the terms $A_0,A_1,A_2$ and $A_3$ when applying Theorem \ref{theorem:generic_bvm}.
\end{proof}

\subsection{Density model with priors based on histograms}\label{section:histograms}

We now consider the case of density models with histograms, where the information geometry of the problem is non-trivial. Let $p = 2^{J+1}, J \geq 0$ and consider the exponential family $\MM = \b{P_\theta : \theta \in \RR^{p-1}}$ on $\XX = \c{0,1}$ defined via
\[
\frac{dP_\theta}{dx} = f_\theta, \quad f_\theta = \exp \p{\theta^\mu u_\mu - \psi\p{\theta}}, \quad \psi\p{\theta} = \ln \int \exp \p{\theta^\mu u_\mu},
\]
where $\p{u_\mu}_{\mu = 1}^{p-1}$ are the Haar wavelets $\p{\psi_{jk}}_{0 \leq j \leq J, 1 \leq k \leq 2^j}$ (see e.g. \cite{gineMathematicalFoundationsInfiniteDimensional2015}) that we have relabeled for convenience. Notice that according to our parametrisation we have $\theta^\mu = \inner{u_\mu | \ln f_\theta}_2$. We define $\FF_p$ and $\HH_p$ as
\[
\FF_p = \Span \b{\ind{I_\mu} : 1 \leq \mu \leq p}, \quad \HH_p = \b{f \in \FF_p : f : \c{0,1} \to \RR_+, \quad \int f = 1},
\]
where
\[
I_\mu = \left[\frac{\mu-1}{p},\frac{\mu}{p}\right), \quad 1 \leq \mu \leq p-1, \quad I_p = \c{\frac{p-1}{p},1}.
\]
When the parametrisation is not relevant for the statements we will often consider a generic element $f \in \HH_p$ without explicitly indicating the parameter $\theta$ in the notation. $\FF_p$ admits two natural orthonormal bases: $\p{\sqrt{p}\ind{I_1},\ldots,\sqrt{p}\ind{I_p}}$ and $\p{\ind{},\p{\psi_{jk}}_{0 \leq j \leq J, 1 \leq k \leq 2^j}}$. We define $\pi_p : L^2 \to \FF_p$ as the orthogonal projection
\[
\pi_p\c{f} = p\sum_{\mu = 1}^p \inner{\ind{I_\mu}|f}_2 \ind{I_\mu} = \int f + \sum_{j = 0}^J \sum_{k = 1}^{2^j} \inner{\psi_{jk} | f}_2 \psi_{jk}, \quad f\in L^2.
\]
We recall that (see e.g. \cite{gineMathematicalFoundationsInfiniteDimensional2015}) any $f \in \CC^s, 0 < s \leq 1$ (classical H\"older space) satisfies
\[
\norm{f - \pi_p\c{f}}_\infty \lesssim \norm{f}_{\CC^s} p^{-s}, 
\quad \sup_{j \geq 0} 2^{j\p{s + 1/2}}\max_{1 \leq k \leq 2^j} \abs{\inner{\psi_{jk}|f}_2} \lesssim \norm{f}_{\CC^s}.
\]

We consider the distribution $P_\theta^n$  of an iid sample $X^n = \p{X_i}_{i=1}^n \sim P_\theta^n$, with associated log-likelihood
\[
\ell_n(x^n,\theta) = \sum_{i = 1}^n \ell\p{x_i,\theta}, \quad \ell\p{\cdot,\theta} = \ln f_\theta = \theta^\mu u_\mu - \psi(\theta),
\]
and score
\[
\partial_\mu \ell = u_\mu - \frac{\partial \psi}{\partial \theta^\mu} = u_\mu - P_\theta \c{u_\mu},
\]
which yields the Fisher information metric
\[
g_{\mu \nu}\p{\theta} = P_\theta \c{\p{\partial_\mu \ell} \p{\partial_\nu \ell}} = \Cov_\theta \p{u_\mu,u_\nu} = \frac{\partial^2 \psi}{\partial \theta^\mu \partial \theta^\nu}\p{\theta},
\]
where $\Cov_\theta$ is the covariance operator under $P_\theta$. Since the functions $\p{\ind{},u_1,\ldots,u_{p-1}}$ are linearly independent it is easy to check that the Fisher information matrix is invertible at every $\theta$. Following \cite{amariDifferentialGeometryCurved1982} the model $\MM$ can be equivalently described using the dual coordinates
\[
\eta_\mu := P_\theta \c{u_\mu} = \frac{\partial \psi}{\partial \theta^\mu}, \quad 1 \leq \mu \leq p - 1,
\]
for which we have
\[
g_{\mu \nu}\p{\theta} = \frac{\partial \eta_\mu}{\partial \theta_\nu}, \quad g^{\mu \nu}\p{\theta} = \frac{\partial \theta^\mu}{\partial \eta_\nu},
\]
where $g^{\mu \nu}\p{\theta}$ is the $\p{\mu,\nu}-$entry of the inverse of the matrix $\c{g_{\mu \nu}\p{\theta}}_{\mu \nu = 1}^{p-1}$. We define the dual differentiation operators $\partial_\mu$ and $\partial^\mu$ as
\[
\partial_\mu := \frac{\partial}{\partial \theta^\mu}, \quad \partial^\mu := \frac{\partial}{\partial \eta_\mu}, \quad 1 \leq \mu \leq p - 1.
\]
With these definitions it can be checked that
\[
g_{\mu \nu}\p{\eta} = g^{\mu \nu}\p{\theta} = \partial^{\mu \nu} \chi, \quad \chi = \int f \ln f.
\]
Moreover following \cite{amariDifferentialGeometryCurved1982} the Christoffel symbols $\Gamma_{~\mu \nu}^\rho$ of the Levi-Civita connection $\nabla$ induced by the Fisher information metric are given in natural coordinates (i.e. $\theta$ coordinates) in terms of the Amari-\v{C}encov cubic tensor $T$:
\[
\Gamma_{~\mu \nu}^\rho\p{\theta} = \frac{1}{2} T_{\mu \nu}^{~~\rho}\p{\theta}, \quad T_{\mu \nu \rho} := P_\theta \c{\p{\partial_\mu \ell} \p{\partial_\nu \ell} \p{\partial_\rho \ell}}.
\]
Notice that in natural coordinates the Amari-\v{C}encov tensor can be expressed as $T_{\mu \nu \rho}\p{\theta} = \partial_{\mu \nu \rho} \psi\p{\theta}$. A histogram $f_\theta \in \HH_p$ can equivalently be represented as
\[
f_\theta = h_\omega := p \sum_{\mu = 1}^p \omega_\mu \ind{I_\mu}, \quad \omega_\mu > 0, \quad \sum_{\mu = 1}^p \omega_\mu = 1.
\]
These can be expressed as an exponential family with sufficient statistics $\p{\ind{I_\mu}}_{\mu = 1}^{p-1}$ instead of $\p{u_\mu}_{\mu = 1}^{p-1}$ :
\begin{align*}
	h_\omega & = \exp\p{\sum_{\mu = 1}^p \ln \p{p \omega_\mu} \ind{I_\mu}} = \exp\p{\sum_{\mu = 1}^{p-1} \ln \p{p \omega_\mu} \ind{I_\mu} + \ln\p{p \omega_p} \p{1 - \sum_{\mu = 1}^{p-1} \ind{I_\mu}}} \\
	& = \exp\p{\sum_{\mu = 1}^{p-1} \ln \p{\omega_\mu/\omega_p} \ind{I_\mu} + \ln \p{p\omega_p}},
\end{align*}
so that with the $\alpha$ coordinates defined as
\[
\alpha^\mu = \ln \frac{\omega_\mu}{1 - \sum_{\nu = 1}^{p-1} \omega_\nu} \iff \omega_\mu = \frac{e^{\alpha^\mu}}{1 + \sum_{\nu = 1}^{p-1} e^{\alpha^\nu}}, \quad 1 \leq \mu \leq p-1,
\]
we obtain
\[
f_\theta = h_\omega := \exp\p{\alpha^\mu \ind{I_\mu} - S\p{\alpha}}, \quad S\p{\alpha} = \ln \p{\frac{1}{p} \p{1 + \sum_{\mu = 1}^{p-1} e^{\alpha^\mu}}},
\]
which parametrises $\HH_p$ as another exponential family with natural parameter $\p{\alpha^\mu}_{\mu = 1}^{p-1}$ and dual parameter $\p{\omega_\mu}_{\mu = 1}^{p-1}$. As for the $\theta$ coordinate system we will often consider a generic $h$ or $f \in \HH_p$ when the parametrisation $\omega$ is not relevant for the statement. By convention we define $\alpha^p = 0$ so that
\[
\alpha^\mu = \ln \frac{\omega_\mu}{1 - \sum_{\nu = 1}^{p-1} \omega_\nu} \iff \omega_\mu = \frac{e^{\alpha^\mu}}{1 + \sum_{\nu = 1}^{p-1} e^{\alpha^\nu}}, \quad 1 \leq \mu \leq p, \quad S\p{\alpha} = \ln \p{\frac{1}{p} \sum_{\mu = 1}^p e^{\alpha^\mu}}.
\]

We consider two different classes of priors over $\HH_p$ :

\begin{prior}\label{prior:independent_haar_series_histograms}
$\Pi \c{d\theta} \propto \prod_{\mu = 1}^{p-1} \exp\p{-H\p{\theta^\mu/\sigma_\mu}}d\theta^\mu$ for some differentiable function $H : \RR \to \RR$ with $\norm{H'}_\infty < +\infty$ and scaling parameters $\sigma_\mu > 0, \quad 1 \leq \mu \leq p-1$. For simplicity we assume that $\sigma_\mu = \sigma_{\p{j,k}} = \sigma_j$ only depends on $j$.
\end{prior}

\begin{prior}\label{prior:dirichlet}
$\Pi$ is a Dirichlet prior with parameter $m \in \p{0,\infty}^p$, i.e. in variables $\p{\omega_\mu}_{\mu = 1}^{p-1}$ the density $\Pi\p{\omega}$ of $\Pi$ with respect to the Lebesgue measure over the admissibility set
\[
\b{\p{\omega_\mu}_{\mu = 1}^{p-1} \in \p{0,\infty}^{p-1} : \sum_{\mu = 1}^{p-1} \omega_\mu < 1}
\]
is given by (see e.g. \cite{ghosalFundamentalsNonparametricBayesian2017})
\[
\Pi\p{\omega_1,\ldots,\omega_{p-1}} = \frac{\prod_{\mu = 1}^p \omega_\mu^{m_\mu - 1}}{B(m)}, \quad \omega_p := 1- \sum_{\mu = 1}^{p-1} \omega_\mu.
\]
\end{prior}

In the remainder of this section we consider iid observations $X^n = \p{X_i}_{i = 1}^n \iid f_0(x)dx$ for some $f_0 \in \CC^s, 0 < s \leq 1$ that satisfies $c_- \leq f_0 \leq c_+$ for some $c_-,c_+ > 0$ and we define
\[
\hat f := 1 + \sum_{j = 0}^{J} \sum_{k = 1}^{2^j} \hat f_{jk} \psi_{jk}, \quad \hat f_{jk} := \Pro_n \c{\psi_{jk}}.
\]
and recall that, since $f_0$ integrates to $1$,
\[
\pi_p\c{f_0} = 1 + \sum_{j = 0}^J \sum_{k = 1}^{2^j} f_{0jk} \psi_{jk}, \quad f_{0jk} := \inner{\psi_{jk} | f_0}_2.
\]

\paragraph{Linear functionals.} We first investigate the consequences of Theorem \ref{theorem:generic_bvm} and Corollary \ref{corollary:bvm_without_bias} for inference on the value of a functional of the form $\inner{a | f_0}_2 = \int a f_0, \quad a \in L^\infty$. In that particular case the simple estimator $\Pro_n \c{a} = \frac{1}{n} \sum_{i = 1}^n a\p{X_i}$ satisfies $\sqrt{n}\p{\Pro_n - \Pro_0} \c{a} \xrightarrow[n \to \infty]{(d)} \NN\p{0,\sigma^2}$ where $\sigma^2 = \int \p{a - \int a f_0}^2 f_0$, which is optimal in the semiparametric sense, see e.g. \cite{vaartAsymptoticStatistics1998}. Moreover because by orthogonality
\[
\Pro_n \c{\pi_p\c{a} - a} = \p{\Pro_n - \Pro_0}\c{\pi_p\c{a} - a} + \Pro_0 \c{\pi_p\c{a} - a} = \p{\Pro_n - \Pro_0}\c{\pi_p\c{a} - a} + \int \p{f_0 - \pi_p\c{f_0}} \p{\pi_p\c{a} - a},
\]
we obtain
\begin{align*}
\Pro_n \c{\pi_p\c{a} - a} & = O_p\p{n^{-1/2} \Pro_0 \c{\p{\pi_p\c{a} - a - \Pro_0 \c{\pi_p\c{a} - a}}^2}^{1/2} + \norm{\pi_p\c{f_0} - f_0}_\infty \norm{\pi_p\c{a} - a}_\infty} \\
& = O_p\p{n^{-1/2} \norm{a - \pi_p\c{a}}_\infty + \norm{\pi_p\c{f_0} - f_0}_\infty \norm{\pi_p\c{a} - a}_\infty} \\
& = O_p\p{n^{-1/2} \norm{a}_{\CC^\gamma} p^{-\gamma} + \norm{a}_{\CC^\gamma} \norm{f_0}_{\CC^s} p^{-(\gamma + s)}},
\end{align*}
which is $o_p\p{n^{-1/2}}$ as soon as $f_0 \in \CC^s,a \in \CC^\gamma$ for some $s,\gamma > 0$ and $p \gg n^{\frac{1}{2(\gamma + s)}}$. As a result the estimator $\hat \Psi_0 := \Pro_n \c{\pi_p\c{a}} = \Pro_n \c{a} + o_p\p{n^{-1/2}}$ also satisfies
\[
\sqrt{n}\p{\hat \Psi_0 - \inner{a|f_0}_2} \xrightarrow[n \to \infty]{(d)} \NN\p{0,\sigma^2}
\]
whenever $p \gg n^{\frac{1}{2(\gamma + s)}}$. We present below an application of (the second item of) Theorem \ref{theorem:generic_bvm} to this setting and then briefly discuss what Corollary \ref{corollary:bvm_without_bias} (BvM theorem without functional correction) would imply instead.

\begin{corollary}\label{corollary:linear_functionals_histograms}
If $X^n = \p{X_i}_{i = 1}^n \iid f_0$ for some probability $f_0 \in \CC^s, 0 < s \leq 1$ and $p \gg n^{\frac{1}{2\p{\gamma + s}}}$ then:
\begin{enumerate}
	\item if $\Pi$ is a prior of type \ref{prior:independent_haar_series_histograms} with $p \ll n/\ln n$, $H$ twice differentiable and $\norm{H''}_\infty < + \infty$,
	\[
	\sum_{j = 0}^J \sigma_j^{-2} \sum_{k = 1}^{2^j} \inner{\psi_{jk} | a}_2^2 \ll n \text{ and } \sum_{j = 0}^J \sigma_j^{-1} 2^{j/2} \leq c\sqrt{np\ln n}
	\]
	for a fixed constant $c>0$, we have
	\[
	d_{BL}\p{\Pi_n \circ Z_n^{-1},\NN\p{0,\sigma^2}} = O_p\p{p^{-\p{\gamma \wedge s}} + \sqrt{\frac{p \ln n}{n}} + \frac{1}{n} \sum_{j = 0}^J \sigma_j^{-2} \sum_{k = 1}^{2^j} \inner{\psi_{jk} | a}_2^2 } = o_p(1)
	\]
	\[
	\text{where }Z_n := \sqrt{n} \p{\int a f + \frac{1}{n} \sum_{j = 0}^J \sigma_j^{-1} \sum_{k = 1}^{2^j} H'\p{\sigma_j^{-1} \inner{\psi_{jk} | \ln f}_2} \inner{\psi_{jk} | a}_2 - \hat \Psi_0},
	\]
	and in particular the BvM phenomenon with centring $\hat \Psi_0$ holds for the functional
	\[
	f \mapsto \int a f + \frac{1}{n} \sum_{j = 0}^J \sigma_j^{-1} \sum_{k = 1}^{2^j} H'\p{\sigma_j^{-1} \inner{\psi_{jk} | \ln f}_2} \inner{\psi_{jk} | a}_2
	\]
	whenever $\sum_{j = 0}^J \sigma_j^{-2} \sum_{k = 1}^{2^j} \inner{\psi_{jk} | a}_2^2 \ll n, \quad \gamma + s > 1/2$ and $n^{\frac{1}{2(\gamma + s)}} \ll p \ll n/\ln n$.
	\item If $\Pi$ is a prior of type \ref{prior:dirichlet} with $p \ll n/\ln n$ and $\norm{m}_\infty = O(1)$ then
	\[
	d_{BL}\p{\Pi_n \circ Z_n^{-1},\NN\p{0,\sigma^2}} = O_p\p{p^{-\p{\gamma \wedge s}} + \sqrt{\frac{p \ln n}{n}}} = o_p(1)
	\]
	\[
	\text{where }Z_n := \sqrt{n} \p{\int a f - \frac{\norm{m}_1}{n} \inner{a | h_{\bar m} - f}_2 - \hat \Psi_0}, \quad \bar m := m/\norm{m}_1,
	\]
	and in particular the BvM phenomenon with centring $\hat \Psi_0$ holds for the functional
	\[
	f \mapsto \int a f - \frac{\norm{m}_1}{n} \inner{a | h_{\bar m} - f}_2
	\]
	whenever $\norm{m}_\infty = O(1), \quad \gamma + s > 1/2$ and $n^{\frac{1}{2(\gamma + s)}} \ll p \ll n/\ln n$.
\end{enumerate}
\end{corollary}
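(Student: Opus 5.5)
We would apply the second item of Theorem \ref{theorem:generic_bvm} with the same cutoff/contraction strategy as in the proof of Corollary \ref{corollary:squared_norm_functional_white_noise}. The functional is $\Phi(f)=\int a f$, which on $\HH_p$ coincides with $\int \pi_p\c{a} f$, and the vector field is $V=\nabla\Phi$. The first step is to compute $V$ in natural coordinates. Write $a_\mu:=\inner{u_\mu|a}_2$. Then
\[
\Phi = \int a + a_\mu \eta_\mu, \qquad \partial_\nu\Phi = g_{\nu\rho}a_\rho .
\]
Hence $V^\mu = g^{\mu\nu}\partial_\nu\Phi = a_\mu$ is a \emph{constant} vector field in $\theta$ coordinates. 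Equivalently, $V = \pi_p\c{a}-P_\theta\c{\pi_p\c{a}}\in T_\theta$. Two consequences follow:
\begin{itemize}
\item $\inner{V|\nabla\ell_n} = n\Pro_n\c{\pi_p\c{a}-P_\theta\c{\pi_p\c{a}}}$, so $\hat\Phi=\Pro_n\c{\pi_p\c{a}}=\hat\Psi_0$ identically on $\Theta$, and $A_2=0$;
\item $\abs{V}^2=\Var_{f}\p{\pi_p\c{a}}$.
\end{itemize}

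Next we compute $\Div_\pi V$ using the coordinate formula $\Div_\pi V=\partial_\mu V^\mu+(\partial_\mu\ln\Pi)V^\mu$.

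For a prior of type \ref{prior:independent_haar_series_histograms}, constancy of $V^\mu$ gives
\[
\Div_\pi V=-\sum_{j,k}\sigma_j^{-1}H'\p{\sigma_j^{-1}\theta^{jk}}a_{jk}.
\]
Since $\theta^{jk}=\inner{\psi_{jk}|\ln f}_2$, the functional $\Phi-n^{-1}\Div_\pi V$ is exactly the corrected functional of the statement. Moreover,
\[
\inner{\nabla\Div_\pi V|V}=V^\mu\partial_\mu\Div_\pi V=-\sum_{j,k}\sigma_j^{-2}H''\p{\cdot}a_{jk}^2,
\]
which is bounded by $\norm{H''}_\infty\sum_j\sigma_j^{-2}\sum_k a_{jk}^2=o(n)$ deterministically.

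For the Dirichlet prior we instead work in $\omega$ coordinates, where $\Pi(\omega)$ is explicit. The components of $V$ are $V^{\omega_\mu}=\Cov_f\p{\ind{I_\mu},a}=\omega_\mu\p{\bar a_\mu-\inner{a|f}_2}$, with $\bar a_\mu$ the average of $a$ on $I_\mu$. A direct computation of $\partial_{\omega_\mu}V^{\omega_\mu}+(\partial_{\omega_\mu}\ln\Pi)V^{\omega_\mu}$ should then give
\[
\Div_\pi V=\norm{m}_1\inner{a|h_{\bar m}-f}_2 ,
\]
possibly up to a $\theta$-independent constant, which would be absorbed into the centring and would need checking. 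This quantity is affine in $f$. Consequently $\inner{\nabla\Div_\pi V|V}=-\norm{m}_1\Var_f\p{\pi_p\c{a}}=O(1)$ on bounded sets, since $\norm{m}_\infty=O(1)$ would be replaced by the relevant bound on $\norm{m}_1/n$. This needs checking: $\norm{m}_1\le p\norm{m}_\infty\ll n$.

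The remaining terms of $A_3$ reduce to $\abs{\Var_{f_0}(a)-\Var_f(\pi_p\c{a})}$ on $\Supp\p{\chi_n}$. This is bounded by $\norm{a-\pi_p\c{a}}_\infty\lesssim p^{-\gamma}$ plus $\norm{f-f_0}_{L^1}\norm{a}_\infty^2$, and the latter is $\lesssim p^{-s}+\sqrt{p\ln n/n}$ on a sup-norm contraction set.

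The main obstacle is the localisation, namely $A_0$ and $A_1$. Following the white-noise proof, we would take a contraction set $A_n$ of coordinatewise sup-norm type around the empirical histogram: $\abs{\omega_\mu/\hat\omega_\mu-1}\le M\sqrt{p\ln n/n}$, which is equivalent to $\abs{\alpha^\mu-\hat\alpha^\mu}$ small. We would show $\Pi_n\c{A_n^c}=O_p(n^{-K})$ by applying Lemma \ref{lemma:laplace_transform} to the coordinate linear functionals $\omega_\mu$ (or $\alpha^\mu$) together with a union bound over $p$ coordinates. For prior \ref{prior:independent_haar_series_histograms}, the condition $\sum_j\sigma_j^{-1}2^{j/2}\le c\sqrt{np\ln n}$ is what we expect to need to control the prior's divergence term in this Laplace argument, since the $\alpha$- and $\theta$-coordinates are related by a Haar change of basis. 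We then set $\chi_n=\prod_\mu\chi\p{(\alpha^\mu-\hat\alpha^\mu)/\delta_n}$ with $\Supp\chi\subset(-2,2)$. On $A_n'$ (twice the radius) the metric is comparable to $n^{-1}$ times an explicit diagonal in $\alpha$ coordinates, because $f$ is bounded above and below there. Hence $\abs{\nabla\chi_n}\le n^{b}$ and $\abs{V}\le\norm{a}_\infty$, and Cauchy--Schwarz gives $A_1\le n^{b}\Pi_n\c{A_n^c}=O_p(n^{b-K})$, which is negligible for $M$ large. Collecting $A_0,\dots,A_3$ in Theorem \ref{theorem:generic_bvm} yields the stated rates.
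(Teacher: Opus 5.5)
Your proposal is correct and follows essentially the paper's proof. The paper also applies the second item of Theorem~\ref{theorem:generic_bvm} with $V=\nabla\Phi$, uses $\hat\Phi=\hat\Psi_0$ identically, obtains $\Div_\pi V=\Delta_\pi\Phi$ from the coordinate formula, and builds cutoffs from sup-norm contraction around $\hat f$. Your $\alpha$-coordinate cutoff is equivalent on $E_n$ to the paper's $\omega$-coordinate one, and the contraction you plan to re-derive is exactly Lemma~\ref{lemma:supremum_norm_contraction_rates_histograms}.

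There are only minor slips, none of which affects the argument:
\begin{itemize}
\item The Dirichlet divergence is exactly $\norm{m}_1\inner{a|h_{\bar m}-f}_2$, with no extra constant.
\item $\inner{\nabla\Div_\pi V|V}=-\norm{m}_1\Var_f\p{\pi_p\c{a}}$ is $O(p)$, not $O(1)$. This is still $o(n)$.
\item The single-observation Fisher metric has no $n^{-1}$ factor. The bound on $\abs{\nabla\chi_n}$ is nonetheless polynomial in $n$, because $g^{\mu\nu}=\delta^{\mu\nu}/\omega_\mu+1/\omega_p$ and $\omega_\mu\gtrsim 1/p$ on $\Supp\p{\chi_n}$.
\end{itemize}
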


We now comment on these results before presenting a proof. First the conditions for priors of type \ref{prior:independent_haar_series_histograms} are met for instance when $s + \gamma > 1/2, \quad p \asymp n/\ln^2 n$ and $\sigma_j \geq c \abs{\inner{\psi_{jk} | a}_2} \vee c \p{n\ln n}^{-1/2}$ for some fixed $c>0$ (e.g. $\sigma_j = 2^{-j/2}$ since $a \in \CC^\gamma, \gamma \geq 0$). For Dirichlet priors \ref{prior:dirichlet} they are met for instance if $s+\gamma > 1/2, \quad p \asymp n/\ln^2 n$ and $\norm{m}_\infty = O(1)$. These conditions are weaker than the ones considered in \cite{castilloBernsteinVonMisesTheorem2015} for the regime $s + \gamma > 1/2$ as it is assumed there that $\norm{m}_1 \ll n^{1/2}$. In contrast, applying Corollary \ref{corollary:bvm_without_bias} with priors of type \ref{prior:independent_haar_series_histograms} would allow us to prove a BvM with centring $\hat \Psi_0$ for the original uncorrected functional $\Psi : f \mapsto \int a f$ under the more restrictive conditions $s + \gamma > 1$ with the choices $p \asymp \p{n/\ln^2 n}^{1/2}$ and $\sigma_j \geq c \abs{\inner{\psi_{jk} | a}_2} \vee c \p{n\ln n}^{-1/2}$ (e.g. $\sigma_j = 2^{-j/2}$). For Dirichlet priors \ref{prior:dirichlet} this would be possible under the conditions $s + \gamma > 1, \quad p \asymp \p{n/\ln^2 n}^{1/2}$ and $\norm{m}_\infty = O(1), \quad \norm{m}_1 \ll n^{1/2}$, which allows us to recover the conclusions of \cite{castilloBernsteinVonMisesTheorem2015}. As a consequence and as in Section \ref{section:white_noise} considering the divergence corrected functional in the present setting allows us to extend the range of validity of previously known BvM theorems.

Moreover, in the case of Dirichlet priors \ref{prior:dirichlet} the divergence corrected functional has a very intuitive form: indeed it is equal to $\inner{a | h_\omega - \frac{\norm{m}_1}{n} \p{h_{\bar m} - h_\omega}}_2$, and by conjugacy the posterior distribution of $h_\omega - \frac{\norm{m}_1}{n} \p{h_{\bar m} - h_\omega}$ can be shown to have a mean given by the empirical histogram estimator $\hat f$. As a result the posterior mean of the divergence corrected functional is exactly $\Pro_n \c{\pi_p\c{a}} = \hat \Psi_0 + o_p\p{n^{-1/2}}$ (since $p \gg n^{\frac{1}{2(\gamma + s)}}$). Thus in this case and as previously discussed in Section \ref{section:general_results} in more generality, the divergence correction step allows us to remove all prior induced posterior bias at the relevant $n^{-1/2}$ scale. What is interesting and not obvious is that the same correction step $\Psi \leftrightarrow \Psi - n^{-1} \Delta_\pi \Psi$ actually produces the same effect even for nonconjugate priors of type \ref{prior:independent_haar_series_histograms}.

\begin{proof}{Corollary \ref{corollary:linear_functionals_histograms}}\\
In both cases the proof follows by an application of Theorem \ref{theorem:generic_bvm} to the functional $\Phi_a = \int a h_\omega = \int a f_\theta$, the vector field $V = \nabla \Phi_a$ and the estimator $\hat \Psi_0 = \Pro_n \c{\pi_p\c{a}}$. We first need to define appropriate cutoff functions, which we do by invoking Lemma \ref{lemma:supremum_norm_contraction_rates_histograms} : for any $K>0$ we have $\ind{E_n}\Pi_n \c{A_n^c} \leq n^{-K}$ and $\Pro_0\c{E_n^c} \leq n^{-K}$ for some $M>0$ where
\[
A_n = \b{f \in \HH_p :  \norm{f - \hat f}_\infty \leq \varepsilon_n}, \quad E_n = \b{\norm{\hat f - \pi_p\c{f_0}}_\infty \leq \varepsilon_n}, \quad \varepsilon_n := M \sqrt{\frac{p \ln n}{n}}.
\]
Then given a $\CC^1$ function $\chi : \RR \to \c{0,1}$ supported on $\p{-2,2}$ and identically equal to $1$ on $\c{-1,1}$ we set
\[
\chi_n\p{\omega} := \prod_{\mu = 1}^p \chi\p{\frac{p\p{\omega_\mu - \hat \omega_\mu}}{\varepsilon_n}}, \quad \hat \omega_\mu := \Pro_n \c{I_\mu}.
\]
Using $c_- \leq f_0 \leq c_+$ one can check that $\chi_n$ is indeed compactly supported, even supported on a set of the form $\b{\omega : a \leq h_\omega \leq b}$ for some fixed $a,b>0$, at least on the event $E_n$. Since $0 \leq \chi_n \leq 1$ and $\chi_n$ is identically equal to $1$ on $A_n$ we have in particular $1 + o_p(1) = \Pi_n \c{A_n} = \Pi_n \c{\chi_n \ind{A_n}} \leq \Pi_n \c{\chi_n} \leq 1$, proving that $\Pi_n \c{\chi_n} = 1 + o_p(1)$. Notice that because both priors are supported on $\HH_p$ by orthogonality we actually have $\Phi_a = \int \pi_p\c{a} h_\omega$ so that Proposition \ref{proposition:linear_functionals_histograms} implies $\nabla \Phi_a = \pi_p\c{a} - \int \pi_p\c{a} h_\omega$. Moreover, in the $\alpha-\omega$ coordinate system,
\begin{align*}
\nabla^\mu \chi_n & = \partial^\mu \chi_n = \frac{\partial \chi_n}{\partial \omega_\mu} \\
& = \frac{p}{\varepsilon_n} \chi'\p{\frac{p\p{\omega_\mu - \hat \omega_\mu}}{\varepsilon_n}} \times \prod_{\nu \neq \mu} \chi\p{\frac{p\p{\omega_\nu - \hat \omega_\nu}}{\varepsilon_n}} - \frac{p}{\varepsilon_n} \chi'\p{\frac{p\p{\omega_p - \hat \omega_p}}{\varepsilon_n}} \times \prod_{\nu \neq p} \chi\p{\frac{p\p{\omega_\nu - \hat \omega_\nu}}{\varepsilon_n}} \\
& =: Q_n^\mu - Q_n^p,
\end{align*}
(remembering that $\omega_p = 1 - \sum_{\mu = 1}^{p-1} \omega_\mu$), therefore
\[
\nabla \chi_n = \p{\partial^\mu \chi_n} \p{\partial_\mu \ell} = \sum_{\mu = 1}^{p-1} \p{Q_n^\mu - Q_n^p} \p{\ind{I_\mu} - \omega_\mu} = \sum_{\mu = 1}^p Q_n^\mu \p{\ind{I_\mu} - \omega_\mu},
\]
where we have used $\sum_{\mu = 1}^{p-1} \ind{I_\mu} = 1 - \ind{I_p}$. Thus $\abs{\nabla \chi_n}^2 = P_\omega \c{\p{\nabla \chi_n}^2} \leq \norm{\p{\sum_{\mu = 1}^p Q_n^\mu \ind{I_\mu}}^2}_\infty = \max_{1 \leq \mu \leq p} \abs{Q_n^\mu}^2 \leq \p{\frac{p}{\varepsilon_n}}^2 \norm{\chi'}_\infty^2$ and since we also have $\abs{\nabla \Phi_a}^2 = P_\omega \c{\p{\pi_p\c{a} - \int \pi_p\c{a} h_\omega}^2} \leq \norm{\pi_p\c{a}}_\infty^2 \leq \norm{a}_\infty^2$, using Cauchy--Schwarz inequality we obtain $\abs{\inner{\nabla \chi_n | \nabla \Phi_a}} \leq \norm{a}_\infty \norm{\chi'}_\infty \p{p/\varepsilon_n}$. But $\chi_n$ is identically equal to $1$ over $A_n$, therefore
\[
\ind{E_n}\Pi_n\c{\abs{\inner{\nabla \chi_n | V}}} \leq \norm{a}_\infty \norm{\chi'}_\infty \frac{p}{\varepsilon_n}\ind{E_n}\Pi_n \c{A_n^c} \leq \norm{a}_\infty \norm{\chi'}_\infty \frac{p}{\varepsilon_n} n^{-K},
\]
which is less than $n^{-K/2}$ for large $K>0$ since $p \ll n/\ln n$ and therefore $p/\varepsilon_n \ll n/\ln n$. Because $\Pro_0\c{E_n} \to 1$ this proves that $\Pi_n \c{\abs{\inner{\nabla \chi_n | V}}} = o_p(1) = o_p\p{\sqrt{n}}$. Moreover by Proposition \ref{proposition:linear_functionals_histograms} we have $\hat \Phi_a = \hat \Psi_0$, so that $\sup_{\Supp\p{\chi_n}} \abs{\hat \Phi_a - \hat \Psi_0} = 0 = o_p\p{n^{-1/2}}$ where $\hat \Phi_a$ is the random functional $\hat \Phi_a = \Phi_a + n^{-1} \inner{\nabla \Phi_a | \nabla \ell_n}$. To conclude it remains to show that in both cases we have
\[
\sup_{\Supp\p{\chi_n}} \abs{\sigma^2 - \inner{V | \nabla \Phi_a} + \frac{1}{n} \inner{\nabla \Div_\pi V | V}} = \sup_{\Supp\p{\chi_n}} \abs{\sigma^2 - \abs{\nabla \Phi_a}^2 + \frac{1}{n} \inner{\nabla \Delta_\pi \Phi_a | \nabla \Phi_a}} = o_p(1).
\]
For this we first observe that because $\Supp\p{\chi_n} \subset A_n' := \b{f : \norm{f - \hat f}_\infty \leq 2\varepsilon_n}$ and
\[
\abs{\nabla \Phi_a}^2 = P_\omega \c{\p{\nabla \Phi_a}^2} = \int \p{\pi_p\c{a} - \int \pi_p\c{a} h_\omega}^2 h_\omega,
\]
we have
\begin{align*}
\ind{E_n} \sup_{\Supp\p{\chi_n}} \abs{\sigma^2 - \abs{\nabla \Phi_a}^2} & \leq \ind{E_n} \sup_{A_n'} \abs{\sigma^2 - \abs{\nabla \Phi_a}^2} \\
& = \ind{E_n} \sup_{f \in A_n'} \abs{\int \p{a - \int a f_0}^2 f_0 - \int \p{\pi_p\c{a} - \int \pi_p\c{a} f}^2 f},
\end{align*}
which is less than a multiple of $p^{-\p{\gamma \wedge s}} + \sqrt{\frac{p \ln n}{n}}$, since $\norm{a - \pi_p\c{a}}_\infty \lesssim p^{-\gamma} \norm{a}_{\CC^\gamma}$ and on the event $E_n$ the set $A_n'$ is contained in a set of the form $\b{f \in \HH_p : \norm{f - f_0}_\infty \leq C\varepsilon_n}, \quad \varepsilon_n = p^{-s} + \sqrt{\frac{p \ln n}{n}}$ for some $C > 0$. Since $\Pro_0\c{E_n} \to 1$ this proves that $\sup_{\Supp\p{\chi_n}} \abs{\sigma^2 - \abs{\nabla \Phi_a}^2} = O_p\p{p^{-\p{\gamma \wedge s}} + \sqrt{\frac{p \ln n}{n}}} = o_p(1)$. It is thus sufficient to show that
\[
\sup_{\Supp\p{\chi_n}} \abs{\inner{\nabla \Delta_\pi \Phi_a | \nabla \Phi_a}} = o_p(n)
\]
for both priors, and we do so by distinguishing between the two cases.
\begin{enumerate}
	\item For a prior of type \ref{prior:independent_haar_series_histograms}, using Proposition \ref{proposition:linear_functionals_histograms_2} and $L^2$ orthogonality we have $\partial_\mu \Delta_\pi \Phi_a = - \sigma_\mu^{-2} H''\p{\theta^\mu/\sigma_\mu} \inner{u_\mu | a}_2$, therefore using also $\partial^\mu \Phi_a = \inner{u_\mu | a}_2$,
	\begin{align*}
	\abs{\inner{\nabla \Delta_\pi \Phi_a | \nabla \Phi_a}} & = \abs{\p{\partial^\mu \Phi_a}\p{\partial_\mu \Delta_\pi \Phi_a}} = \abs{- \sum_{\mu = 1}^{p-1} \sigma_\mu^{-2} H''\p{\theta^\mu/\sigma_\mu} \inner{u_\mu | a}_2^2} \leq \norm{H''}_\infty \sum_{\mu = 1}^{p-1} \sigma_\mu^{-2} \inner{u_\mu | a}_2^2,
	\end{align*}
	which is $o_p\p{n}$ by assumption. We can therefore apply Theorem \ref{theorem:generic_bvm}, which implies that
	\[
	d_{BL}\p{\Pi_n \circ Z_n^{-1}, \NN\p{0,\sigma^2}} = O_p\p{p^{-\p{\gamma \wedge s}} + \sqrt{\frac{p \ln n}{n}} + \frac{1}{n} \sum_{\mu = 1}^{p-1} \sigma_\mu^{-2} \inner{u_\mu | a}_2^2 } = o_p(1)
	\]
	where $Z_n = \sqrt{n}\p{\Phi_a - \frac{1}{n} \Div_\pi V - \hat \Psi_0} = \sqrt{n}\p{\Phi_a - \frac{1}{n} \Delta_\pi \Phi_a - \hat \Psi_0}$, which proves the desired result since using Proposition \ref{proposition:linear_functionals_histograms_2} we have
	\[
	\Phi_a - \frac{1}{n} \Delta_\pi \Phi_a = \int \pi_p\c{a} f_\theta + \frac{1}{n} \sum_{\mu = 1}^{p-1} \sigma_\mu^{-1} H'\p{\theta^\mu/\sigma_\mu} \inner{u_\mu | a}_2.
	\]
	\item For a prior of type \ref{prior:dirichlet}, using Proposition \ref{proposition:linear_functionals_histograms_2} and $L^2$ orthogonality we have $\nabla \Delta_\pi \Phi_a = - \norm{m}_1 \p{a - \int ah_\omega}$, which implies that
	\begin{align*}
	\abs{\inner{\nabla \Delta_\pi \Phi_a | \nabla \Phi_a}} & = \abs{P_\omega \c{\p{\nabla \Delta_\pi \Phi_a} \p{\nabla \Phi_a}}} = \norm{m}_1 \abs{P_\omega \c{\p{\pi_p\c{a} - \int \pi_p\c{a} h_\omega}\p{a - \int a h_\omega}}} \\
	& \leq p \norm{m}_\infty \norm{\pi_p\c{a}}_\infty \norm{a}_\infty \leq p \norm{m}_\infty \norm{a}_\infty^2,
	\end{align*}
	which is $o(n)$ since $p \ll n/\ln n$ and $\norm{m}_\infty = O(1)$. We can therefore apply Theorem \ref{theorem:generic_bvm}, which implies that $d_{BL}\p{\Pi_n \circ Z_n^{-1}, \NN\p{0,\sigma^2}} = O_p\p{p^{-\p{\gamma \wedge s}} + \sqrt{\frac{p \ln n}{n}} + \frac{p}{n}} = o_p(1)$ where $Z_n = \sqrt{n}\p{\Phi_a - \frac{1}{n} \Div_\pi V - \hat \Psi_0} = \sqrt{n}\p{\Phi_a - \frac{1}{n} \Delta_\pi \Phi_a - \hat \Psi_0}$, which proves the desired result since using Proposition \ref{proposition:linear_functionals_histograms_2} we have
	\[
	\Phi_a - \frac{1}{n} \Delta_\pi \Phi_a = \int \pi_p\c{a} f_\theta - \frac{\norm{m}_1}{n} \inner{a | h_{\bar m} - h_\omega}_2.
	\]
\end{enumerate}
\end{proof}

\paragraph{Squared norm functional.} We now turn to the estimation of the quadratic functional $\Psi\p{f_0} = \int f_0^2$; for this we first remind the reader of the construction of an efficient estimator using histograms.

\begin{proposition}\label{proposition:efficiency_estimator_squared_norm_histograms}
	If $f_0 \in \CC^s, \quad \frac{1}{4} < s \leq 1, \quad n^{\frac{1}{4s}} \ll p \ll n$ the estimator $\hat \Psi_0 = \int \hat f^2 - \frac{p}{n}$ satisfies
	\[
	\hat \Psi_0 = \int \pi_p\c{f_0}^2 + 2\pi_p\c{f_0}\p{\hat f - \pi_p\c{f_0}} + o_p\p{n^{-1/2}} = \int f_0^2 + \p{\Pro_n - \Pro_0} \c{2f_0} + o_p\p{n^{-1/2}}
	\]
	and
	\[
	\sqrt{n}\p{\hat \Psi_0 - \int f_0^2} \xrightarrow[n \to \infty]{(d)} \NN\p{0,\sigma^2}, \quad \sigma^2 = 4\int\p{f_0 - \int f_0^2}^2 f_0.
	\]
\end{proposition}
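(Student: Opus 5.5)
The proof rests on an exact algebraic decomposition of $\hat\Psi_0$ around $\pi_p[f_0]$, followed by separate control of a linear term, a degenerate second-order U-statistic, and a deterministic bias term. Write $\hat f - \pi_p[f_0] = \sum_{j\le J}\sum_k (\hat f_{jk} - f_{0jk})\psi_{jk}$, which is a sum over the $p-1$ Haar coefficients. Since $\hat f$ and $\pi_p[f_0]$ both integrate to one,
\[
\int \hat f^2 = \int \pi_p[f_0]^2 + 2\int \pi_p[f_0](\hat f - \pi_p[f_0]) + \sum_{j,k}(\hat f_{jk}-f_{0jk})^2 .
\]
The first claimed expansion then reduces to showing that $\sum_{j,k}(\hat f_{jk}-f_{0jk})^2 - p/n = o_p(n^{-1/2})$.

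For this, expand each squared coefficient, using $\hat f_{jk} - f_{0jk} = (\Pro_n-\Pro_0)[\psi_{jk}]$:
\[
(\hat f_{jk}-f_{0jk})^2 = \frac{1}{n^2}\sum_{i\neq i'}\bar\psi_{jk}(X_i)\bar\psi_{jk}(X_{i'}) + \frac{1}{n^2}\sum_i \bar\psi_{jk}(X_i)^2, \qquad \bar\psi_{jk} := \psi_{jk} - f_{0jk}.
\]
\begin{itemize}
\item \emph{Diagonal part.} Its mean is $n^{-1}\sum_{j,k}\Var_0(\psi_{jk}) = n^{-1}\sum_{j,k}(\Pro_0[\psi_{jk}^2] - f_{0jk}^2)$. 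Haar wavelets at each level tile $[0,1]$ with $\sum_k \psi_{jk}^2 = 2^j$, so $\sum_{j,k}\Pro_0[\psi_{jk}^2] = \sum_{j\le J}2^j = p-1$. The correction $\sum f_{0jk}^2 \le \norm{f_0}_2^2$ is $O(1)$. The diagonal mean is therefore $(p-1)/n + O(1/n)$, which matches $p/n$ up to $O(1/n)$. Its fluctuation has variance of order $n^{-3}\Pro_0[(\sum_{j,k}\psi_{jk}^2)^2] \lesssim p^2/n^3$, i.e. standard deviation $p/n^{3/2} = o(n^{-1/2})$ because $p\ll n$.
\item \emph{Off-diagonal part.} This is a degenerate U-statistic with kernel $K(x,y) = \sum_{j,k}\bar\psi_{jk}(x)\bar\psi_{jk}(y)$, so its variance is of order $n^{-2}\Pro_0^{\otimes 2}[K^2]$. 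Using $f_0\le c_+$ and orthonormality, $\Pro_0^{\otimes 2}[K^2] \lesssim c_+^2\sum_{j,k} 1 + O(1) = O(p)$. The off-diagonal part is thus $O_p(\sqrt p/n) = o_p(n^{-1/2})$, again because $p\ll n$.
\end{itemize}
This establishes the first equality.

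For the second equality:
\begin{itemize}
\item Since $\int \pi_p[f_0](\hat f-\pi_p[f_0]) = (\Pro_n-\Pro_0)[\pi_p[f_0]]$, the linear term equals $(\Pro_n-\Pro_0)[2f_0] + 2(\Pro_n-\Pro_0)[\pi_p[f_0]-f_0]$. The remainder has standard deviation at most $n^{-1/2}\norm{\pi_p[f_0]-f_0}_\infty \lesssim n^{-1/2}p^{-s}$, which is $o(n^{-1/2})$.
\item The deterministic bias is $\int \pi_p[f_0]^2 - \int f_0^2 = -\norm{f_0-\pi_p[f_0]}_2^2 \lesssim p^{-2s}$ by orthogonality and the recalled H\"older approximation bound. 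This is $o(n^{-1/2})$ exactly when $p\gg n^{1/(4s)}$, which is where the hypotheses $s>1/4$ and $p\gg n^{1/(4s)}$ enter.
\end{itemize}

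The asymptotic normality then follows from the classical CLT applied to $(\Pro_n-\Pro_0)[2f_0]$, together with Slutsky's lemma. The variance is $\Var_0(2f_0(X)) = 4\int (f_0-\int f_0^2)^2 f_0$, as claimed.

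The main obstacle is the second-order term. It requires both the exact identification of its mean with $p/n$ up to $O(1/n)$, which relies on the Haar identity $\sum_k\psi_{jk}^2 = 2^j$, and the U-statistic variance bound $O(p/n^2)$, which uses $f_0\le c_+$. Everything else is routine.
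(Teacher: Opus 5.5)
Your proof is correct and follows essentially the same route as the paper: you expand $\int \hat f^2$ exactly around $\pi_p[f_0]$, split $\int(\hat f-\pi_p[f_0])^2$ into a diagonal part (mean $p/n+O(1/n)$) and a degenerate U-statistic of order $\sqrt{p}/n$, and control the $O(p^{-2s})$ bias and the $O_p(p^{-s}/\sqrt{n})$ linear remainder. The differences are cosmetic. You write the kernel in the Haar basis rather than the bin-indicator basis, but it is the same kernel. Your $O_p(p/n^{3/2})$ bound on the diagonal fluctuation is cruder than the paper's $O_p(n^{-3/2})$, which uses that $\sum_{j,k}\psi_{jk}^2\equiv p-1$ is constant, but it still suffices because $p\ll n$.
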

\begin{proof}
This is a standard result but we present a proof in Section \ref{section:proofs_histograms} for completeness.
\end{proof}

The asymptotic variance $\sigma^2$ in Proposition \ref{proposition:efficiency_estimator_squared_norm_histograms} is optimal in a semiparametric sense, see e.g. \cite{laurentEfficientEstimationIntegral1996,laurentEstimationIntegralFunctionals1997,vaartAsymptoticStatistics1998}.

\begin{corollary}\label{corollary:squared_norm_histograms}
If $X^n = \p{X_i}_{i=1}^n \iid f_0$ for some density $f_0 \in \CC^s$ and $1 \ll p \ll n/\ln n$ then
\begin{enumerate}
	\item if $\Pi$ is a prior of type \ref{prior:independent_haar_series_histograms} with $H$ twice differentiable, $\norm{H''}_\infty <+\infty$ and $\sigma_- = \min_{0 \leq j \leq J} \sigma_j \gg n^{-1/2}$ then
	\[
	d_{BL}\p{\Pi_n \circ Z_n^{-1}, \quad \NN\p{0,\sigma^2}} = O_p\p{p^{-s} + \sqrt{\frac{p \ln n}{n}} + \frac{1}{n\sigma_-^2}} = o_p(1)
	\]
	\[
	\text{where }Z_n := \sqrt{n}\p{\p{1 + \frac{1}{n}} \int f^2 - \frac{2p}{n} + \frac{1}{n} \sum_{j = 0}^J \sigma_j^{-1} \sum_{k = 1}^{2^j} H'\p{\sigma_j^{-1} \inner{\psi_{jk} | \ln f}_2} \inner{\psi_{jk} | \hat f + f}_2 - \hat \Psi_0},
	\]
	and in particular the BvM phenomenon with centring $\hat \Psi_0$ holds for the functional
	\[
	f \mapsto \p{1 + \frac{1}{n}} \int f^2 - \frac{2p}{n} + \frac{1}{n} \sum_{j = 0}^J \sigma_j^{-1} \sum_{k = 1}^{2^j} H'\p{\sigma_j^{-1} \inner{\psi_{jk} | \ln f}_2} \inner{\psi_{jk} | \hat f + f}_2
	\]
	whenever in addition $s > 1/4$ and $n^{\frac{1}{4s}} \ll p \ll n/\ln n$.
	\item If $\Pi$ is a Dirichlet prior \ref{prior:dirichlet} with parameter $m \in \p{0,\infty}^p$ such that $\norm{m}_\infty = O(1)$ then
	\[
	d_{BL}\p{\Pi_n \circ Z_n^{-1}, \quad \NN\p{0,\sigma^2}} = O_p\p{p^{-s} + \sqrt{\frac{p \ln n}{n}}} = o_p(1)
	\]
	\[
	\text{where }Z_n := \sqrt{n}\p{\p{1 + \frac{1}{n}} \int f^2 - \frac{2p}{n} - \frac{\norm{m}_1}{n} \int \p{h_{\bar m} - f}\p{\hat f + f} - \hat \Psi_0},
	\]
	and in particular the BvM phenomenon with centring $\hat \Psi_0$ holds for the functional
	\[
	f \mapsto \p{1 + \frac{1}{n}} \int f^2 - \frac{2p}{n} - \frac{\norm{m}_1}{n} \int \p{h_{\bar m} - f}\p{\hat f + f}
	\]
	whenever in addition $s > 1/4$ and $n^{\frac{1}{4s}} \ll p \ll n/\ln n$.
\end{enumerate}
\end{corollary}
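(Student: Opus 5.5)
We apply the second item of Theorem \ref{theorem:generic_bvm}, mirroring the white-noise proof of Corollary \ref{corollary:squared_norm_functional_white_noise} and the histogram proof of Corollary \ref{corollary:linear_functionals_histograms}. We take the functional $\Phi(f) = \int f^2$ on $\HH_p$, the estimator $\hat\Psi_0 = \int \hat f^2 - p/n$, and the vector field
\[
V = \nabla \Phi + \tfrac{1}{2n}\nabla^2\Phi.\nabla\ell_n .
\]
Concretely, we look for $V$ with $V = \pi_p\c{\hat f + f} - \int \pi_p\c{\hat f + f} f$ as an element of $T_f$. In the $\alpha$--$\omega$ coordinates this is $V^\mu = \partial^\mu$-components of $\inner{\hat f + f | \cdot}_2$. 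The intended choice is the one making the identity $\hat\Phi := \Phi + n^{-1}\inner{V|\nabla\ell_n} = \hat\Psi_0 + \text{(deterministic/affine remainder)}$ hold exactly on $\HH_p$.

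Indeed, $\inner{V|\nabla \ell_n} = n\Pro_n\c{V} = n\p{\int \hat f(\hat f + f) - \int f(\hat f + f)}$ up to the projection. This gives $\Phi + n^{-1}\inner{V|\nabla\ell_n} = \int \hat f^2$ exactly, so $\hat\Phi = \hat\Psi_0 + p/n$. The constant offsets ($-2p/n$ and the factor $(1+1/n)$ in the statement) should then come out of $n^{-1}\Div_\pi V$. The first task is therefore a Proposition-type computation, analogous to Proposition \ref{proposition:linear_functionals_histograms_2}, of $\Div_\pi V$ for both priors using the coordinate formula $\Div_\pi V = \partial_\mu V^\mu + (\partial_\mu\ln\Pi)V^\mu$.

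For the Dirichlet prior we work in $\omega$ coordinates, where $\Pi(\omega)$ is explicit. The term $\partial_\mu V^\mu$ should produce a $p$-type constant together with a multiple of $\int f^2$; this multiple accounts for the $(1+1/n)$ and $2p/n$. The $\ln\Pi$ term should give $\norm{m}_1\int(h_{\bar m}-f)(\hat f+f)$ minus a constant. For Prior \ref{prior:independent_haar_series_histograms} we work in $\theta$ coordinates, with $\partial_\mu\ln\Pi = -\sigma_\mu^{-1}H'(\theta^\mu/\sigma_\mu)$ and $V^\mu = \inner{u_\mu|\hat f + f}_2$, since $V$ is $\nabla$ of a linear functional plus $\nabla\Phi$. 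Constants independent of $f$ can be absorbed into the centring, because they shift $Z_n$ only deterministically; any discrepancy must be exactly matched, so this bookkeeping has to be done carefully.

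Next comes localisation. We reuse the cutoff $\chi_n$ and the sets $A_n$, $A_n'$, $E_n$ from the proof of Corollary \ref{corollary:linear_functionals_histograms}, built on Lemma \ref{lemma:supremum_norm_contraction_rates_histograms}. As there, $\abs{\nabla\chi_n}\le \norm{\chi'}_\infty p/\varepsilon_n$ and $\abs{V}\le \norm{\hat f + f}_\infty = O(1)$ on $E_n$, which gives $A_0, A_1 = O_p(n^{-K/2})$ after choosing $M$ large. We have $A_2=0$ exactly. For $A_3$, note that $\inner{\nabla\Phi|V} = \Cov_f(2f, \hat f + f)$, which is within $O(\norm{f - f_0}_\infty + \norm{\hat f - f_0}_\infty)$ of $\sigma^2 = 4\Var_{f_0}(f_0)$. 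On $A_n'$ this is $O_p(p^{-s}+\sqrt{p\ln n/n})$, exactly as in the linear case.

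The main obstacle is bounding $n^{-1}\sup_{\Supp\chi_n}\abs{\inner{\nabla\Div_\pi V|V}}$. For Prior \ref{prior:independent_haar_series_histograms}, differentiating the $H'$ term yields $\sigma_\mu^{-2}H''\,\inner{u_\mu|\hat f+f}_2^2$ plus cross terms $\sigma_\mu^{-1}H'\,\inner{u_\mu|V}$-type contributions. The first is bounded by $\norm{H''}_\infty\sigma_-^{-2}\norm{\hat f + f}_2^2 = O_p(\sigma_-^{-2})$, which is the source of the $1/(n\sigma_-^2)$ rate. For the cross terms, I would try Cauchy--Schwarz with $\sum_\mu \inner{u_\mu|g}_2^2\le\norm{g}_2^2$, together with the fact that $\partial_\mu V^\nu$ involves only the metric $g^{\mu\nu}$, which is bounded on $\{a\le f\le b\}$. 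The $\partial_\mu V^\mu$ part contributes terms of order $\Cov_f$-traces of size $O(p)$, giving $p/n=o(1)$. For Dirichlet, $\nabla\Div_\pi V$ is a combination of $\norm{m}_1(\hat f+f - \cdots)$ and $\nabla\int f^2$, so $\abs{\inner{\nabla\Div_\pi V|V}}\lesssim p\norm{m}_\infty + O(1)$, which is $o(n)$ and gives rate $p/n\lesssim\sqrt{p\ln n/n}$. Finally, Theorem \ref{theorem:generic_bvm} assembles these bounds, and Proposition \ref{proposition:efficiency_estimator_squared_norm_histograms} gives the efficiency interpretation when $s>1/4$ and $n^{1/(4s)}\ll p$.
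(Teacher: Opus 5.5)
Your route is the paper's: the second item of Theorem~\ref{theorem:generic_bvm} with $V=\hat f+f-P_f\c{\hat f+f}$ (i.e.\ $V^\mu=\hat\eta_\mu+\eta_\mu$ in $\theta$--$\eta$ coordinates), the cutoffs of Corollary~\ref{corollary:linear_functionals_histograms}, $\inner{\nabla\Phi|V}=\Cov_f(2f,\hat f+f)$, and the same order bounds on $\inner{\nabla\Div_\pi V|V}$ ($\sigma_-^{-2}$-type for the Haar-series prior, $O(p)$ for Dirichlet). Two bookkeeping points need fixing, though neither changes the approach. First, with $\Phi=\int f^2$ you get $\hat\Phi-\hat\Psi_0=p/n$, so $\sqrt n A_2=p/\sqrt n$ does not vanish in the relevant regime $p\gg\sqrt n$; you must take $\Phi=\int f^2-p/n$ as the paper does, and then $A_2=0$ and the $-2p/n$ comes half from this shift and half from $n^{-1}\Div_\pi V=n^{-1}(p-\int f^2-\cdots)$ rather than entirely from the divergence. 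Second, your concrete $V$ is not $\nabla\Phi+\frac{1}{2n}\nabla^2\Phi.\nabla\ell_n$ with the Levi-Civita Hessian, since that choice leaves a $T$-dependent remainder of order $\norm{\hat f-f}_2^2$ in $\hat\Phi$ which is not uniformly $o(n^{-1/2})$; it is the paper's $V^\mu=\nabla^\mu\Phi+\frac12\p{\nabla^{\mu\nu}\Phi-\frac12T^{\mu\nu\rho}\partial_\rho\Phi}\Pro_n\c{\partial_\nu\ell}$.
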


\begin{remark}
It can be checked that the $\p{1 + \frac{1}{n}}$ factors in the corrected functionals in Corollary \ref{corollary:squared_norm_histograms} can be omitted, as $\frac{1}{n} \int f^2$ and $\frac{p}{n^2}$ are uniformly $o_p\p{n^{-1/2}}$ on a posterior contraction set (for instance here a supremum norm ball of vanishing radius around $\hat f$, see Lemma \ref{lemma:supremum_norm_contraction_rates_histograms}).
\end{remark}

Like Corollaries \ref{corollary:squared_norm_functional_white_noise} \& \ref{corollary:linear_functionals_histograms}, Corollary \ref{corollary:squared_norm_histograms} illustrates clearly the effect of the divergence correction. For both priors, the correction term is the sum of a constant bias $2p/n$ with a non-constant, prior-dependent term, and allows us to relax the assumption $s>1/2$ that a direct application of Corollary \ref{corollary:bvm_without_bias} or the strategy in \cite{castilloBernsteinVonMisesTheorem2015} would require. Using conjugacy and Dirichlet moment calculations it can be checked that the divergence corrected functional has a posterior mean exactly equal to $\hat \Psi_0$ (as explained in Section \ref{section:general_results} in more generality), so that the correction step allows us to completely remove posterior biases. The analogous result for priors of type \ref{prior:independent_haar_series_histograms} confirms that this is not a consequence of conjugacy but of the divergence correction technique itself.

\begin{proof}{Proof of Corollary \ref{corollary:squared_norm_histograms}}\\
	The proof is based on an application of Theorem \ref{theorem:generic_bvm} to the functional $\Phi = \int f_\theta^2 - \frac{p}{n} = \int h_\omega^2 - \frac{p}{n}$ and the vector field
	\[
	V^\mu = \nabla^\mu \Phi + \frac{1}{2} \p{\nabla^{\mu \nu} \Phi - \frac{1}{2} T^{\mu \nu \rho} \partial_\rho \Phi} \Pro_n \c{\partial_\nu \ell}.
	\]
	Moreover we construct the cutoff functions $\chi_n : \Theta \to \c{0,1}$ as in Corollary \ref{corollary:linear_functionals_histograms}, so that in particular $\chi_n = 1$ on $A_n = \b{f \in \HH_p : \norm{f - \hat f}_\infty \leq \varepsilon_n}, \quad \Supp\p{\chi_n} \subset A_n' = \b{f \in \HH_p : \norm{f - \hat f}_\infty \leq 2\varepsilon_n}$ and $\abs{\nabla \chi_n} \leq \norm{\chi'}_\infty \p{p/\varepsilon_n}$ where $\varepsilon_n = M\sqrt{\p{p \ln n}/n}$. Using Proposition \ref{proposition:squared_norm_functional_histograms} we have $V = \hat f + f_\theta - P_\theta \c{\hat f + f_\theta} = \hat f + h_\omega - P_\omega \c{\hat f + h_\omega}$, therefore
	\begin{align*}
	\ind{E_n} \sup_{\Supp\p{\chi_n}} \abs{V} & = \ind{E_n} \sup_{\Supp\p{\chi_n}} P_\theta \c{V^2}^{1/2} \leq \ind{E_n} \sup_{\Supp\p{\chi_n}} P_\theta \c{\p{\hat f + f_\theta}^2}^{1/2} \\
	& \leq \ind{E_n} \sup_{\Supp\p{\chi_n}} P_\theta \c{\p{2\hat f - 2\pi_p\c{f_0} + f_\theta - \hat f}^2}^{1/2} + 2\norm{\pi_p\c{f_0}}_\infty \leq 2\norm{f_0}_\infty + 2\varepsilon_n \leq 4\norm{f_0}_\infty
	\end{align*}
	for large $n$. Therefore by Cauchy--Schwarz inequality we also have $\ind{E_n} \sup_{\Supp\p{\chi_n}} \abs{\inner{\nabla \chi_n | V}} \leq 4\norm{f_0}_\infty\norm{\chi'}_\infty \p{p/\varepsilon_n}$. But $\chi_n = 1$ on $A_n$ and $\ind{E_n} \Pi_n \c{A_n^c} \leq n^{-K}$ with $K$ fixed but that can be taken arbitrarily large if $M$ is itself accordingly chosen, thus $\ind{E_n} \Pi_n \c{\abs{\inner{\nabla \chi_n | V}}} \leq 4\norm{f_0}_\infty\norm{\chi'}_\infty \p{p/\varepsilon_n} \ind{E_n} \Pi_n \c{A_n^c} \leq 4\norm{f_0}_\infty\norm{\chi'}_\infty \p{p/\varepsilon_n} n^{-K}$, which is less than $n^{-K/2}$ for large enough $K$. Since $\Pro_0\c{E_n} \to 1$ this shows in particular that $\Pi_n\c{\abs{\inner{\nabla \chi_n | V}}} = O_p\p{n^{-K/2}} = o_p\p{\sqrt{n}}$. Furthermore by Proposition \ref{proposition:squared_norm_functional_histograms} we have $\hat \Phi = \int \hat f^2 - \frac{p}{n} = \hat \Psi_0$, so that $\sup_{\Supp\p{\chi_n}} \abs{\hat \Phi - \hat \Psi_0} = 0 = o_p(n^{-1/2})$. To conclude we thus need to bound
	\[
	\sup_{\Supp\p{\chi_n}} \abs{\sigma^2 - \inner{\nabla \Phi | V} + \frac{1}{n} \inner{\nabla \Div_\pi V | V}}
	\]
	for both priors. For this we first observe that, using the expressions for $V$ and $\nabla \Phi$ given in Proposition \ref{proposition:squared_norm_functional_histograms},
	\[
	\inner{\nabla \Phi | V} = P_\theta \c{\p{\nabla \Phi}V} = 2\int \p{f_\theta - \int f_\theta^2}\p{\hat f + f_\theta - \int \p{\hat f + f_\theta}f_\theta} f_\theta.
	\]
	and since $\Supp\p{\chi_n} \subset A_n'$, by definition of the sets $A_n',E_n$ and because $\norm{\pi_p\c{f_0} - f_0}_\infty \to 0$ (since $f_0 \in \CC^0$ and $p \gg 1$, we have
	\begin{align*}
	\ind{E_n} \sup_{\Supp\p{\chi_n}} \abs{\sigma^2 - \inner{\nabla \Phi | V}} & = \ind{E_n} \sup_{\Supp\p{\chi_n}} \abs{4\int \p{f_0 - \int f_0^2}^2 - 2\int \p{f_\theta - \int f_\theta^2}\p{\hat f + f_\theta - \int \p{\hat f + f_\theta}f_\theta} f_\theta}
	\end{align*}
	which (as in the proof of Corollary \ref{corollary:linear_functionals_histograms}) is less than a multiple of $p^{-s} + \sqrt{\frac{p \ln n}{n}}$, because $\Supp\p{\chi_n} \subset A_n'$ and on the event $E_n$ the set $A_n'$ is contained in another set of the form $\b{f \in \HH_p : \norm{f - f_0}_\infty \leq C\varepsilon_n}, \quad \varepsilon_n = p^{-s} + \sqrt{\frac{p \ln n}{n}}$ for some $C>0$. Because $\Pro_0 \c{E_n} \to 1$ this proves that $\sup_{\Supp\p{\chi_n}} \abs{\sigma^2 - \inner{\nabla \Phi | V}} = O_p\p{p^{-s} + \sqrt{\frac{p \ln n}{n}}} = o_p(1)$. To conclude it thus suffices to show that for both priors we have
	\[
	\sup_{\Supp\p{\chi_n}} \abs{\inner{\nabla \Div_\pi V | V}} = o_p(n),
	\]
	which we do by distinguishing between the two cases.
	\begin{enumerate}
		\item For a prior of type \ref{prior:independent_haar_series_histograms}, Proposition \ref{proposition:squared_norm_functional_histograms} yields
		\begin{align*}
		& \abs{\inner{\nabla \Div_\pi V | V}} \\
		& \leq 4\p{1 + \norm{H'}_\infty + \norm{H''}_\infty + \sigma_-^{-2}}\p{1 + \norm{f_\theta - \hat f}_\infty + \norm{\hat f - \pi_p\c{f_0}}_\infty + \norm{f_0}_\infty}^2,
		\end{align*}
		therefore, using $\Supp\p{\chi_n} \subset A_n'$ and the definition of the sets $A_n',E_n$,
		\begin{align*}
			& \ind{E_n} \sup_{\Supp\p{\chi_n}} \abs{\inner{\nabla \Div_\pi V | V}} \leq \ind{E_n} \sup_{A_n'} \abs{\inner{\nabla \Div_\pi V | V}} \\
			& \leq 4\p{1 + \norm{H'}_\infty + \norm{H''}_\infty + \sigma_-^{-2}} \ind{E_n} \sup_{A_n'} \p{1 + \norm{f_\theta - \hat f}_\infty + \norm{\hat f - \pi_p\c{f_0}}_\infty + \norm{f_0}_\infty}^2 \\
			& \leq 4\p{1 + \norm{H'}_\infty + \norm{H''}_\infty + \sigma_-^{-2}} \p{1 + \norm{f_0}_\infty + 4\varepsilon_n}^2 \\
			& \leq 4\p{1 + \norm{H'}_\infty + \norm{H''}_\infty + \sigma_-^{-2}} \p{2 + \norm{f_0}_\infty}^2
		\end{align*}
		for large $n$. Because $\Pro_0 \c{E_n} \to 1$ and $\sigma_- \gg n^{-1/2}$ by assumption, we obtain
		\[
		\ind{E_n} \sup_{\Supp\p{\chi_n}} \abs{\inner{\nabla \Div_\pi V | V}} = O_p\p{\sigma_-^{-2}} = o_p(n).
		\]
		We can therefore apply Theorem \ref{theorem:generic_bvm}, which implies that $d_{BL}\p{\Pi_n \circ Z_n^{-1}, \NN\p{0,\sigma^2}} = o_p(1)$ where $Z_n = \sqrt{n}\p{\Phi_a - \frac{1}{n} \Div_\pi V - \hat \Psi_0} $, which proves the desired result since using Proposition \ref{proposition:squared_norm_functional_histograms} we have
		\[
		\Phi_a - \frac{1}{n} \Div_\pi V = \p{1 + \frac{1}{n}} \int f_\theta^2 - \frac{2p}{n} + \frac{1}{n} \sum_{\mu = 1}^{p-1} \sigma_\mu^{-1} H'\p{\theta^\mu/\sigma_\mu} \p{\hat \eta_\mu + \eta_\mu}.
		\]
		\item For a prior of type \ref{prior:dirichlet}, Proposition \ref{proposition:squared_norm_functional_histograms} yields
		\[
		\abs{\inner{\nabla \Div_\pi V | V}} \leq 6p \p{1 + \norm{m}_\infty} \p{1 + \norm{h_\omega - \hat f}_\infty + \norm{\hat f - \pi_p\c{f_0}}_\infty + \norm{f_0}_\infty}^2
		\]
		therefore, using $\Supp\p{\chi_n} \subset A_n'$ and the definition of the sets $A_n',E_n$,
		\begin{align*}
		& \ind{E_n} \sup_{\Supp\p{\chi_n}} \abs{\inner{\nabla \Div_\pi V | V}} \leq \ind{E_n} \sup_{A_n'} \abs{\inner{\nabla \Div_\pi V | V}} \\
		& \leq 6p \p{1 + \norm{m}_\infty} \ind{E_n} \sup_{A_n'} \p{1 + \norm{h_\omega - \hat f}_\infty + \norm{\hat f - \pi_p\c{f_0}}_\infty + \norm{f_0}_\infty}^2 \\
		& \leq 6p \p{1 + \norm{m}_\infty} \p{1 + \norm{f_0}_\infty + 4\varepsilon_n}^2 \leq 6p \p{1 + \norm{m}_\infty} \p{2 + \norm{f_0}_\infty}^2
		\end{align*}
		for large $n$. Because $\norm{m}_\infty = O(1), \quad \Pro_0 \c{E_n} \to 1$ and $p \ll n/\ln n$ in particular this implies
		\[
		\sup_{\Supp\p{\chi_n}} \abs{\inner{\nabla \Div_\pi V | V}} = O_p\p{p} = o_p(n).
		\]
		We can therefore apply Theorem \ref{theorem:generic_bvm}, which implies that $d_{BL}\p{\Pi_n \circ Z_n^{-1}, \NN\p{0,\sigma^2}} = o_p(1)$ where $Z_n = \sqrt{n}\p{\Phi_a - \frac{1}{n} \Div_\pi V - \hat \Psi_0} $, which proves the desired result since using Proposition \ref{proposition:squared_norm_functional_histograms} we have
		\[
		\Phi_a - \frac{1}{n} \Div_\pi V = \p{1 + \frac{1}{n}} \int h_\omega^2 - \frac{2p}{n} - \frac{\norm{m}_1}{n} \int \p{h_{\bar m} - h_\omega}\p{\hat f + h_\omega}.
		\]
	\end{enumerate}
\end{proof}

\paragraph{General integral functionals.} We conclude this section by investigating the case of integral functionals of the form $\int T\p{x,f_0(x)}dx$ for some function $T : \c{0,1} \times \RR \to \RR$, limiting ourselves to the ``no-bias'' case where $s > 1/2$ and $p \ll \p{n/\ln n}^{1/2}$, which allows us to recover the results of \cite{castilloBernsteinVonMisesTheorem2015} by an application of Corollary \ref{corollary:bvm_without_bias}, which yields in addition non-asymptotic quantitative estimates on the underlying bounded Lipschitz distances. In that context an efficient (oracle) estimator of $\int T\p{x,f_0(x)}dx$ is given by
\[
\hat \Psi_0 = \int T\p{x,f_0(x)}dx + \p{\Pro_n - \Pro_0}\c{\frac{\partial T}{\partial f}\p{\cdot,f_0(\cdot)}}
\]
and the efficient asymptotic variance is
\[
\sigma^2 = \int \p{\frac{\partial T}{\partial f}(x,f_0(x)) - \int \frac{\partial T}{\partial f}(x,f_0(x)) f_0(x)dx}^2 f_0(x)dx,
\]
see e.g. \cite{laurentEfficientEstimationIntegral1996,laurentEstimationIntegralFunctionals1997}. We make the following regularity assumption on $T$ :

\begin{assumption}\label{assumption:T}
$T : \c{0,1} \times \RR \to \RR$ is continuous, twice differentiable in its second argument and for some fixed $C, \eta > 0$ and $\gamma > 1/2$ we have
\[
\abs{\frac{\partial T}{\partial f}(x,f) - \frac{\partial T}{\partial f}\p{y,f}} \vee \abs{T(x,f)-T(y,f)} \leq C\abs{x-y}^\gamma, \quad \abs{T\p{x,f}} \vee \abs{\frac{\partial T}{\partial f}\p{x,f}} \vee \abs{\frac{\partial^2 T}{\partial f^2}\p{x,f}} \leq C,
\]
for any $x,y \in \c{0,1}$ and $f \in \p{\inf_{\c{0,1}} f_0 - \eta, \sup_{\c{0,1}} f_0 + \eta}$.
\end{assumption}

\begin{corollary}\label{corollary:integral_functionals_histograms}
	Assume that $s,\gamma > 1/2$, that $T$ satisfies Assumption (\ref{assumption:T}) and $p = \floor{\frac{\sqrt{n}}{\ln^2 n}}$. Then with $Z_n = \sqrt{n}\p{\int T\p{x,f(x)}dx - \hat \Psi_0}$:
	\begin{enumerate}
		\item if $\Pi$ is a prior of type \ref{prior:independent_haar_series_histograms} with $\norm{H'}_\infty < +\infty$ and $\sigma_- := \min_{0 \leq j \leq J} \sigma_j \gg \frac{1}{\sqrt{p} \ln^2 n}$ we have
		\begin{align*}
		d_{BL}\p{\Pi_n \circ Z_n^{-1}, \NN\p{0,\sigma^2}} & = O_p\p{p^{- s \wedge \gamma} \sqrt{p \ln n} + \frac{p \ln n}{\sqrt{n}} + \sqrt{n}p^{-2s} + \sqrt{n}p^{-(s+\gamma)} + \sigma_-^{-1} \sqrt{\frac{p}{n}}} \\
		& = O_p\p{\frac{1}{\ln n}} = o_p(1),
		\end{align*}
		and in particular the BvM phenomenon with centring $\hat \Psi_0$ holds for the functional $f \mapsto \int T\p{x,f(x)}dx$,
		\item if $\Pi$ is a prior of type \ref{prior:dirichlet} with parameter $m \in \p{0,\infty}^p$ such that $\norm{m}_\infty = O(1)$ we have
		\begin{align*}
			d_{BL}\p{\Pi_n \circ Z_n^{-1}, \NN\p{0,\sigma^2}} & = O_p\p{p^{- s \wedge \gamma} \sqrt{p \ln n} + \frac{p \ln n}{\sqrt{n}} + \sqrt{n}p^{-2s} + \sqrt{n}p^{-(s+\gamma)}} \\
			& = O_p\p{\frac{1}{\ln n}} = o_p(1),
		\end{align*}
		and in particular the BvM phenomenon with centring $\hat \Psi_0$ holds for the functional $f \mapsto \int T\p{x,f(x)}dx$.
	\end{enumerate}
\end{corollary}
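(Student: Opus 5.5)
We apply Corollary \ref{corollary:bvm_without_bias} to $\Psi_T(f) = \int T\p{x,f(x)}dx$ restricted to $\HH_p$, with $V = \nabla \Psi_T$. The cutoffs $\chi_n$ are the same as in Corollaries \ref{corollary:linear_functionals_histograms} and \ref{corollary:squared_norm_histograms}, built from the supremum norm contraction sets $A_n \subset \Supp\p{\chi_n} \subset A_n'$ of Lemma \ref{lemma:supremum_norm_contraction_rates_histograms}, with $\varepsilon_n = M\sqrt{p\ln n/n}$.

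\textbf{Gradient, $A_0$, $A_1$ and $A_3$.} For histograms $h_\omega$, differentiating in $\omega$ and using $\nabla^\mu = \partial^\mu$ in the $\alpha$--$\omega$ coordinates gives
\[
\nabla \Psi_T = a_f - P_f\c{a_f}, \qquad a_f := \pi_p\c{\tfrac{\partial T}{\partial f}\p{\cdot,f(\cdot)}}.
\]
This is the linear-functional gradient with $a$ replaced by the piecewise average $a_f$ of $\partial_f T(\cdot,f)$. On the event $E_n$ and on $A_n'$ we have $\norm{f-f_0}_\infty \lesssim p^{-s} + \varepsilon_n \to 0$, so $f$ takes values in the $\eta$-neighbourhood of Assumption \ref{assumption:T}. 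All bounds on $T$ and its derivatives therefore apply, and in particular $\abs{\nabla \Psi_T} \le C$. Then:
\begin{itemize}
\item $A_0 = o_p(1)$ follows exactly as before.
\item $A_1 \lesssim (p/\varepsilon_n) n^{-K}$ follows by Cauchy--Schwarz and $\chi_n \equiv 1$ on $A_n$.
\item For $A_3$, compare $\abs{\nabla\Psi_T}^2 = \Var_f(a_f)$ with $\sigma^2 = \Var_{f_0}\p{\partial_f T(\cdot,f_0)}$. Using the H\"older-$\gamma$ continuity in $x$ and the Lipschitz continuity in $f$ (from the bound on $\partial^2_f T$), we get $\norm{a_f - \partial_fT(\cdot,f_0)}_\infty \lesssim p^{-\gamma} + \norm{f-f_0}_\infty$. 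Hence $A_3 = O_p\p{p^{-s\wedge\gamma} + \sqrt{p\ln n/n}}$.
\end{itemize}

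\textbf{Bias term $A_2$.} We split it into $\hat\Psi_T - \hat\Psi_0$ and $n^{-1}\Delta_\pi\Psi_T$. Since $\inner{\nabla\Psi_T|\nabla \ell_n} = n\Pro_n\c{\nabla \Psi_T} = n(\Pro_n - P_f)\c{a_f}$, we have $\hat\Psi_T = \Psi_T(f) + (\Pro_n - P_f)\c{a_f}$. Subtracting $\hat\Psi_0$ and Taylor expanding $T$ in its second argument at $f_0$ leaves:
\begin{itemize}
\item a quadratic remainder $\le C\norm{f-f_0}_2^2$;
\item a cross term $\int \partial_fT(\cdot,f_0)(f-f_0) - P_f\c{a_f} + P_0\c{\partial_fT(\cdot,f_0)}$, which reduces by $L^2$ orthogonality of $\pi_p$ to $O(p^{-(s+\gamma)})$ plus quadratic terms;
\item an empirical process term $(\Pro_n - \Pro_0)\c{a_f - \partial_f T(\cdot,f_0)}$.
\end{itemize}
The first two contribute $\sqrt n\p{p^{-2s} + p\ln n/n + p^{-(s+\gamma)}}$ to $\sqrt n A_2$. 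The third must be controlled uniformly over $f \in A_n'$. Write $a_f - \pi_p\c{\partial_f T(\cdot,f_0)} = \sum_\mu c_\mu(f) \ind{I_\mu}$ with $\abs{c_\mu} \lesssim \norm{f-f_0}_\infty$. Then $\sup_\mu \sqrt{n}\abs{(\Pro_n-\Pro_0)\ind{I_\mu}} \lesssim \sqrt{\ln n/p}$ with high probability, and summing over $p$ cells gives a contribution of order $\sqrt{p\ln n}\,\p{p^{-s} + \varepsilon_n}$. The fixed part $(\Pro_n - \Pro_0)\c{\pi_p\c{\partial_fT(\cdot,f_0)} - \partial_fT(\cdot,f_0)}$ is $O_p\p{n^{-1/2}p^{-\gamma}}$. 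This accounts for the terms $p^{-s\wedge\gamma}\sqrt{p\ln n}$ and $p\ln n/\sqrt n$ in the stated rate.

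\textbf{Laplacian term, the main obstacle.} We still need $\sup_{A_n'} n^{-1/2}\abs{\Delta_\pi \Psi_T} = o(1)$. In $\alpha$--$\omega$ coordinates,
\[
\Delta_\pi\Psi_T = \partial_\mu\nabla^\mu\Psi_T + \p{\partial_\mu \ln \Pi}\nabla^\mu\Psi_T,
\]
where $\partial_\mu$ is taken in $\omega$ with $\nabla^\mu\Psi_T = \partial^\mu \Psi_T$ contravariant. This needs explicit second derivatives of $\Psi_T$ and of the prior density.
\begin{itemize}
\item \emph{Second-derivative part.} $\partial^{\mu\mu}\Psi_T$ involves $p\,\partial^2_fT$ on cells $I_\mu$ and $I_p$, while the metric factors give $\omega_\mu \approx 1/p$. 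The trace is then $O(p)$, contributing $p/\sqrt n$ to $\sqrt n A_2$.
\item \emph{Dirichlet prior.} Here $\partial_\mu \ln\Pi = (m_\mu-1)/\omega_\mu - (m_p-1)/\omega_p$. Paired with $\nabla^\mu\Psi_T$ as in Proposition \ref{proposition:linear_functionals_histograms_2}, this gives $O(p\norm{m}_\infty)$.
\item \emph{Prior \ref{prior:independent_haar_series_histograms}.} Working in $\theta$ coordinates, the prior term is $\sum_\mu \sigma_\mu^{-1}H'\p{\theta^\mu/\sigma_\mu}\inner{u_\mu|a_f}_2$. Cauchy--Schwarz gives a bound of order $\sigma_-^{-1}\sqrt p$, contributing $\sigma_-^{-1}\sqrt{p/n}$.
\end{itemize}
The delicate point is the Jacobian/Christoffel bookkeeping in the curvature part, checking that the contracted term really is $O(p)$ uniformly on $A_n'$, where $h_\omega$ is bounded away from $0$. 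I would compute it directly via the $\alpha$--$\omega$ formula $\Delta_\pi\Phi = \partial^\mu\nabla_\mu\ldots$ adapted from the linear case. With $p = \floor{\sqrt n/\ln^2 n}$ and $s,\gamma>1/2$, every listed term is $O(1/\ln n)$, and Corollary \ref{corollary:bvm_without_bias} concludes.
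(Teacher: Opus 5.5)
Your architecture matches the paper's: Corollary~\ref{corollary:bvm_without_bias} with $V=\nabla\Psi_T$, the same cutoffs, $\nabla\Psi_T=a_f-P_f[a_f]$, and an explicit bound on $\Delta_\pi\Psi_T$. However, the step you flag as delicate is where your sketch goes wrong for the Dirichlet prior.

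\textbf{The Dirichlet prior term.} The coordinate formula $\Delta_\pi\Phi=\partial_\mu\nabla^\mu\Phi+(\partial_\mu\ln\Pi)\nabla^\mu\Phi$ is valid only when $\partial_\mu$, the components $\nabla^\mu\Phi$ and the Lebesgue density $\Pi$ all refer to the same chart. You take $\nabla^\mu\Psi_T=\partial\Psi_T/\partial\omega_\mu$, which are the contravariant components in the $\alpha$ chart. You compute the trace part as $g_{\mu\nu}\partial^{\mu\nu}\Psi_T$, i.e.\ with $\partial_\mu=\partial/\partial\alpha^\mu$. But for the prior you take $(m_\mu-1)/\omega_\mu-(m_p-1)/\omega_p$, which is the $\omega$-gradient of the log of the Dirichlet density in the $\omega$ chart. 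This hybrid is not $\Delta_\pi\Psi_T$, and it has the wrong size: on $A_n'$ each coefficient is of order $p$, and in general nothing cancels. For instance, with $m_\mu\equiv m\neq 1$ and $T(x,f)=f^2$, your pairing equals $-2(m-1)p\sum_{\mu<p}(f_\mu-f_p)^2/(f_\mu f_p)\asymp p^2$ for non-constant $f_0$, where $f_\mu$ is the value of $f$ on $I_\mu$. Since $n^{-1/2}p^2=\sqrt n/\ln^4 n\to\infty$, condition~1 of Corollary~\ref{corollary:bvm_without_bias} would not follow from what you wrote.

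The missing ingredient is the Jacobian, and it sits in the prior term rather than the trace term. In the $\alpha$ chart the Dirichlet density is $\Pi(\alpha)\propto\prod_{\mu=1}^p\omega_\mu^{m_\mu}$, because $\abs{\partial\omega/\partial\alpha}=\abs{g(\alpha)}=\prod_{\mu}\omega_\mu$ (Lemma~\ref{lemma:determinant_metric_tensor}) absorbs the exponents $-1$. Hence $\partial_{\alpha^\mu}\ln\Pi=m_\mu-\norm{m}_1\omega_\mu$ (Proposition~\ref{proposition:density_dirichlet_alpha}), and the prior term becomes $\norm{m}_1\int(h_{\bar m}-f)\,\partial_fT(x,f(x))\,dx=O(p\norm{m}_\infty)$. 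The trace part needs no Christoffel bookkeeping at all: $\partial_{\alpha^\mu}\partial^\mu\Psi_T=g_{\mu\nu}\partial^{\mu\nu}\Psi_T=\int f(p-f)\,\partial_f^2T(x,f(x))\,dx$ exactly (Proposition~\ref{proposition:delta_pi_Phi_integral_functionals_histograms}). For the prior of type~\ref{prior:independent_haar_series_histograms}, your switch between the $\theta$ and $\alpha$ charts is harmless because they are affinely related. Your Cauchy--Schwarz bound $\sigma_-^{-1}\sqrt p$ is a valid substitute for the paper's wavelet-localisation bound.

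\textbf{Two smaller points in the bias term.} First, your three pieces do not sum to $\hat\Psi_T-\hat\Psi_0$. The cross term must be $\int\partial_fT(\cdot,f_0)(f-f_0)-P_f[a_f]+\Pro_0[a_f]=\int(\partial_fT(\cdot,f_0)-a_f)(f-f_0)$. With $+\Pro_0[\partial_fT(\cdot,f_0)]$ as you wrote it, the term equals $\int(\partial_fT(\cdot,f_0)-a_f)f$, which is first order in $\norm{f-f_0}_\infty$. After this correction, your orthogonality argument gives $O(p^{-(s+\gamma)})$ plus quadratic terms, as you claim. Second, $x\mapsto\partial_fT(x,f_0(x))$ is only $(s\wedge\gamma)$-H\"older, so the fixed empirical term is $O_p(n^{-1/2}p^{-(s\wedge\gamma)})$; this is harmless.

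Apart from these, your empirical-process step is correct and simpler than the paper's. Splitting $a_f-\partial_fT(\cdot,f_0)=\pi_p[\partial_fT(\cdot,f)-\partial_fT(\cdot,f_0)]+(\pi_p-I)[\partial_fT(\cdot,f_0)]$ puts all the $f$-dependence in $\FF_p$, with cell coefficients bounded by $C\norm{f-f_0}_\infty$. Uniformity over $A_n'$ then follows from a single bound on $\sum_\mu\abs{(\Pro_n-\Pro_0)\ind{I_\mu}}$. Lemma~\ref{lemma:bound_hat_Phi_integral_functionals_histograms} instead uses a $\delta$-net of $B_n$, a sub-Gaussian maximal inequality and an extra lower constraint on $p$ to reach a contribution of the same order.
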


The condition $\sigma_- \gg \frac{1}{\sqrt{p} \ln^2 n}$ is satisfied for instance with $\sigma_j = 2^{-j/2}$ and the condition $\norm{m}_\infty = O(1)$ essentially coincides with the one considered in \cite{castilloBernsteinVonMisesTheorem2015} in the case of Dirichlet priors. Although not explicitly worked out in \cite{castilloBernsteinVonMisesTheorem2015}, non-quantitative BvM results for priors of type \ref{prior:independent_haar_series_histograms} can be derived in this setting using the Laplace transform approach \cite{castilloBernsteinVonMisesTheorem2015}, but an application of Corollary \ref{corollary:bvm_without_bias} instead allows precisely to obtain these non-asymptotic and quantitative estimates.

Corollary \ref{corollary:integral_functionals_histograms} has a somewhat different purpose from Corollaries \ref{corollary:squared_norm_functional_white_noise} - \ref{corollary:squared_norm_histograms} since we restrict our case to the classical no-bias regime $s \wedge \gamma > 1/2$ via an application of Corollary \ref{corollary:bvm_without_bias} i.e. no functional correction. Compared to the consequences of the Laplace transform approach the interest of the method is therefore quantitative: Corollary \ref{corollary:integral_functionals_histograms} directly implies for both priors a high-probability bound on the bounded Lipschitz distance of the form
\[
d_{BL}\p{\Pi_n \circ Z_n^{-1}, \NN\p{0,\sigma^2}} = O_p\p{\frac{1}{\ln n}},
\]
which is not a direct consequence of the posterior Laplace transform approach. Extending these results to the regime $s>1/4$ (as done in \cite{laurentEfficientEstimationIntegral1996,laurentEstimationIntegralFunctionals1997} using frequentist techniques) would require to design a more refined choice of vector field $V$ and functional $\Phi$ such that $\abs{\hat \Phi - \hat \Psi_0} = o_p\p{n^{-1/2}}$ uniformly on a posterior contraction set and then to consider the divergence corrected functional $\Phi - n^{-1} \Div_\pi V$; this is left for future work.

\begin{proof}
The proof follows as an application of Corollary \ref{corollary:bvm_without_bias} to $\Phi = \int T\p{x,f(x)}dx$ and $V = \nabla \Phi$. For any $K>0$ Lemma \ref{lemma:supremum_norm_contraction_rates_histograms} yields the existence of $M>0$ such that $\Pro_0\c{E_n^c} \leq n^{-K}$ and $\ind{E_n} \Pi_n \c{A_n^c} \leq n^{-K}$ where
\[
E_n = \b{\norm{\hat f - \pi_p\c{f_0}}_\infty \leq \varepsilon_n'}, \quad A_n = \b{f \in \HH_p : \norm{f - \hat f}_\infty \leq \varepsilon_n'}, \quad \varepsilon_n' := M\sqrt{\frac{p \ln n}{n}}.
\]
The cutoff functions $\chi_n$ are then constructed as in the proof of Corollary \ref{corollary:linear_functionals_histograms}; they satisfy $0 \leq \chi_n \leq 1, \quad \chi_n = 1$ on $A_n, \quad \Supp\p{\chi_n} \subset A_n'$ where
\[
A_n' := \b{f \in \HH_p : \norm{f - \hat f}_\infty \leq 2\varepsilon_n'}
\]
and $\abs{\nabla \chi_n}^2 \leq \p{\frac{p}{\varepsilon_n'}}^2 \norm{\chi'}_\infty^2$. Using Proposition \ref{proposition:nabla_phi_integral_functionals_histograms} we have
\[
\nabla \Phi_n = \pi_p \c{ \frac{\partial T}{\partial f}\p{\cdot,f(\cdot)} } - \int \pi_p \c{ \frac{\partial T}{\partial f}\p{\cdot,f(\cdot)} }(x)f(x)dx,
\]
which using assumption \ref{assumption:T} ($T$ is $\CC^2$ in its second argument) together with $f_0 \in \CC^s$ (which implies $\norm{f_0 - \pi_p\c{f_0}}_\infty \to 0$) and the definition of $\sigma^2$ can be seen to imply that
\[
\sup_{f \in A_n'} \abs{\sigma^2 - \abs{\nabla \Phi}^2} \leq c\varepsilon_n, \quad \varepsilon_n = p^{-s} + \sqrt{\frac{p \ln n}{n}}
\]
for some $c>0$. Because $\Supp\p{\chi_n} \subset A_n'$ this shows that condition $2$ of Corollary \ref{corollary:bvm_without_bias} is satisfied. Since $\chi_n = 1$ on $A_n$ and $\Supp\p{\chi_n} \subset A_n'$ we also have $\Supp\p{\nabla \chi_n} \subset A_n^c \cap A_n'$, therefore using Cauchy--Schwarz inequality we obtain
\[
\Pi_n\c{\abs{\inner{\nabla \chi_n|\nabla \Phi}}} \leq \Pi_n \c{\abs{\nabla \chi_n} \abs{\nabla \Phi} \ind{A_n^c \cap A_n'}} \leq \p{\frac{p}{\varepsilon_n'}} \norm{\chi'}_\infty \p{\sigma^2 + c\varepsilon_n}\Pi_n \c{A_n^c},
\]
which implies $\ind{E_n}\Pi_n\c{\abs{\inner{\nabla \chi_n|\nabla \Phi}}} \lesssim \p{p/\varepsilon_n'}n^{-K}$. Taking $K>0$ large enough then yields $\ind{E_n}\Pi_n\c{\abs{\inner{\nabla \chi_n|\nabla \Phi}}} \leq \p{p/\varepsilon_n'}n^{-K/2}$, which shows that $\Pi_n\c{\abs{\inner{\nabla \chi_n|\nabla \Phi}}} = o_p\p{\sqrt{n}}$ since $\Pro_0\c{E_n} \to 1$. To conclude we need to show that condition $1$ of Corollary \ref{corollary:bvm_without_bias} holds. Since $\Supp\p{\chi_n} \subset A_n'$ and $\p{\ln n}^{\frac{1}{2(s \wedge \gamma) - 1}} \ll p \ll \p{n/\ln^2 n}^{1/2}$, Lemma \ref{lemma:bound_hat_Phi_integral_functionals_histograms} yields
\[
\sup_{f \in \Supp\p{\chi_n}} \abs{\hat \Phi - \hat \Psi_0} = O_p\p{p^{-2s} + p^{-(s + \gamma)} + p^{-s \wedge \gamma} \sqrt{\frac{p \ln n}{n}} + \frac{p \ln n}{n}},
\]
in particular $\sup_{f \in \Supp\p{\chi_n}} \abs{\hat \Phi - \hat \Psi_0} = o_p\p{n^{-1/2}}$ using the choice of $p$ and the fact that $s \wedge \gamma > 1/2$. It remains to bound $\sup_{\Supp\p{\chi_n}} \abs{\Div_\pi V} = \sup_{\Supp\p{\chi_n}} \abs{\Delta_\pi \Phi}$ for each prior, and we do so by distinguishing between the two cases.
\begin{enumerate}
	\item For a prior of type \ref{prior:independent_haar_series_histograms}, using Proposition \ref{proposition:delta_pi_Phi_integral_functionals_histograms} we have
	\[
	\Delta_\pi \Phi = \int \frac{\partial^2 T}{\partial f^2}\p{x,f(x)} f(x)\p{p-f(x)} dx - \int \frac{\partial T}{\partial f}\p{x,f(x)}\b{\sum_{\mu = 1}^{p-1} \sigma_\mu^{-1} H'\p{\theta^\mu/\sigma_\mu}u_\mu(x)}dx.
	\]
	Moreover by localisation properties of wavelets (see e.g. \cite{gineMathematicalFoundationsInfiniteDimensional2015}) we have
	\[
	\norm{\sum_{\mu = 1}^{p-1} \sigma_\mu^{-1} H'\p{\theta^\mu/\sigma_\mu}u_\mu}_\infty \lesssim \sum_{j = 0}^J \sigma_j^{-1} 2^{j/2} \norm{H'}_\infty,
	\]
	and since on the event $E_n$ we have $\Supp\p{\chi_n} \subset A_n'$ and $A_n'$ is itself contained in a uniform neighbourhood of $f_0$ with vanishing radius, using Assumption \ref{assumption:T} we find
	\[
	\ind{E_n} \sup_{\Supp\p{\chi_n}} \abs{\Delta_\pi \Phi} \leq c\p{p + \sum_{j = 0}^J 2^{j/2} \sigma_j^{-1}} \leq c\p{p + \sigma_-^{-1} \sqrt{p}}
	\]
	for some fixed constant $c>0$. Because $\Pro_0\c{E_n} \to 1$ and by assumption $\sigma_- \gg \frac{1}{\sqrt{p} \ln^2 p} \asymp \sqrt{\frac{p}{n}}$ this implies that $\sup_{\Supp\p{\chi_n}} \abs{\Delta_\pi \Phi} = O_p\p{\sigma_-^{-1}\sqrt{p}} = o_p\p{\sqrt{n}}$, since $p \ll \sqrt{n}$.
	\item For a prior of type \ref{prior:dirichlet}, using Proposition \ref{proposition:delta_pi_Phi_integral_functionals_histograms} we have
	\[
	\Delta_\pi \Phi = \int f(x)\p{p - f(x)} \frac{\partial^2 T}{\partial f^2}\p{x,f(x)}dx + \norm{m}_1 \int \p{h_{\bar m}(x) - f(x)} \frac{\partial T}{\partial f}\p{x,f(x)}dx,
	\]
	where $\bar m = m/\norm{m}_1$. As in the first case since on the event $E_n$ we have $\Supp\p{\chi_n} \subset A_n'$ and $A_n'$ is itself contained in a uniform neighbourhood of $f_0$ with vanishing radius, using Assumption \ref{assumption:T} together with $\norm{m}_1 \leq p \norm{m}_\infty = O(p)$ we find $\ind{E_n} \sup_{\Supp\p{\chi_n}} \abs{\Delta_\pi \Phi} \leq cp$ for a fixed constant $c>0$, thus showing that $\sup_{\Supp\p{\chi_n}} \abs{\Delta_\pi \Phi} = O_p\p{p} = O_p\p{\sqrt{n}/\ln n}$ since $\Pro_0\c{E_n} \to 1$.
\end{enumerate}
\end{proof}

\section{Simulation study}\label{section:simulation_study}

In this section we illustrate the effect of the divergence correction in the white-noise and density models. We limit ourselves to Gaussian and Dirichlet priors, so that all posterior samples can be generated as exact independent draws from the posterior distributions by conjugacy. Equal-tailed $95\%$ credible intervals are computed from $2\,000$ posterior draws, and their coverage is estimated over $500$ independent datasets. The posterior densities in Figure \ref{figure:simulation_posterior_densities} correspond to representative datasets and are computed from $20\,000$ posterior draws.

\paragraph{White noise.} We take $N=10^4$, $p=\floor{N/\ln N}=1\,085$ and Gaussian product priors $\theta^\mu \sim \NN(0,\sigma_\mu^2)$. We compare the posterior distributions of the uncorrected functional $\norm{\theta}_2^2$, the constant-shift correction (as introduced in Section \ref{section:proofs_white_noise})
\[
\norm{\theta}_2^2-\frac{2p}{N}+\frac{1}{2\sqrt N}\sum_{\mu=1}^p A_\mu,
\qquad
A_\mu=\frac{2(3N\sigma_\mu^2+2)}{\sqrt N(N\sigma_\mu^2+1)^2}
\]
and our divergence-corrected functional
\[
\sum_{\mu=1}^p\left\{\theta^{\mu2}+N^{-1}\sigma_\mu^{-2}\theta^\mu(Y^\mu+\theta^\mu)\right\}-\frac{2p}{N}.
\]
For each dataset we sample directly from
$\theta^\mu\mid Y^\mu\sim\NN\bigl(NY^\mu/(N+\sigma_\mu^{-2}),(N+\sigma_\mu^{-2})^{-1}\bigr)$.

In the first experiment, $\sigma_\mu^2=p/N^{3/2}$, so that $\sigma_\mu\simeq0.0329$, and the truth is $\theta_0^\mu=0.7\sigma_\mu\simeq0.0231$ for $1\leq \mu\leq p$ and zero otherwise. Thus $\norm{\theta_0}_2^2\simeq0.577$ and
\[
\sum_{\mu=1}^p\sigma_\mu^{-2}=10^6=N^{3/2},
\qquad
\sum_{\mu=1}^p\sigma_\mu^{-2}\theta_0^{\mu 2}=0.49p\simeq531.7=5.317\sqrt N.
\]
More generally, along the sequence $p=\floor{N/\ln N}$ the ratios of these sums to $N^{3/2}$ and $N^{1/2}$ are respectively $1$ and asymptotic to $0.49\sqrt N/\ln N\to\infty$. Thus neither term in Condition (\ref{equation:correct_condition_CR15}) is negligible.

In the second experiment, we instead take
\[
\sigma_1^2=1,\qquad \theta_0^1=1,\qquad
\sigma_\mu^2=\frac{p-1}{N^{3/2}-1},\quad \theta_0^\mu=0,\quad 2\leq\mu\leq p.
\]
Then $\norm{\theta_0}_2^2=1$ and, exactly,
\[
\sum_{\mu=1}^p\sigma_\mu^{-2}=N^{3/2},
\qquad
\sum_{\mu=1}^p\sigma_\mu^{-2}\theta_0^{\mu2}=1=o(\sqrt N).
\]
Thus the total prior precision remains at its critical scale, but the signal-dependent term is negligible. The constant-shift and divergence corrections are consequently expected to give similar results.

\paragraph{Density model.} We use the same experiment for the linear and quadratic functionals. We generate $n=1\,000$ observations on $[0,1]$ from
\[
f_0(x)=0.2+1.6x
\]
and use histograms with $p=64$ equal bins and the uniform Dirichlet prior $\omega\sim\Dir(1,\ldots,1)$ on their probabilities. Thus $\norm{m}_1=p$ and $h_{\bar m}=1$. The chosen linear functional is $\Psi_L(f)=\int_0^1xf(x)dx$, with true value $\Psi_L(f_0)=19/30$. We compare it with the divergence corrected functional
\[
\widetilde\Psi_L(f)=\int_0^1xf(x)dx-\frac{p}{n}\int_0^1x\p{1-f(x)}dx.
\]
For the quadratic functional $\Psi_Q(f)=\int_0^1f^2$, whose true value is $\Psi_Q(f_0)=91/75$, the divergence corrected version is
\[
\widetilde\Psi_Q(f)=\left(1+\frac1n\right)\int_0^1f^2-\frac{2p}{n}
-\frac{p}{n}\int_0^1\p{1-f}(\hat f+f).
\]
Writing $N_\mu$ for the observations in bin $I_\mu$, posterior draws are generated exactly from $\omega\mid X^n\sim\Dir(1+N_1,\ldots,1+N_p)$.

\begin{table}[ht]
\centering
\begin{tabular}{llcc}
\hline
Experiment & Posterior functional & Coverage & Monte Carlo s.e. \\
\hline
White-noise quadratic I & Uncorrected & $0.000$ & $0.000$ \\
 & Constant shift & $0.000$ & $0.000$ \\
 & Divergence corrected & $0.970$ & $0.008$ \\
White-noise quadratic II & Uncorrected & $0.000$ & $0.000$ \\
 & Constant shift & $0.952$ & $0.010$ \\
 & Divergence corrected & $0.954$ & $0.009$ \\
Density linear & Uncorrected & $0.846$ & $0.016$ \\
 & Divergence corrected & $0.966$ & $0.008$ \\
Density quadratic & Uncorrected & $0.054$ & $0.010$ \\
 & Divergence corrected & $0.984$ & $0.006$ \\
\hline
\end{tabular}
\caption{Estimated frequentist coverage of equal-tailed $95\%$ posterior credible intervals over $500$ independent datasets.}
\label{table:simulation_coverage}
\end{table}

Table \ref{table:simulation_coverage} and Figure \ref{figure:simulation_posterior_densities} display the same pattern. In the first white-noise experiment, the constant shift does not remove the non-negligible signal-dependent prior bias, while the divergence correction restores approximately nominal coverage. In the second experiment, where this signal-dependent term is negligible, the constant-shift and divergence corrections have nearly identical coverage. The divergence correction also substantially improves coverage for both density functionals.

The datasets displayed in Figure \ref{figure:simulation_posterior_densities} are selected by a fixed rule rather than by visual inspection. For every repetition $r$ and every posterior functional $j$ being compared, let $e_{rj}$ be the posterior mean of that functional minus its true value. Across the $500$ repetitions, we compute separately for each $j$ the median $m_j$ of these errors and their median absolute deviation $s_j$. We then display the dataset minimising
\[
\sum_j\left(\frac{e_{rj}-m_j}{s_j}\right)^2.
\]
The selected dataset therefore has a vector of posterior-mean errors close to the typical error vector across the simulations, simultaneously for all methods, with the division by $s_j$ preventing a functional with a larger error scale from dominating the selection. In each white-noise experiment the vector contains the errors of the three displayed posterior functionals. In the density experiment it contains the uncorrected and divergence-corrected errors for both the linear and quadratic functionals; consequently, one dataset is selected jointly and used in both density panels.

\begin{center}
\centering
\captionsetup{type=figure,hypcap=false}
\includegraphics[width=0.9\textwidth]{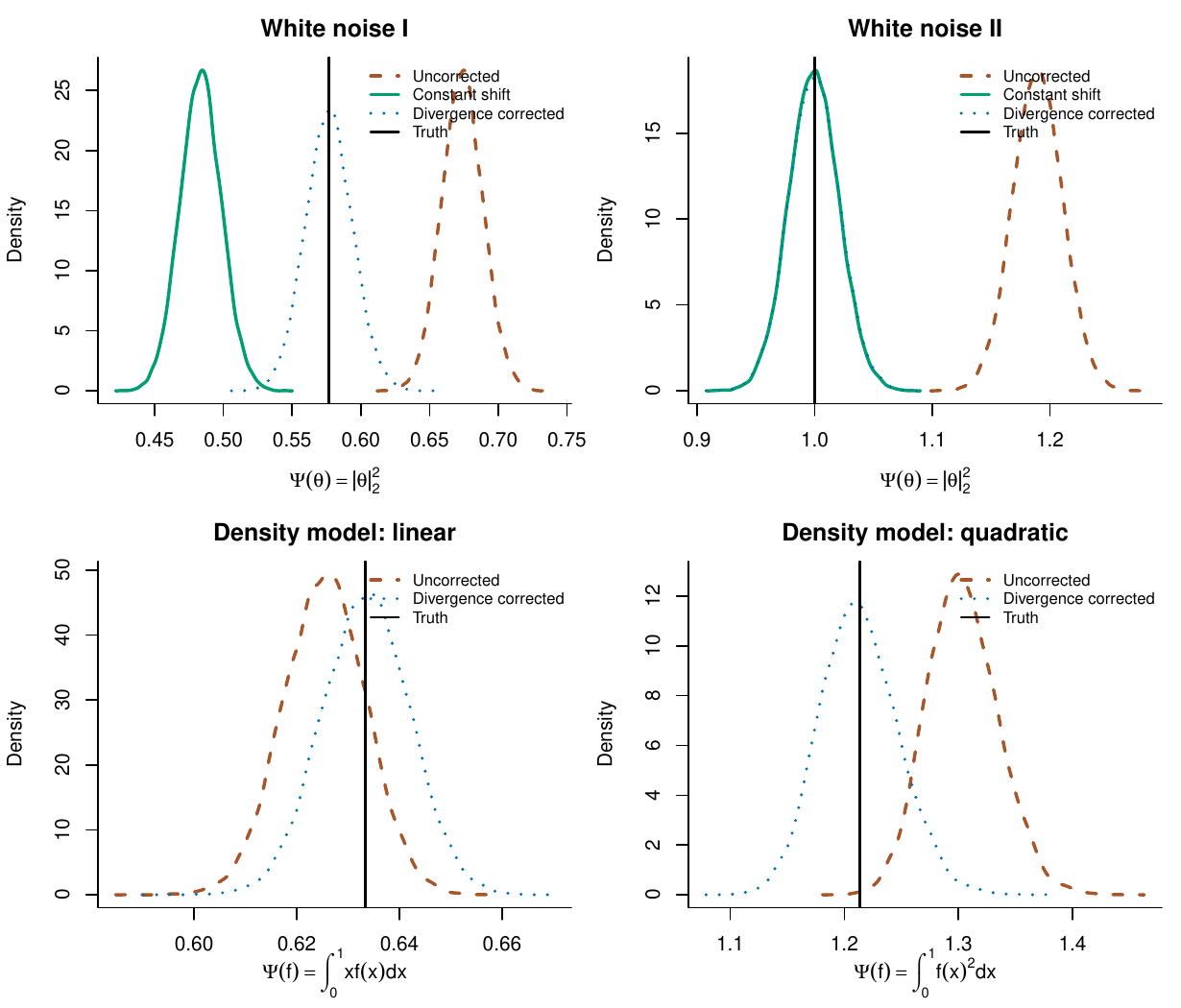}
\caption{Posterior densities for representative datasets in the two white-noise experiments and the density experiment. The vertical black line is the true functional value.}
\label{figure:simulation_posterior_densities}
\end{center}

\section{AI disclosure}

Although the author takes full responsibility for all mathematical claims in this paper, GPT-5.6 and GPT-6 were used to assist with writing, language polishing, and proofreading, as well as with the design and implementation of the simulation study in Section \ref{section:simulation_study}.

\section{Acknowledgments \& funding}

The author is grateful to Judith Rousseau and Isma\"el Castillo for helpful discussions. This work was supported by the Engineering
and Physical Sciences Research Council [Grant Ref: EP/Y028732/1].

\printbibliography

\section{Appendix}

\subsection{Lemma \ref{lemma:laplace_transform}}\label{section:proof_lemma_laplace_transform}

\begin{lemma}\label{lemma:laplace_transform}
Let $n \geq 1$ be fixed, $X^n$ be an observation, $\quad \Phi : \Theta \to \RR$ be a $\CC^1$ functional, $V$ a $\CC^1$ vector field, and $\hat \Psi_0$ an estimator (i.e. $X^n-$measurable real valued random variable). Assume that there exists a sequence $\p{\chi_l}_{l \geq 1}$ of $\CC^1$ compactly supported functions with values in $\c{0,1}$ such that, for any $t > 0$ and $l \geq 1$,
\[
e^{t\abs{\Phi}} \b{1 + \abs{\Div_\pi V} + \abs{\nabla \Phi} + \abs{V} + \abs{\inner{V|\nabla \Phi}} + \abs{\hat \Phi - \hat \Psi_0} + \abs{\inner{\nabla \chi_l | V}}} \in L^1\p{\Pi_n}.
\]
and that
\[
\chi_l \xrightarrow[l \to \infty]{} 1 \text{ pointwise and } \Pi_n \c{e^{t\sqrt{n}\p{\Phi - \hat \Psi_0}} \inner{\nabla \chi_l | V}} \to 0, \text{ for any } t \in \RR.
\]
Then the function $t \mapsto \ln \Pi_n \c{\exp\p{t \sqrt{n}\p{\Phi - \hat \Psi_0}}}$ is differentiable on $\RR$ and we have
\begin{align*}
	\frac{d}{dt} \ln \Pi_n \c{e^{t\sqrt{n}\p{\Phi - \hat \Psi_0}}} & = \Pi_n \c{\sqrt{n}\p{\Phi - \hat \Psi_0} \frac{e^{t\sqrt{n}\p{\Phi - \hat \Psi_0}}}{\Pi_n \c{e^{t\sqrt{n}\p{\Phi - \hat \Psi_0}}}}} \\
	& = \Pi_n \c{\p{t\inner{\nabla \Phi | V} + \sqrt{n} \p{\hat \Phi - \hat \Psi_0 + \frac{1}{n} \Div_\pi V}} \frac{e^{t\sqrt{n}\p{\Phi - \hat \Psi_0}}}{\Pi_n\c{e^{t\sqrt{n}\p{\Phi - \hat \Psi_0}}}}}, \quad t \in \RR,
\end{align*}
where $\hat \Phi = \Phi + \frac{1}{n} \inner{V|\nabla \ell_n}$.
\end{lemma}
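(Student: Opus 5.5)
The plan is to make the heuristic derivation of Section \ref{section:general_bvm_stein} rigorous. I will first justify differentiation under the integral, and then run the divergence-theorem identity against the compactly supported cutoffs $\chi_l$, passing to the limit $l\to\infty$ by dominated convergence.

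\emph{Step 1: differentiability.} Write $Z = \sqrt n(\Phi-\hat\Psi_0)$ and $L(t)=\Pi_n[e^{tZ}]$. Since $\hat\Psi_0$ is fixed given $X^n$, we have $|Z|e^{tZ}\le C_{t,\epsilon}\, e^{(|t|+\epsilon)\sqrt n|\Phi|}$ for $t$ in a compact interval, with $C_{t,\epsilon}$ finite. The integrability assumption (with the constant $1$ in the bracket) gives $e^{s|\Phi|}\in L^1(\Pi_n)$ for every $s>0$. So $L$ is finite, and differentiation under the integral sign is justified by dominated convergence. This gives $L'(t)=\Pi_n[Ze^{tZ}]$, and since $L>0$, $\ln L$ is differentiable with derivative equal to the first displayed expression.

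\emph{Step 2: integration by parts with a cutoff.} Fix $t$ and $l$. Apply Stokes' theorem to the compactly supported $\CC^1$ vector field $\pi_n e^{tZ}\chi_l V$, i.e. use $\int \Div(\pi_n e^{tZ}\chi_l V)\,d\vol_\Theta = 0$. Expanding with the product rule, and using $\Div_{\pi_n}V=\Div_\pi V+\inner{V|\nabla\ell_n}$ and $\nabla e^{tZ} = t\sqrt n e^{tZ}\nabla\Phi$, we get
\[
\Pi_n\c{\chi_l e^{tZ}\p{\Div_\pi V+\inner{V|\nabla \ell_n} + t\sqrt n\inner{\nabla\Phi|V}}} + \Pi_n\c{e^{tZ}\inner{\nabla\chi_l|V}}=0 .
\]
Next, substitute $\inner{V|\nabla\ell_n} = n(\hat\Phi-\Phi) = n(\hat\Phi-\hat\Psi_0) - \sqrt n Z$ and divide by $\sqrt n$. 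This gives
\[
\Pi_n[\chi_l e^{tZ} Z] = \Pi_n\c{\chi_l e^{tZ}\p{t\inner{\nabla\Phi|V}+\sqrt n(\hat\Phi-\hat\Psi_0+n^{-1}\Div_\pi V)}} + n^{-1/2}\Pi_n[e^{tZ}\inner{\nabla\chi_l|V}] .
\]

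\emph{Step 3: limit $l\to\infty$.} Each integrand on the left and in the first term on the right is dominated, uniformly in $l$ since $0\le\chi_l\le1$, by $e^{|t|\sqrt n|\Phi|}e^{|t|\sqrt n|\hat\Psi_0|}$ times one of $|Z|$, $|\inner{\nabla\Phi|V}|$, $|\hat\Phi-\hat\Psi_0|$, $|\Div_\pi V|$. These are all integrable by the hypothesis (applied with $t$ replaced by $|t|\sqrt n$; $|Z|$ is handled as in Step 1). Since $\chi_l\to1$ pointwise, dominated convergence applies. The last term tends to $0$ by assumption. Dividing by $L(t)$ yields the second expression.

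The main obstacle is bookkeeping rather than depth. One must check that the product-rule and Stokes steps are valid with only $\CC^1$ regularity: $\pi$ is assumed $\CC^1$ implicitly through $\Div_\pi V$, and $\ell_n$ is smooth in the models considered. One must also check that the integrability hypothesis, which is stated with $e^{t|\Phi|}$, controls the factor $e^{tZ}$; the constant factor $e^{-t\sqrt n\hat\Psi_0}$ is harmless for fixed data.
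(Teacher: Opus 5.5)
Your proposal is correct and follows essentially the same route as the paper. Both arguments differentiate under the integral using $e^{s|\Phi|}\in L^1(\Pi_n)$, then apply the divergence theorem against the compactly supported cutoffs $\chi_l$ and pass to the limit by dominated convergence, with the boundary term $\Pi_n[e^{tZ}\inner{\nabla\chi_l|V}]\to 0$ handled by hypothesis. The only difference is bookkeeping: the paper first proves $\Pi_n[\Div_{\pi_n}(e^{t\tau}V)]=0$, which requires an $L^1$ bound on the full $\Div_{\pi_n}(e^{t\tau}V)$ including the $\inner{V|\nabla\ell_n}$ term, whereas you substitute $\inner{V|\nabla\ell_n}=n(\hat\Phi-\hat\Psi_0)-\sqrt{n}Z$ inside the cutoff identity and take limits term by term, which slightly streamlines the integrability checks.
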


\begin{remark}
	In Lemma \ref{lemma:laplace_transform} the sample size $n$ and the observation $X^n$ are fixed; the sequence $\p{\chi_l}_{l \geq 1}$ is only introduced to justify differentiation under the expectation sign and application of the divergence theorem. In particular any potential domination functional that is considered to show the integrability condition can depend on $n$ and $p$ in an arbitrarily bad way. When applying Lemma \ref{lemma:laplace_transform} in our proofs of posterior contraction, we defer all formal justification to Section \ref{section:justifications_lemma_laplace_transform} since these do not add much to the statistical content of this paper. These technical justifications may be skipped by most readers.
\end{remark}

\begin{proof}
In the rest of this proof we set $\tau = \sqrt{n}\p{\Phi - \hat \Psi_0}$. We have, for any $t_0 \geq 1$,
\[
\abs{\frac{\partial}{\partial t} e^{t \tau}} \leq \abs{\tau} e^{t \tau} \leq e^{2t_0 \abs{\tau}} \leq e^{2t_0 \sqrt{n} \abs{\hat \Psi_0}} e^{2t_0 \sqrt{n} \abs{\Phi}}, \quad \abs{t} \leq t_0,
\]
and since $e^{2t_0 \sqrt{n} \abs{\Phi}} \in L^1\p{\Pi_n}$ by assumption, we obtain that $e^{2t_0 \abs{\tau}} \in L^1\p{\Pi_n}$. Therefore $t \mapsto \ln \Pi_n\c{e^{t\tau}}$ is indeed differentiable over $\bigcup_{t_0 \geq 1} \p{-t_0,t_0} = \RR$ and satisfies
\[
\frac{d}{dt} \ln \Pi_n \c{e^{t\tau}} = \Pi_n \c{\tau \frac{e^{t\tau}}{\Pi_n \c{e^{t\tau}}}}.
\]
Since $\inner{V|\nabla \ell_n} = \Div_{\pi_n} V - \Div_\pi V$ we have
\begin{align*}
	\Pi_n \c{\tau e^{t\tau}} & = \Pi_n \c{\sqrt{n}\p{\Phi - \hat \Psi_0} e^{t\tau}} \\
	& = \Pi_n \c{\sqrt{n}\p{\hat \Phi - \hat \Psi_0 - \frac{1}{n}\inner{V|\nabla \ell_n}}e^{t\tau}} \\ \\
	& = \Pi_n \c{\p{\sqrt{n}\p{\hat \Phi + \frac{1}{n} \Div_\pi V - \hat \Psi_0} - \frac{1}{\sqrt{n}} \Div_{\pi_n} V}e^{t\tau}},
\end{align*}
but
\[
\Div_{\pi_n} \p{e^{t\tau} V} = e^{t\tau} \Div_{\pi_n} V + \inner{V | \nabla e^{t\tau}} = e^{t\tau} \p{ \Div_{\pi_n} V + t\sqrt{n} \inner{V|\nabla \Phi}},
\]
therefore
\[
\Pi_n \c{\tau e^{t\tau}} = \Pi_n\c{\p{\sqrt{n}\p{\hat \Phi + \frac{1}{n} \Div_\pi V - \hat \Psi_0} + t \inner{\nabla \Phi | V} } e^{t\tau} - \frac{1}{\sqrt{n}} \Div_{\pi_n}\p{e^{t\tau} V}}.
\]
To conclude it thus suffices to show that
\[
\p{\sqrt{n}\p{\hat \Phi + \frac{1}{n} \Div_\pi V - \hat \Psi_0} + t \inner{\nabla \Phi | V} } e^{t\tau}, \quad \Div_{\pi_n}\p{e^{t\tau} V} \in L^1\p{\Pi_n}
\]
and that $\Pi_n \c{\Div_{\pi_n} \p{e^{t\tau} V}} = 0$, as in that case
\begin{align*}
	\Pi_n\c{\tau e^{t\tau}} & = \Pi_n\c{\p{\sqrt{n}\p{\hat \Phi + \frac{1}{n} \Div_\pi V - \hat \Psi_0} + t \inner{\nabla \Phi | V} } e^{t\tau}} - \Pi_n\c{\frac{1}{\sqrt{n}} \Div_{\pi_n}\p{e^{t\tau} V}} \\
	& = \Pi_n\c{\p{\sqrt{n}\p{\hat \Phi + \frac{1}{n} \Div_\pi V - \hat \Psi_0} + t \inner{\nabla \Phi | V} } e^{t\tau}}.
\end{align*}
First by the triangle inequality $\abs{\tau} \leq \sqrt{n}\p{\abs{\Phi} + \abs{\hat \Psi_0}}$, therefore
\begin{align*}
	\abs{\p{\sqrt{n}\p{\hat \Phi + \frac{1}{n} \Div_\pi V - \hat \Psi_0} + t \inner{\nabla \Phi | V} } e^{t\tau}} & \leq \p{\sqrt{n} + \abs{t}} e^{\abs{t} \sqrt{n} \abs{\hat \Psi_0}} e^{\abs{t}\sqrt{n} \abs{\Phi}}  \b{\abs{\hat \Phi - \hat \Psi_0} + \abs{\Div_\pi V} + \abs{\inner{V|\nabla \Phi}}},
\end{align*}
and the right-hand side of the last display is in $L^1\p{\Pi_n}$ by assumption. Moreover, using the computations above,
\begin{align*}
	\abs{\Div_{\pi_n} \p{e^{t\tau} V}} & = \abs{\Div_{\pi_n} V + t\sqrt{n} \inner{V|\nabla \Phi}} e^{t\tau} = \abs{\Div_\pi V + \inner{V|\nabla \ell_n} + t\sqrt{n} \inner{V|\nabla \Phi}} e^{t\tau} \\
	& = \abs{\Div_\pi V + n\p{\hat \Phi - \Phi} + t\sqrt{n} \inner{V|\nabla \Phi}} e^{t\tau} \\
	& \leq e^{\abs{t}\sqrt{n}\abs{\hat \Psi_0}} \b{\abs{\Div_\pi V} + n \abs{\hat \Phi - \hat \Psi_0} + n \abs{\hat \Psi_0} + n \abs{\Phi} + \abs{t}\sqrt{n} \abs{\inner{V|\nabla \Phi}}} e^{\abs{t}\sqrt{n}\abs{\Phi}} \\
	& \leq e^{\abs{t}\sqrt{n}\abs{\hat \Psi_0}} \b{1 + n + n \abs{\hat \Psi_0} + \abs{t} \sqrt{n}} \b{\abs{\Div_\pi V} + \abs{\hat \Phi - \hat \Psi_0} + \abs{\inner{V|\nabla \Phi}}} e^{\abs{t}\sqrt{n}\abs{\Phi}} \\
	& + n e^{\abs{t}\sqrt{n}\abs{\hat \Psi_0}} e^{\p{n + \abs{t}\sqrt{n}} \abs{\Phi}},
\end{align*}
and the right-hand side of the last display is again in $L^1\p{\Pi_n}$ by assumption. It remains to show that $\Pi_n \c{\Div_{\pi_n} \p{e^{t\tau} V}} = 0$. For this we use the cutoff functions $\chi_l$: for any $l$ since $\chi_l$ is compactly supported the divergence theorem implies that $\Pi_n \c{\Div_{\pi_n}\p{\chi_l e^{t\tau} V}} = 0$. But $\Div_{\pi_n}\p{\chi_l e^{t\tau} V} = \chi_l \Div_{\pi_n}\p{e^{t\tau} V} + e^{t\tau} \inner{\nabla \chi_l | V}$, and since by assumption $e^{t\tau} \inner{\nabla \chi_l | V} \in L^1\p{\Pi_n}$ and we have shown above that $\Div_{\pi_n}\p{e^{t\tau} V} \in L^1\p{\Pi_n}$ we obtain
\[
\Pi_n \c{\chi_l \Div_{\pi_n} \p{e^{t\tau} V}} = - \Pi_n \c{e^{t\tau} \inner{\nabla \chi_l | V}}.
\]
The right-hand side of the last display converges to $0$ by assumption and the left-hand side converges to $\Pi_n \c{\Div_{\pi_n}\p{e^{t\tau} V}}$ by the dominated convergence theorem, which implies the desired result.
\end{proof}

\subsection{Proofs of Section \ref{section:white_noise}}\label{section:proofs_white_noise}

\begin{proposition}\label{proposition:linear_functionals_white_noise}
For a linear functional $\Phi = \inner{a|\theta}_2, \quad a \in \Theta$, we have
\[
\nabla^\mu \Phi = N^{-1} a^\mu, \quad \abs{\nabla \Phi}^2 = N^{-1} \norm{a}_2^2, \quad \Phi + \inner{\nabla \Phi | \nabla \ell} = \inner{a|\pi_p\c{Y}}_2.
\]
\end{proposition}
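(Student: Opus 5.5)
The three identities follow from the flat white-noise geometry recorded in Section \ref{section:white_noise}. I would first write down the coordinate data already derived there for Model (\ref{equation:gaussian_sequence_model}). The metric is $g_{\mu\nu} = N\delta_{\mu\nu}$, so $g^{\mu\nu} = N^{-1}\delta^{\mu\nu}$, and the log-likelihood gradient has components $\nabla^\mu \ell = Y^\mu - \theta^\mu$. The latter comes from $\partial_\mu \ell = N(Y^\mu - \theta^\mu)$, obtained by differentiating $\ell = N\inner{\pi_p\c{Y}|\theta}_2 - \frac{N}{2}\norm{\theta}_2^2$, and then raising the index with $g^{\mu\nu}$.

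Next I would compute $\nabla \Phi$ for $\Phi = \inner{a|\theta}_2 = a^\mu\theta^\mu$. The partial derivatives are $\partial_\mu \Phi = a^\mu$, and the definition $\nabla^\mu \Phi = g^{\mu\nu}\partial_\nu \Phi$ immediately gives $\nabla^\mu\Phi = N^{-1}a^\mu$.

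The squared norm is then
\[
\abs{\nabla\Phi}^2 = g_{\mu\nu}\nabla^\mu\Phi\,\nabla^\nu\Phi = N \sum_{\mu=1}^p N^{-2}(a^\mu)^2 = N^{-1}\norm{a}_2^2 .
\]

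For the last identity I would use $\inner{\nabla\Phi|\nabla\ell} = g_{\mu\nu}\nabla^\mu\Phi\,\nabla^\nu\ell = \partial_\mu\Phi\,\nabla^\mu\ell$. This equals $\sum_\mu a^\mu(Y^\mu-\theta^\mu)$. Adding $\Phi = \sum_\mu a^\mu\theta^\mu$ cancels the $\theta$ terms and leaves $\sum_{\mu\le p} a^\mu Y^\mu = \inner{a|\pi_p\c{Y}}_2$; here $a\in\Theta=\RR^p$ is identified with its embedding in $\ell^2$, so only the first $p$ coordinates of $Y$ enter.

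No step is genuinely hard. The only care needed is consistent index placement, namely using $\partial_\mu\ell = N(Y^\mu-\theta^\mu)$ versus $\nabla^\mu\ell = Y^\mu-\theta^\mu$ so that the factors of $N$ cancel correctly, and noting that $n=1$ here, so $\nabla\ell_1 = \nabla\ell$.
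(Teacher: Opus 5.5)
Your proposal is correct and matches the paper's proof essentially line by line. You compute $\partial_\mu\Phi = a^\mu$, raise the index with $g^{\mu\nu} = N^{-1}\delta^{\mu\nu}$, contract with $g_{\mu\nu} = N\delta_{\mu\nu}$ for the squared norm, and use $\nabla^\mu\ell = Y^\mu-\theta^\mu$ so that the $\theta$ terms cancel in $\Phi + \inner{\nabla\Phi|\nabla\ell}$.
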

\begin{proof}
We have $\Phi = \inner{a|\theta}_2 = \sum_{\mu = 1}^p a^\mu \theta^\mu$, therefore $\partial_\mu \Phi = a^\mu$ which implies that $\nabla^\mu \Phi = N^{-1} \delta^{\mu \nu} \partial_\nu \Phi = N^{-1} a^\mu$, which in turn implies $\abs{\nabla \Phi}^2 = N^{-2} a^\mu a^\nu g_{\mu \nu} = N^{-1} \norm{a}_2^2$. Finally
\[
\Phi + \inner{\nabla \Phi | \nabla \ell} = \Phi + g_{\mu \nu}\p{\nabla^\nu \Phi}\p{\nabla^\nu \ell} = \sum_{\mu = 1}^p a^\mu \theta^\mu + N \times N^{-1} a^\mu \times \p{Y^\mu - \theta^\mu} = \inner{a | \pi_p\c{Y}}_2.
\]
\end{proof}

\begin{proposition}\label{proposition:linear_functionals2_white_noise}
Let $a \in \RR^p$ and $\Phi$ be the functional $\Phi = \inner{a|\theta}_2$. Then:
\begin{enumerate}
	\item for a prior of type \ref{prior:independent_series_white_noise} we have
	\[
	\Delta_\pi \Phi = - N^{-1} \sum_{\mu = 1}^p \sigma_\mu^{-1} H'\p{\theta^\mu/\sigma_\mu} a^\mu, \quad \nabla_\mu \Delta_\pi \Phi = - N^{-1} a^\mu \sigma_\mu^{-2} H''\p{\theta^\mu/\sigma_\mu},
	\]
	\item for a prior of type \ref{prior:gaussian} we have
	\[
	\Delta_\pi \Phi = - N^{-1}\sum_{\mu = 1}^p \sigma_\mu^{-2} \theta^\mu a^\mu, \quad \nabla_\mu \Delta_\pi \Phi = - N^{-1} a^\mu \sigma_\mu^{-2}.
	\]
\end{enumerate}
\end{proposition}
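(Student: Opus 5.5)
The plan is to compute both quantities directly in the $\theta$ coordinates, using the coordinate formula for the weighted Laplacian from Section \ref{section:information_geometry}:
\[
\Delta_\pi \Phi = \partial_\mu \nabla^\mu \Phi + \p{\partial_\mu \ln \Pi}\nabla^\mu \Phi .
\]
Here $\Pi(\theta)$ is the Lebesgue density of the prior in coordinates $\theta$. This formula is convenient because it avoids Christoffel symbols altogether. In the present model they vanish anyway, since $g_{\mu\nu} = N\delta_{\mu\nu}$ is constant.

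First, by Proposition \ref{proposition:linear_functionals_white_noise} we have $\nabla^\mu \Phi = N^{-1}a^\mu$, which does not depend on $\theta$. Hence $\partial_\mu \nabla^\mu \Phi = 0$, and only the prior term survives:
\[
\Delta_\pi \Phi = N^{-1}\sum_{\mu=1}^p a^\mu\, \partial_\mu \ln \Pi .
\]

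Second, compute $\partial_\mu \ln \Pi$ for each prior. For a prior of type \ref{prior:independent_series_white_noise}, $\ln \Pi(\theta) = -\sum_{\nu} H(\theta^\nu/\sigma_\nu) + \text{const}$, so
\[
\partial_\mu \ln \Pi = -\sigma_\mu^{-1}H'\p{\theta^\mu/\sigma_\mu}.
\]
For the Gaussian prior \ref{prior:gaussian}, the same computation with $H(x) = x^2/2$ (up to additive constants) gives $\partial_\mu \ln \Pi = -\sigma_\mu^{-2}\theta^\mu$. The boundedness of $H'$ plays no role here, because the computation is purely local. Substituting into the previous display yields the two stated expressions for $\Delta_\pi\Phi$.

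Finally, $\Delta_\pi\Phi$ is a scalar, so $\nabla_\mu \Delta_\pi\Phi = \partial_\mu \Delta_\pi \Phi$. Differentiating the sum term by term, only the $\mu$-th summand depends on $\theta^\mu$. This gives
\[
\nabla_\mu \Delta_\pi\Phi = -N^{-1}a^\mu\sigma_\mu^{-2}H''\p{\theta^\mu/\sigma_\mu}
\]
in the first case, and $-N^{-1}a^\mu\sigma_\mu^{-2}$ in the Gaussian case. In the first case $H$ must be twice differentiable, as assumed wherever this formula is used.

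There is no real obstacle in this argument. The only point needing care is the index bookkeeping in the definition of $\nabla_\mu$: it is the covariant component $\partial_\mu$, not the raised component $\nabla^\mu = N^{-1}\partial_\mu$. Mixing these up would introduce a spurious factor $N^{-1}$.
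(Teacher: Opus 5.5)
Your proposal is correct and is essentially the paper's own proof. The paper likewise starts from the coordinate formula $\Delta_\pi \Phi = \partial_\mu \nabla^\mu \Phi + (\partial_\mu \ln \Pi)\nabla^\mu \Phi$, uses that $\nabla^\mu \Phi = N^{-1}a^\mu$ is constant and that $\partial_\mu \ln \Pi = -\sigma_\mu^{-1}H'(\theta^\mu/\sigma_\mu)$, differentiates once more (computing $\partial_\mu \Delta_\pi \Phi$, exactly as you note), and obtains the Gaussian case by setting $H(x) = x^2/2$.
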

\begin{proof}
For a prior of type \ref{prior:independent_series_white_noise}, using Proposition \ref{proposition:linear_functionals_white_noise} we have
\begin{align*}
\Delta_\pi \Phi & = \Delta \Phi + \inner{\nabla \Phi | \nabla \ln \pi} = \partial_\mu \nabla^\mu \Phi + \p{\partial_\mu \ln \Pi}\p{\nabla^\mu \Phi} \\
& = \partial_\mu N^{-1} a^\mu -\sigma_\mu^{-1} H'\p{\theta^\mu/\sigma_\mu} \times N^{-1} a^\mu \\
& = - N^{-1} \sum_{\mu = 1}^p \sigma_\mu^{-1} H'\p{\theta^\mu/\sigma_\mu} a^\mu.
\end{align*}
In turn this also implies
\[
\partial_\mu \Delta_\pi \Phi = - N^{-1} a^\mu \sigma_\mu^{-2} H''\p{\theta^\mu/\sigma_\mu}.
\]
The results for a prior of type \ref{prior:gaussian} follow by taking $H\p{x} = x^2/2$ in the previous computations (even if this choice of $H$ does not define a bona fide prior of type \ref{prior:independent_series_white_noise}, the computations are similar).
\end{proof}

\begin{proposition}\label{proposition:gaussian_concentration_white_noise}
Under model \ref{equation:gaussian_sequence_model} and if $p \ll N/\ln N$, for any $K>0$ there exists $M>0$ such that $\Pro_0 \c{E_N^c} \leq N^{-K}$ where
\[
E_N = \b{\norm{\pi_p\c{Y} - \pi_p\c{\theta_0}}_\infty \leq M \sqrt{\frac{\ln N}{N}}} \leq N^{-K}.
\]
\end{proposition}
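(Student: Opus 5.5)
Under Model (\ref{equation:gaussian_sequence_model}) we have $\pi_p\c{Y} - \pi_p\c{\theta_0} = N^{-1/2}\pi_p\c{\varepsilon}$, where $\varepsilon^1,\ldots,\varepsilon^p$ are i.i.d. $\NN\p{0,1}$. The event $E_N^c$ is therefore exactly
\[
\b{\max_{1 \leq \mu \leq p} \abs{\varepsilon^\mu} > M\sqrt{\ln N}},
\]
and the claim reduces to a maximal inequality for $p$ standard Gaussians. The plan is a union bound combined with the Gaussian tail bound.

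First, for each $\mu$ the standard Mills-ratio (or Chernoff) bound gives $\Pro_0\c{\abs{\varepsilon^\mu} > x} \leq 2e^{-x^2/2}$ for all $x \geq 0$. A union bound over $1 \leq \mu \leq p$ then yields
\[
\Pro_0\c{E_N^c} \leq 2p\, e^{-M^2 \ln N/2} = 2p N^{-M^2/2}.
\]
Next, the assumption $p \ll N/\ln N$ gives $p \leq N$ for $N$ large. It then suffices to choose $M$ with $M^2/2 \geq K+2$, which gives $\Pro_0\c{E_N^c} \leq 2N^{-K-1} \leq N^{-K}$ for all $N \geq 2$. Finitely many small $N$ can be absorbed by enlarging $M$, or one can simply note that $2pN^{-M^2/2} \leq 2N^{1-M^2/2}$ holds for every $N$ as soon as $p \leq N$.

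There is no real obstacle here. The only points needing care are that $p$ may grow with $N$, which is harmless since the union bound costs only a polynomial factor in $N$ and this is absorbed by taking $M$ large, and that the statement as typeset contains a stray ``$\leq N^{-K}$'' inside the definition of $E_N$, which should be read as the intended tail bound $\Pro_0\c{E_N^c} \leq N^{-K}$.
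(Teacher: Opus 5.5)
Your proof is correct and takes the same route as the paper: a union bound over the $p$ coordinates plus a Gaussian tail bound, with $M$ taken large enough to absorb the polynomial factor from $p \ll N/\ln N$. The only difference is that you use the explicit bound $2e^{-x^2/2}$, where the paper invokes a generic sub-Gaussian bound with an unspecified constant.
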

\begin{proof}
This is a standard sub-Gaussian concentration result : because $\pi_p\c{Y} - \theta_0 \sim \NN\p{N^{-1} I_p}$, for any $x>0$ by the union bound we have
\[
\Pro_0 \c{\norm{\pi_p\c{Y} - \pi_p\c{\theta_0}}_\infty > x} \leq p\Pro_0 \c{\abs{Z} > \sqrt{N}x},
\]
where $Z$ is a standard Gaussian random variable. Because the Gaussian distribution is sub-Gaussian, this is further bounded by a multiple of $p \exp\p{-cNx^2}$ for some constant $c>0$ \cite{vershyninHighDimensionalProbabilityIntroduction2018}, hence because $p \ll N/\ln N$ (this is actually way stronger than what is needed but is satisfied in our case), given $K>0$ taking $x$ a large enough multiple of $\p{N/\ln N}^{-1/2}$ yields the desired bound.
\end{proof}

\begin{lemma}\label{lemma:contraction_rates_white_noise}
	Under model \ref{equation:gaussian_sequence_model} :
	\begin{enumerate}
		\item if $\Pi$ is a prior of type \ref{prior:independent_series_white_noise} such that $\min_{1 \leq \mu \leq p} \sigma_\mu \geq \frac{c}{\sqrt{N \ln N}}$ for some fixed constant $c>0$ and $1 \ll p \ll N/\ln N$ then for any $K>0$ there exists $M>0$ such that $\Pi_1\c{A_N^c} \leq N^{-K}$ and $\Pro_0\c{E_N^c} \leq N^{-K}$ where
		\[
		A_N := \b{\theta : \norm{\theta - \pi_p\c{Y}}_\infty \leq M \sqrt{\frac{\ln N}{N}}}, \quad E_N = \b{\norm{\pi_p\c{Y} - \pi_p\c{\theta_0}}_\infty \leq M \sqrt{\frac{\ln N}{N}}}.
		\]
		Moreover if $\theta_0 \in H^s$ for some $s>0$ then on the event $E_N$ we have
		\[
		A_N \subset \b{\theta : \norm{\theta - \pi_p\c{\theta_0}}_2 \leq 2M \sqrt{\frac{p \ln N}{N}}} \subset \b{\theta : \norm{\pi_p^*\c{\theta} - \theta_0}_2 \leq C\varepsilon_N},
		\]
		\[
		C := 2M + \norm{\theta_0}_{H^s}, \quad \varepsilon_N := p^{-s} + \sqrt{\frac{p \ln N}{N}}.
		\]
		\item if $\Pi$ is a prior of type \ref{prior:gaussian} such that $\sigma_\mu \geq c\p{N^{-1/2} \vee \abs{\theta_0^\mu}}$ for some fixed $c>0$ and $p \ll N/\ln N$, then for any $K>0$ there exists $M>0$ such that $\Pi_1\c{A_N^c} \leq N^{-K}$ and $\Pro_0 \c{E_N^c} \leq N^{-K}$ where $E_N$ is defined as above and
		\[
		A_N := \c{\theta : \norm{R^{-1}\c{\theta - R^2 \pi_p\c{Y}}}_\infty \leq M \sqrt{\frac{\ln N}{N}}}, \quad R := \Diag\p{\rho}, \quad \rho_\mu := \sqrt{\frac{N\sigma_\mu^2}{N\sigma_\mu^2 + 1}}, \quad 1 \leq \mu \leq p.
		\]
		Moreover if $\theta_0 \in H^s$ for some $s>0$ then with $C = M\sqrt{3} + \norm{\theta_0}_{H^s}$, for large $N$ on the event $E_N$ we have
		\[
		A_N \subset \b{\theta : \norm{\theta - \pi_p\c{\theta_0}}_2 \leq M\sqrt{3\frac{p \ln N}{N}}} \subset \b{\theta : \norm{\pi_p^*\c{\theta} - \theta_0}_2 \leq C\varepsilon_N}, \quad \varepsilon_N := p^{-s} + \sqrt{\frac{p \ln N}{N}}.
		\]
	\end{enumerate}
\end{lemma}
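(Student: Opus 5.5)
The proof exploits the product structure of the model and of both priors. Under Model (\ref{equation:gaussian_sequence_model}) the likelihood factorises as $\prod_{\mu\le p}\exp\p{-\frac N2(\theta^\mu-Y^\mu)^2}$ up to constants. Since both priors are products, the posterior $\Pi_1$ is a product of one-dimensional posteriors. Hence $\Pi_1\c{A_N^c}\le\sum_{\mu=1}^p\Pi_1\c{\abs{\cdot}>\delta_N}$ coordinatewise, with $\delta_N=M\sqrt{\ln N/N}$. It therefore suffices to show that each one-dimensional tail is at most $N^{-K-1}$, since $p\le N$. The bound $\Pro_0\c{E_N^c}\le N^{-K}$ is exactly Proposition \ref{proposition:gaussian_concentration_white_noise}.

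\emph{Gaussian prior.} By conjugacy $\theta^\mu\mid Y\sim\NN\p{\rho_\mu^2Y^\mu,\rho_\mu^2/N}$. Hence the coordinates of $R^{-1}\c{\theta-R^2\pi_p\c{Y}}$ are i.i.d. $\NN(0,1/N)$ under $\Pi_1$. The Gaussian tail bound then gives $\Pi_1\c{A_N^c}\le 2p\,e^{-M^2\ln N/2}\le N^{-K}$ for $M$ large, exactly. For the inclusions, on $E_N$ and for $\theta\in A_N$ we decompose
\[
\theta-\pi_p\c{\theta_0}=R\p{R^{-1}\c{\theta-R^2\pi_p\c{Y}}}+R^2\p{\pi_p\c{Y}-\pi_p\c{\theta_0}}+(R^2-I)\pi_p\c{\theta_0}.
\]
\begin{itemize}
\item Using $\rho_\mu\le1$, each of the first two terms has coordinates bounded by $\delta_N$, hence $\ell^2$-norm at most $M\sqrt{p\ln N/N}$.
\item For the third term, $(1-\rho_\mu^2)\abs{\theta_0^\mu}=\abs{\theta_0^\mu}/(N\sigma_\mu^2+1)\le 1/(cN\sigma_\mu)\le c^{-2}N^{-1/2}$, using $\sigma_\mu\ge c\abs{\theta_0^\mu}$ and then $\sigma_\mu\ge cN^{-1/2}$. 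Its squared $\ell^2$-norm is therefore at most $c^{-4}p/N$, which is at most $M^2p\ln N/N$ for large $N$.
\end{itemize}
Applying $(a+b+c)^2\le3(a^2+b^2+c^2)$ yields the radius $M\sqrt{3p\ln N/N}$. The second inclusion follows from the triangle inequality and $\norm{\theta_0-\pi_p^*\c{\pi_p\c{\theta_0}}}_2\le p^{-s}\norm{\theta_0}_{H^s}$.

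\emph{Prior of type \ref{prior:independent_series_white_noise}.} This is the main step. Write $t=\theta^\mu-Y^\mu$ and $L=\norm{H'}_\infty$. The one-dimensional posterior density is proportional to $\exp\p{-\frac N2t^2-\c{H((Y^\mu+t)/\sigma_\mu)-H(Y^\mu/\sigma_\mu)}}$. The bracket is bounded in absolute value by $\lambda\abs t$, where $\lambda:=L/\sigma_\mu\le L\sqrt{N\ln N}/c$. I would then bound the tail as the ratio
\[
\frac{\int_{\abs t>\delta_N}e^{-Nt^2/2+\lambda\abs t}dt}{\int e^{-Nt^2/2-\lambda\abs t}dt}.
\]
Completing the square, the numerator is at most $2\sqrt{2\pi/N}\,e^{\lambda^2/(2N)}\,\Pro\c{Z>\sqrt N\delta_N-\lambda/\sqrt N}$. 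The denominator is at least $\sqrt{2\pi/N}\,e^{\lambda^2/(2N)}\,2\Pro\c{Z>\lambda/\sqrt N}$. The key observation is $\lambda/\sqrt N\le (L/c)\sqrt{\ln N}$, which is of the same order as $\sqrt N\delta_N=M\sqrt{\ln N}$. Using Mills' ratio, the denominator factor satisfies $\Pro\c{Z>u}\gtrsim e^{-u^2}$ for $u\ge1$. Thus the ratio is at most a constant times $\exp\p{-\frac12(M-L/c)^2\ln N+(L/c)^2\ln N}$, which is $\le N^{-K-1}$ once $M$ is large compared to $L/c$ and $K$. The union bound over $\mu\le p$ concludes.

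The inclusion in this case is immediate: on $E_N\cap A_N$ we have $\norm{\theta-\pi_p\c{\theta_0}}_\infty\le2\delta_N$. Hence the $\ell^2$-norm is at most $2M\sqrt{p\ln N/N}$, and adding the $H^s$ tail bound gives $C=2M+\norm{\theta_0}_{H^s}$. The only delicate point in the whole argument is making sure the prior-induced drift $\lambda/\sqrt N$ does not exceed the $\sqrt{\ln N}$ scale. This is precisely where the assumption $\sigma_\mu\ge c/\sqrt{N\ln N}$ enters.
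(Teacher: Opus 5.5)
Your proof is correct, but it takes a different route from the paper.

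\textbf{The paper's route.} The paper deliberately avoids conjugacy and the product structure. It applies Lemma \ref{lemma:laplace_transform} to the linear functionals $\Phi_a=\inner{a|\theta}_2$, $a=\pm e_\mu$, with $V=\nabla\Phi_a$ and $\hat\Psi_{a0}=\inner{a|\pi_p\c{Y}}_2$. Since $\hat\Phi_a=\hat\Psi_{a0}$, the derivative of the posterior cumulant generating function of $\theta^\mu-Y^\mu$ equals the tilted posterior mean of $t\abs{\nabla\Phi_a}^2+\Delta_\pi\Phi_a$. The bound $\abs{\Delta_\pi\Phi_a}\le \norm{H'}_\infty/(N\sigma_-)$ then integrates to $\ln\Pi_1\c{e^{\pm t(\theta^\mu-Y^\mu)}}\le t^2/(2N)+t\norm{H'}_\infty/(N\sigma_-)$. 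A Chernoff bound with $t=Nx$ and a union bound give $2p\exp\p{-Nx^2/2+x\norm{H'}_\infty/\sigma_-}$. In the Gaussian case the same identity closes into an ODE that reproduces the conjugate moment generating function. The inclusions are obtained by the same triangle-inequality decompositions you use.

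\textbf{What your route buys.} Your coordinatewise ratio-of-integrals bound (the $e^{\pm\lambda\abs{t}}$ envelopes, completion of the square, and a Mills-ratio lower bound on the normalising constant) is more elementary and self-contained. It needs none of the integrability and cutoff-approximation conditions that the paper must verify separately in Section \ref{section:justifications_lemma_laplace_transform}. The extra $(L/c)^2\ln N$ it costs in the exponent is harmless, since $M$ is taken large. In the Mills step, for $u<1$ simply use $\Pro\c{Z>u}\ge\Pro\c{Z>1}$.

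\textbf{What the paper's route buys.} It only needs a uniform bound on $\inner{\nabla\ln\pi|\nabla\Phi_a}$ along the chosen directions, and it never evaluates or lower-bounds a normalising constant. The same mechanism therefore carries over verbatim to Lemma \ref{lemma:supremum_norm_contraction_rates_histograms}. There the posterior is not a product of one-dimensional posteriors: the likelihood couples the Haar coordinates through $\psi(\theta)$, and the bin probabilities are constrained to the simplex. Your reduction to independent one-dimensional posteriors is unavailable in that setting.
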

\begin{proof}
	First let us mention that the proof for the Gaussian prior can be obtained directly using conjugacy, but we present here an argument that relies on an application of Lemma \ref{lemma:laplace_transform} in each case for consistency. First, the bound $\Pro_0 \c{E_N^c} \leq N^{-K}$ follows by Proposition \ref{proposition:gaussian_concentration_white_noise}. Let $a \in \RR^p$ and $\Phi$ be the functional $\Phi_a = \inner{a|\theta}_2, \quad V = \nabla \Phi_a$ and $\hat \Psi_{a0} = \inner{a|\pi_p\c{Y}}_2$. We show in Section \ref{section:justifications_lemma_laplace_transform} that Lemma \ref{lemma:laplace_transform} applies in the present context. As a result and since $n = 1$ here,
	\begin{align*}
		\frac{d}{dt} \ln \Pi_n \c{e^{t\sqrt{n}\p{\Phi_a - \hat \Psi_{a0}}}} & = \frac{d}{dt} \ln \Pi_1 \c{e^{t\p{\Phi_a - \hat \Psi_{a0}}}} \\
		& = \Pi_1 \c{\p{\Phi_a - \hat \Psi_{a0}} \frac{e^{t\p{\Phi_a - \hat \Psi_{a0}}}}{\Pi_1 \c{e^{t\p{\Phi_a - \hat \Psi_{a0}}}}}} \\
		& = \Pi_1 \c{\p{t\inner{\nabla \Phi_a | V} + \sqrt{n} \p{\hat \Phi_a - \hat \Psi_{a0} + \frac{1}{n} \Div_\pi V}} \frac{e^{t\sqrt{n}\p{\Phi_a - \hat \Psi_{a0}}}}{\Pi_1\c{e^{t\sqrt{n}\p{\Phi_a - \hat \Psi_{a0}}}}}} \\
		& = \Pi_1 \c{\p{t\abs{\nabla \Phi_a}^2 + \frac{1}{\sqrt{n}} \Delta_\pi \Phi_a} \frac{e^{t\sqrt{n}\p{\Phi_a - \hat \Psi_{a0}}}}{\Pi_1\c{e^{t\sqrt{n}\p{\Phi_a - \hat \Psi_{a0}}}}}} \\
		& = \Pi_1 \c{\p{t\abs{\nabla \Phi_a}^2 + \Delta_\pi \Phi_a} \frac{e^{t\p{\Phi_a - \hat \Psi_{a0}}}}{\Pi_1\c{e^{t\p{\Phi_a - \hat \Psi_{a0}}}}}} , \quad t \in \RR.
	\end{align*}
	We finish by distinguishing between the two priors :
	\begin{enumerate}
		\item for a prior of type \ref{prior:independent_series_white_noise} using Proposition \ref{proposition:linear_functionals_white_noise} we have
		\[
		\abs{\nabla \Phi_a}^2 = N^{-1} \norm{a}_2^2
		\]
		and, using Proposition \ref{proposition:linear_functionals2_white_noise}, with $\sigma_- := \min_{1 \leq \mu \leq p} \sigma_\mu$,
		\[
		\Delta_\pi \Phi_a = - N^{-1} \sum_{\mu = 1}^p \sigma_\mu^{-1} H'\p{\theta^\mu/\sigma_\mu} a^\mu \leq N^{-1} \norm{H'}_\infty \sigma_-^{-1} \norm{a}_1,
		\]
		therefore
		\[
		\frac{d}{dt} \ln \Pi_1 \c{e^{t\p{\Phi_a - \hat \Psi_{a0}}}} \leq tN^{-1} \norm{a}_2^2 + N^{-1} \norm{H'}_\infty \sigma_-^{-1} \norm{a}_1.
		\]
		Integrating this inequality between $0$ and $t > 0$ then yields
		\[
		\ln \Pi_1 \c{e^{t\p{\Phi_a - \hat \Psi_{a0}}}} \leq \frac{t^2}{2} N^{-1} \norm{a}_2^2 + tN^{-1} \norm{H'}_\infty \sigma_-^{-1} \norm{a}_1,
		\]
		and therefore, by Markov' inequality, for any $\mu \in \b{1,\ldots,p}$, taking $a$ equal to $\pm 1$ times the $\mu$-th canonical basis vector,
		\begin{align*}
			\Pi_1 \c{\abs{\theta^\mu - Y^\mu} > x} & = \Pi_1 \c{\abs{\Phi_a - \hat \Psi_{a0}} > x} \\
			& \leq \Pi_1 \c{\Phi_a - \hat \Psi_{a0} > x} +  \Pi_1\c{-\p{\Phi_a - \hat \Psi_{a0}} > x} \\
			& \leq 2\exp\p{- tx + \frac{t^2}{2N} + \frac{t\norm{H'}_\infty}{N\sigma_-}}.
		\end{align*}
		By a union bound this implies
		\[
		\Pi_1 \c{\norm{\theta - \pi_p\c{Y}}_\infty > x} \leq 2p\exp\p{- tx + \frac{t^2}{2N} + \frac{t\norm{H'}_\infty}{N\sigma_-}},
		\]
		and setting $t = Nx$ yields
		\[
		\Pi_1 \c{\norm{\theta - \pi_p\c{Y}}_\infty > x} \leq 2p \exp\p{-Nx^2/2 + \frac{x\norm{H'}_\infty}{\sigma_-}}
		\]
		which (because $p \ll N/\ln N$) for any $K>0$ can be made less than $N^{-K}$ for $x$ a large enough multiple of $\sqrt{\frac{\ln N}{N}}$, using the assumption on $\sigma_-$. This proves the first statement. For the second one we simply use the inequalities
		\begin{align*}
			\norm{\theta - \pi_p\c{\theta_0}}_2 & \leq \norm{\theta - \pi_p\c{Y}}_2 + \norm{\pi_p\c{Y} - \pi_p\c{\theta_0}}_2 \\
			& \leq \sqrt{p} \norm{\theta - \pi_p\c{Y}}_\infty + \sqrt{p} \norm{\pi_p\c{Y} - \pi_p\c{\theta_0}}_\infty,
		\end{align*}
		and $\norm{\pi_p^*\c{\theta} - \theta_0}_2 \leq \norm{\theta - \pi_p\c{\theta_0}}_2 + p^{-s}\norm{\theta_0}_{H^s}$.
		\item For a Gaussian prior \ref{prior:gaussian} Proposition \ref{proposition:linear_functionals2_white_noise} yields
		\[
		\Delta_\pi \Phi_a = - N^{-1} \sum_{\mu = 1}^p \sigma_\mu^{-2} \theta^\mu a^\mu,
		\]
		so that with $a$ equal to the $\mu$-th canonical basis vector of $\RR^p$ we obtain
		\[
		\Delta_\mu \Phi_a = - N^{-1} \sigma_\mu^{-2} \theta^\mu = - N^{-1} \sigma_\mu^{-2} \Phi_a.
		\]
		As a result
		\begin{align*}
			\frac{d}{dt} \ln \Pi_1 \c{e^{t\p{\Phi_a - \hat \Psi_{a0}}}} & = \Pi_1 \c{\p{\Phi_a - \hat \Psi_{a0}} \frac{e^{t\p{\Phi_a - \hat \Psi_{a0}}}}{\Pi_1\c{e^{t\p{\Phi_a - \hat \Psi_{a0}}}}}} \\
			& = \Pi_1 \c{\p{t\abs{\nabla \Phi_a}^2 + \Delta_\pi \Phi_a} \frac{e^{t\p{\Phi_a - \hat \Psi_{a0}}}}{\Pi_1\c{e^{t\p{\Phi_a - \hat \Psi_{a0}}}}}} \\
			& = t\Pi_1 \c{\abs{\nabla \Phi_a}^2 \frac{e^{t\p{\Phi_a - \hat \Psi_{a0}}}}{\Pi_1\c{e^{t\p{\Phi_a - \hat \Psi_{a0}}}}}} - N^{-1} \sigma_\mu^{-2} \Pi_1 \c{ \Phi_a \frac{e^{t\p{\Phi_a - \hat \Psi_{a0}}}}{\Pi_1\c{e^{t\p{\Phi_a - \hat \Psi_{a0}}}}}} \\
			& = t\Pi_1 \c{\abs{\nabla \Phi_a}^2 \frac{e^{t\p{\Phi_a - \hat \Psi_{a0}}}}{\Pi_1\c{e^{t\p{\Phi_a - \hat \Psi_{a0}}}}}} \\
			& - N^{-1} \sigma_\mu^{-2} \Pi_1 \c{ \p{\Phi_a - \hat \Psi_{a0}} \frac{e^{t\p{\Phi_a - \hat \Psi_{a0}}}}{\Pi_1\c{e^{t\p{\Phi_a - \hat \Psi_{a0}}}}}} - N^{-1} \sigma_\mu^{-2} \hat \Psi_{a0} \\
			& = t\Pi_1 \c{\abs{\nabla \Phi_a}^2 \frac{e^{t\p{\Phi_a - \hat \Psi_{a0}}}}{\Pi_1\c{e^{t\p{\Phi_a - \hat \Psi_{a0}}}}}} - N^{-1} \sigma_\mu^{-2} \frac{d}{dt} \ln \Pi_1 \c{e^{t\p{\Phi_a - \hat \Psi_{a0}}}} - N^{-1} \sigma_\mu^{-2} \hat \Psi_{a0},
		\end{align*}
		which implies that
		\[
		\frac{d}{dt} \ln \Pi_1 \c{e^{t\p{\Phi_a - \hat \Psi_{a0}}}} = \p{1 + \frac{1}{N\sigma_\mu^2}}^{-1} \c{t\Pi_1 \c{\abs{\nabla \Phi_a}^2 \frac{e^{t\p{\Phi_a - \hat \Psi_{a0}}}}{\Pi_1\c{e^{t\p{\Phi_a - \hat \Psi_{a0}}}}}} - N^{-1} \sigma_\mu^{-2} \hat \Psi_{a0}},
		\]
		i.e., using Proposition \ref{proposition:linear_functionals_white_noise},
		\[
		\frac{d}{dt} \ln \Pi_1 \c{e^{t\p{\inner{\theta|a}_2 - \inner{\pi_p\c{Y}|a}_2}}} = \frac{N\sigma_\mu^2}{N \sigma_\mu^2 + 1} \frac{t}{N} \norm{a}_2^2 - \frac{1}{N \sigma_\mu^2 + 1} \inner{\pi_p\c{Y}|a}_2,
		\]
		and taking $a$ equal to the $\mu$-th canonical basis vector we obtain
		\[
		\frac{d}{dt} \ln \Pi_1 \c{e^{t\p{\theta^\mu - Y^\mu}}} = \frac{N\sigma_\mu^2}{N \sigma_\mu^2 + 1} \frac{t}{N} - \frac{1}{N \sigma_\mu^2 + 1} Y^\mu,
		\]
		i.e.
		\[
		\ln \Pi_1 \c{e^{t\p{\theta^\mu - \rho_\mu^2 Y^\mu}}} = \rho_\mu^2 \frac{t^2}{2N}, \quad \rho_\mu^2 := \frac{N\sigma_\mu^2}{N \sigma_\mu^2 + 1}.
		\]
		Obviously this could have been derived directly since by conjugacy $\theta | Y \sim \bigotimes_{\mu = 1}^p \NN\p{\rho_\mu^2 Y^\mu, \frac{\rho_\mu^2}{N}}$. Reasoning analogously for $-\Phi_a$, by Markov' inequality and a union bound we obtain
		\[
		\Pi_1\c{\exists \mu : \abs{\theta^\mu - \rho_\mu^2 Y^\mu} > \rho_\mu x} \leq 2p \exp\p{-t\rho_\mu x + \frac{\rho_\mu^2 t^2}{2N}} \leq 2N \exp\p{-t\rho_\mu x + \frac{\rho_\mu^2 t^2}{2N}},
		\]
		which can be made less than $N^{-K}$ via $t = Nx/\rho_\mu$ and $x$ a large enough multiple of $\sqrt{\frac{\ln N}{N}}$, i.e. for some $M>0$
		\[
		\Pi_1\c{\exists \mu : \abs{\theta^\mu - \rho_\mu^2 Y^\mu} > M \rho_\mu \sqrt{\frac{\ln N}{N}}} \leq N^{-K},
		\]
		which proves the first result. To conclude we bound $\norm{\pi_p^*\c{\theta} - \theta_0}_2$ as follows : for any $1 \leq q \leq p$ decompose
		\[
		\norm{\pi_p^*\c{\theta} - \theta_0}_2 \leq \underbrace{\norm{\theta - R^2 \pi_p\c{Y}}_2}_{=: I} + \underbrace{\norm{R^2 \pi_p\c{Y} - R^2 \pi_p\c{\theta_0}}_2}_{=: II} + \underbrace{\norm{R^2 \pi_p\c{\theta_0} - \pi_p\c{\theta_0}}_2}_{=III} + \underbrace{\norm{\theta_0 - \pi_p\c{\theta_0}}_2}_{=: IV},
		\]
		and $\norm{\theta - \pi_p\c{\theta_0}}_2 \leq I + II + III$. First, $IV$ is bounded above by $\norm{\theta_0}_{H^s} p^{-s}$. Moreover by definition of $A_N$
		\[
		\sup_{\theta \in A_N} \norm{I}_2^2 \leq M^2 \sum_{\mu = 1}^p \rho_\mu^2 \frac{\ln N}{N} \leq M^2 \p{\ln N} \sum_{\mu = 1}^p N^{-1} \wedge \sigma_\mu^2 = M^2 \frac{p \ln N}{N},
		\]
		and similarly
		\[
		\ind{E_N} \norm{II}_2^2 \leq M^2 \frac{\ln N}{N} \sum_{\mu = 1}^p \rho_\mu^2 \leq M^2 \p{\ln N} \sum_{\mu = 1}^p N^{-1} \wedge \sigma_\mu^2 \leq M^2 \frac{p \ln N}{N}
		\]
		by definition of the event $E_N$. Finally
		\[
		\norm{III}_2^2 = \sum_{\mu = 1}^p \frac{\theta_0^{\mu 2}}{\p{N\sigma_\mu^2 + 1}^2} \leq c^{-2} \sum_{\mu = 1}^p \frac{\sigma_\mu^2}{\p{N\sigma_\mu^2 + 1}^2} \leq \frac{c^{-2}p}{N},
		\]
		therefore for large $N$ and on the event $E_N$ we have
		\[
		A_N \subset \b{\theta : \norm{\theta - \pi_p\c{\theta_0}}_2 \leq M\sqrt{\frac{3p\ln N}{N}}} \subset \b{\theta : \norm{\pi_p^*\c{\theta} - \theta_0}_2 \leq \p{M\sqrt{3} + \norm{\theta_0}_{H^s}} \varepsilon_N}.
		\]
	\end{enumerate}
\end{proof}

\begin{proposition}\label{proposition:quadratic_functional_white_noise}
With $\Phi = \norm{\theta}_2^2$ and $V$ the random vector field $V = \nabla \Phi + \frac{1}{2} \nabla_{\mu \nu}^2 \Phi. \nabla \ell_1$ we have
\begin{enumerate}
	\item $V_\mu = Y^\mu + \theta^\mu$,
	\item for a prior of type \ref{prior:independent_series_white_noise}
	\[
	\Div_\pi V = \frac{p}{N} - N^{-1} \sum_{\mu = 1}^p \sigma_\mu^{-1} \p{Y^\mu + \theta^\mu} H'\p{\theta^\mu/\sigma_\mu},
	\]
	\[
	\quad \nabla_\mu \Div_\pi V = - N^{-1} \sigma_\mu^{-1} \b{(Y^\mu + \theta^\mu)\sigma_\mu^{-1} H''\p{\theta^\mu/\sigma_\mu} + H'\p{\theta^\mu/\sigma_\mu}},
	\]
	\item for a prior of type \ref{prior:gaussian}
	\[
	\Div_\pi V = \frac{p}{N} - N^{-1} \sum_{\mu = 1}^p \sigma_\mu^{-2} \p{Y^\mu + \theta^\mu}\theta^\mu,
	\]
	\[
	\quad \nabla_\mu \Div_\pi V = - N^{-1} \sigma_\mu^{-2} \p{Y^\mu + 2\theta_\mu}.
	\]
\end{enumerate}
\end{proposition}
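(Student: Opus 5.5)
The plan is a direct coordinate computation in the affine coordinates $\theta$. In these coordinates $g_{\mu\nu}=N\delta_{\mu\nu}$, $g^{\mu\nu}=N^{-1}\delta^{\mu\nu}$ and all Christoffel symbols vanish, so covariant derivatives are ordinary partial derivatives. Throughout, the observation $Y$ is treated as a fixed parameter, since $V$ depends on $Y$ but we only differentiate in $\theta$. The tractable local formula $\Div_\pi V=\partial_\mu V^\mu+\p{\partial_\mu \ln \Pi}V^\mu$ from Section \ref{section:information_geometry} lets us avoid any geometric computation beyond raising indices.

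\emph{Step 1 (components of $V$).} From $\Phi=\sum_{\mu=1}^p\theta^{\mu2}$ we get $\partial_\mu\Phi=2\theta^\mu$. Since $\Gamma\equiv0$, the Hessian is $\nabla^2_{\mu\nu}\Phi=\partial_{\mu\nu}\Phi=2\delta_{\mu\nu}$. We also recall $\nabla^\nu\ell_1=Y^\nu-\theta^\nu$. Interpreting $\nabla^2\Phi.\nabla\ell_1$ as the contraction $(\nabla^2_{\mu\nu}\Phi)\nabla^\nu\ell_1$, the covariant components of $V$ are
\[
V_\mu=2\theta^\mu+\tfrac12\cdot 2\p{Y^\mu-\theta^\mu}=Y^\mu+\theta^\mu ,
\]
which is item 1. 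Raising the index gives $V^\mu=N^{-1}\p{Y^\mu+\theta^\mu}$.

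\emph{Step 2 (divergence).} For a prior of type \ref{prior:independent_series_white_noise}, the coordinate density satisfies $\partial_\mu\ln\Pi=-\sigma_\mu^{-1}H'\p{\theta^\mu/\sigma_\mu}$. Moreover $\partial_\mu V^\mu=N^{-1}$ for each $\mu$, so the trace contributes $p/N$. Hence
\[
\Div_\pi V=\frac pN-N^{-1}\sum_{\mu=1}^p\sigma_\mu^{-1}\p{Y^\mu+\theta^\mu}H'\p{\theta^\mu/\sigma_\mu}.
\]
Since $\Div_\pi V$ is a scalar, $\nabla_\mu\Div_\pi V=\partial_\mu\Div_\pi V$. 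Only the $\mu$-th summand depends on $\theta^\mu$, so the product rule gives
\[
\nabla_\mu\Div_\pi V=-N^{-1}\sigma_\mu^{-1}\b{\p{Y^\mu+\theta^\mu}\sigma_\mu^{-1}H''\p{\theta^\mu/\sigma_\mu}+H'\p{\theta^\mu/\sigma_\mu}},
\]
which is item 2.

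\emph{Step 3 (Gaussian case).} As in the proof of Proposition \ref{proposition:linear_functionals2_white_noise}, the Gaussian prior \ref{prior:gaussian} corresponds to $H(x)=x^2/2$ up to an additive constant, which does not affect $\partial_\mu\ln\Pi$. Then $\sigma_\mu^{-1}H'\p{\theta^\mu/\sigma_\mu}=\sigma_\mu^{-2}\theta^\mu$ and $\sigma_\mu^{-2}H''\equiv\sigma_\mu^{-2}$. Substituting into Step 2 gives
\[
\Div_\pi V=\frac pN-N^{-1}\sum_{\mu=1}^p\sigma_\mu^{-2}\p{Y^\mu+\theta^\mu}\theta^\mu
\]
and $\nabla_\mu\Div_\pi V=-N^{-1}\sigma_\mu^{-2}\p{Y^\mu+\theta^\mu+\theta^\mu}=-N^{-1}\sigma_\mu^{-2}\p{Y^\mu+2\theta^\mu}$, which is item 3.

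The only real obstacle is bookkeeping of index positions. One must check that the contraction in the definition of $V$ uses $\nabla^\nu\ell_1$, the upper-index form $Y^\nu-\theta^\nu$, against the lower-index Hessian, so that the factors of $N$ cancel and $V_\mu$ carries no $N$. One must also check that $\partial_\mu V^\mu$ is taken with the raised components, which produces the $p/N$ term. Everything else is elementary differentiation.
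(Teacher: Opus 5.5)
Your proposal is correct and follows essentially the same route as the paper. You work in the affine $\theta$-coordinates and use $\partial_\mu\Phi=2\theta^\mu$, $\partial_{\mu\nu}\Phi=2\delta_{\mu\nu}$ and $\nabla^\nu\ell_1=Y^\nu-\theta^\nu$ to obtain $V_\mu=Y^\mu+\theta^\mu$. You then raise the index to $V^\mu=N^{-1}(Y^\mu+\theta^\mu)$, apply the coordinate formula $\Div_\pi V=\partial_\mu V^\mu+(\partial_\mu\ln\Pi)V^\mu$, differentiate, and obtain the Gaussian case by substituting $H(x)=x^2/2$. The paper makes the same caveat you do: this choice of $H$ is not a bona fide prior of the first type, but that is irrelevant to these purely local computations.
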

\begin{proof}
\begin{enumerate}
	\item Since $\partial_\mu \Phi = 2\theta_\mu$ we have
	\[
	V_\mu = \nabla_\mu \Phi + \frac{1}{2} \nabla_{\mu \nu}^2 \Phi. \nabla^\nu \ell_1 = \partial_\mu \Phi + \frac{1}{2} \p{\partial_{\mu \nu}^2 \Phi} \p{\partial^\nu \ell_1} = 2\theta^\mu + \delta_{\mu \nu} \p{Y^\nu - \theta^\nu} = Y^\mu + \theta^\mu.
	\]
	\item By definition
	\[
	\Div_\pi V = \Div V + \inner{V | \nabla \ln \pi} = \partial_\mu V^\mu + \b{\Gamma_{~\mu \nu}^\mu + \partial_\mu \ln \pi}V^\nu = \partial_\mu V^\mu + \p{\partial_\mu \ln \Pi} V^\mu,
	\]
	but using $g^{\mu \nu} = N^{-1} \delta^{\mu \nu}$ we have $V^\mu = g^{\mu \nu} V_\nu = N^{-1}\p{Y^\mu + \theta^\mu}$, which implies that
	\[
	\Div_\pi V = \frac{p}{N} - N^{-1} \sum_{\mu = 1}^p \sigma_\mu^{-1} \p{Y^\mu + \theta^\mu} H'\p{\theta^\mu/\sigma_\mu}.
	\]
	and
	\[
	\partial_\mu \Div_\pi V = - N^{-1} \sigma_\mu^{-1} \b{(Y^\mu + \theta^\mu)\sigma_\mu^{-1} H''\p{\theta^\mu/\sigma_\mu} + H'\p{\theta^\mu/\sigma_\mu}}.
	\]
	\item The result follows by setting $H\p{x} = x^2/2$ in the last item (even if a Gaussian prior \ref{prior:gaussian} is not a bona fide prior of type \ref{prior:independent_series_white_noise} the computations are similar).
\end{enumerate}
\end{proof}

\paragraph{On the proof of \cite{castilloBernsteinVonMisesTheorem2015} Theorem $(3.1)$ in the case of Gaussian priors.} There is an error in the proof: on page $5$ of the supplementary material, the integral should be (in the notations used there) over $\psi_n\p{B_n}$ not $\psi^{-1}\p{B_n}$, and the argument of the prior density $\varphi$ after change of variable should be $\frac{g_k + \delta_k}{c_t \sigma_k}$, not $\frac{c_t^{-1} g_k - \delta_k}{\sigma_k}$. This propagates to page $6$, so that the correct expression for $B_k\p{\epsilon_k}$ actually is
\begin{equation}\label{equation:correct_B_k}
	B_k\p{\epsilon_k} = \frac{\int e^{-\frac{u^2}{2} + \epsilon_k u} \varphi\p{\frac{f_{0k} + u/\sqrt{n} + \delta_k}{c_t \sigma_k}} du}{\int e^{-\frac{v^2}{2} + \epsilon_k v} \varphi\p{\frac{f_{0k} + v/\sqrt{n}}{\sigma_k}}dv},
\end{equation}
not
\begin{equation}\label{equation:incorrect_B_k}
	B_k\p{\epsilon_k} = \frac{\int e^{-\frac{u^2}{2} + \epsilon_k u} \varphi\p{\frac{c_t^{-1}\p{f_{0k} + u/\sqrt{n}} - \delta_k}{\sigma_k}} du}{\int e^{-\frac{v^2}{2} + \epsilon_k v} \varphi\p{\frac{f_{0k} + v/\sqrt{n}}{\sigma_k}}dv},
\end{equation}
and to conclude the proof one has to show that
\[
\prod_{k = 1}^{K_n} B_k\p{\epsilon_k} = 1 + o_p(1), \text{ equivalently } \abs{\sum_{k = 1}^{K_n} \ln B_k\p{\epsilon_k}} = o_p(1).
\]
To show this in the Gaussian case where $\varphi\p{x} = \p{2\pi}^{-1/2}\exp\p{-x^2/2}$ we carefully complete the squares in the integrals defining $B_k\p{\epsilon_k}$: using
\[
\int \exp\p{-Ax^2 + Bx} dx = \exp\p{B^2/4A}\sqrt{\frac{\pi}{A}}
\]
we have
\begin{align*}
\int e^{-\frac{v^2}{2} + \epsilon_k v} \varphi\p{\frac{f_{0k} + v/\sqrt{n}}{\sigma_k}}dv & = \p{2\pi}^{-1/2} \int \exp\p{-\frac{v^2}{2} + \epsilon_k v - \frac{1}{2}\p{\frac{f_{0k} + v/\sqrt{n}}{\sigma_k}}^2} dv \\
& = \p{2\pi}^{-1/2} \int \exp\p{-\frac{v^2}{2} + \epsilon_k v - \frac{1}{2\sigma_k^2} \b{f_{0k}^2 + \frac{2f_{0k} v}{\sqrt{n}} + \frac{v^2}{n}}} dv \\
& = \p{2\pi}^{-1/2} \exp\p{- \frac{f_{0k}^2}{2\sigma_k^2}} \int \exp\p{-\frac{1}{2}\b{1 + \frac{1}{n\sigma_k^2} } v^2 + \p{\epsilon_k - \frac{f_{0k}}{\sigma_k^2 \sqrt{n}}}v} dv \\
& = \p{2\pi}^{-1/2} \exp\p{- \frac{f_{0k}^2}{2\sigma_k^2}} \exp\p{\frac{\p{\epsilon_k - \frac{f_{0k}}{\sigma_k^2 \sqrt{n}}}^2}{4\frac{1}{2} \b{1 + \frac{1}{n\sigma_k^2}}}} \p{\frac{\pi}{\frac{1}{2} \b{1 + \frac{1}{n\sigma_k^2}}}}^{1/2},
\end{align*}
and (recognising the same expression as above with $\sigma_k$ replaced by $c_t \sigma_k$ and $f_{0k}$ replaced by $f_{0k} + \delta_k$)
\begin{align*}
\int e^{-\frac{u^2}{2} + \epsilon_k u} \varphi\p{\frac{f_{0k} + u/\sqrt{n} + \delta_k}{c_t \sigma_k}} du & = \p{2\pi}^{-1/2} \int \exp\p{-\frac{u^2}{2} + \epsilon_k u - \frac{1}{2} \p{\frac{f_{0k} + u/\sqrt{n} + \delta_k}{c_t \sigma_k}}^2} du \\
& = \p{2\pi}^{-1/2} \exp\p{- \frac{\p{f_{0k} + \delta_k}^2}{2c_t^2\sigma_k^2}} \exp\p{\frac{\p{\epsilon_k - \frac{\p{f_{0k} + \delta_k}}{c_t^2\sigma_k^2 \sqrt{n}}}^2}{4\frac{1}{2} \b{1 + \frac{1}{nc_t^2\sigma_k^2}}}} \p{\frac{\pi}{\frac{1}{2} \b{1 + \frac{1}{nc_t^2\sigma_k^2}}}}^{1/2}.
\end{align*}
Thus the ratio $B_k\p{\epsilon_k}$ can be expressed as
\begin{align*}
B_k\p{\epsilon_k} & = \exp\p{\frac{f_{0k}^2}{2\sigma_k^2} - \frac{\p{f_{0k} + \delta_k}^2}{2c_t^2\sigma_k^2}} \exp\p{\frac{\p{\epsilon_k - \frac{\p{f_{0k} + \delta_k}}{c_t^2\sigma_k^2 \sqrt{n}}}^2}{2 \b{1 + \frac{1}{nc_t^2\sigma_k^2}}} - \frac{\p{\epsilon_k - \frac{f_{0k}}{\sigma_k^2 \sqrt{n}}}^2}{2 \b{1 + \frac{1}{n\sigma_k^2}}}} \p{\frac{1 + \frac{1}{n\sigma_k^2}}{1 + \frac{1}{nc_t^2 \sigma_k^2}}}^{1/2} \\
& = c_t \p{\frac{\sigma_k^2 + n^{-1}}{c_t^2 \sigma_k^2 + n^{-1}}}^{1/2} \exp\p{\frac{f_{0k}^2}{2\sigma_k^2} - \frac{\p{f_{0k} + \frac{t}{\sqrt{n}} Y_k}^2}{2c_t^2\sigma_k^2}} \exp\p{\frac{\p{\epsilon_k - \frac{\p{f_{0k} + \frac{t}{\sqrt{n}} Y_k}}{c_t^2\sigma_k^2 \sqrt{n}}}^2}{2 \b{1 + \frac{1}{nc_t^2\sigma_k^2}}} - \frac{\p{\epsilon_k - \frac{f_{0k}}{\sigma_k^2 \sqrt{n}}}^2}{2 \b{1 + \frac{1}{n\sigma_k^2}}}}
\end{align*}
where we have used $Y_k = f_{0k} + n^{-1/2} \epsilon_k \iff \delta_k = \frac{t}{\sqrt{n}} Y_k$. But
\begin{align*}
- \frac{f_{0k}^2}{2\sigma_k^2} + \frac{\p{\epsilon_k - \frac{f_{0k}}{\sigma_k^2 \sqrt{n}}}^2}{2 \b{1 + \frac{1}{n\sigma_k^2}}} & = - \frac{\p{Y_k - n^{-1/2}\epsilon_k}^2}{2\sigma_k^2} + \frac{\p{\epsilon_k - \frac{Y_k - n^{-1/2} \epsilon_k}{\sigma_k^2 \sqrt{n}}}^2}{2 \b{1 + \frac{1}{n\sigma_k^2}}} \\
& = - \frac{\p{Y_k - n^{-1/2}\epsilon_k}^2}{2\sigma_k^2} + \frac{\p{\p{1 + \frac{1}{n\sigma_k^2}} \epsilon_k - \frac{Y_k}{\sigma_k^2 \sqrt{n}}}^2}{2 \b{1 + \frac{1}{n\sigma_k^2}}} \\
& = - \frac{Y_k^2}{2\sigma_k^2} + \frac{Y_k \epsilon_k}{\sigma_k^2\sqrt{n}} - \frac{\epsilon_k^2}{2n\sigma_k^2} + \frac{1}{2}\p{1 + \frac{1}{n\sigma_k^2}} \epsilon_k^2 - \frac{\epsilon_kY_k}{\sigma_k^2 \sqrt{n}} + \frac{Y_k^2/\p{n\sigma_k^4}}{2\b{1 + \frac{1}{n\sigma_k^2}}} \\
& = \frac{\epsilon_k^2}{2} - \frac{1}{2} \frac{Y_k^2}{\sigma_k^2 + n^{-1}},
\end{align*}
and similarly, exchanging the roles of $\sigma_k$ and $c_t \sigma_k$ and the ones of $f_{0k}$ and $f_{0k} + \frac{t}{\sqrt{n}} Y_k$,
\[
- \frac{\p{f_{0k} + \frac{t}{\sqrt{n}} Y_k}^2}{2c_t^2\sigma_k^2} + \frac{\p{\epsilon_k - \frac{\p{f_{0k} + \frac{t}{\sqrt{n}} Y_k}}{c_t^2\sigma_k^2 \sqrt{n}}}^2}{2 \b{1 + \frac{1}{nc_t^2\sigma_k^2}}} = \frac{\epsilon_k^2}{2} - \frac{1}{2} \frac{\p{Y_k + \frac{t}{\sqrt{n}} Y_k}^2}{c_t^2 \sigma_k^2 + n^{-1}} = \frac{\epsilon_k^2}{2} - \frac{1}{2} \frac{\p{1 + \frac{t}{\sqrt{n}}}^2 Y_k^2}{c_t^2 \sigma_k^2 + n^{-1}}.
\]
Therefore
\begin{align*}
B_k\p{\epsilon_k} & = \p{\frac{c_t^2\p{\sigma_k^2 + n^{-1}}}{c_t^2 \sigma_k^2 + n^{-1}}}^{1/2} \exp\p{-\frac{\epsilon_k^2}{2} + \frac{1}{2} \frac{Y_k^2}{\sigma_k^2 + n^{-1}} + \frac{\epsilon_k^2}{2} - \frac{1}{2} \frac{\p{1 + \frac{t}{\sqrt{n}}}^2 Y_k^2}{c_t^2 \sigma_k^2 + n^{-1}}} \\
& = \p{\frac{n\sigma_k^2 + 1}{n\sigma_k^2 + c_t^{-2}}}^{1/2} \exp\p{\frac{1}{2} Y_k^2 \b{ \frac{1}{\sigma_k^2 + n^{-1}} - \frac{\p{1 + \frac{t}{\sqrt{n}}}^2}{c_t^2 \sigma_k^2 + n^{-1}}}},
\end{align*}
which implies
\[
\ln B_k\p{\epsilon_k} = -\frac{1}{2} \ln \p{\frac{n\sigma_k^2 + c_t^{-2}}{n\sigma_k^2 + 1}} - \frac{1}{2} Y_k^2 \b{\frac{\p{1 + \frac{t}{\sqrt{n}}}^2}{c_t^2 \sigma_k^2 + n^{-1}} - \frac{1}{\sigma_k^2 + n^{-1}}},
\]
where we recall that $c_t = 1 - \frac{t}{\sqrt{n}}$. Since $t$ is fixed in the argument we have, with all big $O$ being uniform in $n$ and $k \in \b{1,\ldots,K_n}$,
\[
\frac{n\sigma_k^2 + c_t^{-2}}{n\sigma_k^2 + 1} = \frac{n\sigma_k^2 + \p{1 - \frac{t}{\sqrt{n}}}^{-2}}{n\sigma_k^2 + 1} = \frac{n\sigma_k^2 + \p{1 + \frac{2t}{\sqrt{n}} + O\p{1/n}}}{n\sigma_k^2 + 1} = 1 + \frac{2}{\sqrt{n}\p{n\sigma_k^2 + 1}} t + O\p{\frac{\sigma_k^{-2}}{n^2}}.
\]
where we have used $\sigma_k \geq cn^{-1/2}$, a fact that we also use repeatedly below. Moreover
\begin{align*}
\frac{1}{c_t^2 \sigma_k^2 + n^{-1}} & = \frac{1}{\sigma_k^2 + n^{-1}} \frac{1}{1 + \frac{\sigma_k^2}{\sigma_k^2 + n^{-1}}\p{\p{1-\frac{t}{\sqrt{n}}}^2 - 1}} = \frac{1}{\sigma_k^2 + n^{-1}} \frac{1}{1 - \frac{\sigma_k^2}{\sigma_k^2 + n^{-1}}\p{\frac{2t}{\sqrt{n}} + O\p{\frac{1}{n}}}} \\
& = \frac{1}{\sigma_k^2 + n^{-1}} \p{1 + \frac{2\sigma_k^2}{\sqrt{n}\p{\sigma_k^2 + n^{-1}}}t + O\p{\frac{1}{n}} } = \frac{1}{\sigma_k^2 + n^{-1}} + \frac{2\sigma_k^2}{\sqrt{n}\p{\sigma_k^2 + n^{-1}}^2}t + O\p{\frac{\sigma_k^{-2}}{n}},
\end{align*}
therefore
\begin{align*}
\frac{\p{1 + \frac{t}{\sqrt{n}}}^2}{c_t^2 \sigma_k^2 + n^{-1}} - \frac{1}{\sigma_k^2 + n^{-1}} & = \p{1 + \frac{2t}{\sqrt{n}} + \frac{t^2}{n}}\p{\frac{1}{\sigma_k^2 + n^{-1}} + \frac{2\sigma_k^2}{\sqrt{n}\p{\sigma_k^2 + n^{-1}}^2}t + O\p{\frac{\sigma_k^{-2}}{n}}} - \frac{1}{\sigma_k^2 + n^{-1}} \\
& = \p{\frac{2}{\sqrt{n}} \frac{1}{\sigma_k^2 + n^{-1}} + \frac{2\sigma_k^2}{\sqrt{n}\p{\sigma_k^2 + n^{-1}}^2}} t + O\p{\frac{\sigma_k^{-2}}{n}} \\
& = \frac{2}{\sqrt{n}} \frac{2\sigma_k^2 + n^{-1}}{\p{\sigma_k^2 + n^{-1}}^2} t + O\p{\frac{\sigma_k^{-2}}{n}}.
\end{align*}
Let now $a_k,b_k \in \RR$ be defined as
\[
a_k := \frac{2}{\sqrt{n}\p{n\sigma_k^2 + 1}} \asymp \frac{\sigma_k^{-2}}{n^{3/2}} \text{ and } b_k := \frac{2}{\sqrt{n}} \frac{2\sigma_k^2 + n^{-1}}{\p{\sigma_k^2 + n^{-1}}^2} \asymp \frac{\sigma_k^{-2}}{n^{1/2}},
\]
for which given the calculations above we have
\[
\frac{n\sigma_k^2 + c_t^{-2}}{n\sigma_k^2 + 1} = 1 + a_k t + O\p{\frac{\sigma_k^{-2}}{n^2}} \text{ and } \frac{\p{1 + \frac{t}{\sqrt{n}}}^2}{c_t^2 \sigma_k^2 + n^{-1}} - \frac{1}{\sigma_k^2 + n^{-1}} = b_k t + O\p{\frac{\sigma_k^{-2}}{n}}.
\]
Then
\begin{align*}
\ln B_k\p{\epsilon_k} & = - \frac{1}{2} \ln \p{1 + a_k t + O\p{\frac{\sigma_k^{-2}}{n^2}}} - \frac{1}{2} \p{f_{0k} + n^{-1/2} \epsilon_k}^2 \b{b_k t + O\p{\frac{\sigma_k^{-2}}{n}}} \\
& = -\frac{1}{2} a_k t + O\p{\frac{\sigma_k^{-2}}{n^2}} - \frac{1}{2} f_{0k}^2 \b{b_k t + O\p{\frac{\sigma_k^{-2}}{n}}} - \frac{1}{\sqrt{n}}  f_{0k} \epsilon_k \b{b_k t + O\p{\frac{\sigma_k^{-2}}{n}}} - \frac{1}{2n} \epsilon_k^2 \b{b_k t + O\p{\frac{\sigma_k^{-2}}{n}}} \\
& = - \frac{1}{2} \b{a_k + f_{0k}^2 b_k + \frac{2}{\sqrt{n}} f_{0k} \epsilon_k b_k + \frac{1}{n} \epsilon_k^2 b_k} t + O\p{\frac{\sigma_k^{-2}}{n^2} + \frac{f_{0k}^2 \sigma_k^{-2}}{n}} + O\p{\frac{\sigma_k^{-2} f_{0k}}{n^{3/2}}} \epsilon_k + O\p{\frac{\sigma_k^{-2}}{n^2}} \epsilon_k^2.
\end{align*}
Because $\sigma_k \geq n^{-1/2} \vee \abs{f_{0k}}$ and $p \asymp n/\ln n$, we have
\[
\sum_{k = 1}^{K_n} \frac{\sigma_k^{-2}}{n^2} + \frac{f_{0k}^2 \sigma_k^{-2}}{n} = O\p{\frac{1}{\ln n}} = o\p{1},
\]
and also
\[
\sum_{k = 1}^{K_n} \frac{\sigma_k^{-2} f_{0k}}{n^{3/2}} \epsilon_k = O_p\p{\p{\sum_{k = 1}^{K_n} \frac{\sigma_k^{-4} f_{0k}^2}{n^3}}^{1/2}} = O_p\p{\p{n^{-3}\sum_{k = 1}^{K_n} \sigma_k^{-2}}^{1/2}} = O_p\p{\frac{1}{\p{n \ln n}^{1/2}}} = o_p\p{1},
\]
as well as
\[
\sum_{k = 1}^{K_n} \frac{\sigma_k^{-2}}{n^2}\epsilon_k^2 = O_p\p{\sum_{k = 1}^{K_n} \frac{\sigma_k^{-2}}{n^2}} = O_p\p{\frac{1}{\ln n}} = o_p\p{1}.
\]
As a result under the only assumptions $\sigma_k \geq n^{-1/2} \vee \abs{f_{0k}}, \quad p \asymp n/\ln n$ we obtain
\[
\prod_{k = 1}^{K_n} B_k\p{\epsilon_k} = \exp\p{\mu_n t + o_p(1)}, \quad \mu_n := - \frac{1}{2} \sum_{k = 1}^{K_n} \b{a_k + f_{0k}^2 b_k + \frac{2}{\sqrt{n}} f_{0k} \epsilon_k b_k + \frac{1}{n} \epsilon_k^2 b_k},
\]
for any $t$, which then implies using Theorem $(2.1)$ \cite{castilloBernsteinVonMisesTheorem2015} that
\[
d_{BL}\p{\Pi_1 \circ Z_n^{-1}, \NN\p{0,4\norm{f_0}_2^2}} = o_p(1), \quad Z_n := \sqrt{n}\p{\norm{f}_2^2 - \frac{2p}{n} - \frac{\mu_n}{\sqrt{n}} - \bar \psi},
\]
where we recall that $\Pi_1$ denotes the posterior distribution over $f$. Finally, using
\begin{align*}
\sum_{k = 1}^{K_n} \frac{1}{n}\epsilon_k^2 b_k & = \sum_{k = 1}^{K_n} \frac{1}{n} b_k + \sum_{k = 1}^{K_n} \frac{1}{n}\b{\epsilon_k^2 - 1} b_k = \sum_{k = 1}^{K_n} \frac{1}{n} b_k + O_p\p{\p{\sum_{k = 1}^{K_n} \frac{b_k^2}{n^2} }^{1/2}} \\
& = \sum_{k = 1}^{K_n} \frac{1}{n} b_k + O_p\p{\frac{1}{\sqrt{\ln n}}} = \sum_{k = 1}^{K_n} \frac{1}{n} b_k + o_p(1)
\end{align*}
and, since $\sigma_k \geq n^{-1/2} \vee \abs{f_{0k}}$,
\[
\sum_{k = 1}^{K_n} \frac{f_{0k} b_k}{\sqrt{n}} \epsilon_k = O_p\p{\p{\sum_{k = 1}^{K_n} \frac{f_{0k}^2 b_k^2}{n}}^{1/2}} = O_p\p{\p{\sum_{k = 1}^{K_n} \frac{f_{0k}^2 \sigma_k^{-4}}{n^2}}^{1/2}} = O_p\p{\frac{1}{\sqrt{\ln n}}} = o_p(1),
\]
the same BvM result holds with $\mu_n$ replaced with
\[
\mu_n = - \frac{1}{2} \sum_{k = 1}^{K_n} \b{a_k + \frac{b_k}{n} + f_{0k}^2 b_k}.
\]

\subsection{Proofs of Section \ref{section:histograms}}\label{section:proofs_histograms}

\begin{proposition}\label{proposition:linear_functionals_histograms}
	For a linear functional $\Phi\p{\theta} = \inner{a|f_\theta}_2, \quad a \in \FF_p$, we have
	\[
	\nabla \Phi = a - \int a f_\theta, \quad \nabla_{\mu \nu} \Phi = \frac{1}{2} T_{\mu \nu \rho} \nabla^\rho \Phi, \quad \abs{\nabla \Phi}^2 = \int \p{a - \int a f_\theta}^2 f_\theta,
	\]
	\[
	\Phi + \inner{\nabla \Phi|\nabla \ell} = a.
	\]
\end{proposition}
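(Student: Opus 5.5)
The plan is a direct computation in the natural coordinates $\theta$, using that $a \in \FF_p$ decomposes in the basis $\p{\ind{},u_1,\ldots,u_{p-1}}$. The key identification is that the contravariant components of $\nabla \Phi$ are exactly the Haar coefficients of $a$. I will first write $a = c + a^\mu u_\mu$ with $c = \int a$ and $a^\mu = \inner{u_\mu | a}_2$; this uses $\int u_\mu = 0$ and orthonormality of the Haar wavelets. Since $\int f_\theta = 1$ and $\eta_\mu = P_\theta\c{u_\mu}$, this gives
\[
\Phi(\theta) = c + a^\mu \eta_\mu(\theta).
\]

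\emph{Gradient and norm.} Differentiating gives $\partial_\nu \Phi = a^\mu \partial_\nu \eta_\mu = a^\mu g_{\mu\nu}$, using $g_{\mu\nu} = \partial \eta_\mu/\partial\theta^\nu$. Raising the index gives $\nabla^\mu \Phi = g^{\mu\nu}\partial_\nu\Phi = a^\mu$. Hence, as an element of $T_\theta \subset H_\theta$,
\[
\nabla \Phi = a^\mu \partial_\mu \ell = a^\mu\p{u_\mu - \eta_\mu} = \p{a - c} - \p{\Phi - c} = a - \int a f_\theta .
\]
The formula for $\abs{\nabla\Phi}^2$ then follows from $\abs{V}^2 = P_\theta\c{V^2}$.

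\emph{Last identity.} Pointwise in the observation $x$, $\inner{\nabla\Phi | \nabla \ell} = g_{\mu\nu}\nabla^\mu\Phi\,\nabla^\nu \ell = \p{\nabla^\mu\Phi}\partial_\mu \ell$. Using $\nabla^\mu\Phi = a^\mu$, this equals $a^\mu\p{u_\mu - \eta_\mu} = a - \Phi$. Therefore $\Phi + \inner{\nabla\Phi|\nabla\ell} = a$ as a function of $x$.

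\emph{Hessian.} I will use $\nabla_{\mu\nu}\Phi = \partial_{\mu\nu}\Phi - \Gamma^{~\rho}_{\mu\nu}\partial_\rho \Phi$. Since $\eta_\rho = \partial_\rho\psi$, we have $\partial_{\mu\nu}\Phi = a^\rho \partial_{\mu\nu\rho}\psi = T_{\mu\nu\rho}a^\rho$. Using the stated Christoffel symbols $\Gamma^{~\rho}_{\mu\nu} = \frac12 T_{\mu\nu}^{~~\rho} = \frac12 g^{\rho\sigma}T_{\mu\nu\sigma}$ together with $\partial_\rho\Phi = g_{\rho\tau}a^\tau$, we get $\Gamma^{~\rho}_{\mu\nu}\partial_\rho\Phi = \frac12 T_{\mu\nu\sigma}a^\sigma$. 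Subtracting gives $\nabla_{\mu\nu}\Phi = \frac12 T_{\mu\nu\rho}a^\rho = \frac12 T_{\mu\nu\rho}\nabla^\rho\Phi$.

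The only delicate point is index bookkeeping: the raised index in $T_{\mu\nu}^{~~\rho}$ must contract against the lowered $\partial_\rho\Phi = g_{\rho\tau}a^\tau$, so that the two metric factors cancel. The rest is routine.
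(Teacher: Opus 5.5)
Your proof is correct and follows essentially the same route as the paper's. Both arguments identify the contravariant components $\nabla^\mu\Phi$ with the Haar coefficients $\inner{u_\mu|a}_2$ via $\partial_\nu\Phi = a^\mu g_{\mu\nu}$, then read off the remaining identities. For the Hessian, both subtract $\Gamma_{~\mu\nu}^\rho\partial_\rho\Phi = \frac{1}{2}T_{\mu\nu\rho}\nabla^\rho\Phi$ from $\partial_{\mu\nu}\Phi = T_{\mu\nu\rho}a^\rho$.
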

\begin{proof}{Proposition \ref{proposition:linear_functionals_histograms}}\\
	Since $\Phi = \int a + \int \p{a - \int a} f_\theta$ we have
	\[
	\partial_\mu \Phi = \partial_\mu \int \p{a - \int a} f_\theta = \int \p{a - \int a} \p{\partial_\mu \ell} f_\theta = \int \p{a - \int a} \p{u_\mu - \eta_\mu} f_\theta.
	\]
	We now define the coefficients $a_p^\nu = \inner{u_\nu | a}_2$ so that $a - \int a = a_p^\mu u_\mu$ (because $a \in \FF_p$) to get
	\[
	\partial_\mu \Phi = \int \p{a_p^\nu u_\nu} \p{u_\mu - \eta_\mu} f_\theta = a_p^\nu g_{\mu \nu},
	\]
	which implies that $\nabla^\mu \Phi = g^{\mu \nu} \partial_\nu \Phi = a_p^\mu = \inner{u_\mu | a}_2$. This implies that
	\[
	\nabla \Phi = \p{\nabla^\mu \Phi} \partial_\mu \ell = \sum_{\mu = 1}^{p-1} \inner{u_\mu|a}_2 \p{u_\mu - \eta_\mu} = a - \int a f_\theta,
	\]
	and in particular
	\[
	\abs{\nabla \Phi}^2 = P_\theta \c{\p{\nabla \Phi}^2} = \int \p{a - \int a f_\theta}^2 f_\theta.
	\]
	For the Hessian of $\Phi$ we have
	\[
	\partial_{\mu \nu} \Phi = \partial_\nu a_p^\rho g_{\mu \rho} = a_p^\rho T_{\mu \nu \rho},
	\]
	which implies that
	\[
	\nabla_{\mu \nu} \Phi = \partial_{\mu \nu} \Phi - \Gamma_{~\mu \nu}^\rho \partial_\rho \Phi = \partial_{\mu \nu} \Phi - \frac{1}{2} T_{\mu \nu \rho} \nabla^\rho \Phi = \frac{1}{2} T_{\mu \nu \rho} a_p^\rho.
	\]
	Finally using the results above we find $\Phi + \inner{\nabla \Phi | \nabla \ell} = \int a f_\theta + a - \int a f_\theta = a$.
\end{proof}

\begin{proposition}\label{proposition:linear_functionals_histograms_2}
	Let $a \in \FF_p$ and $\Phi$ be the functional $\Phi\p{\theta} = \int a f_\theta$. Then:
	\begin{enumerate}
		\item For a prior of type \ref{prior:independent_haar_series_histograms} on $\theta$, if $H$ is differentiable we have
		\[
		\Delta_\pi \Phi = - \sum_{\mu = 1}^{p-1} \sigma_\mu^{-1} H'\p{\theta^\mu/\sigma_\mu} \inner{u_\mu | a}_2,
		\]
		\item For a Dirichlet prior \ref{prior:dirichlet} with parameter $m \in \p{0,\infty}^p$ on $\omega$ we have
		\[
		\Delta_\pi \Phi = \norm{m}_1 \inner{a| h_{\bar m} - h_\omega}_2, \quad \nabla \Delta_\pi \Phi = - \norm{m}_1 \p{a - \int ah_\omega}.
		\]
	\end{enumerate}
\end{proposition}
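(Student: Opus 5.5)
The plan is to compute $\Delta_\pi \Phi$ through the coordinate formula from Section \ref{section:information_geometry},
\[
\Delta_\pi \Phi = \partial_\mu \nabla^\mu \Phi + \p{\partial_\mu \ln \Pi}\nabla^\mu \Phi .
\]
Here $\Pi$ is the Lebesgue density of the prior in the chosen chart and $\nabla^\mu\Phi$ are the contravariant components of $\nabla\Phi$ in that chart. The formula needs no Christoffel symbols. For each prior I will pick the chart in which its density is explicit.

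\textbf{Prior \ref{prior:independent_haar_series_histograms}.} I work in the natural coordinates $\theta$. Proposition \ref{proposition:linear_functionals_histograms} gives $\nabla^\mu \Phi = \inner{u_\mu | a}_2$, which does not depend on $\theta$, so $\partial_\mu \nabla^\mu\Phi = 0$. Moreover $\ln \Pi(\theta) = -\sum_\mu H(\theta^\mu/\sigma_\mu) + \text{const}$, so $\partial_\mu \ln \Pi = -\sigma_\mu^{-1}H'(\theta^\mu/\sigma_\mu)$. Substituting these two facts gives the first claimed formula directly.

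\textbf{Dirichlet prior.} I work in the dual coordinates $(\omega_\mu)_{\mu=1}^{p-1}$ with $\omega_p = 1-\sum_{\mu<p}\omega_\mu$. Write $a = \sum_{\mu=1}^p a_\mu \ind{I_\mu}$, so that $\Phi = \sum_\mu a_\mu\omega_\mu$.

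1. \emph{Components of $\nabla\Phi$.} The $\omega_\mu$-component of the vector field $\nabla\Phi$ is its action on the coordinate function $\omega_\mu = P_\omega\c{\ind{I_\mu}}$. This equals $\inner{\nabla\Phi | \nabla\omega_\mu}$. Since $\nabla\omega_\mu = \ind{I_\mu}-\omega_\mu$ (same argument as in Proposition \ref{proposition:linear_functionals_histograms}) and $\nabla\Phi = a - \int a h_\omega$, this is $\Cov_\omega(a,\ind{I_\mu})$. Hence
\[
V_\mu := \nabla^{\omega_\mu}\Phi = \omega_\mu (a_\mu - \Phi).
\]

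2. \emph{Divergence part.} Differentiating in $\omega_\mu$, and using $\partial\Phi/\partial\omega_\mu = a_\mu - a_p$, gives
\[
\partial_{\omega_\mu}V_\mu = (a_\mu-\Phi) - \omega_\mu(a_\mu - a_p).
\]

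3. \emph{Prior part.} From $\ln\Pi = \sum_{\nu\le p}(m_\nu-1)\ln\omega_\nu + \text{const}$ we get
\[
\partial_{\omega_\mu}\ln\Pi = \frac{m_\mu-1}{\omega_\mu} - \frac{m_p-1}{\omega_p}.
\]

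4. \emph{Summation.} Summing over $\mu<p$ uses two identities. The first is $\sum_{\mu<p}\omega_\mu(a_\mu-\Phi) = -\omega_p(a_p-\Phi)$, so the $\omega_p^{-1}$ term becomes $(m_p-1)(a_p-\Phi)$. The second is $\sum_{\mu<p}\omega_\mu(a_\mu-a_p) = \Phi - a_p$. With these, the contributions combine into
\[
\Delta_\pi\Phi = \sum_{\mu=1}^p m_\mu(a_\mu-\Phi) = \norm{m}_1\p{\inner{a|h_{\bar m}}_2 - \inner{a|h_\omega}_2},
\]
because $\inner{a|h_\omega}_2 = \sum_\mu a_\mu\omega_\mu$ and $\inner{a|h_{\bar m}}_2 = \sum_\mu a_\mu m_\mu/\norm{m}_1$.

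5. \emph{Gradient.} In this expression $\inner{a|h_{\bar m}}_2$ is constant, so $\nabla\Delta_\pi\Phi = -\norm{m}_1\nabla\Phi$. Proposition \ref{proposition:linear_functionals_histograms} then gives $\nabla\Delta_\pi\Phi = -\norm{m}_1\p{a-\int a h_\omega}$.

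\textbf{Main obstacle.} The delicate step is the bookkeeping in the Dirichlet case. The constraint on $\omega_p$ enters both $\partial\Phi/\partial\omega_\mu$ and $\partial_{\omega_\mu}\ln\Pi$, and one must check that the spurious $-1$'s from the exponents $m_\mu - 1$ cancel exactly against the divergence part. A secondary point is to justify that the components of $\nabla\Phi$ in the $\omega$ chart are $\inner{\nabla\Phi|\nabla\omega_\mu}$. This holds because $g^{\mu\nu}$ in $\omega$ coordinates is the inverse of the $\alpha$-Hessian relation recalled for dual coordinates, or more simply because $d\omega_\mu(X) = \inner{X|\nabla\omega_\mu}$ for any vector field $X$.
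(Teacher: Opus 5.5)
Your proof is correct, and for the prior of type \ref{prior:independent_haar_series_histograms} it coincides with the paper's. For the Dirichlet prior you use a different chart from the paper.

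The paper works in the natural coordinates $\alpha$. There the contravariant components $\nabla^\mu\Phi=\partial\Phi/\partial\omega_\mu=a_\mu-a_p$ of a linear functional are constant. So $\partial_\mu\nabla^\mu\Phi=0$, exactly as in the first case, and $\Delta_\pi\Phi=(\partial_{\alpha^\mu}\ln\Pi)(a_\mu-a_p)$. The cost is transporting the Dirichlet density to $\alpha$ coordinates via the Jacobian $\abs{\partial\omega/\partial\alpha}=\abs{g(\alpha)}=\prod_{\mu\le p}\omega_\mu$ (Lemma \ref{lemma:determinant_metric_tensor}, Proposition \ref{proposition:density_dirichlet_alpha}). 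This turns the exponents $m_\mu-1$ into $m_\mu$ and gives $\partial_{\alpha^\mu}\ln\Pi=m_\mu-\norm{m}_1\omega_\mu$.

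You stay in the mixture chart $\omega$, where the density is explicit and no Jacobian is needed. The price is that the components $\omega_\mu(a_\mu-\Phi)$ are no longer constant. Their divergence $\sum_{\mu\le p}(a_\mu-\Phi)$ is what absorbs the $-1$'s; this is the same cancellation seen from the other side.

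Your route is more elementary and self-contained. It needs no determinant lemma and no detour through Christoffel symbols and the Amari--\v{C}encov tensor. Getting $\nabla\Delta_\pi\Phi=-\norm{m}_1\nabla\Phi$ from the affine dependence of $\Delta_\pi\Phi$ on $\Phi$ is also quicker than the paper's tangent-space projection argument.

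The paper's choice treats both priors uniformly, through the vanishing of $\partial_\mu\nabla^\mu\Phi$ in natural coordinates. Its formula for $\partial_{\alpha^\mu}\ln\Pi$ is also reused for the quadratic and general integral functionals, so the Jacobian computation is amortised.
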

\begin{proof}{Proposition \ref{proposition:linear_functionals_histograms_2}}\\
\begin{enumerate}
	\item We have
	\begin{align*}
		\Delta_\pi \Phi = \Delta \Phi + \inner{\nabla \Phi | \nabla \ln \pi} & = \Delta \Phi + \inner{\nabla \ln \pi | \nabla \Phi} = \nabla_\mu \nabla^\mu \Phi + \nabla_\mu \ln \pi \nabla \Phi \\
		& = \partial_\mu \nabla^\mu \Phi + \Gamma_{~\mu \nu}^\nu \nabla^\mu \Phi + \b{\partial_\mu \ln \Pi - \Gamma_{~\mu \nu}^\nu} \nabla^\mu \Phi \\
		& = \partial_\mu \nabla^\mu \Phi + \p{\partial_\mu \ln \Pi} \nabla^\mu \Phi,
	\end{align*}
	but since $\nabla^\mu \Phi = \inner{u_\mu|a}_2$ we have $\partial_\mu \nabla^\mu \Phi = 0$, therefore
	\[
	\Delta_\pi \Phi = \p{\partial_\mu \ln \Pi} \nabla^\mu \Phi = - \sum_{\mu = 1}^{p-1} \sigma_\mu^{-1} H'\p{\theta^\mu/\sigma_\mu} \inner{u_\mu | a}_2, \quad \partial_\mu \Delta_\pi \Phi_a = - \sigma_\mu^{-2} H''\p{\theta^\mu/\sigma_\mu} \inner{u_\mu | a}_2.
	\]
	\item By definition we have
	\[
	\Delta_\pi \Phi = \Delta \Phi + \inner{\nabla \ln \pi|\nabla \Phi} = g^{\mu \nu} \nabla_{\mu \nu} \Phi + \p{\nabla_\mu \ln \pi} \p{\nabla^\mu \Phi},
	\]
	but using the $\alpha-\omega$ coordinate system recall that $\nabla^\mu \Phi = p\p{\inner{\ind{I_\mu}|a}_2 - \inner{\ind{I_p}|a}_2}, \nabla_{\mu \nu} \Phi = \frac{1}{2} T_{\mu \nu \rho} \inner{u_\rho | a}_2$ and
	\[
	\nabla_\mu \ln \pi = \partial_\mu \ln \Pi - \Gamma_{~\mu \nu}^\nu = \partial_\mu \ln \Pi - \frac{1}{2} g^{\nu \rho} T_{\mu \nu \rho},
	\]
	therefore
	\[
	\Delta_\pi \Phi = \frac{1}{2} g^{\mu \nu} T_{\mu \nu \rho} \inner{u_\rho | a}_2 + \p{\partial_\mu \ln \Pi - \frac{1}{2} g^{\nu \rho} T_{\mu \nu \rho}} \inner{u_\mu | a}_2 = p\p{\partial_\mu \ln \Pi} \p{\inner{\ind{I_\mu}|a}_2 - \inner{\ind{I_p}|a}_2}.
	\]
	But by Proposition \ref{proposition:density_dirichlet_alpha} we have
	\begin{align*}
		\partial_\mu \ln \Pi & = \sum_{\nu = 1}^p m_\nu \partial_\mu \ln \frac{e^{\alpha^\nu}}{\sum_{\rho = 1}^p e^{\alpha^\rho}} = \sum_{\nu = 1}^p m_\nu \partial_\mu \b{\alpha^\nu - \ln \p{\sum_{\rho = 1}^p e^{\alpha^\rho}}} = \sum_{\nu = 1}^p m_\nu \b{\delta_\mu^\nu - \frac{e^{\alpha^\mu}}{\sum_{\rho = 1}^p e^{\alpha^\rho}}} \\
		& = m_\mu - \sum_{\nu = 1}^p \frac{m_\nu e^{\alpha^\mu}}{\sum_{\rho = 1}^p e^{\alpha^\rho}} = m_\mu - \norm{m}_1 \omega_\mu,
	\end{align*}
	therefore
	\begin{align*}
		\Delta_\pi \Phi & = p\sum_{\mu = 1}^{p-1} \p{\inner{\ind{I_\mu}|a}_2 - \inner{\ind{I_p}|a}_2} \p{m_\mu - \norm{m}_1 \omega_\mu} = p \sum_{\mu = 1}^{p-1} \p{\int_{I_\mu} a - \int_{I_p}a} \p{m_\mu - \norm{m}_1 \omega_\mu} \\
		& = p \sum_{\mu = 1}^{p-1} \p{\int_{I_\mu} a} \p{m_\mu - \norm{m}_1 \omega_\mu} - p \p{\int_{I_p}a} \sum_{\mu = 1}^{p-1} \p{m_\mu - \norm{m}_1 \omega_\mu} \\
		& = p \sum_{\mu = 1}^{p-1} \p{\int_{I_\mu} a} \p{m_\mu - \norm{m}_1 \omega_\mu} - p \p{\int_{I_p}a} \p{\norm{m}_1 - m_p - \norm{m}_1\p{1-\omega_p}} \\
		& = p \sum_{\mu = 1}^{p-1} \p{\int_{I_\mu} a} \p{m_\mu - \norm{m}_1 \omega_\mu} + p \p{\int_{I_p}a} \p{m_p - \norm{m}_1\omega_p} \\
		& = p \sum_{\mu = 1}^p \p{\int_{I_\mu} a} \p{m_\mu - \norm{m}_1 \omega_\mu} = \norm{m}_1 \sum_{\mu = 1}^p \frac{\inner{\ind{I_\mu}|a}_2}{\norm{\ind{I_\mu}}_2^2} \inner{\ind{I_\mu}| h_{\bar m} - h_\omega}_2 \\
		& = \norm{m}_1 \inner{a| h_{\bar m} - h_\omega}_2.
	\end{align*}
	As a result we also have (still in the $\alpha-\omega$ coordinate system)
	\[
	\nabla_\mu \Delta_\pi \Phi = \frac{\partial}{\partial \alpha^\mu} \b{\norm{m}_1 \inner{a| h_{\bar m} - h_\omega}_2} = - \norm{m}_1 \int a \p{\ind{I_\mu} - \omega_\mu} h_\omega,
	\]
	which implies that
	\[
	\nabla \Delta_\pi \Phi = - \norm{m}_1 g^{\mu \nu} \p{\int a\p{\ind{I_\mu} - \omega_\mu} h_\omega} \p{\ind{I_\nu} - \omega_\nu},
	\]
	but because $g_{\mu \nu}$ is the Gram matrix of the scores $\partial_\mu \ell = \ind{I_\mu} - \omega_\mu, \quad 1 \leq \mu \leq p-1$, the quantity
	\[
	g^{\mu \nu} \p{\int a\p{\ind{I_\mu} - \omega_\mu} h_\omega} \p{\ind{I_\nu} - \omega_\nu}
	\]
	is actually the $L^2\p{P_\omega}$ projection of $a$ on the tangent space
	\[
	T_\omega = \Span \b{\ind{I_\mu} - \omega_\mu : 1 \leq \mu \leq p-1} = \b{a \in \FF_p : \int a h_\omega = 0},
	\]
	which is equal to $a - \int ah_\omega$, and as a result
	\[
	\nabla \Delta_\pi \Phi = - \norm{m}_1 \p{a - \int ah_\omega}.
	\]
\end{enumerate}
\end{proof}

\begin{proposition}\label{proposition:bernstein_empirical_bins}
	If $p \ll \frac{n}{\ln n}$, for any $K>0$ there exists $M,n_->0$ such that
	\[
	\Pro_0^n \c{\norm{\hat f - \pi_p\c{f_0}}_\infty > M \sqrt{\frac{p \ln n}{n}}} \leq n^{-K}, \quad n \geq n_-.
	\]
\end{proposition}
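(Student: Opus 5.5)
Since $\hat f$ and $\pi_p\c{f_0}$ are both constant on each bin $I_\mu$, I will reduce the supremum norm to a maximum over the $p$ bins. Writing $\hat\omega_\mu := \Pro_n\c{\ind{I_\mu}}$ and $\omega_{0\mu} := \Pro_0\c{\ind{I_\mu}} = \int_{I_\mu} f_0$, I first check that $\hat f = p\sum_\mu \hat\omega_\mu \ind{I_\mu}$. Indeed $\hat f = 1 + \sum_{j,k}\Pro_n\c{\psi_{jk}}\psi_{jk} = \Pro_n\c{K_p(x,\cdot)}$ with $K_p(x,y) = p\sum_\mu \ind{I_\mu}(x)\ind{I_\mu}(y)$, by the equality of the two orthonormal-basis expansions of $\pi_p$ recalled in the paper. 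Similarly $\pi_p\c{f_0} = p\sum_\mu \omega_{0\mu}\ind{I_\mu}$. Hence
\[
\norm{\hat f - \pi_p\c{f_0}}_\infty = p \max_{1 \leq \mu \leq p} \abs{(\Pro_n - \Pro_0)\c{\ind{I_\mu}}}.
\]

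Next I apply Bernstein's inequality to each bin. For fixed $\mu$, $n\hat\omega_\mu$ is a sum of $n$ i.i.d. Bernoulli$(\omega_{0\mu})$ variables, with variance at most $\omega_{0\mu} \leq \norm{f_0}_\infty/p$; here I use $f_0 \leq c_+$, i.e. $f_0$ is bounded. Bernstein's inequality gives
\[
\Pro_0^n\c{\abs{\hat\omega_\mu - \omega_{0\mu}} > x} \leq 2\exp\p{-\frac{n x^2/2}{c_+/p + x/3}}.
\]
I take $x = (M/p)\sqrt{p\ln n/n}$, so that the target event $p\abs{\hat\omega_\mu-\omega_{0\mu}} > M\sqrt{p\ln n/n}$ is exactly $\abs{\hat\omega_\mu-\omega_{0\mu}}>x$. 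Then $nx^2 = M^2 \ln n/p$ and $x/3 = (M/3p)\sqrt{p\ln n/n}$. The assumption $p \ll n/\ln n$ gives $\sqrt{p\ln n/n} \to 0$, so for $n \geq n_-$ (with $n_-$ depending on $M$) the denominator is at most $2c_+/p$. The exponent is then at least $M^2\ln n/(4c_+)$.

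Finally, a union bound over the $p \leq n$ bins gives
\[
\Pro_0^n\c{\norm{\hat f - \pi_p\c{f_0}}_\infty > M\sqrt{p\ln n/n}} \leq 2n \cdot n^{-M^2/(4c_+)} \leq n^{-K}
\]
once $M^2 > 4c_+(K+2)$ and $n \geq n_-$. The only delicate step is checking that the linear Bernstein term $x/3$ is dominated by the variance term $c_+/p$. This is precisely where $p \ll n/\ln n$ is used; without it one would only get a sub-exponential regime with a worse rate.
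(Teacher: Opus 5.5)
Your proof is correct. It uses the same tools as the paper (Bernstein's inequality plus a union bound), but applies them to a different decomposition of $\hat f - \pi_p\c{f_0}$.

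The paper works in the Haar basis. It writes $\hat f - \pi_p\c{f_0} = \sum_{j,k}(\hat f_{jk} - f_{0jk})\psi_{jk}$ and uses the disjoint supports within each level to get $\norm{\hat f - \pi_p\c{f_0}}_\infty \leq \sum_{j} 2^{j/2}\max_k\abs{\hat f_{jk}-f_{0jk}} \lesssim \sqrt{p}\max_{j,k}\abs{\hat f_{jk}-f_{0jk}}$. It then applies Bernstein to each of the $p-1$ empirical wavelet coefficients, with range $\lesssim\sqrt p$, variance $\leq \norm{f_0}_\infty$ and deviation level $\asymp t/\sqrt{p}$.

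You instead work in the indicator basis. There the supremum norm is exactly $p\max_\mu\abs{\hat\omega_\mu - \omega_{0\mu}}$, and you apply Bernstein to Bernoulli bin frequencies with range $1$, variance $\leq c_+/p$ and deviation level $t/p$. After rescaling, both Bernstein exponents are of order $(nt^2/p)/(c_+ + t)$. In both arguments $p\ll n/\ln n$ is used only to make the linear Bernstein term negligible.

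What your route buys:
\begin{itemize}
\item It is more direct, since the reduction to bins is an equality rather than a triangle inequality over resolution levels.
\item It avoids the paper's constant in that step, which is slightly off: $\sum_{j\le J}2^{j/2} = (\sqrt p - 1)/(\sqrt2-1)$, which is not $\leq \sqrt{2p}$ for large $p$. This is harmless for the result.
\item It relies on the identity $\hat f = p\sum_\mu \hat\omega_\mu\ind{I_\mu}$, which the paper itself uses elsewhere, for instance at the end of the proof of Lemma \ref{lemma:supremum_norm_contraction_rates_histograms}.
\end{itemize}

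What the paper's route buys: it only needs the localisation property $\norm{\sum_k c_k\psi_{jk}}_\infty\lesssim 2^{j/2}\max_k\abs{c_k}$. It would therefore carry over to projections onto smoother compactly supported wavelets, where no exact bin structure is available.
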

\begin{proof}
	We have
	\[
	\hat f - \pi_p\c{f_0} = \sum_{j = 0}^J \sum_{k = 1}^{2^j} \p{ \hat f_{jk} - f_{0jk} } \psi_{jk},
	\]
	which implies
	\begin{align*}
		\norm{\hat f - \pi_p\c{f_0}}_\infty & \leq \sum_{j = 0}^J \norm{\sum_{k = 1}^{2^j} \p{\hat f_{jk} - f_{0jk}} \psi_{jk}}_\infty = \sum_{j=0}^J 2^{j/2} \max_{1 \leq k \leq 2^j} \abs{\hat f_{jk} - f_{0jk}} \\
		& \leq \p{2^{(J+1)/2} - 1} \max_{0 \leq j \leq J, 1 \leq k \leq 2^j} \abs{\hat f_{jk} - f_{0jk}} \leq \sqrt{2p} \max_{0 \leq j \leq J, 1 \leq k \leq 2^j} \abs{\hat f_{jk} - f_{0jk}},
	\end{align*}
	and as a result, by a union bound,
	\[
	\Pro_0^n \c{\norm{\hat f - \pi_p\c{f_0}}_\infty > t} \leq \sum_{j = 0}^J \sum_{k = 1}^{2^j} \Pro_0^n \c{\abs{\hat f_{jk} - f_{0jk}} > t/\sqrt{2p}}.
	\]
	Since
	\[
	\norm{\psi_{jk}}_\infty = 2^{j/2} \leq \sqrt{p}, \quad \Pro_0 \c{\p{\psi_{jk} - \Pro_0 \c{\psi_{jk}}}^2} \leq \Pro_0 \c{\psi_{jk}^2} \leq \norm{f_0}_\infty \int \psi_{jk}^2 = \norm{f_0}_\infty,
	\]
	by Bernstein's inequality we obtain, for any $j,k$,
	\[
	\Pro_0^n \c{\abs{\hat f_{jk} - f_{0jk}} > t/\sqrt{2p}} \leq 2 \exp \p{- \frac{1}{2} \frac{n\p{t/\sqrt{2p}}^2}{\norm{f_0}_\infty + \p{t/\sqrt{2p}} \times \sqrt{p}}} = 2\exp \p{-\frac{1}{2} \frac{nt^2/2p}{\norm{f_0}_\infty + t/\sqrt{2}}},
	\]
	and therefore
	\[
	\Pro_0^n \c{\norm{\hat f - \pi_p\c{f_0}}_\infty > t} \leq 8p\exp \p{-\frac{1}{2} \frac{nt^2/2p}{\norm{f_0}_\infty + t/\sqrt{2}}}.
	\]
	Now since $p \ll \frac{n}{\ln n}$, for any $K>0$ we can find $M>0$ such that setting $t = M\sqrt{\frac{p \ln n}{n}}$ makes the probability in the last display less than $n^{-K}$.
\end{proof}

\begin{lemma}\label{lemma:supremum_norm_contraction_rates_histograms}
	Assume that $X^n = \p{X_i}_{i=1}^n \iid f_0$ on $\c{0,1}$ for some $f_0 \in \CC^0, \quad p \ll \frac{n}{\ln n}$ and that $\Pi$ is either
	\begin{enumerate}
		\item a prior of type \ref{prior:independent_haar_series_histograms} with $H$ differentiable, $\norm{H'}_\infty < +\infty$ and $\sum_{j = 0}^J \sigma_j^{-1} 2^{j/2} \leq c\sqrt{np\ln n}$ for some constant $c>0$,
		\item or a Dirichlet prior \ref{prior:dirichlet} with parameter $m \in \p{0,\infty}^p$ such that $\norm{m}_\infty \leq c\sqrt{\frac{n \ln n}{p}}$ for a fixed constant $c>0$.
	\end{enumerate}
	Then for any $K>0$ there exists $M>0$ such that
	\[
	\ind{E_n} \Pi_n \c{\norm{f_\theta - \hat f}_\infty > M \sqrt{\frac{p \ln n}{n}}} \leq n^{-K}, \quad \Pro_0^n\c{E_n^c} \leq n^{-K},
	\]
	where $E_n$ is the event
	\[
	E_n = \b{\norm{\hat f - \pi_p\c{f_0}}_\infty \leq M \sqrt{\frac{p \ln n}{n}}}.
	\]
	In particular if $f_0 \in \CC^s, \quad 0 < s \leq 1$ then for $M>0$ large enough we have
	\[
	\ind{E_n} \Pi_n \c{\norm{f_\theta - f_0}_\infty > M\varepsilon_n } \leq n^{-K}, \quad \varepsilon_n := p^{-s} + \sqrt{\frac{p \ln n}{n}}.
	\]
\end{lemma}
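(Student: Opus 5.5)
The plan mirrors the proof of Lemma \ref{lemma:contraction_rates_white_noise}. We use Lemma \ref{lemma:laplace_transform} on linear functionals, then Markov's inequality and a union bound.

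The bound $\Pro_0^n\c{E_n^c} \leq n^{-K}$ is exactly Proposition \ref{proposition:bernstein_empirical_bins}. For the posterior statement, fix $\mu \in \b{1,\ldots,p}$ and a sign $\epsilon = \pm 1$. Take $a = \epsilon p\ind{I_\mu} \in \FF_p$, the functional $\Phi_a = \int a f_\theta$, the vector field $V = \nabla \Phi_a$ and the estimator $\hat \Psi_{a0} = \Pro_n\c{a}$. Then $\Phi_a - \hat\Psi_{a0} = \epsilon\p{f_\theta - \hat f}|_{I_\mu}$, since $\hat f = \pi_p$ applied to the empirical measure is the empirical histogram. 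By Proposition \ref{proposition:linear_functionals_histograms} we have $\hat\Phi_a = \hat\Psi_{a0}$ identically and $\abs{\nabla\Phi_a}^2 = \int \p{a - \int a f_\theta}^2 f_\theta \leq \int a^2 f_\theta = p\, f_\theta|_{I_\mu}$. Lemma \ref{lemma:laplace_transform} then gives
\[
\frac{d}{dt}\ln \Pi_n\c{e^{t\sqrt n(\Phi_a - \hat\Psi_{a0})}} = \Pi_{n,t}\c{t\abs{\nabla\Phi_a}^2 + n^{-1/2}\Delta_\pi\Phi_a},
\]
with $\Delta_\pi\Phi_a$ given by Proposition \ref{proposition:linear_functionals_histograms_2}. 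The integrability and cutoff conditions will be deferred to a routine technical section, as in the white-noise case.

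\textbf{Prior of type \ref{prior:independent_haar_series_histograms}.} Here $\abs{\Delta_\pi\Phi_a} \leq \norm{H'}_\infty\sum_\mu \sigma_\mu^{-1}\abs{\inner{u_\mu|a}_2}$. Since $\abs{\inner{\psi_{jk}|p\ind{I_\mu}}_2}\leq 2^{j/2}$ and only one $k$ per level $j$ is nonzero, this is at most $\norm{H'}_\infty\sum_j\sigma_j^{-1}2^{j/2} \leq c\sqrt{np\ln n}$.

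\textbf{Dirichlet prior.} Here $\Delta_\pi\Phi_a = \epsilon\norm{m}_1 \p{p\bar m_\mu - p\omega_\mu} = \epsilon p\p{m_\mu - \norm{m}_1\omega_\mu}$. Its positive part is bounded by $p\norm{m}_\infty \leq c\sqrt{np\ln n}$ when $\epsilon = +1$. For $\epsilon = -1$ the term $-p\norm{m}_1\omega_\mu$ could in principle be large, but it has a favourable sign, so only the upper bound is needed.

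The main obstacle is that $\abs{\nabla\Phi_a}^2 \leq p f_\theta|_{I_\mu}$ is not uniformly bounded, so the quadratic term in the cumulant bound is not deterministic. I would handle this as follows. Writing $f_\theta|_{I_\mu} = \hat f|_{I_\mu} + \epsilon^{-1}\p{\Phi_a - \hat\Psi_{a0}}$ gives
\[
t\,\Pi_{n,t}\c{\abs{\nabla\Phi_a}^2} \leq tp\,\hat f|_{I_\mu} + \frac{tp}{\sqrt n}\frac{d}{dt}\ln\Pi_n\c{\cdots}\quad (\epsilon = +1).
\]
This is a linear differential inequality for $G(t) = \ln\Pi_n\c{e^{t\sqrt n(\Phi_a-\hat\Psi_{a0})}}$. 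For $0 \leq t \leq \sqrt n/(2p)$ it yields $G'(t) \leq 2tp\hat f|_{I_\mu} + 2c\sqrt{p\ln n}$. On $E_n$ we have $\hat f \leq 2\norm{f_0}_\infty$, because $p \ll n/\ln n$ and $\pi_p\c{f_0}$ is bounded. For $\epsilon = -1$ the coupling has the favourable sign and is simply dropped. Integrating gives $G(t) \lesssim t^2 p + t\sqrt{p\ln n}$.

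By Markov's inequality, $\Pi_n\c{\abs{f_\theta-\hat f}|_{I_\mu} > x} \leq 2\exp\p{-t\sqrt n x + Ct^2p + Ct\sqrt{p\ln n}}$. Choose $t = \sqrt{n/p}\,\sqrt{\ln n}$ (admissible since $p \ll n/\ln n$) and $x = M\sqrt{p\ln n/n}$. The exponent is then $-(M - C' )\ln n$. A union bound over the $p \leq n$ bins and both signs gives $n^{-K}$ for $M$ large.

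The final claim follows from the triangle inequality with $\norm{\pi_p\c{f_0}-f_0}_\infty \lesssim p^{-s}$ on $E_n$.
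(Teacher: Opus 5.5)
Your strategy is sound. For the prior of type~\ref{prior:independent_haar_series_histograms} it is the paper's argument up to rescaling: you take $a=\pm p\ind{I_\mu}$ where the paper takes $a=\ind{I_{\mu_0}}$. In both, the random variance $\abs{\nabla\Phi_a}^2\le p f_\theta|_{I_\mu}$ is absorbed through the linear coupling with $G'$, followed by Markov's inequality and a union bound. However, two steps fail as written.

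\textbf{Dirichlet prior with $\epsilon=-1$.} Your sign claim is wrong. For $\epsilon=-1$ you have $\Delta_\pi\Phi_a=-pm_\mu+p\norm{m}_1\omega_\mu$, so the large term enters $G'$ with a \emph{plus} sign. It is favourable only for $\epsilon=+1$.

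The term is also not negligible. It equals $n^{-1/2}p\norm{m}_1\omega_\mu=n^{-1/2}\norm{m}_1 f_\theta|_{I_\mu}$, and under your hypothesis $\norm{m}_1$ may be of order $\sqrt{np\ln n}$. The crude bound $f_\theta|_{I_\mu}\le p$ would add a term of order $p\ln n$ to the exponent.

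The repair is your own coupling device. For $\epsilon=-1$ one has $f_\theta|_{I_\mu}=\hat f|_{I_\mu}-(\Phi_a-\hat\Psi_{a0})$, hence
\[
\Pi_{n,t}\left[n^{-1/2}\Delta_\pi\Phi_a\right]\le n^{-1/2}\norm{m}_1\hat f|_{I_\mu}-\frac{\norm{m}_1}{n}G'(t).
\]
The first term is at most $2c\norm{f_0}_\infty\sqrt{p\ln n}$ on $E_n$, since $\norm{m}_1\le p\norm{m}_\infty$. The second term can be moved to the left-hand side.

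The paper avoids the issue differently. It applies Lemma~\ref{lemma:laplace_transform} to the divergence-corrected functional $\Phi_a-n^{-1}\Delta_\pi\Phi_a$ with $V=\nabla\Phi_a$. For this functional the drift vanishes identically and $\nabla(\Phi_a-n^{-1}\Delta_\pi\Phi_a)=(1+\norm{m}_1/n)\nabla\Phi_a$. It then transfers the bound back to $\Phi_a$ using $\Phi_a-n^{-1}\Delta_\pi\Phi_a=(1+\norm{m}_1/n)\Phi_a-n^{-1}\norm{m}_1\int a h_{\bar m}$. Once repaired, your route treats both priors by one mechanism and needs no transfer step; the paper's route needs no sign analysis of the drift.

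\textbf{Choice of $t$.} Your normalisation is $G(t)=\ln\Pi_n[e^{t\sqrt n(\Phi_a-\hat\Psi_{a0})}]$ with the constraint $0\le t\le\sqrt n/(2p)$. The choice $t=\sqrt{n\ln n/p}$ exceeds this range by the factor $2\sqrt{p\ln n}$. It also makes $Ct^2p=Cn\ln n$ dominate $t\sqrt n\,x=M\sqrt n\ln n$, so the exponent is not $-(M-C')\ln n$.

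You have conflated $t$ with the multiplier $t\sqrt n$ of the unnormalised deviation; take $t=\sqrt{\ln n/p}$ instead. Admissibility then reads exactly $4p\ln n\le n$, which is where $p\ll n/\ln n$ enters. The exponent becomes $-(M-2C)\ln n$, as you claimed.
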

\begin{proof}{Lemma \ref{lemma:supremum_norm_contraction_rates_histograms}}\\
	We show in Section \ref{section:justifications_lemma_laplace_transform} that we can apply Lemma \ref{lemma:laplace_transform} to the functional $\Phi_a = P_\theta \c{a}$ where $a = \ind{I_{\mu_0}} \in \FF_p$ for some $\mu_0 \in \b{1,\ldots,p}$ (in the prior \ref{prior:independent_haar_series_histograms} case) or $\tilde \Phi_a = \Phi_a - \frac{1}{n} \Delta_\pi \Phi_a$ (in the prior \ref{prior:dirichlet} case), the estimator $\hat \Psi_0 = \Pro_n\c{a} = \Pro_n \c{I_{\mu_0}}$ and the vector field $V = \nabla \Phi_a$. We will now see how this implies a differential inequality satisfied by the posterior cumulant generating function of $\sqrt{n}\p{\Phi_a - \Pro_n \c{a}}$ or $\sqrt{n}\p{\tilde \Phi_a - \Pro_n \c{a}}$ which somehow allows us to conclude.
	\begin{enumerate}
		\item Let $\mu_0 \in \b{1,\ldots,p}, \quad a = \ind{I_{\mu_0}} \in \FF_p, \quad \Phi_a$ the linear functional $\Phi_a = \int a f_\theta = P_\theta \c{I_{\mu_0}}$ and $V = \nabla \Phi_a$. Then as shown in Section \ref{section:justifications_lemma_laplace_transform} Lemma \ref{lemma:laplace_transform} applies here and therefore, for any $t \in \RR$,
		\begin{align*}
		\frac{d}{dt} \ln \Pi_n \c{e^{t\sqrt{n}\p{\Phi_a - \hat \Psi_0}}} & = \Pi_n \c{\p{t\abs{\nabla \Phi_a}^2 + \sqrt{n}\p{\hat \Phi_a - \hat \Psi_0} + \frac{1}{\sqrt{n}} \Delta_\pi \Phi_a}  \frac{e^{t\sqrt{n}\p{\Phi - \hat \Psi_0}}}{\Pi_n \c{e^{t\sqrt{n}\p{\Phi - \hat \Psi_0}}}}} \\
		& = \Pi_n \c{\p{t\Phi_a\p{1-\Phi_a} - \frac{1}{\sqrt{n}} \sum_{\mu = 1}^{p-1} \sigma_\mu^{-1} H'\p{\theta^\mu/\sigma_\mu} \inner{u_\mu | a}_2}  \frac{e^{t\sqrt{n}\p{\Phi - \hat \Psi_0}}}{\Pi_n \c{e^{t\sqrt{n}\p{\Phi - \hat \Psi_0}}}}}.
		\end{align*}
		Because for any $0 \leq j \leq J$ and $1 \leq k \leq 2^j$ we have
		\[
		\abs{\inner{\psi_{jk} | a}_2} = 2^{j/2}/p \text{ and } \# \b{1 \leq k \leq 2^j : \Supp\p{\psi_{jk}} \cap \Supp\p{a}} \leq 2,
		\]
		which implies
		\[
		\abs{\frac{1}{\sqrt{n}} \sum_{\mu = 1}^{p-1} \sigma_\mu^{-1} H'\p{\theta^\mu/\sigma_\mu} \inner{u_\mu | a}_2} \leq 2\norm{H'}_\infty \frac{1}{\sqrt{n}p} \sum_{j = 0}^J \sigma_j^{-1} 2^{j/2}.
		\]
		As a result
		\begin{align*}
			\frac{d}{dt} \ln \Pi_n \c{e^{t\sqrt{n}\p{\Phi_a - \hat \Psi_0}}} & \leq 2\norm{H'}_\infty \frac{1}{\sqrt{n}p} \sum_{j = 0}^J \sigma_j^{-1} 2^{j/2} + \Pi_n \c{t \Phi_a \frac{e^{t\sqrt{n}\p{\Phi - \hat \Psi_0}}}{\Pi_n \c{e^{t\sqrt{n}\p{\Phi - \hat \Psi_0}}}}} \\
			& = 2\norm{H'}_\infty \frac{1}{\sqrt{n}p} \sum_{j = 0}^J \sigma_j^{-1} 2^{j/2} + t\hat \Psi_0 + \frac{t}{\sqrt{n}} \Pi_n \c{\sqrt{n}\p{\Phi_a - \hat \Psi_0} \frac{e^{t\sqrt{n}\p{\Phi - \hat \Psi_0}}}{\Pi_n \c{e^{t\sqrt{n}\p{\Phi - \hat \Psi_0}}}}}.
		\end{align*}
		Since by Lemma \ref{lemma:laplace_transform} we also have
		\[
		\frac{d}{dt} \ln \Pi_n \c{e^{t\sqrt{n}\p{\Phi_a - \hat \Psi_0}}} = \Pi_n \c{\sqrt{n}\p{\Phi_a - \hat \Psi_0} \frac{e^{t\sqrt{n}\p{\Phi - \hat \Psi_0}}}{\Pi_n \c{e^{t\sqrt{n}\p{\Phi - \hat \Psi_0}}}}},
		\]
		we obtain
		\[
		\frac{d}{dt} \ln \Pi_n \c{e^{t\sqrt{n}\p{\Phi_a - \hat \Psi_0}}} \leq 2\norm{H'}_\infty \frac{1}{\sqrt{n}p} \sum_{j = 0}^J \sigma_j^{-1} 2^{j/2} + t\hat \Psi_0 + \frac{t}{\sqrt{n}} \frac{d}{dt} \ln \Pi_n \c{e^{t\sqrt{n}\p{\Phi - \hat \Psi_0}}},
		\]
		and therefore
		\[
		\frac{d}{dt} \ln \Pi_n \c{e^{t\sqrt{n}\p{\Phi_a - \hat \Psi_0}}} \leq 2\p{2\norm{H'}_\infty \frac{1}{\sqrt{n}p} \sum_{j = 0}^J \sigma_j^{-1} 2^{j/2}+ t\hat \Psi_0}, \quad 0 \leq t \leq \frac{\sqrt{n}}{2}.
		\]
		Integrating this inequality between $0$ and $t$ then yields
		\begin{align*}
			\ln \Pi_n \c{e^{t\sqrt{n}\p{\Phi_a - \hat \Psi_0}}} & \leq 2t\p{2\norm{H'}_\infty \frac{1}{\sqrt{n}p} \sum_{j = 0}^J \sigma_j^{-1} 2^{j/2} + \frac{1}{2}t\hat \Psi_0} \\
			& =: at + bt^2, \quad 0 \leq t \leq \frac{\sqrt{n}}{2}.
		\end{align*}
		Applying the same reasoning to $-\Phi_a$ we find
		\begin{align*}
			\frac{d}{dt} \ln \Pi_n \c{e^{-t\sqrt{n}\p{\Phi_a - \hat \Psi_0}}} & \leq \frac{a}{2} + \Pi_n \c{t\p{-\Phi_a} \frac{e^{-t\sqrt{n}\p{\Phi_a - \hat \Psi_0}}}{\Pi_n \c{e^{-t\sqrt{n}\p{\Phi_a - \hat \Psi_0}}}}} \\
			& = \frac{a}{2} - t\hat \Psi_0 + \frac{t}{\sqrt{n}} \Pi_n \c{\sqrt{n}\p{\hat \Psi_0 -\Phi_a} \frac{e^{-t\sqrt{n}\p{\Phi_a - \hat \Psi_0}}}{\Pi_n \c{e^{-t\sqrt{n}\p{\Phi_a - \hat \Psi_0}}}}} \\
			& \leq \frac{a}{2} + \frac{t}{\sqrt{n}} \frac{d}{dt} \ln \Pi_n \c{e^{-t\sqrt{n}\p{\Phi_a - \hat \Psi_0}}},
		\end{align*}
		which for $t \leq \frac{\sqrt{n}}{2}$ implies
		\[
		\frac{d}{dt} \ln \Pi_n \c{e^{-t\sqrt{n}\p{\Phi_a - \hat \Psi_0}}} \leq a,
		\]
		and therefore
		\[
		\ln \Pi_n \c{e^{-t\sqrt{n}\p{\Phi_a - \hat \Psi_0}}} \leq at, \quad 0 \leq t \leq \frac{\sqrt{n}}{2}.
		\]
		Using the numerical inequality $e^{\abs{x}} \leq e^x + e^{-x}$ we obtain
		\[
		\Pi_n \c{e^{t\sqrt{n}\abs{\Phi _a- \hat \Psi_0}}} \leq 2\exp\p{at + bt^2}, \quad 0 \leq t \leq \frac{\sqrt{n}}{2},
		\]
		which by Markov' inequality implies, for any $x>0$ and $0 \leq t \leq \frac{\sqrt{n}}{2}$,
		\[
		\Pi_n \c{\abs{\Phi_a - \hat \Psi_0} > x} \leq \exp\p{-tx\sqrt{n}} \Pi_n \c{e^{t\sqrt{n}\abs{\Phi_a - \hat \Psi_0}}} \leq 2\exp\p{at +bt^2 - tx\sqrt{n}}, \quad 0 \leq t \leq \frac{\sqrt{n}}{2}.
		\]
		For $x = C \sqrt{\frac{\ln n}{np}}$ and
		\[
		x \sqrt{n} \geq 2a \iff \sum_{j = 0}^J \sigma_j^{-1} 2^{j/2} \leq \frac{C}{8\norm{H'}_\infty} \sqrt{np\ln n}
		\]
		(which is satisfied for $C$ large enough by assumption) we get
		\[
		\Pi_n \c{\abs{\Phi_a - \hat \Psi_0} > x} \leq 2\exp\p{bt^2 - tx\sqrt{n}/2}, \quad 0 \leq t \leq \frac{\sqrt{n}}{2},
		\]
		and with $E_n$ the event
		\[
		E_n = \b{\norm{\hat f - \pi_p\c{f_0}}_\infty \leq M \sqrt{\frac{p \ln n}{n}}}
		\]
		(which by Proposition \ref{proposition:bernstein_empirical_bins} satisfies $\Pro_0\c{E_n^c} \leq n^{-K}$ for some $M>0$), since $p \ll n/\ln n$ we have for large $n$
		\[
		\hat \Psi_0 \ind{E_n} = \ind{E_n} \int \hat f \ind{I_{\mu_0}} \leq \frac{1}{p} \ind{E_n} \norm{\hat f}_\infty \leq \frac{1}{p}\p{M \sqrt{\frac{p \ln n}{n}} + \norm{\pi_p\c{f_0}}_\infty} \leq \frac{2\norm{f_0}_\infty}{p},
		\]
		so that
		\[
		\ind{E_n} \Pi_n \c{\abs{\Phi_a - \hat \Psi_0} > x} \leq 2\exp\p{\frac{2t^2 \norm{f_0}_\infty}{p} - tx\sqrt{n}/2}, \quad 0 \leq t \leq \frac{\sqrt{n}}{2},
		\]
		and choosing $t = \frac{xp\sqrt{n}}{8\norm{f_0}_\infty} = \frac{C}{8\norm{f_0}_\infty} \sqrt{p\ln n} \ll \sqrt{n}$ yields
		\[
		\ind{E_n} \Pi_n \c{\abs{\Phi_a - \hat \Psi_0} > x} \leq 2\exp\p{-\frac{npx^2}{32\norm{f_0}_\infty}} = 2\exp\p{- \frac{C^2}{32\norm{f_0}_\infty} \ln n},
		\]
		By a union bound over $\mu_0$ we obtain
		\begin{align*}
		\ind{E_n} \Pi_n \c{\norm{f_\theta - \hat f}_\infty > Cp\sqrt{\frac{\ln n}{np}}} & = \ind{E_n} \Pi_n \c{\max_{1 \leq \mu_0 \leq p} \abs{P_\theta \c{I_{\mu_0}} - \Pro_n\c{I_{\mu_0}}} > C\sqrt{\frac{\ln n}{np}}} \\
		& \leq 2p\exp\p{- \frac{C^2}{32\norm{f_0}_\infty} \ln n},
		\end{align*}
		therefore for some $C>0$ we get
		\[
		\ind{E_n} \Pi_n \c{\norm{f_\theta - \hat f} > C\sqrt{\frac{p\ln n}{n}}} \leq n^{-K}.
		\]
		\item For Dirichlet priors \ref{prior:dirichlet} we show in Section \ref{section:justifications_lemma_laplace_transform} that Lemma \ref{lemma:laplace_transform} applies to the functional $\tilde \Phi_a = \Phi_a - \frac{1}{n} \Delta_\pi \Phi_a$ and the vector field $V = \nabla \Phi_a$ and therefore, for any $t \in \RR$,
		\begin{align*}
		\frac{d}{dt} \ln \Pi_n \c{\exp\p{t\sqrt{n}\p{\tilde \Phi_a - \hat \Psi_0}}} & = \Pi_n \c{\p{t\inner{\nabla \tilde \Phi_a | V} + \sqrt{n} \p{\hat{\tilde \Phi}_a - \hat \Psi_0 + \frac{1}{n} \Div_\pi V}} \frac{e^{t\sqrt{n}\p{\tilde \Phi_a - \hat \Psi_0}}}{\Pi_n\c{e^{t\sqrt{n}\p{\tilde \Phi_a - \hat \Psi_0}}}}} \\
		& = \Pi_n \c{\p{t\inner{\nabla \tilde \Phi_a | \nabla \Phi_a} + \sqrt{n}\p{\hat{ \tilde \Phi}_a - \hat \Psi_0 + \frac{1}{n} \Delta_\pi \Phi_a}} \frac{e^{t\sqrt{n}\p{\tilde \Phi_a - \hat \Psi_0}}}{\Pi_n \c{e^{t\sqrt{n}\p{\tilde \Phi_a - \hat \Psi_0}}}}}.
		\end{align*}
		First, since as for priors of type \ref{prior:independent_haar_series_histograms} we have $\hat \Phi_a = \hat \Psi_0$, we have
		\begin{align*}
		\hat{\tilde \Phi}_a - \hat \Psi_0 + \frac{1}{n} \Delta_\pi \Phi_a & = \tilde \Phi_a + \frac{1}{n} \inner{V | \nabla \ell_n} - \hat \Psi_0 + \frac{1}{n} \Delta_\pi \Phi_a \\
		& = \Phi_a - \frac{1}{n}\Delta_\pi \Phi_a  + \frac{1}{n} \inner{\nabla \Phi_a | \nabla \ell_n} - \hat \Psi_0 + \frac{1}{n} \Delta_\pi \Phi_a \\
		& = \hat \Phi_a - \hat \Psi_0 = 0,
		\end{align*}
		therefore, using also $\nabla \tilde \Phi_a = \p{1 + \frac{\norm{m}_1}{n}} \nabla \Phi_a$ and $\abs{\nabla \Phi_a}^2 = \Phi_a\p{1-\Phi_a} \leq \Phi_a$,
		\begin{align*}
		\frac{d}{dt} \ln \Pi_n \c{\exp\p{t\sqrt{n}\p{\tilde \Phi_a - \hat \Psi_0}}} & = \Pi_n \c{t\inner{\nabla \tilde \Phi_a | \nabla \Phi_a} \frac{e^{t\sqrt{n}\p{\tilde \Phi_a - \hat \Psi_0}}}{\Pi_n \c{e^{t\sqrt{n}\p{\tilde \Phi_a - \hat \Psi_0}}}}} \\
		& = \Pi_n \c{t\p{1 + \frac{\norm{m}_1}{n}} \Phi_a\p{1-\Phi_a} \frac{e^{t\sqrt{n}\p{\tilde \Phi_a - \hat \Psi_0}}}{\Pi_n \c{e^{t\sqrt{n}\p{\tilde \Phi_a - \hat \Psi_0}}}}} \\
		& \leq t\Pi_n \c{\p{1 + \frac{\norm{m}_1}{n}}\Phi_a \frac{e^{t\sqrt{n}\p{\tilde \Phi_a - \hat \Psi_0}}}{\Pi_n \c{e^{t\sqrt{n}\p{\tilde \Phi_a - \hat \Psi_0}}}}} \\
		& = t\Pi_n \c{\b{\tilde \Phi_a + \frac{\norm{m}_1}{n} \int a h_{\bar m}} \frac{e^{t\sqrt{n}\p{\tilde \Phi_a - \hat \Psi_0}}}{\Pi_n \c{e^{t\sqrt{n}\p{\tilde \Phi_a - \hat \Psi_0}}}}} \\
		& = t\frac{\norm{m}_1}{n} \int a h_{\bar m} + t\Pi_n \c{\tilde \Phi_a \frac{e^{t\sqrt{n}\p{\tilde \Phi_a - \hat \Psi_0}}}{\Pi_n \c{e^{t\sqrt{n}\p{\tilde \Phi_a - \hat \Psi_0}}}}} \\
		& = t\p{\hat \Psi_0 + \frac{\norm{m}_1}{n} \int a h_{\bar m}} + \frac{t}{\sqrt{n}} \Pi_n \c{\sqrt{n}\p{\tilde \Phi_a - \hat \Psi_0} \frac{e^{t\sqrt{n}\p{\tilde \Phi_a - \hat \Psi_0}}}{\Pi_n \c{e^{t\sqrt{n}\p{\tilde \Phi_a - \hat \Psi_0}}}}} \\
		& = t\p{\hat \Psi_0 + \frac{\norm{m}_1}{n} \int ah_{\bar m}} + \frac{t}{\sqrt{n}} \frac{d}{dt} \ln \Pi_n \c{\exp\p{t\sqrt{n}\p{\tilde \Phi_a - \hat \Psi_0}}},
		\end{align*}
		which for $0 \leq t \leq \frac{\sqrt{n}}{2}$ implies that
		\[
		\frac{d}{dt} \ln \Pi_n \c{\exp\p{t\sqrt{n}\p{\tilde \Phi_a - \hat \Psi_0}}} \leq 2t\p{\hat \Psi_0 + \frac{\norm{m}_1}{n} \int ah_{\bar m}}.
		\]
		Because $\int a h_{\bar m} \leq \norm{h_{\bar m}}_\infty \int a = \frac{1}{p} \times \frac{p\norm{m}_\infty}{\norm{m}_1} = \frac{\norm{m}_\infty}{\norm{m}_1} \leq \frac{c}{\norm{m}_1} \sqrt{\frac{n \ln n}{p}}$, we obtain
		\[
		\frac{d}{dt} \ln \Pi_n \c{\exp\p{t\sqrt{n}\p{\tilde \Phi_a - \hat \Psi_0}}} \leq 4t\p{\hat \Psi_0 + c\sqrt{\frac{\ln n}{np}}}, \quad 0 \leq t \leq \frac{\sqrt{n}}{2},
		\]
		and as in the proof for priors of type \ref{prior:independent_haar_series_histograms}, using $p \ll n/\ln n$ for large $n$ we have $\hat \Psi_0 \ind{E_n} \leq 2\norm{f_0}_\infty/p$ and therefore, on the event $E_n$ and for large $n$,
		\[
		\sup_{0 \leq t \leq \sqrt{n}/2}\frac{d}{dt} \ln \Pi_n \c{\exp\p{t\sqrt{n}\p{\tilde \Phi_a - \hat \Psi_0}}} \leq 4t \p{\frac{2\norm{f_0}_\infty}{p} + \frac{c}{p} \sqrt{\frac{p\ln n}{n}}} \leq \frac{16t \norm{f_0}_\infty}{p},
		\]
		which yields, on $E_n$,
		\[
		\ln \Pi_n \c{\exp\p{t\sqrt{n}\p{\tilde \Phi_a - \hat \Psi_0}}} \leq \frac{32\norm{f_0}_\infty t^2}{p}, \quad 0 \leq t \leq \frac{\sqrt{n}}{2}.
		\]
		Applying the same reasoning to $-\Phi_a$ we obtain similarly, on $E_n$,
		\[
		\ln \Pi_n \c{\exp\p{-t\sqrt{n}\p{\tilde \Phi_a - \hat \Psi_0}}} \leq \frac{32\norm{f_0}_\infty t^2}{p}, \quad 0 \leq t \leq \frac{\sqrt{n}}{2},
		\]
		therefore, using the numerical inequality $e^{\abs{x}} \leq e^x + e^{-x}$,
		\[
		\ind{E_n} \Pi_n \c{\exp\p{t\sqrt{n}\abs{\tilde \Phi_a - \hat \Psi_0}}} \leq 2\exp\p{\frac{32\norm{f_0}_\infty t^2}{p}}, \quad 0 \leq t \leq \frac{\sqrt{n}}{2}.
		\]
		As a result, by Markov' inequality, with $x = C\sqrt{\frac{\ln n}{np}}$ for a fixed $C>0$,
		\[
		\ind{E_n} \Pi_n \c{\abs{\tilde \Phi_a - \hat \Psi_0} > x} \leq 2\exp\p{-t\sqrt{n} x + \frac{32\norm{f_0}_\infty t^2}{p}}, \quad 0 \leq t \leq \frac{\sqrt{n}}{2},
		\]
		and choosing $t = \frac{p\sqrt{n}x}{64\norm{f_0}_\infty} \lesssim \sqrt{p\ln n} \ll \sqrt{n}$ (proving in particular that $t \in \c{0,\frac{\sqrt{n}}{2}}$) yields
		\[
		\ind{E_n} \Pi_n \c{\abs{\tilde \Phi_a - \hat \Psi_0} > x} \leq 2\exp\p{- \frac{npx^2}{128\norm{f_0}_\infty}} = 2\exp\p{- \frac{C^2\ln n}{128\norm{f_0}_\infty}}.
		\]
		Finally $\tilde \Phi_a = \p{1 + \frac{\norm{m}_1}{n}} \Phi_a - \frac{\norm{m}_1}{n} \int ah_{\bar m}$, therefore $\abs{\tilde \Phi_a - \hat \Psi_0} \leq x$ implies
		\begin{align*}
		\abs{\Phi_a - \hat \Psi_0} & = \abs{\frac{\tilde \Phi_a + \frac{\norm{m}_1}{n} \int ah_{\bar m}}{1 + \frac{\norm{m}_1}{n}} - \hat \Psi_0} = \abs{\frac{\tilde \Phi_a + \frac{\norm{m}_1}{n} \int ah_{\bar m}}{1 + \frac{\norm{m}_1}{n}} - \frac{\hat \Psi_0}{1 + \frac{\norm{m}_1}{n}} + \frac{\frac{\norm{m}_1}{n}\hat \Psi_0}{1+\frac{\norm{m}_1}{n}}} \\
		& \leq x + \frac{\norm{m}_1}{n} \p{\int ah_{\bar m}  + \abs{\hat \Psi_0}} \leq x + \frac{\norm{m}_1}{n} \p{\frac{1}{p} \times \frac{p\norm{m}_\infty}{\norm{m}_1}  + \abs{\hat \Psi_0}} \\
		& \leq x + c \sqrt{\frac{\ln n}{np}} + \frac{cp}{n} \abs{\hat \Psi_0},
		\end{align*}
		so that, on $E_n$ and for large $n$,
		\[
		\abs{\tilde \Phi_a - \hat \Psi_0} \leq x \implies \abs{\Phi_a - \hat \Psi_0} \leq x + c \sqrt{\frac{\ln n}{np}} + \frac{cp}{n} \abs{\hat \Psi_0} \leq \frac{C+c}{p}\sqrt{\frac{p\ln n}{n}} + \frac{2c\norm{f_0}_\infty}{n} \leq \frac{2\p{C+c}}{p}\sqrt{\frac{p\ln n}{n}}.
		\]
		As a result, by a union bound over $\mu_0 \in \b{1,\ldots,p}$,
		\[
		\ind{E_n} \Pi_n \c{\max_{1 \leq \mu_0 \leq p} \abs{\int_{I_{\mu_0}} h_\omega - \Pro_n \c{I_{\mu_0}}} > \frac{2\p{C+c}}{p} \sqrt{\frac{p \ln n}{n}}} \leq 2p \exp\p{-\frac{C^2 \ln n}{8\p{\norm{f_0}_\infty + c}}},
		\]
		which is less than $n^{-K}$ for large choice of $C$ since $p \ll n/\ln n$. We conclude as for the priors of type \ref{prior:independent_haar_series_histograms} by noticing that $\norm{h_\omega - \hat f}_\infty = p \max_{1 \leq \mu_0 \leq p} \abs{\int_{I_{\mu_0}} h_\omega - \Pro_n\c{I_{\mu_0}}}$.
	\end{enumerate}
\end{proof}

\begin{proposition}\label{proposition:metric_tensor_bounds_histograms}
	In coordinates $\theta$ the metric tensor $g\p{\theta} = \c{g_{\mu \nu}\p{\theta}}_{\mu,\nu = 1}^{p-1}$ satisfies
	\[
	\p{\inf_{\c{0,1}} f_\theta} I_{p-1} \preceq g\p{\theta} \preceq \p{\sup_{\c{0,1}} f_\theta} I_{p-1}.
	\]
\end{proposition}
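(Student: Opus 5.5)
We need to prove that the Fisher metric in $\theta$ coordinates is sandwiched between $(\inf f_\theta) I$ and $(\sup f_\theta) I$. Since $g_{\mu\nu}(\theta) = \Cov_\theta(u_\mu,u_\nu)$, the plan is to compute the quadratic form $v^\top g(\theta) v$ for an arbitrary $v \in \RR^{p-1}$ and compare it to $\norm{v}_2^2$.

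First, setting $w := v^\mu u_\mu$, we get
\[
v^\top g(\theta) v = \Var_\theta\p{w} = \int \p{w - \int w f_\theta}^2 f_\theta .
\]
Because the Haar wavelets $u_\mu$ are orthonormal in $L^2[0,1]$ and orthogonal to $\ind{}$, we have $\int w = 0$ and $\int w^2 = \norm{v}_2^2$.

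For the upper bound, we use the fact that the variance is the minimal mean squared deviation over all constants:
\[
\Var_\theta(w) \le \int w^2 f_\theta \le \p{\sup f_\theta} \int w^2 = \p{\sup f_\theta}\norm{v}_2^2 .
\]

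For the lower bound, we write $\Var_\theta(w) = \min_{c\in\RR} \int (w-c)^2 f_\theta$. For every $c$,
\[
\int (w-c)^2 f_\theta \ge \p{\inf f_\theta}\int (w-c)^2 = \p{\inf f_\theta}\p{\norm{v}_2^2 + c^2} \ge \p{\inf f_\theta}\norm{v}_2^2 ,
\]
where the middle equality uses $\int w = 0$. Taking the minimum over $c$ gives $v^\top g(\theta) v \ge (\inf f_\theta)\norm{v}_2^2$.

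Since both bounds hold for every $v$, they give the two Loewner inequalities. The only point that needs care, and it is a mild one, is the lower bound: the centring constant $\int w f_\theta$ must be controlled, and the step $\int (w-c)^2 = \norm{v}_2^2 + c^2$ uses the orthogonality of the $u_\mu$ to constants.
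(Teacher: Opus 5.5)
Your proof is correct and is essentially the paper's argument. The paper also writes the quadratic form as $\Var_\theta(v^\mu u_\mu)$. For the upper bound it drops the centring term and bounds $f_\theta$ by its supremum. For the lower bound it bounds $f_\theta$ below by its infimum and uses the Lebesgue-measure decomposition $\int (w-c)^2 = (\int w - c)^2 + \int (w-\int w)^2$, which is your $\norm{v}_2^2 + c^2$. The only cosmetic difference is that you minimise over $c$, whereas the paper substitutes $c=\int w f_\theta$ directly.
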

\begin{proof}
	If $a \in \RR^{p-1}$ then
	\begin{align*}
		a^\mu a^\nu g_{\mu \nu}\p{\theta} & = a^\mu a^\nu \int \p{u_\mu - \int u_\mu f_\theta} \p{u_\nu - \int u_\nu f_\theta} f_\theta \\
		& = \int \p{a^\mu u_\mu - \int a^\mu u_\mu f_\theta}^2 f_\theta,
	\end{align*}
	therefore by $L^2$ orthonormality of the family $\p{\ind{},u_1,\ldots,u_{p-1}}$,
	\[
	a^\mu a^\nu g_{\mu \nu}\p{\theta} = \int \p{a^\mu u_\mu}^2 f_\theta - \p{\int a^\mu u_\mu f_\theta}^2 \leq \int \p{a^\mu u_\mu}^2 f_\theta \leq \p{\sup_{\c{0,1}} f_\theta} \int \p{a^\mu u_\mu}^2 = \p{\sup_{\c{0,1}} f_\theta} \norm{a}_{\ell^2}^2,
	\]
	proving that $g\p{\theta} \preceq \p{\sup_{\c{0,1}} f_\theta} I_{p-1}$. Similarly, still by $L^2$ orthonormality of $\p{\ind{},u_1,\ldots,u_{p-1}}$,
	\begin{align*}
	a^\mu a^\nu g_{\mu \nu}\p{\theta} & = \int \p{a^\mu u_\mu - \int a^\mu u_\mu f_\theta}^2 f_\theta \\
	& \geq \p{\inf_{\c{0,1}} f_\theta} \int \p{a^\mu u_\mu - \int a^\mu u_\mu f_\theta}^2 = \p{\inf_{\c{0,1}} f_\theta} \b{\p{\int a^\mu u_\mu - \int a^\mu u_\mu f_\theta}^2 + \int \p{a^\mu u_\mu - \int a^\mu u_\mu}^2} \\
	& \geq \p{\inf_{\c{0,1}} f_\theta} \int \p{a^\mu u_\mu - \int a^\mu u_\mu}^2 = \p{\inf_{\c{0,1}} f_\theta} \norm{a}_{\ell^2}^2,
	\end{align*}
	proving that $g\p{\theta} \succeq \p{\inf_{\c{0,1}} f_\theta} I_{p-1}$.
\end{proof}

\begin{proposition}\label{proposition:squared_norm_functional_histograms}
	Let $\Phi = \int f_\theta^2 - \frac{p}{n}$ and $V$ be the vector field
	\[
	V^\mu = \nabla^\mu \Phi + \frac{1}{2} \p{\nabla^{\mu \nu} \Phi - \frac{1}{2} T^{\mu \nu \rho} \partial_\rho \Phi} \Pro_n \c{\partial_\nu \ell}.
	\]
	Then
	\[
	\nabla \Phi = 2\p{f_\theta - \int f_\theta^2} = 2\p{h_\omega - \int h_\omega^2}, \quad V = \hat f + f_\theta - P_\theta \c{\hat f + f_\theta} = \hat f + h_\omega - P_\omega \c{\hat f + h_\omega},
	\]
	\[
	\hat \Phi = \Phi + \frac{1}{n} \inner{V | \nabla \ell_n} = \int \hat f^2 - \frac{p}{n},
	\]
	and :
	\begin{enumerate}
		\item in $\theta - \eta$ coordinates we have
		\[
		\nabla^\mu \Phi = 2\eta_\mu, \quad V^\mu = \partial^\mu \Phi + \frac{1}{2} \p{\partial^{\mu \nu} \Phi} \Pro_n \c{\partial_\nu \ell} = \hat \eta_\mu + \eta_\mu,
		\]
		and if $\Pi$ is a prior of type \ref{prior:independent_haar_series_histograms} with $H$ twice differentiable, we have
		\[
		\Div_\pi V = p\p{1-\frac{1}{n}} - \Phi - \sum_{\mu = 1}^{p-1} \sigma_\mu^{-1} H'\p{\theta^\mu/\sigma_\mu}\p{\hat \eta_\mu + \eta_\mu}
		\]
		and
		\[
			\abs{\inner{\nabla \Div_\pi V | V}} \leq 4\p{1 + \sigma_-^{-2} \norm{H''}_\infty + \norm{H'}_\infty \sqrt{p} \sigma_-^{-1}} \p{\norm{f_0}_\infty + \norm{\hat f - \pi_p\c{f_0}}_\infty + \norm{f_\theta - \hat f}_\infty + 1}^2.
		\]
		\item In $\alpha - \omega$ coordinates we have
		\[
		\nabla^\mu \Phi = 2p\p{\omega_\mu - \omega_p}, \quad V^\mu = p\p{\hat \omega_\mu + \omega_\mu - \hat \omega_p - \omega_p},
		\]
		and if $\Pi$ is a Dirichlet prior \ref{prior:dirichlet} with parameter $m \in \p{0,\infty}^p$ we have
		\[
		\Div_\pi V = p\p{1 - \frac{1}{n}} - \Phi + \norm{m}_1 \int \p{h_{\bar m} - h_\omega}\p{\hat f + h_\omega}
		\]
		and
		\[
		\abs{\inner{\nabla \Div_\pi V | V}} \leq 6p \p{1 + \norm{m}_\infty} \p{1 + \norm{h_\omega - \hat f}_\infty + \norm{\hat f - \pi_p\c{f_0}}_\infty + \norm{f_0}_\infty}^2.
		\]
	\end{enumerate}
\end{proposition}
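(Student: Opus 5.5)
The plan is to do every computation in the dually flat coordinates of the exponential family: $\theta$ (natural) and $\eta$ (mean) for the first item, $\alpha$ and $\omega$ for the second. The divergence formula $\Div_\pi V=\partial_\mu V^\mu+(\partial_\mu\ln\Pi)V^\mu$ from Section \ref{section:information_geometry} lets us avoid Christoffel symbols everywhere except in the Hessian term defining $V$.

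\textbf{Step 1 (gradient of $\Phi$).} Since $f_\theta\in\FF_p$ and $(\ind{},u_1,\ldots,u_{p-1})$ is $L^2$-orthonormal, we have $f_\theta=1+\eta_\nu u_\nu$ and $\int f_\theta^2=1+\sum_\nu\eta_\nu^2$.
\begin{itemize}
\item Differentiating gives $\partial_\mu\Phi=2\Cov_\theta(f_\theta,u_\mu)=2g_{\mu\nu}\eta_\nu$. Hence $\nabla^\mu\Phi=2\eta_\mu$.
\item Then $\nabla\Phi=2\eta_\mu(u_\mu-\eta_\mu)=2(f_\theta-\int f_\theta^2)$.
\item In $\alpha$–$\omega$ coordinates the same computation with the bin scores $\ind{I_\mu}-\omega_\mu$ gives $\nabla^\mu\Phi=\partial\Phi/\partial\omega_\mu=2p(\omega_\mu-\omega_p)$.
\end{itemize}

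\textbf{Step 2 (the vector field $V$).} I would first show that for a dually flat structure $\nabla^{\mu\nu}\Phi-\frac12 T^{\mu\nu\rho}\partial_\rho\Phi=\partial^{\mu\nu}\Phi$.
\begin{itemize}
\item The Levi-Civita connection is the average of the $e$- and $m$-connections. The $m$-connection has vanishing symbols in $\eta$ coordinates, so the Levi-Civita symbols in $\eta$ coordinates are $-\frac12 T$ with raised indices.
\item Expanding $\nabla^{\mu\nu}\Phi$ and converting the index placement via $g^{\mu\nu}(\theta)=\partial\theta^\mu/\partial\eta_\nu$ then gives the identity. This index bookkeeping is the step I would check most carefully.
\item Since $\Phi=1+\sum\eta_\mu^2-p/n$, we get $\partial^{\mu\nu}\Phi=2\delta^{\mu\nu}$. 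With $\Pro_n[\partial_\nu\ell]=\hat\eta_\nu-\eta_\nu$, this yields $V^\mu=\hat\eta_\mu+\eta_\mu$, i.e. $V=\hat f+f_\theta-P_\theta[\hat f+f_\theta]$.
\item Consequently $n^{-1}\inner{V|\nabla\ell_n}=V^\mu\Pro_n[\partial_\mu\ell]=\sum(\hat\eta_\mu^2-\eta_\mu^2)$, which gives $\hat\Phi=\int\hat f^2-p/n$.
\item The $\omega$-form of $V^\mu$ follows by the change of basis between the scores $u_\mu-\eta_\mu$ and $\ind{I_\mu}-\omega_\mu$.
\end{itemize}

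\textbf{Step 3 (divergence).} In $\theta$ coordinates $\partial_\mu V^\nu=\partial_\mu\eta_\nu=g_{\mu\nu}$, so $\partial_\mu V^\mu=\sum_\mu\Var_\theta(u_\mu)=\int(\sum_\mu u_\mu^2)f_\theta-(\int f_\theta^2-1)$.
\begin{itemize}
\item Since $1+\sum_\mu u_\mu^2$ is the diagonal of the projection kernel of $\FF_p$, which equals $p$ pointwise, this trace is $p-\int f_\theta^2=p(1-\frac1n)-\Phi$.
\item The trace is a scalar, so the same value holds in $\alpha$ coordinates.
\item For prior \ref{prior:independent_haar_series_histograms}, $\partial_\mu\ln\Pi=-\sigma_\mu^{-1}H'(\theta^\mu/\sigma_\mu)$ gives the stated formula.
\item For the Dirichlet prior I would use $\partial_\mu\ln\Pi=m_\mu-\norm{m}_1\omega_\mu$ in $\alpha$ coordinates, exactly as in Proposition \ref{proposition:linear_functionals_histograms_2}. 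The same telescoping over $\mu\le p-1$ using $\omega_p=1-\sum\omega_\mu$ turns $\sum_\mu(m_\mu-\norm{m}_1\omega_\mu)p(\hat\omega_\mu+\omega_\mu-\hat\omega_p-\omega_p)$ into $\norm{m}_1\int(h_{\bar m}-h_\omega)(\hat f+h_\omega)$.
\end{itemize}

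\textbf{Step 4 (bounds, the main obstacle).} I write $\inner{\nabla\Div_\pi V|V}=V^\mu\partial_\mu\Div_\pi V$ and treat the pieces separately.
\begin{itemize}
\item The $-\Phi$ term contributes $-\inner{\nabla\Phi|V}$. This is bounded by $|\nabla\Phi||V|\le 4\,\norm{f_\theta}_\infty\norm{\hat f+f_\theta}_\infty$.
\item For prior \ref{prior:independent_haar_series_histograms} the prior term differentiates into two parts.
\begin{itemize}
\item The first is $-\sigma_\mu^{-2}H''V^\mu V^\mu$. It is bounded by $\sigma_-^{-2}\norm{H''}_\infty\sum_\mu(\hat\eta_\mu+\eta_\mu)^2\le\sigma_-^{-2}\norm{H''}_\infty\norm{\hat f+f_\theta}_2^2$.
\item The second is $-\sigma_\mu^{-1}H'\,V^\mu g_{\mu\nu}V^\nu$-type terms coming from $\partial_\mu(\hat\eta_\nu+\eta_\nu)=g_{\mu\nu}$. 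These are controlled by Cauchy--Schwarz, the operator bound $g\preceq(\sup f_\theta)I$ of Proposition \ref{proposition:metric_tensor_bounds_histograms}, and $\norm{\cdot}_{\ell^1}\le\sqrt p\norm{\cdot}_{\ell^2}$. This is where the factor $\sqrt p\,\sigma_-^{-1}$ comes from.
\end{itemize}
\item For the Dirichlet prior, the derivative of $\norm{m}_1\int(h_{\bar m}-h_\omega)(\hat f+h_\omega)$ along $V$ is an $L^2(P_\omega)$ inner product of bounded histograms. Its size is $\norm{m}_1\lesssim p\norm{m}_\infty$.
\item In both cases I finish by bounding $\norm{\hat f+f_\theta}_\infty$ by the triangle inequality through $\hat f-\pi_p[f_0]$, $f_\theta-\hat f$ and $\norm{f_0}_\infty$.
\end{itemize}
The delicate part is tracking constants so that every contribution collapses into the single squared factor in the statement.
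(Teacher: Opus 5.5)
Your overall route is the paper's: you use dual coordinates, the identity $\nabla^{\mu\nu}\Phi-\frac12 T^{\mu\nu\rho}\partial_\rho\Phi=\partial^{\mu\nu}\Phi$ (the Levi-Civita symbols are $-\frac12T$ in $\eta$ coordinates), $\partial_\mu V^\mu=\sum_\mu g_{\mu\mu}=p-\int f_\theta^2$ via $\sum_\mu u_\mu^2=p-1$, and Cauchy--Schwarz with Proposition \ref{proposition:metric_tensor_bounds_histograms} for the bounds. Reading off the $\omega$-components of $V$ by re-expanding $\hat f+h_\omega-P_\omega[\hat f+h_\omega]$ in the bin scores is a fine shortcut for the paper's recomputation of $\partial^{\mu\nu}\Phi$.

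\textbf{False justification in Step 3.} You transfer $\partial_\mu V^\mu=p-\int f_\theta^2$ to $\alpha$ coordinates because ``the trace is a scalar''. It is not:
\begin{enumerate}
\item The coordinate expression $\partial_\mu V^\mu$ changes under a general change of chart. Only $\Div V=\partial_\mu V^\mu+\Gamma^\mu_{~\mu\nu}V^\nu$ and $\Div_\pi V$ are invariant.
\item The trace of the matrix of $g$ is not invariant either: $\sum_{\mu<p}g_{\mu\mu}(\alpha)=\sum_{\mu<p}\omega_\mu(1-\omega_\mu)$, whereas $\sum_{\mu<p}g_{\mu\mu}(\theta)=p\sum_{\mu\le p}\omega_\mu(1-\omega_\mu)$.
\end{enumerate}
The value you want does hold, but you must supply the reason. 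Since $\theta$ and $\alpha$ are natural parameters of the same exponential family, $\theta^\mu=\sum_{\nu<p}\inner{u_\mu|\ind{I_\nu}}_2\alpha^\nu$ is a \emph{linear} change of variables, and $\Tr(\partial V/\partial\theta)$ is invariant under linear changes. Alternatively, compute directly as the paper does: $\partial V^\mu/\partial\alpha^\mu=p(g_{\mu\mu}+\sum_\nu g_{\mu\nu})$, which sums to $p\sum_{\mu\le p}\omega_\mu(1-\omega_\mu)$.

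\textbf{Misdescribed bounds in Step 4.}
\begin{enumerate}
\item The $H'$ contribution is linear, not quadratic, in $V$: it equals $-\sum_{\mu,\nu}\sigma_\nu^{-1}H'(\theta^\nu/\sigma_\nu)g_{\mu\nu}V^\mu$. The factor $\sqrt p\,\sigma_-^{-1}$ comes from $\norm{(\sigma_\nu^{-1}H'(\theta^\nu/\sigma_\nu))_\nu}_{\ell^2}\le\sqrt p\,\sigma_-^{-1}\norm{H'}_\infty$, combined with $g\preceq\norm{f_\theta}_\infty I$ and $(\sum_\mu (V^\mu)^2)^{1/2}=\norm{\hat f+f_\theta-2}_2$.
\item For the Dirichlet prior, $h_{\bar m}$ is not a bounded histogram, since $\norm{h_{\bar m}}_\infty=p\norm{m}_\infty/\norm{m}_1$ may be of order $p$. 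What is controlled is the product $\norm{m}_1\norm{h_{\bar m}}_\infty=p\norm{m}_\infty$. That product, not $\norm{m}_1$ times a bounded quantity, produces the factor $p\norm{m}_\infty$.
\item Your identification of the $-\Phi$ contribution as $-\inner{\nabla\Phi|V}$ is correct. The paper writes $\partial_\mu\Phi=2\eta_\mu$ at that point, which is really $\partial^\mu\Phi$, but this is harmless for the bound. To reach the stated constant, use $\abs{\nabla\Phi}\le2\norm{f_\theta}_\infty$ and $\abs{V}\le\norm{\hat f+f_\theta}_\infty$, i.e.\ a factor $2$ rather than your $4$.
\end{enumerate}
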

\begin{proof}
	We start the proof by using the $\theta - \eta$ coordinates. First, because $f_\theta \in \FF_p$ by Parseval's identity we have
	\[
	\Phi = \int f_\theta^2 - \frac{p}{n} = \inner{\ind{}|f_\theta}_2^2 + \sum_{\mu = 1}^{p-1} \inner{u_\mu|f_\theta}_2^2 - \frac{p}{n} = 1 + \sum_{\mu = 1}^{p-1} \eta_\mu^2 - \frac{p}{n},
	\]
	so that $\nabla^\mu \Phi = \partial^\mu \Phi = 2\eta_\mu, \quad \partial^{\mu \nu} \Phi = 2\delta^{\mu \nu}$, which implies that
	\[
	\nabla \Phi = 2\sum_{\mu = 1}^{p-1} \eta_\mu \partial_\mu \ell = 2\sum_{\mu = 1}^{p-1} \eta_\mu\p{u_\mu - \eta_\mu} = 2\p{f_\theta - \int f_\theta^2} = 2\p{h_\omega - \int h_\omega^2},
	\]
	and
	\begin{align*}
		V^\mu & = \nabla^\mu \Phi + \frac{1}{2} \p{\nabla^{\mu \nu} \Phi - \frac{1}{2} T^{\mu \nu \rho} \partial_\rho \Phi} \Pro_n \c{\partial_\nu \ell} \\
		& = \partial^\mu \Phi + \frac{1}{2} \p{\partial^{\mu \nu} \Phi - \b{- \frac{1}{2} T_{~\mu \nu}^\rho\p{\eta} \partial^\rho \Phi} - \frac{1}{2} T_{~~\rho}^{\mu \nu} \partial^\rho \Phi} \Pro_n \c{\partial_\nu \ell} \\
		& = \partial^\mu \Phi + \frac{1}{2} \p{\partial^{\mu \nu} \Phi - \b{- \frac{1}{2} T_{\rho~~}^{~\mu \nu} \partial^\rho \Phi} - \frac{1}{2} T_{~~\rho}^{\mu \nu} \partial^\rho \Phi} \Pro_n \c{\partial_\nu \ell} \\
		& = \partial^\mu \Phi + \frac{1}{2} \p{\partial^{\mu \nu} \Phi} \Pro_n \c{\partial_\nu \ell} = 2\eta_\mu + \Pro_n\c{\partial_\mu \ell} \\
		& = 2\eta_\mu + \Pro_n \c{u_\mu - \eta_\mu} = \Pro_n \c{u_\mu} + \eta_\mu = \hat \eta_\mu + \eta_\mu.
	\end{align*}
	In particular this also implies
	\[
	V = V^\mu \partial_\mu \ell = \sum_{\mu = 1}^{p-1} \p{\hat \eta_\mu + \eta_\mu}\p{u_\mu - \eta_\mu} = \hat f + f_\theta - P_\theta\c{\hat f + f_\theta}.
	\]
	Because $f_\theta = h_\omega$ we also have $V = \hat f + h_\omega - P_\omega\c{\hat f + h_\omega}$. Furthermore
	\begin{align*}
		\hat \Phi = \Phi + \frac{1}{n} \inner{V | \nabla \ell_n} & = \int f_\theta^2 - \frac{p}{n} + \frac{1}{n} V^\mu \partial_\mu \ell_n = 1 - \frac{p}{n} + \sum_{\mu = 1}^{p-1} \eta_\mu^2 + \frac{1}{n} \p{\hat \eta_\mu + \eta_\mu} \sum_{i = 1}^n \p{u_\mu(x_i) - \eta_\mu} \\
		& = 1 - \frac{p}{n} + \sum_{\mu = 1}^{p-1} \eta_\mu^2 + \p{\hat \eta_\mu + \eta_\mu} \p{\hat \eta_\mu - \eta_\mu} = 1 - \frac{p}{n} + \sum_{\mu = 1}^{p-1} \hat \eta_\mu^2 = \int \hat f^2 - \frac{p}{n}.
	\end{align*}
	Moreover
	\begin{align*}
		\Div_\pi V & = \Div V + \inner{\nabla \ln \pi | V} = \nabla_\mu V^\mu + \p{\partial_\mu \ln \pi} V^\mu = \partial_\mu V^\mu + \Gamma_{~\mu \nu}^\nu V^\nu  + \p{\partial_\mu \ln \Pi} V^\mu - \Gamma_{~\mu \nu}^\nu V^\mu \\
		& = \sum_{\mu = 1}^{p-1} \b{\partial_\mu V^\mu + \p{\partial_\mu \ln \Pi} V^\mu},
	\end{align*}
	and since $V^\mu = \hat \eta_\mu + \eta_\mu$, we have $\partial_\mu V^\mu = \frac{\partial \eta_\mu}{\partial \theta^\mu} = g_{\mu \mu}$ which implies that
	\[
	\sum_{\mu = 1}^{p-1} \partial_\mu V^\mu = \sum_{\mu = 1}^{p-1} g_{\mu \mu} = \sum_{\mu = 1}^{p-1} \int u_\mu^2 f_\theta - \eta_\mu^2 = p-1 - \p{\Phi + \frac{p}{n} - 1} = p\p{1-\frac{1}{n}} - \Phi,
	\]
	where we have used the equality $\sum_{\mu = 1}^{p-1} u_\mu^2 = p-1$ which is satisfied by the Haar basis. For a prior of type \ref{prior:independent_haar_series_histograms} we have $\partial_\mu \ln \Pi = - \sigma_\mu^{-1} H'\p{\theta^\mu/\sigma_\mu}$, therefore
	\[
	\Div_\pi V = p\p{1-\frac{1}{n}} - \Phi - \sum_{\mu = 1}^{p-1} \sigma_\mu^{-1} H'\p{\theta^\mu/\sigma_\mu}\p{\hat \eta_\mu + \eta_\mu}.
	\]
	Using $\partial_\mu \Phi = 2 \eta_\mu$ and $\partial \eta_\nu/\partial \theta^\mu = g_{\mu \nu}$ this also implies
	\[
	\partial_\mu \Div_\pi V = - 2\eta_\mu - \sigma_\mu^{-2} H''\p{\theta^\mu/\sigma_\mu} \p{\hat \eta_\mu + \eta_\mu} - \sum_{\nu = 1}^{p-1} \sigma_\nu^{-1} H'\p{\theta^\nu/\sigma_\nu} g_{\mu \nu},
	\]
	and therefore
	\begin{align*}
		\inner{\nabla \Div_\pi V | V} & = V^\mu \partial_\mu \Div_\pi V = \sum_{\mu = 1}^{p-1} \p{\hat \eta_\mu + \eta_\mu}\p{- 2\eta_\mu - \sigma_\mu^{-2} H''\p{\theta^\mu/\sigma_\mu} \p{\hat \eta_\mu + \eta_\mu} - \sum_{\nu = 1}^{p-1} \sigma_\nu^{-1} H'\p{\theta^\nu/\sigma_\nu} g_{\mu \nu}} \\
		& = -2 \int \p{\hat f + f_\theta - 2}\p{f_\theta - 1} - \sum_{\mu = 1}^{p-1} \sigma_\mu^{-2} H''\p{\theta^\mu/\sigma_\mu}\p{\hat \eta_\mu + \eta_\mu}^2 - \sum_{\mu,\nu = 1}^{p-1} \sigma_\nu^{-1} H'\p{\theta^\nu/\sigma_\nu} \p{\hat \eta_\mu + \eta_\mu} g_{\mu \nu}.
	\end{align*}
	First
	\begin{align*}
		\abs{\int \p{\hat f + f_\theta - 2}\p{f_\theta - 1}} & = \abs{\int \p{\hat f + f_\theta - 2}f_\theta} \leq \int \abs{\hat f + f_\theta - 2}f_\theta \\
		& \leq 2 + \norm{f_\theta - \hat f}_\infty + 2\norm{\hat f - \pi_p\c{f_0}}_\infty + 2 \norm{\pi_p\c{f_0}}_\infty \\
		& \leq 2 + 2 \norm{f_0}_\infty + \norm{f_\theta - \hat f}_\infty + 2\norm{\hat f - \pi_p\c{f_0}}_\infty.
	\end{align*}
	Moreover by Proposition \ref{proposition:metric_tensor_bounds_histograms} and Cauchy-Schwarz inequality we have
	\begin{align*}
		\abs{\sum_{\mu,\nu = 1}^{p-1} \sigma_\nu^{-1} H'\p{\theta^\nu/\sigma_\nu} \p{\hat \eta_\mu + \eta_\mu} g_{\mu \nu}} & \leq \norm{H'}_\infty \norm{f_\theta}_\infty \p{\sum_{\mu = 1}^{p-1} \sigma_\mu^{-2}}^{1/2} \p{\sum_{\mu = 1}^{p-1} \p{\hat \eta_\mu + \eta_\mu}^2 }^{1/2} \\
		& \leq \norm{H'}_\infty \b{\norm{f_\theta - \hat f}_\infty + \norm{\hat f - \pi_p\c{f_0}}_\infty + \norm{\pi_p\c{f_0}}_\infty} \sqrt{p} \sigma_-^{-1} \norm{\hat f + f_\theta - 2}_2 \\
		& \leq \norm{H'}_\infty \b{\norm{f_\theta - \hat f}_\infty + \norm{\hat f - \pi_p\c{f_0}}_\infty + \norm{f_0}_\infty} \sqrt{p} \sigma_-^{-1} \\
		& \times \b{2 + 2\norm{f_0}_\infty + 2\norm{\hat f - \pi_p\c{f_0}}_\infty + \norm{f_\theta - \hat f}_\infty},
	\end{align*}
	and similarly
	\begin{align*}
	\abs{\sum_{\mu = 1}^{p-1} \sigma_\mu^{-2} H''\p{\theta^\mu/\sigma_\mu}\p{\hat \eta_\mu + \eta_\mu}^2} & \leq \sigma_-^{-2} \norm{H''}_\infty \norm{\hat f + f_\theta - 2}_2^2 \\
	& = \sigma_-^{-2} \norm{H''}_\infty \p{\norm{2\pi_p\c{f_0} + 2\p{\hat f - \pi_p\c{f_0}} + f_\theta - \hat f - 2}_2}^2 \\
	& \leq \sigma_-^{-2} \norm{H''}_\infty \p{2\norm{\pi_p\c{f_0}}_2 + 2\norm{\hat f - \pi_p\c{f_0}}_2 + \norm{f_\theta - \hat f}_2 + 2}^2 \\
	& \leq 4\sigma_-^{-2} \norm{H''}_\infty \p{\norm{f_0}_\infty + \norm{\hat f - \pi_p\c{f_0}}_\infty + \norm{f_\theta - \hat f}_\infty + 1}^2,
	\end{align*}
	which yields
	\begin{align*}
		& \abs{\inner{\nabla \Div_\pi V | V}} \leq 4 + 4 \norm{f_0}_\infty + 2\norm{f_\theta - \hat f}_\infty + 4\norm{\hat f - \pi_p\c{f_0}}_\infty \\
		& + 4\sigma_-^{-2} \norm{H''}_\infty \p{\norm{f_0}_\infty + \norm{\hat f - \pi_p\c{f_0}}_\infty + \norm{f_\theta - \hat f}_\infty + 1}^2 \\
		& + 2\norm{H'}_\infty \b{\norm{f_\theta - \hat f}_\infty + \norm{\hat f - \pi_p\c{f_0}}_\infty + \norm{f_0}_\infty}^2 \sqrt{p} \sigma_-^{-1} \\
		& \leq 4\p{1 + \sigma_-^{-2} \norm{H''}_\infty + \norm{H'}_\infty \sqrt{p} \sigma_-^{-1}} \p{\norm{f_0}_\infty + \norm{\hat f - \pi_p\c{f_0}}_\infty + \norm{f_\theta - \hat f}_\infty + 1}^2.
	\end{align*}
	We now switch to the $\alpha - \omega$ coordinate system. We have $\Phi = \int f_\theta^2 - \frac{p}{n} = p\sum_{\mu = 1}^p \omega_\mu^2 - \frac{p}{n}$, therefore for any $1 \leq \mu \leq p-1$, since $\omega_p = 1 - \sum_{\mu = 1}^{p-1} \omega_\mu$,
	\[
	\partial^\mu \Phi = \frac{\partial \Phi}{\partial \omega_\mu} = 2p \omega_\mu + 2p \omega_p \frac{\partial \omega_p}{\partial \omega_\mu} = 2p \p{\omega_\mu - \omega_p}, \quad \partial^{\mu \nu} \Phi = 2p \frac{\partial}{\partial \omega_\nu} \p{\omega_\mu - \omega_p} = 2p \p{\delta^{\mu \nu} + 1}.
	\]
	As for the $\theta-\eta$ coordinate system this implies that
	\begin{align*}
		V^\mu & = \partial^\mu \Phi + \frac{1}{2} \p{\partial^{\mu \nu} \Phi} \Pro_n \c{\partial_\nu \ell} = 2p \p{\omega_\mu - \omega_p} + \sum_{\nu = 1}^{p-1} p \p{\delta^{\mu \nu} + 1} \p{\hat \omega_\nu - \omega_\nu} \\
		& = 2p \p{\omega_\mu - \omega_p} + p \p{\hat \omega_\mu - \omega_\mu} + p \sum_{\nu = 1}^{p-1} \hat \omega_\nu - \omega_\nu = 2p \p{\omega_\mu - \omega_p} + p \p{\hat \omega_\mu - \omega_\mu} + p \p{\omega_p - \hat \omega_p} \\
		& = p \p{\hat \omega_\mu + \omega_\mu} - p \p{\hat \omega_p + \omega_p},
	\end{align*}
	and in particular, for any $1 \leq \mu \leq p-1$,
	\[
	\partial_\mu V^\mu = p \frac{\partial}{\partial \alpha^\mu} \p{\hat \omega_\mu + \omega_\mu - \hat \omega_p - \omega_p} = p \p{\frac{\partial \omega_\mu}{\partial \alpha^\mu} + \sum_{\nu = 1}^{p-1} \frac{\partial \omega_\nu}{\partial \alpha^\mu}} = p \p{g_{\mu \mu} + \sum_{\nu = 1}^{p-1} g_{\mu \nu}},
	\]
	which implies that
	\begin{align*}
		\sum_{\mu = 1}^{p-1} \partial_\mu V^\mu & = p \sum_{\mu = 1}^{p-1} \p{g_{\mu \mu} + \sum_{\nu = 1}^{p-1} g_{\mu \nu}} = p \sum_{\mu = 1}^{p-1} \p{\omega_\mu\p{1-\omega_\mu} + \sum_{\nu = 1}^{p-1} \delta_{\mu \nu} \omega_\mu - \omega_\mu \omega_\nu} \\
		& = p \sum_{\mu = 1}^{p-1} \p{\omega_\mu\p{1-\omega_\mu} + \omega_\mu - \omega_\mu \p{1-\omega_p}} = p \sum_{\mu = 1}^{p-1} \p{\omega_\mu\p{1-\omega_\mu} + \omega_p \omega_\mu} \\
		& = p\sum_{\mu = 1}^{p-1} \omega_\mu\p{1-\omega_\mu} + p \omega_p \sum_{\mu = 1}^{p-1} \omega_\mu = p\sum_{\mu = 1}^{p-1} \omega_\mu\p{1-\omega_\mu} + p \omega_p\p{1-\omega_p} \\
		& = p\sum_{\mu = 1}^p \omega_\mu\p{1-\omega_\mu} = p\p{1 - \frac{1}{n}} - \Phi.
	\end{align*}
	Therefore, using also Proposition \ref{proposition:density_dirichlet_alpha} we obtain
	\begin{align*}
		\Div_\pi V & = \Div V + \inner{\nabla \ln \pi | V} = \nabla_\mu V^\mu + \p{\partial_\mu \ln \pi} V^\mu = \partial_\mu V^\mu + \Gamma_{~\mu \nu}^\nu V^\nu  + \p{\partial_\mu \ln \Pi} V^\mu - \Gamma_{~\mu \nu}^\nu V^\mu \\
		& = \sum_{\mu = 1}^{p-1} \b{\partial_\mu V^\mu + \p{\partial_\mu \ln \Pi} V^\mu} = p\p{1 - \frac{1}{n}} - \Phi + \sum_{\mu = 1}^{p-1} \norm{m}_1 \p{\bar m_\mu - \omega_\mu} p\p{\hat \omega_\mu + \omega_\mu - \hat \omega_p - \omega_p} \\
		& = p\p{1 - \frac{1}{n}} - \Phi + p\norm{m}_1 \sum_{\mu = 1}^{p-1} \p{\bar m_\mu - \omega_\mu}\p{\hat \omega_\mu + \omega_\mu} - p\norm{m}_1 \p{\hat \omega_p + \omega_p} \sum_{\mu = 1}^{p-1} \p{\bar m_\mu - \omega_\mu} \\
		& = p\p{1 - \frac{1}{n}} - \Phi + p\norm{m}_1 \sum_{\mu = 1}^{p-1} \p{\bar m_\mu - \omega_\mu}\p{\hat \omega_\mu + \omega_\mu} + p\norm{m}_1 \p{\hat \omega_p + \omega_p} \p{\bar m_p - \omega_p} \\
		& = p\p{1 - \frac{1}{n}} - \Phi + p\norm{m}_1 \sum_{\mu = 1}^p \p{\bar m_\mu - \omega_\mu}\p{\hat \omega_\mu + \omega_\mu} \\
		& = p\p{1 - \frac{1}{n}} - \Phi + \norm{m}_1 \int \p{h_{\bar m} - h_\omega}\p{\hat f + h_\omega}.
	\end{align*}
	As a result we also have
	\begin{align*}
		\partial^\mu \Div_\pi V & = - \partial^\mu \Phi + p\norm{m}_1 \sum_{\nu = 1}^p \frac{\partial}{\partial \omega_\mu} \p{\bar m_\nu - \omega_\nu}\p{\hat \omega_\nu + \omega_\nu} \\
		& = - 2p\p{\omega_\mu - \omega_p} + p\norm{m}_1 \p{\p{\bar m_\mu - \omega_\mu} - \p{\hat \omega_\mu + \omega_\mu}} + p\norm{m}_1 \frac{\partial}{\partial \omega_\mu} \p{\bar m_p - \omega_p}\p{\hat \omega_p + \omega_p} \\
		& = - 2p\p{\omega_\mu - \omega_p} + p\norm{m}_1 \p{\p{\bar m_\mu - \omega_\mu} - \p{\hat \omega_\mu + \omega_\mu}} + p\norm{m}_1 \p{\p{\hat \omega_p + \omega_p} - \p{\bar m_p - \omega_p}},
	\end{align*}
	which using $\sum_{\mu = 1}^{p-1} \ind{I_\mu} = 1 - \ind{I_p}$ implies that
	\begin{align*}
	\nabla \Div_\pi V & = \p{\partial^\mu \Div_\pi V} \p{\partial_\mu \ell} = -2p \sum_{\mu = 1}^{p-1} \p{\omega_\mu - \omega_p} \p{\ind{I_\mu} - \omega_\mu} \\
	& + p \norm{m}_1 \sum_{\mu = 1}^{p-1} \p{\p{\bar m_\mu - \omega_\mu} - \p{\hat \omega_\mu + \omega_\mu}}\p{\ind{I_\mu} - \omega_\mu} - p \norm{m}_1 \sum_{\mu = 1}^{p-1} \p{\p{\bar m_p - \omega_p} - \p{\hat \omega_p + \omega_p}}\p{\ind{I_\mu} - \omega_\mu} \\
	& = -2p \sum_{\mu = 1}^p \omega_\mu \p{\ind{I_\mu} - \omega_\mu} + p \norm{m}_1 \sum_{\mu = 1}^p \p{\p{\bar m_\mu - \omega_\mu} - \p{\hat \omega_\mu + \omega_\mu}}\p{\ind{I_\mu} - \omega_\mu} \\
	& = -2\p{h_\omega - \int h_\omega^2} + \norm{m}_1\p{\p{h_{\bar m} - h_\omega} - \p{\hat f + h_\omega} -  \int \p{\p{h_{\bar m} - h_\omega} - \p{\hat f + h_\omega}} h_\omega}
	\end{align*}
	and therefore, since $V = \hat f + h_\omega - \int \p{\hat f + h_\omega} h_\omega$,
	\begin{align*}
	\inner{\nabla \Div_\pi V | V} & = P_\omega \c{\p{\Div_\pi V} V} \\
	& = \int \p{-2h_\omega + \norm{m}_1 \p{\p{h_{\bar m} - h_\omega} - \p{\hat f + h_\omega}}}\p{\hat f + h_\omega - \int \p{\hat f + h_\omega} h_\omega} h_\omega.
	\end{align*}
	In particular
	\begin{align*}
	\abs{\inner{\nabla \Div_\pi V | V}} & = \abs{\int \p{-2h_\omega + \norm{m}_1 \p{\p{h_{\bar m} - h_\omega} - \p{\hat f + h_\omega}}}\p{\hat f + h_\omega - \int \p{\hat f + h_\omega} h_\omega} h_\omega} \\
	& \leq \int \p{2h_\omega + \norm{m}_1 \abs{\p{h_{\bar m} - h_\omega} - \p{\hat f + h_\omega}}} \abs{\hat f + h_\omega - \int \p{\hat f + h_\omega} h_\omega} h_\omega \\
	& \leq 4\norm{h_\omega}_\infty + \norm{m}_1 \norm{\p{h_{\bar m} - h_\omega} - \p{\hat f + h_\omega}}_\infty \norm{\hat f + h_\omega}_\infty \\
	& \leq 4\norm{h_\omega}_\infty + \norm{m}_1 \p{\norm{\hat f + h_\omega}_\infty + \norm{h_{\bar m}}_\infty + \norm{h_\omega}_\infty} \norm{\hat f + h_\omega}_\infty,
	\end{align*}
	which combined with
	\[
	\norm{h_\omega}_\infty \leq \norm{h_\omega - \hat f}_\infty + \norm{\hat f - \pi_p\c{f_0}}_\infty + \norm{\pi_p\c{f_0}}_\infty \leq \norm{h_\omega - \hat f}_\infty + \norm{\hat f - \pi_p\c{f_0}}_\infty + \norm{f_0}_\infty,
	\]
	\[
	\norm{\hat f + h_\omega}_\infty \leq \norm{h_\omega - \hat f}_\infty + 2\norm{\hat f - \pi_p\c{f_0}}_\infty + 2\norm{\pi_p\c{f_0}}_\infty \leq \norm{h_\omega - \hat f}_\infty + 2\norm{\hat f - \pi_p\c{f_0}}_\infty + 2\norm{f_0}_\infty,
	\]
	\[
	\norm{h_{\bar m}}_\infty = p \norm{\bar m}_\infty = p \frac{\norm{m}_\infty}{\norm{m}_1}
	\]
	implies
	\begin{align*}
	& \abs{\inner{\nabla \Div_\pi V | V}} \\
	& \leq 4\p{\norm{h_\omega - \hat f}_\infty + \norm{\hat f - \pi_p\c{f_0}}_\infty + \norm{f_0}_\infty} \\
	& + p\norm{m}_\infty\p{1 + 2\norm{h_\omega - \hat f}_\infty + 3\norm{\hat f - \pi_p\c{f_0}}_\infty + 3\norm{f_0}_\infty} \p{\norm{h_\omega - \hat f}_\infty + 2\norm{\hat f - \pi_p\c{f_0}}_\infty + 2\norm{f_0}_\infty} \\
	& \leq \p{4 + 6p\norm{m}_\infty} \p{1 + \norm{h_\omega - \hat f}_\infty + \norm{\hat f - \pi_p\c{f_0}}_\infty + \norm{f_0}_\infty}^2 \\
	& \leq 6p \p{1 + \norm{m}_\infty} \p{1 + \norm{h_\omega - \hat f}_\infty + \norm{\hat f - \pi_p\c{f_0}}_\infty + \norm{f_0}_\infty}^2.
	\end{align*}
\end{proof}

\begin{proposition}\label{proposition:density_dirichlet_alpha}
	The Lebesgue density of the Dirichlet distribution $\Dir\p{m}$ in $\alpha$ coordinates is given by
	\[
	\Pi \p{\alpha} = \frac{1}{B(m)} \prod_{\mu = 1}^p \c{\frac{e^{\alpha^\mu}}{\sum_{\nu = 1}^p e^{\alpha^\nu}}}^{m_\mu}, \quad B\p{m} = \frac{1}{\Gamma \p{\norm{m}_1}} \prod_{\mu = 1}^p \Gamma\p{m_\mu}
	\]
	over the admissibility set $\b{\alpha \in \RR^p : \alpha^p = 0}$. In particular
	\[
	\frac{\partial}{\partial \alpha^\mu} \ln \Pi\p{\alpha} = m_\mu - \norm{m}_1 \omega_\mu = \norm{m}_1 \p{\bar m_\mu - \omega_\mu}, \quad \bar m := \frac{m}{\norm{m}_1}.
	\]
\end{proposition}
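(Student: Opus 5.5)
The plan is a direct change of variables from the $\omega$ coordinates, in which Prior \ref{prior:dirichlet} is specified, to the $\alpha$ coordinates, followed by an elementary differentiation. Recall that for $1 \leq \mu \leq p-1$ we have $\omega_\mu = e^{\alpha^\mu}/\sum_{\nu=1}^p e^{\alpha^\nu}$ with the convention $\alpha^p = 0$, and $\omega_p = 1 - \sum_{\mu=1}^{p-1}\omega_\mu$. This map is a diffeomorphism from $\RR^{p-1}$ onto the open simplex $\b{\omega \in \p{0,\infty}^{p-1} : \sum_{\mu=1}^{p-1}\omega_\mu < 1}$, with inverse $\alpha^\mu = \ln\p{\omega_\mu/\omega_p}$. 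Consequently the transformation rule $\Pi(\alpha) = \Pi(\omega)\abs{\partial \omega/\partial \alpha}$ recalled in Section \ref{section:information_geometry} applies.

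The first step is to compute the Jacobian matrix. Differentiating the softmax gives, for $1 \leq \mu,\nu \leq p-1$,
\[
\frac{\partial \omega_\mu}{\partial \alpha^\nu} = \omega_\mu\p{\delta_{\mu\nu} - \omega_\nu}.
\]
This is exactly the Fisher metric $g_{\mu\nu}$ in $\alpha$ coordinates, consistent with the exponential-family description of $\HH_p$. The Jacobian matrix is therefore $D - w w^\top$, where $D = \Diag\p{\omega_1,\ldots,\omega_{p-1}}$ and $w = \p{\omega_1,\ldots,\omega_{p-1}}$. By the matrix determinant lemma,
\[
\det\p{D - ww^\top} = \det(D)\p{1 - w^\top D^{-1} w} = \p{\prod_{\mu=1}^{p-1}\omega_\mu}\p{1 - \sum_{\mu=1}^{p-1}\omega_\mu} = \prod_{\mu=1}^p \omega_\mu ,
\]
and this is positive. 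This determinant computation is the only step with any content, so I expect it to be the main (mild) obstacle. If one prefers to avoid the lemma, the alternative is an induction on $p$, or observing that $\ln\Pi$ written as a function of $\omega$ already forces this form.

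The second step is to multiply the Dirichlet density $\prod_{\mu=1}^p \omega_\mu^{m_\mu-1}/B(m)$ by the Jacobian $\prod_{\mu=1}^p \omega_\mu$. This gives $\prod_{\mu=1}^p \omega_\mu^{m_\mu}/B(m)$. Substituting the softmax expression for each $\omega_\mu$, including $\omega_p = 1/\sum_\nu e^{\alpha^\nu}$ because $\alpha^p = 0$, yields the claimed density. The normalising constant $B(m) = \prod_{\mu=1}^p \Gamma(m_\mu)/\Gamma(\norm{m}_1)$ is unchanged, since it is the usual Dirichlet normaliser from \cite{ghosalFundamentalsNonparametricBayesian2017}.

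Finally, taking logarithms gives
\[
\ln \Pi(\alpha) = \sum_{\nu=1}^p m_\nu \alpha^\nu - \norm{m}_1 \ln \sum_{\rho=1}^p e^{\alpha^\rho} - \ln B(m).
\]
The coordinate $\alpha^p$ is frozen at $0$, so for $1 \leq \mu \leq p-1$ we have $\partial \alpha^\nu/\partial\alpha^\mu = \delta^\nu_\mu$. Differentiating then gives $m_\mu - \norm{m}_1 e^{\alpha^\mu}/\sum_\rho e^{\alpha^\rho} = m_\mu - \norm{m}_1\omega_\mu = \norm{m}_1\p{\bar m_\mu - \omega_\mu}$, which is the second claim.
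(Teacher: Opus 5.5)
Your proposal is correct and follows essentially the same route as the paper. The paper also uses the change of variables $\Pi(\alpha)=\Pi(\omega(\alpha))\abs{\partial\omega/\partial\alpha}$, identifies the Jacobian with the Fisher metric $g_{\mu\nu}=\omega_\mu(\delta_{\mu\nu}-\omega_\nu)$, obtains its determinant $\prod_{\mu=1}^p\omega_\mu$ from the matrix determinant lemma (Lemma \ref{lemma:determinant_metric_tensor}), and multiplies this by the Dirichlet density; your log-derivative computation for the second claim matches the one carried out in the proof of Proposition \ref{proposition:linear_functionals_histograms_2}.
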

\begin{proof}{Proposition \ref{proposition:density_dirichlet_alpha}}\\
	By the change of variable formula we have
	\[
	\Pi \c{\alpha} = \Pi \p{\omega\p{\alpha}} \cdot \abs{\frac{\partial \omega}{\partial \alpha}},
	\]
	but using $\frac{\partial \omega_\mu}{\partial \alpha^\nu} = g_{\mu \nu}$ (in $\alpha$ coordinates), this implies
	\[
	\Pi \p{\alpha} = \Pi \p{\omega\p{\alpha}} \cdot \abs{g(\alpha)}.
	\]
	First
	\[
	\Pi \p{\omega\p{\alpha}} = \frac{\prod_{\mu = 1}^p \omega_\mu^{m_\mu - 1}}{B(m)} = \frac{1}{B(m)} \prod_{\mu = 1}^p \c{\frac{e^{\alpha^\mu}}{\sum_{\nu = 1}^p e^{\alpha^\nu}}}^{m_\mu - 1}
	\]
	and using Lemma \ref{lemma:determinant_metric_tensor} below we also have
	\[
	\abs{g(\alpha)} = \prod_{\mu = 1}^p \frac{e^{\alpha^\mu}}{\sum_{\nu = 1}^p e^{\alpha^\nu}},
	\]
	so that
	\[
	\Pi \p{\alpha} = \frac{1}{B(m)} \prod_{\mu = 1}^p \c{\frac{e^{\alpha^\mu}}{\sum_{\nu = 1}^p e^{\alpha^\nu}}}^{m_\mu - 1} \cdot \prod_{\mu = 1}^p \frac{e^{\alpha^\mu}}{\sum_{\nu = 1}^p e^{\alpha^\nu}} = \frac{1}{B(m)} \prod_{\mu = 1}^p \c{\frac{e^{\alpha^\mu}}{\sum_{\nu = 1}^p e^{\alpha^\nu}}}^{m_\mu}.
	\]
\end{proof}

\begin{lemma}\label{lemma:determinant_metric_tensor}
	In the $\alpha$ coordinate system the determinant $\abs{g}$ of the metric tensor is given by
	\[
	\abs{g(\alpha)} = \prod_{\mu = 1}^p \omega_\mu = \prod_{\mu = 1}^p \frac{e^{\alpha^\mu}}{\sum_{\nu = 1}^p e^{\alpha^\nu}}.
	\]
\end{lemma}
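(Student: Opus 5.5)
The plan is to compute the determinant of the metric tensor directly in the $\alpha$ coordinates, where the model is an exponential family with sufficient statistics $\p{\ind{I_\mu}}_{\mu=1}^{p-1}$. There the metric is the Hessian of the log-partition function $S$, or equivalently the covariance matrix of the scores $\partial_\mu \ell = \ind{I_\mu} - \omega_\mu$ under $P_\omega$. Since $\int \ind{I_\mu}\ind{I_\nu} h_\omega = \delta_{\mu\nu}\omega_\mu$, we get
\[
g_{\mu\nu}(\alpha) = \frac{\partial \omega_\mu}{\partial \alpha^\nu} = \delta_{\mu\nu}\omega_\mu - \omega_\mu\omega_\nu, \quad 1 \leq \mu,\nu \leq p-1,
\]
as already used in the proof of Proposition \ref{proposition:squared_norm_functional_histograms}. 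Thus $g = D - w w^\top$ with $D = \Diag\p{\omega_1,\ldots,\omega_{p-1}}$ and $w = \p{\omega_1,\ldots,\omega_{p-1}}^\top$.

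Next apply the matrix determinant lemma. Since all $\omega_\mu > 0$, $D$ is invertible and
\[
\abs{g} = \det\p{D}\p{1 - w^\top D^{-1} w} = \p{\prod_{\mu=1}^{p-1}\omega_\mu}\p{1 - \sum_{\mu=1}^{p-1}\omega_\mu} = \prod_{\mu=1}^{p-1}\omega_\mu \cdot \omega_p,
\]
using $w^\top D^{-1} w = \sum_{\mu=1}^{p-1} \omega_\mu$ and $\omega_p = 1 - \sum_{\mu=1}^{p-1}\omega_\mu$. This gives $\prod_{\mu=1}^p \omega_\mu$. The second expression in the statement then follows by substituting $\omega_\mu = e^{\alpha^\mu}/\sum_\nu e^{\alpha^\nu}$ with the convention $\alpha^p = 0$.

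There is no real obstacle here. The only point to check is that $g_{\mu\nu}$ is taken with respect to the $(p-1)$ free coordinates $\alpha^1,\ldots,\alpha^{p-1}$, so that the rank-one correction involves only $w \in \RR^{p-1}$ and the factor $\omega_p$ appears through $1 - \sum_{\mu<p}\omega_\mu$. An alternative route would avoid the lemma by verifying the eigenstructure of $D - ww^\top$ inductively, but the determinant lemma is the direct one.
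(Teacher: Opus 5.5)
Your proposal is correct and follows essentially the same route as the paper: you write $g_{\mu\nu}(\alpha) = \delta_{\mu\nu}\omega_\mu - \omega_\mu\omega_\nu$ as a diagonal matrix minus a rank-one term and apply the matrix determinant lemma, where $w^\top D^{-1} w = 1 - \omega_p$ supplies the missing factor $\omega_p$. You justify the form of the metric slightly more explicitly, via the covariance of the indicator scores, but the argument is otherwise the paper's.
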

\begin{proof}{Lemma \ref{lemma:determinant_metric_tensor}}\\
	We have $g_{\mu \nu} = \delta_{\mu \nu} \omega_\mu - \omega_\mu \omega_\nu = \c{\Diag \p{\omega} - \omega \omega^T}_{\mu \nu}$, so that the matrix determinant lemma implies $\abs{g} = \abs{\Diag \p{\omega} \p{1 - \omega^T \Omega^{-1} \omega}}$, but $\omega^T \Omega^{-1} \omega = \sum_{\mu = 1}^{p-1} \omega_\mu \frac{1}{\omega_\mu} \omega_\mu = 1 - \omega_p$, therefore
	\[
	\abs{g(\alpha)} = \abs{\Diag \p{\omega} \p{1 - \p{1-\omega_p}}} = \abs{\prod_{\mu = 1}^{p-1} \omega_\mu \times \omega_p} = \prod_{\mu = 1}^p \omega_\mu.
	\]
	Now we use the formula
	\[
	\omega_\mu = \frac{e^{\alpha^\mu}}{\sum_{\nu = 1}^p e^{\alpha^\nu}}, \quad 1 \leq \mu \leq p
	\]
	to conclude.
\end{proof}

\begin{proof}{Proposition \ref{proposition:efficiency_estimator_squared_norm_histograms}}\\
	With $T(f) = f^2$ it suffices to show that $\hat \Psi_0 = \hat \Phi_1 + o_p\p{n^{-1/2}}$ where
	\[
	\hat \Phi_1 = \int T\p{f_0} + \p{\Pro_n - \Pro_0}\c{T'\p{f_0}}.
	\]
	For this we define
	\[
	f_p = \pi_p\c{f_0}, \quad \hat \Phi_2 =  \int T\p{f_p} + \p{\Pro_n - P_{f_p}}\c{T'\p{f_p}}
	\]
	and we show that
	\[
	\hat \Phi_2 - \hat \Phi_1 = o_p\p{n^{-1/2}}, \quad \hat \Psi_0 - \hat \Phi_2 = o_p\p{n^{-1/2}}.
	\]
	We have
	\begin{align*}
		\hat \Phi_1 - \b{ \int T\p{f_0} + \p{\Pro_n - \Pro_0}\c{T'\p{f_p}}} & = \p{\Pro_n - \Pro_0}\c{T'\p{f_0} - T'\p{f_p}} = 2\p{\Pro_n - \Pro_0}\c{f_0 - f_p} \\
		& = O_p\p{\frac{\norm{f_0 - f_p}_\infty}{\sqrt{n}}} = O_p\p{\frac{p^{-s}}{\sqrt{n}}} = o_p\p{n^{-1/2}}.
	\end{align*}
	Moreover because $T'\p{f_p} \in \FF_p$ we have $\Pro_n \c{T'\p{f_p}} = \int \hat f T'\p{f_p}$, therefore
	\begin{align*}
		\hat \Phi_2 - \b{\int T\p{f_0} + \p{\Pro_n - \Pro_0}\c{T'\p{f_p}}} & = \int T\p{f_p} + T'\p{f_p}\p{\hat f - f_p} - T\p{f_0} - T'\p{f_p}\p{\hat f - f_0} \\
		& = \int T\p{f_p} - T\p{f_0} - T'\p{f_p}\p{f_p - f_0} \\
		& = \norm{f_0 - f_p}_2^2 = O\p{p^{-2s}} = o\p{n^{-1/2}},
	\end{align*}
	since $p \gg n^{\frac{1}{4s}}$, which implies that $\hat \Phi_2 - \hat \Phi_1 = o_p\p{n^{-1/2}}$. We now show that $\hat \Phi_2 - \hat \Psi_0 = o_p\p{n^{-1/2}}$. We have
	\begin{align*}
		\int \hat f^2 & = \int \p{f_p + \hat f - f_p}^2 = \int f_p^2 + 2f_p\p{\hat f - f_p} + \p{\hat f - f_p}^2 \\
		& = \hat \Phi_2 + \int \p{\hat f - f_p}^2,
	\end{align*}
	and
	\[
	\int \p{\hat f - f_p}^2 = p \sum_{\mu = 1}^p \p{\hat \omega_\mu - \omega_{0\mu}}^2 = \frac{1}{n^2} \sum_{i,j} K\p{x_i,x_j},
	\]
	where $K$ is the kernel
	\[
	K\p{x,y} = p \sum_{\mu = 1}^p \p{\ind{I_\mu}\p{x} - \omega_{0\mu}} \p{\ind{I_\mu}\p{y} - \omega_{0\mu}}.
	\]
	We have
	\[
	\int \p{\hat f - f_p}^2 = \underbrace{\frac{1}{n} \int K\p{x,x} f_0(x)}_{=: U_0} + \underbrace{\frac{1}{n^2} \sum_{i = 1}^n \p{K\p{x_i,x_i} - \int K\p{x,x} f_0(x)}}_{=: U_1} + \underbrace{\frac{1}{n^2} \sum_{i \neq j} K\p{x_i,x_j}}_{=: U_2}.
	\]
	First
	\[
	U_0 = \frac{p}{n} \sum_{\mu = 1}^p \int \p{\ind{I_\mu} - \omega_{0\mu}}^2 f_0 = \frac{p}{n} \sum_{\mu = 1}^p \omega_{0\mu}\p{1-\omega_{0\mu}} = \frac{p}{n} - \frac{1}{n} \int f_p^2 = \frac{p}{n} + o\p{n^{-1/2}}.
	\]
	Moreover
	\[
	U_1 = O_p\p{n^{-3/2} \p{\int \p{K(x,x) - \int K(x,x)f_0(x)}^2 f_0(x)}^{1/2}},
	\]
	but
	\[
	K\p{x,x} = p \sum_{\mu = 1}^p \p{\ind{I_\mu}(x) - \omega_{0\mu}}^2 = p \sum_{\mu = 1}^p \b{\ind{I_\mu} + \omega_{0\mu}^2 - 2 \omega_{0\mu} \ind{I_\mu}} = p + \int f_p^2 - 2f_p,
	\]
	so that
	\[
	\int \p{K(x,x) - \int K(x,x)f_0(x)}^2 f_0(x) = 4 \int \p{f_p - \int f_pf_0}^2 f_0 \leq 4 \int f_p^2 f_0 \leq 4 \norm{f_p}_\infty^2 \leq 4 \norm{f_0}_\infty^2,
	\]
	which implies that $U_1 = O_p\p{n^{-3/2}} = o_p\p{n^{-1/2}}$. Finally because $U_2$ is a degenerate second order $U-$statistics we have
	\begin{align*}
		P_0 \c{U_2^2} & = \p{\frac{n-1}{n}}^2 \times \frac{1}{\binom{n}{2}} \times \binom{2}{2} \times \binom{n-2}{2-2} \times \int K^2\p{x,y} f_0(x)f_0(y) \\
		& \leq \frac{2}{n\p{n-1}} \int K^2\p{x,y} f_0(x)f_0(y),
	\end{align*}
	but
	\begin{align*}
		K\p{x,y} & = p \sum_{\mu = 1}^p \p{\ind{I_\mu}(x) - \omega_{0\mu}} \p{\ind{I_\mu}(y) - \omega_{0\mu}} = p \sum_{\mu = 1}^p \ind{x,y \in I_\mu} - f_p(x) - f_p(y) + \int f_p^2 \\
		& = p \ind{x,y \text{ in the same bin}} - p P_0 \c{\text{x,y in the same bin}} + \int f_p^2 - f_p(x) + \int f_p^2 - f_p(y),
	\end{align*}
	so that
	\[
	K^2\p{x,y} \leq 3 \p{p^2 \p{\ind{x,y \text{ in the same bin}} - P_0 \c{\text{x,y in the same bin}}}^2 + \p{\int f_p^2 - f_p(x)}^2 + \p{\int f_p^2 - f_p(y)}^2},
	\]
	and therefore
	\begin{align*}
		& \int K^2\p{x,y}f_0(x)f_0(y) \leq 3\p{p^2 P_0\c{\text{x,y in the same bin}} + 2 \int \p{f_p - \int f_p^2}^2 f_0} \\
		& \lesssim p^2 \times \sum_{\mu = 1}^p \omega_{0\mu}^2 = p \int f_p^2 \lesssim p,
	\end{align*}
	which implies $U_2 = O_p\p{\sqrt{p}/n} = o_p\p{n^{-1/2}}$, since $p \ll n$ by assumption. All in all this proves that $\hat \Psi_0 = \hat \Phi_1 + o_p\p{n^{-1/2}}$ and therefore the result.
\end{proof}

\begin{proposition}\label{proposition:nabla_phi_integral_functionals_histograms}
	With $\Phi = \int T(x,f(x))dx, \quad f \in \HH_p$ we have
	\[
	\frac{\partial \Phi}{\partial \omega_\mu} = p\int_{I_\mu} \frac{\partial T}{\partial f}\p{x,p\omega_\mu}dx - p\int_{I_p} \frac{\partial T}{\partial f}\p{x,p\omega_p}dx, \quad \mu = 1,\ldots,p-1
	\]
	and
	\[
	\nabla \Phi = \pi_p \c{ \frac{\partial T}{\partial f}\p{\cdot,f(\cdot)} } - \int \pi_p \c{ \frac{\partial T}{\partial f}\p{\cdot,f(\cdot)} }(x)f(x)dx.
	\]
\end{proposition}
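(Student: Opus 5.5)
The plan is a direct computation in the $\alpha$--$\omega$ coordinate system, following the pattern already used for $\nabla \chi_n$ in the proof of Corollary \ref{corollary:linear_functionals_histograms} and for the squared norm in Proposition \ref{proposition:squared_norm_functional_histograms}. The first step is to write $f = h_\omega = p\sum_{\mu=1}^p \omega_\mu \ind{I_\mu}$ with $\omega_p = 1-\sum_{\mu=1}^{p-1}\omega_\mu$. Since $f$ is constant equal to $p\omega_\mu$ on $I_\mu$, this gives
\[
\Phi = \sum_{\nu=1}^p \int_{I_\nu} T\p{x,p\omega_\nu}dx .
\]

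The second step is to differentiate in $\omega_\mu$ for $1\leq \mu\leq p-1$. Only the $\nu=\mu$ term and the $\nu=p$ term depend on $\omega_\mu$, and $\partial\omega_p/\partial\omega_\mu=-1$. Differentiating under the integral sign then yields the first formula. This is justified by dominated convergence: $T$ is differentiable in its second argument with $\partial T/\partial f$ bounded on a neighbourhood of the relevant values, as in Assumption \ref{assumption:T}, and the mean value theorem gives a domination on each bounded interval $I_\nu$. This justification is the only delicate point. For $\omega$ far from the range of $f_0$ one needs local boundedness of $\partial T/\partial f$, so I would state the formula wherever this holds; it does hold on the contraction sets used later.

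The third step passes from partial derivatives to the gradient. In the dual coordinates we have $g_{\mu\nu}(\alpha)=\partial\omega_\mu/\partial\alpha^\nu$, so $g^{\mu\nu}\partial/\partial\alpha^\nu=\partial/\partial\omega_\mu$. Hence $\nabla^\mu\Phi=\partial^\mu\Phi$ and $\nabla\Phi=\sum_{\mu=1}^{p-1}\p{\partial^\mu\Phi}\p{\ind{I_\mu}-\omega_\mu}$, exactly as for $\nabla\chi_n$ earlier. Set
\[
b_\mu := p\int_{I_\mu}\frac{\partial T}{\partial f}\p{x,p\omega_\mu}dx, \quad 1\leq\mu\leq p,
\]
so that $\pi_p\c{\frac{\partial T}{\partial f}\p{\cdot,f(\cdot)}}=\sum_{\mu=1}^p b_\mu\ind{I_\mu}$ and $\partial^\mu\Phi=b_\mu-b_p$. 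Using $\sum_{\mu=1}^{p-1}\p{\ind{I_\mu}-\omega_\mu}=-\p{\ind{I_p}-\omega_p}$, we get
\[
\nabla\Phi=\sum_{\mu=1}^{p}b_\mu\p{\ind{I_\mu}-\omega_\mu}=\pi_p\c{\tfrac{\partial T}{\partial f}\p{\cdot,f(\cdot)}}-\sum_{\mu=1}^p b_\mu\omega_\mu .
\]
Finally, $\sum_{\mu=1}^p b_\mu\omega_\mu=\int\pi_p\c{\tfrac{\partial T}{\partial f}\p{\cdot,f(\cdot)}}h_\omega$, because $h_\omega=p\omega_\mu$ on $I_\mu$ and each bin has length $1/p$. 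This gives the stated expression for $\nabla\Phi$.
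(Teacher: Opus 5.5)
Your proposal is correct and follows essentially the same route as the paper: you write $\Phi$ bin by bin, differentiate in $\omega_\mu$ using $\omega_p = 1-\sum_{\nu=1}^{p-1}\omega_\nu$, and identify $\nabla^\mu\Phi = \partial^\mu\Phi$ in the $\alpha$--$\omega$ system. You then fold the $p$-th bin back in via $\sum_{\mu=1}^{p-1}(\ind{I_\mu}-\omega_\mu) = \omega_p - \ind{I_p}$, exactly as the paper does. Your remark on justifying differentiation under the integral sign, which relies on local boundedness of $\partial T/\partial f$ and holds on the contraction sets, addresses a point the paper leaves implicit.
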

\begin{proof}
	By definition $\nabla \Phi = \p{\partial^\mu \Phi}\p{\partial_\mu \ell}$, but since for a histogram $f = h_\omega \in \HH_p$ we have
	\[
	\Phi = \int T\p{x,f(x)}dx = \sum_{\mu = 1}^p \int_{I_\mu} T\p{x,p\omega_\mu}dx,
	\]
	this implies (in $\alpha-\omega$ coordinates)
	\[
	\partial^\mu \Phi = \frac{\partial \Phi}{\partial \omega_\mu} = p\int_{I_\mu} \frac{\partial T}{\partial f}\p{x,p\omega_\mu}dx - p\int_{I_p} \frac{\partial T}{\partial f}\p{x,p\omega_p}dx
	\]
	(because $\omega_p = 1 - (\omega_1 + \ldots + \omega_{p-1})$) and therefore, using also $\partial_\mu \ell = \ind{I_\mu} - \omega_\mu$,
	\begin{align*}
		\nabla \Phi & = p\sum_{\mu = 1}^{p-1} \b{\int_{I_\mu} \frac{\partial T}{\partial f}\p{x,p\omega_\mu}dx - \int_{I_p} \frac{\partial T}{\partial f}\p{x,p\omega_p}dx} \p{\ind{I_\mu} - \omega_\mu} \\
		& = p\sum_{\mu = 1}^p \b{\int_{I_\mu} \frac{\partial T}{\partial f}\p{x,p\omega_\mu}dx} \p{\ind{I_\mu} - \omega_\mu} \\
		& = \pi_p \c{ \frac{\partial T}{\partial f}\p{\cdot,f(\cdot)} } - \int \pi_p \c{ \frac{\partial T}{\partial f}\p{\cdot,f(\cdot)} }(x)f(x)dx,
	\end{align*}
	where we have used $\sum_{\mu = 1}^{p-1} \ind{I_\mu} - \omega_\mu = \omega_p - \ind{I_p}$.
\end{proof}

\begin{proposition}\label{proposition:delta_pi_Phi_integral_functionals_histograms}
	With $\Phi = \int T(x,f(x))dx, \quad f \in \HH_p$ we have
	\begin{enumerate}
		\item $\Delta_\pi \Phi = \int \frac{\partial^2 T}{\partial f^2}\p{x,f(x)} f(x)\p{p-f(x)} dx - \int \frac{\partial T}{\partial f}\p{x,f(x)}\b{\sum_{\mu = 1}^{p-1} \sigma_\mu^{-1} H'\p{\theta^\mu/\sigma_\mu}u_\mu(x)}dx$ for a prior of type \ref{prior:independent_haar_series_histograms},
		\item $\Delta_\pi \Phi = \int f(x)\p{p - f(x)} \frac{\partial^2 T}{\partial f^2}\p{x,f(x)}dx + \norm{m}_1 \int \p{h_{\bar m}(x) - f(x)} \frac{\partial T}{\partial f}\p{x,f(x)}dx$ for a prior of type \ref{prior:dirichlet}, where $\bar m = m/\norm{m}_1$.
	\end{enumerate}
\end{proposition}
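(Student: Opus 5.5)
The plan is to compute $\Delta_\pi \Phi$ through the coordinate formula $\Delta_\pi \Phi = \partial_\mu \nabla^\mu \Phi + (\partial_\mu \ln \Pi)\nabla^\mu \Phi$, which holds in any chart with $\Pi$ the Lebesgue density in that chart. This is the same route used in Propositions \ref{proposition:linear_functionals_histograms_2} and \ref{proposition:squared_norm_functional_histograms}. The split is into two pieces. The first is a prior-free term $\Delta \Phi$, computed in the $\alpha-\omega$ chart. The second is the drift term $\inner{\nabla \ln \pi | \nabla \Phi}$, computed in whichever chart makes $\ln \Pi$ simple: $\theta$ for prior \ref{prior:independent_haar_series_histograms} and $\alpha$ for prior \ref{prior:dirichlet}.

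For the first piece, Proposition \ref{proposition:nabla_phi_integral_functionals_histograms} gives $\nabla^\mu \Phi = \partial^\mu \Phi = p\int_{I_\mu} T_f(x,p\omega_\mu)dx - p\int_{I_p} T_f(x,p\omega_p)dx$. Differentiating with respect to $\alpha^\mu$ and using $\partial \omega_\nu/\partial \alpha^\mu = g_{\mu\nu} = \delta_{\mu\nu}\omega_\mu - \omega_\mu\omega_\nu$ gives
\[
\partial_\mu \nabla^\mu \Phi = p^2 \int_{I_\mu} T_{ff}(x,p\omega_\mu)dx\, \omega_\mu(1-\omega_\mu) + p^2 \int_{I_p} T_{ff}(x,p\omega_p)dx\, \omega_\mu \omega_p.
\]
I then sum over $\mu \le p-1$, using $\sum_{\mu<p}\omega_\mu = 1-\omega_p$, exactly as in the trace computation of Proposition \ref{proposition:squared_norm_functional_histograms}. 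This should give $\sum_{\mu=1}^p p^2 \omega_\mu(1-\omega_\mu)\int_{I_\mu} T_{ff}(x,p\omega_\mu)dx$. Since $f = p\omega_\mu$ on $I_\mu$, we have $p^2\omega_\mu(1-\omega_\mu) = f(p-f)$ there, so the sum is $\int T_{ff}(x,f)f(p-f)dx$. This term is common to both priors.

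For the drift with the Dirichlet prior, Proposition \ref{proposition:density_dirichlet_alpha} gives $\partial_\mu \ln \Pi = \norm{m}_1(\bar m_\mu - \omega_\mu)$. Contracting with $\nabla^\mu \Phi$ and using the same "move the $p$-th term inside the sum" trick as in Proposition \ref{proposition:linear_functionals_histograms_2} gives $\norm{m}_1 \sum_{\mu=1}^p (\bar m_\mu - \omega_\mu) p\int_{I_\mu} T_f(x,f)dx$. Since $h_{\bar m} - f = p(\bar m_\mu - \omega_\mu)$ on $I_\mu$, this equals $\norm{m}_1\int (h_{\bar m}-f)T_f(x,f)dx$.

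For the drift with prior \ref{prior:independent_haar_series_histograms}, I work in $\theta$ coordinates, where $\partial_\mu \ln \Pi = -\sigma_\mu^{-1}H'(\theta^\mu/\sigma_\mu)$. What remains is to identify $\nabla^\mu \Phi$ in these coordinates. Since $\nabla \Phi = \nabla^\mu(\theta)\,(u_\mu - \eta_\mu)$ and $\nabla \Phi = \pi_p[T_f] - \int \pi_p[T_f] f$ by Proposition \ref{proposition:nabla_phi_integral_functionals_histograms}, the orthonormality of the Haar system, together with the fact that $\pi_p[T_f] - \int \pi_p[T_f]$ lies in $\Span\{u_\mu\}$, gives $\nabla^\mu \Phi = \inner{u_\mu | \pi_p[T_f]}_2 = \int u_\mu T_f(x,f(x))dx$. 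The drift is then $-\int T_f \sum_\mu \sigma_\mu^{-1}H'(\theta^\mu/\sigma_\mu)u_\mu$, as claimed. The $\Delta\Phi$ part is chart-independent, so the $\alpha$-chart computation transfers to this case.

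I expect the only delicate step to be the $\Delta\Phi$ summation: the boundary index $p$ and the signs in the $\omega_\mu\omega_p$ cross terms have to recombine cleanly. I would check this bookkeeping against the known special case $T = f^2$, which should reproduce $\Delta\Phi = 2\int f(p-f) = 2p - 2\int f^2$.
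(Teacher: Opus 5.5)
Your explicit computations and final formulas are correct. The Dirichlet case is essentially the paper's own calculation in the $\alpha$--$\omega$ chart; there both pieces live in the same chart, so the issue below does not bite. The gap is in how you justify combining charts for prior \ref{prior:independent_haar_series_histograms}.

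In a fixed chart, $\partial_\mu \nabla^\mu \Phi$ is not $\Delta \Phi$, and $\p{\partial_\mu \ln \Pi}\nabla^\mu \Phi$ is not $\inner{\nabla \ln \pi | \nabla \Phi}$. By the definitions of Section \ref{section:information_geometry},
\[
\Delta \Phi = \partial_\mu \nabla^\mu \Phi + \Gamma_{~\mu \nu}^\mu \nabla^\nu \Phi, \quad \inner{\nabla \ln \pi | \nabla \Phi} = \p{\partial_\mu \ln \Pi} \nabla^\mu \Phi - \Gamma_{~\mu \nu}^\nu \nabla^\mu \Phi,
\]
and only the sum is intrinsic. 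The trace term $\Gamma_{~\mu\nu}^\nu \nabla^\mu \Phi = \nabla \Phi\p{\ln \sqrt{\abs{g}}}$ does not vanish here. In the $\alpha$ chart, $\abs{g} = \prod_{\mu=1}^p \omega_\mu$ gives $\frac{p}{2}\int \p{1-f}\frac{\partial T}{\partial f}\p{x,f(x)}dx$.

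Your two pieces are really the divergence and the drift relative to the chart's Lebesgue measure. Under a change of chart with Jacobian determinant $J$ they shift by the opposite amounts $\mp \nabla \Phi\p{\ln J}$. So ``the $\Delta\Phi$ part is chart-independent'' does not license moving the $\alpha$-chart value of $\partial_\mu \nabla^\mu \Phi$ into the $\theta$-chart formula. Your $T=f^2$ check has the same problem: it confirms $\partial_\mu \nabla^\mu \Phi = 2p - 2\int f^2$, whereas $\Delta \Phi = 3p - (p+2)\int f^2$.

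The transfer is nevertheless valid, for a reason you need to state. We have $\theta^\mu = \inner{u_\mu | \ln f}_2 = \sum_{\nu=1}^{p-1} \inner{u_\mu | \ind{I_\nu}}_2 \alpha^\nu$, so $\theta$ and $\alpha$, being two natural parametrisations of the same exponential family, are linearly related. Hence $J$ is constant, and both $\partial_\mu \nabla^\mu \Phi$ and $\p{\partial_\mu \ln \Pi}\nabla^\mu \Phi$ take the same values in the two charts.

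The paper avoids the issue by staying in $\theta$--$\eta$ coordinates for this prior. It writes $\partial_\mu \partial^\mu \Phi = \int \frac{\partial^2 T}{\partial f^2}\p{x,f(x)} \sum_{\mu,\nu} g_{\mu\nu} u_\mu(x)u_\nu(x)\,dx$. It then uses $1 + \sum_\mu u_\mu(x)u_\mu(y) = p\sum_\mu \ind{I_\mu}(x)\ind{I_\mu}(y)$ to identify the inner sum with $\Var_{y \sim P_\theta}\c{p\ind{I_{\mu(x)}}(y)} = p^2\omega_{\mu(x)}\p{1-\omega_{\mu(x)}}$.

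With the linearity remark added, your route works. Its small advantage is that the second-order computation is done only once for both priors.
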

\begin{proof}
	As proved in Section \ref{section:general_results}, for either the $\alpha-\omega$ or the $\theta - \eta$ coordinate system we have
	\[
	\Delta_\pi \Phi = \partial_\mu \nabla^\mu \Phi + \p{\partial_\mu \ln \Pi}\p{\nabla^\mu \Phi} = \partial_\mu \partial^\mu \Phi + \p{\partial_\mu \ln \Pi}\p{\partial^\mu \Phi}.
	\]
	\begin{enumerate}
		\item In the $\theta-\eta$ coordinate system, since $f = 1 + \sum_{\mu = 1}^{p-1} \eta_\mu u_\mu$ we have $\partial^\mu f = u_\mu$ and
		\[
		\partial^\mu \Phi = \frac{\partial}{\partial \eta_\mu} \int T\p{x,f(x)}dx = \int \frac{\partial T}{\partial f}\p{x,f(x)}u_\mu(x)dx,
		\]
		which implies that
		\[
		\partial_\mu \partial^\mu \Phi = g_{\mu \nu} \partial^\nu \partial^\mu \Phi = \int \frac{\partial^2 T}{\partial f^2}\p{x,f(x)} \b{\sum_{\mu,\nu = 1}^{p-1} g_{\mu \nu} u_\mu(x) u_\nu(x)} dx.
		\]
		Now since for any $x,y$ we have $1 + \sum_{\mu = 1}^{p-1} u_\mu(x) u_\mu(y) = p \sum_{\mu = 1}^p \ind{I_\mu}(x) \ind{I_\mu}(y)$, we obtain
		\begin{align*}
		\sum_{\mu,\nu = 1}^{p-1} g_{\mu \nu} u_\mu(x) u_\nu(x) & = P_\theta \c{\p{\sum_{\mu = 1}^{p-1} u_\mu(x) u_\mu}^2} - \p{\sum_{\mu = 1}^{p-1} u_\mu(x) \eta_\mu}^2 \\
		& = \Var_{y \sim P_\theta} \c{\sum_{\mu = 1}^{p-1} u_\mu(x) u_\mu(y)} = \Var_{y \sim P_\theta} \c{p \sum_{\mu = 1}^p \ind{I_\mu}(x)\ind{I_\mu}(x)} \\
		& = p^2\Var_{y \sim P_\theta} \c{\ind{y \text{ in the same bin as } x}} = p^2 \omega_{\mu(x)}\p{1-\omega_{\mu(x)}}
		\end{align*}
		where $\mu(x)$ denotes the index of the bin containing $x$. Therefore
		\begin{align*}
			\partial_\mu \partial^\mu \Phi & = g_{\mu \nu} \partial^\nu \partial^\mu \Phi = \int \frac{\partial^2 T}{\partial f^2}\p{x,f(x)} \b{\sum_{\mu,\nu = 1}^{p-1} g_{\mu \nu} u_\mu(x) u_\nu(x)} dx \\
			& = \sum_{\mu = 1}^p \int_{I_\mu} \frac{\partial^2 T}{\partial f^2}\p{x,f(x)} p^2 \omega_\mu\p{1-\omega_\mu} dx \\
			& = \sum_{\mu = 1}^p \int_{I_\mu} \frac{\partial^2 T}{\partial f^2}\p{x,f(x)} p f(x)\p{1-p^{-1}f(x)} dx \\
			& = \int \frac{\partial^2 T}{\partial f^2}\p{x,f(x)} f(x)\p{p-f(x)} dx.
		\end{align*}
		Moreover
		\[
		\p{\partial_\mu \ln \Pi}\p{\partial^\mu \Phi} = - \sum_{\mu = 1}^{p-1} \sigma_\mu^{-1} H'\p{\theta^\mu/\sigma_\mu} \int \frac{\partial T}{\partial f}\p{x,f(x)}u_\mu(x)dx,
		\]
		therefore
		\[
		\Delta_\pi \Phi = \int \frac{\partial^2 T}{\partial f^2}\p{x,f(x)} f(x)\p{p-f(x)} dx - \int \frac{\partial T}{\partial f}\p{x,f(x)}\b{\sum_{\mu = 1}^{p-1} \sigma_\mu^{-1} H'\p{\theta^\mu/\sigma_\mu}u_\mu(x)}dx.
		\]
		\item In the $\alpha-\omega$ coordinate system, by Proposition \ref{proposition:nabla_phi_integral_functionals_histograms} we have
		\[
		\partial^\mu \Phi = \frac{\partial \Phi}{\partial \omega_\mu} = p\int_{I_\mu} \frac{\partial T}{\partial f}\p{x,p\omega_\mu}dx - p\int_{I_p} \frac{\partial T}{\partial f}\p{x,p\omega_p}dx,
		\]
		therefore
		\begin{align*}
			\partial_\mu \partial^\mu \Phi & = g_{\mu \nu} \partial^\nu \partial^\mu \Phi = \sum_{\mu,\nu = 1}^{p-1} g_{\mu \nu} \frac{\partial^2 \Phi}{\partial \omega_\mu \partial \omega_\nu} = \sum_{\mu,\nu = 1}^{p-1} g_{\mu \nu} \frac{\partial}{\partial \omega_\nu} \b{p\int_{I_\mu} \frac{\partial T}{\partial f}\p{x,p\omega_\mu}dx - p\int_{I_p} \frac{\partial T}{\partial f}\p{x,p\omega_p}dx} \\
			& = p \sum_{\mu,\nu = 1}^{p-1} g_{\mu \nu} \b{ p\delta_{\mu \nu} \int_{I_\mu} \frac{\partial^2 T}{\partial f^2}\p{x,p\omega_\mu}dx + p \int_{I_p} \frac{\partial^2 T}{\partial f^2}\p{x,p\omega_p}dx} \\
			& = \p{p^2 \sum_{\mu = 1}^{p-1} g_{\mu \mu} \int_{I_\mu} \frac{\partial^2 T}{\partial f^2}\p{x,p\omega_\mu}dx} + \p{p^2 \int_{I_p} \frac{\partial^2 T}{\partial f^2}\p{x,p\omega_p}dx \sum_{\mu,\nu = 1}^{p-1} g_{\mu \nu}},
		\end{align*}
		but $g_{\mu \mu} = \omega_\mu\p{1-\omega_\mu}$ and
		\[
		\sum_{\mu,\nu = 1}^{p-1} g_{\mu \nu} = \sum_{\mu,\nu = 1}^{p-1} \delta_{\mu \nu} \omega_\mu - \omega_\mu \omega_\nu = \sum_{\mu = 1}^{p-1} \omega_\mu - \p{\sum_{\mu = 1}^{p-1} \omega_\mu}^2 = 1-\omega_p - \p{1-\omega_p}^2 = \omega_p\p{1-\omega_p},
		\]
		therefore
		\[
		\partial_\mu \partial^\mu \Phi = p^2 \sum_{\mu = 1}^p \omega_\mu\p{1-\omega_\mu} \int_{I_\mu} \frac{\partial^2 T}{\partial f^2}\p{x,p\omega_\mu}dx = \int f(x)\p{p - f(x)} \frac{\partial^2 T}{\partial f^2}\p{x,f(x)}dx.
		\]
		Furthermore, using Proposition \ref{proposition:density_dirichlet_alpha} we have $\partial_\mu \ln \Pi = \norm{m}_1 \p{\bar m_\mu - \omega_\mu}$ where $\bar m = m/\norm{m}_1$. As a result we obtain
		\begin{align*}
			\p{\partial_\mu \ln \Pi}\p{\partial^\mu \Phi} & = \sum_{\mu = 1}^{p-1} \norm{m}_1 \p{\bar m_\mu - \omega_\mu} \b{p\int_{I_\mu} \frac{\partial T}{\partial f}\p{x,p\omega_\mu}dx - p\int_{I_p} \frac{\partial T}{\partial f}\p{x,p\omega_p}dx} \\
			& = p \norm{m}_1 \sum_{\mu = 1}^p \p{\bar m_\mu - \omega_\mu} \int_{I_\mu} \frac{\partial T}{\partial f}\p{x,p\omega_\mu}dx \\
			& = \norm{m}_1 \int \p{h_{\bar m}(x) - f(x)} \frac{\partial T}{\partial f}\p{x,f(x)}dx,
		\end{align*}
		and therefore
		\[
		\Delta_\pi \Phi = \int f(x)\p{p - f(x)} \frac{\partial^2 T}{\partial f^2}\p{x,f(x)}dx + \norm{m}_1 \int \p{h_{\bar m}(x) - f(x)} \frac{\partial T}{\partial f}\p{x,f(x)}dx.
		\]
	\end{enumerate}
\end{proof}

\begin{lemma}\label{lemma:bound_hat_Phi_integral_functionals_histograms}
	Let $\varepsilon_n = p^{-s} + \sqrt{\frac{p \ln n}{n}}$ and $B_n = \b{f \in \HH_p : \norm{f - f_0}_\infty < C\varepsilon_n}$ for a fixed $C>0$. Then with $\hat \Phi = \Phi + \frac{1}{n} \inner{\nabla \Phi | \nabla \ell_n}$ we have, for any $M_n \to \infty$,
	\[
	\sup_{f \in B_n} \abs{\hat \Phi - \hat \Psi_0} = O_p\p{p^{-2s} + p^{-(s+\gamma)} + M_n p^{-s \wedge \gamma} \sqrt{\frac{p \ln n}{n}} + M_n \frac{p \ln n}{n}}.
	\]
	whenever $s \wedge \gamma > 1/2$ and $\p{\ln n}^{\frac{1}{2(s \wedge \gamma) - 1}} \ll p \ll \p{n/\ln^2 n}^{1/2}$.
\end{lemma}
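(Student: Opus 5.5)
The plan is to compute $\hat \Phi$ in closed form and then split $\hat\Phi-\hat\Psi_0$ into a small number of remainder terms. Each term will be bounded uniformly over $B_n$ by Cauchy--Schwarz, using only deterministic sup/$L^2$ distances and one random quantity $\norm{\hat f-\pi_p\c{f_0}}_2$. Uniformity in $f$ should then come for free, because $f$ enters only through $\norm{f-f_0}_\infty\le C\varepsilon_n$.

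\emph{Step 1 (closed form for $\hat\Phi$).} Since $\frac1n\nabla\ell_n=\Pro_n\c{\nabla\ell}$ and $\inner{W|\nabla\ell(x)}=W(x)$ for a tangent vector $W$, we get $\frac1n\inner{\nabla\Phi|\nabla\ell_n}=\Pro_n\c{\nabla\Phi}$. Write $g_f:=\frac{\partial T}{\partial f}\p{\cdot,f(\cdot)}$. By Proposition \ref{proposition:nabla_phi_integral_functionals_histograms},
\[
\Pro_n\c{\nabla\Phi}=\Pro_n\c{\pi_p\c{g_f}}-\int \pi_p\c{g_f}f .
\]
Two facts will be used here: $f\in\FF_p$, so $\int\pi_p\c{g_f}f=\int g_f f$; and $\Pro_n\c{\pi_p\c{g}}=\int\hat f\,g$ for every $g$. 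Together they give
\[
\hat\Phi=\int T(x,f)\,dx+\int(\hat f-f)\,g_f .
\]

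\emph{Step 2 (decomposition).} Recall $\hat\Psi_0=\int T(x,f_0)+\p{\Pro_n-\Pro_0}\c{g_{f_0}}$. I will write $\hat\Phi-\hat\Psi_0=R_1+R_2+R_3+R_4$ with
\begin{align*}
R_1&=\int \b{T(x,f)-T(x,f_0)-g_{f_0}(f-f_0)}, \\
R_2&=-\int (f-f_0)(g_f-g_{f_0}), \\
R_3&=\int(\hat f-f_0)(g_f-g_{f_0}), \\
R_4&=\int(\hat f-f_0)g_{f_0}-\p{\Pro_n-\Pro_0}\c{g_{f_0}}.
\end{align*}
On $B_n$ the function $f$ takes values in the window of Assumption \ref{assumption:T} for large $n$. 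Using the bound on $\partial^2T/\partial f^2$, both $R_1$ and $R_2$ are then $O(\norm{f-f_0}_2^2)=O(\varepsilon_n^2)=O\p{p^{-2s}+\frac{p\ln n}{n}}$ uniformly on $B_n$.

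For $R_4$, I will use $\int\hat f g_{f_0}=\Pro_n\c{\pi_p g_{f_0}}$ to rewrite it as
\[
R_4=\p{\Pro_n-\Pro_0}\c{\pi_p g_{f_0}-g_{f_0}}+\int(f_0-\pi_p f_0)(\pi_p g_{f_0}-g_{f_0}).
\]
Assumption \ref{assumption:T} together with $f_0\in\CC^s$ makes $g_{f_0}$ Hölder of order $s\wedge\gamma$. The first term is therefore $O_p\p{n^{-1/2}p^{-s\wedge\gamma}}$ by a variance bound. The second term is $O\p{p^{-s}p^{-s\wedge\gamma}}$, which is at most $p^{-2s}+p^{-(s+\gamma)}$.

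\emph{Step 3 (the cross term $R_3$, the main point).} This is the only term where the data and the uniformity in $f$ interact. I will split $\hat f-f_0=(\hat f-\pi_p f_0)+(\pi_p f_0-f_0)$ and apply Cauchy--Schwarz to each piece, with $\norm{g_f-g_{f_0}}_2\lesssim\norm{f-f_0}_\infty\le C\varepsilon_n$. The deterministic piece gives $O(p^{-s}\varepsilon_n)$. The random piece gives $\norm{\hat f-\pi_p f_0}_2\,C\varepsilon_n$, with a factor that no longer depends on $f$.

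The $U$-statistic computation in the proof of Proposition \ref{proposition:efficiency_estimator_squared_norm_histograms} ($U_0+U_1+U_2$) shows $\norm{\hat f-\pi_p f_0}_2^2=O_p(p/n)$. Hence this piece is $O_p\p{\sqrt{p/n}\,(p^{-s}+\sqrt{p\ln n/n})}$, which is dominated by $M_n\p{p^{-s\wedge\gamma}\sqrt{p\ln n/n}+p\ln n/n}$. The $M_n\to\infty$ slack absorbs turning $O_p$ into an explicit bound. I expect the only delicate point to be checking that $B_n$ stays inside the range where Assumption \ref{assumption:T} applies. This follows from $\varepsilon_n\to0$ under $p\ll(n/\ln^2 n)^{1/2}$ and $p\to\infty$. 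The lower bound on $p$ is not needed for the $O_p$ statement itself; it only serves later to make the whole bound $o(n^{-1/2})$.

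Collecting $R_1,\dots,R_4$ gives the claimed rate.
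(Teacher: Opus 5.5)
Your proof is correct, but it handles the key term by a different route from the paper. The paper Taylor-expands around $f$ rather than $f_0$ and writes $\hat\Phi-\hat\Psi_0=I+II+III$. Here $I$ is a remainder, $II=\Pro_0[(\pi_p-I)g_f]=O(p^{-(s+\gamma)})$ is a bias term, and $III=(\Pro_n-\Pro_0)[u(f)]$ with $u(f)=g_f-g_{f_0}+(\pi_p-I)g_f$ is an empirical process. The paper controls $\sup_{B_n}|III|$ with a sup-norm $\delta$-net of the $p$-dimensional set $B_n$ (so $\ln L\asymp p\ln n$) and a subgaussian maximal inequality, which gives $O_p((p^{-s\wedge\gamma}+\sqrt{p\ln n/n})\sqrt{p\ln n/n})$.

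You instead apply the identity $\Pro_n[\pi_p g]=\int\hat f g$ before expanding. The only term that is then both random and $f$-dependent is $\int(\hat f-\pi_pf_0)(g_f-g_{f_0})=(\Pro_n-\Pro_0)[\pi_p(g_f-g_{f_0})]$. Cauchy--Schwarz bounds its supremum over $B_n$ by the single $f$-free factor $\|\hat f-\pi_pf_0\|_2=O_p(\sqrt{p/n})$ times $C\varepsilon_n$. The remaining stochastic piece in $R_4$ involves only the fixed function $g_{f_0}$.

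What your route buys:
\begin{itemize}
\item It is more elementary: no entropy bound or maximal inequality is needed.
\item It makes clear that only $\varepsilon_n\to0$ is used, so that $B_n$ lies in the window of Assumption~\ref{assumption:T}.
\item Its stochastic bounds are slightly sharper: it saves a $\sqrt{\ln n}$ in the cross term and a $\sqrt{p\ln n}$ in the term involving $(\pi_p-I)g_{f_0}$, although the final rate has the same form.
\item The factor $M_n$ is not needed at all. Your remark that $M_n$ ``absorbs'' the passage to an explicit bound is superfluous, since $O_p$ of the rate without $M_n$ is already stronger.
\end{itemize}

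What your route relies on is that $\Pro_n$ acts on $\nabla\Phi$ through a fixed finite-dimensional linear space, here $\FF_p$. This holds in the histogram/exponential-family setting. The paper's covering argument uses only Lipschitz dependence of $u(f)$ on a $p$-dimensional parameter, not this linear structure. It is therefore the more portable tool, for instance for sieves whose tangent spaces vary with $\theta$, at the cost of the $\sqrt{p\ln n}$ entropy factor.
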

\begin{proof}
	With $f_p = \pi_p\c{f_0}$, since by definition $\frac{1}{n} \inner{\nabla \Phi| \nabla \ell_n} = \Pro_n \c{\nabla \Phi}$ using Proposition \ref{proposition:nabla_phi_integral_functionals_histograms} we have
	\begin{align*}
		\hat \Phi - \hat \Psi_0 & = \int T\p{x,f(x)}dx + \Pro_n \c{ \pi_p\c{\frac{\partial T}{\partial f}\p{\cdot,f(\cdot)} }} - \int f(x) \frac{\partial T}{\partial f}\p{x,f(x)}dx \\
		& - \b{\int T\p{x,f_0(x)}dx + \p{\Pro_n - \Pro_0}\c{\frac{\partial T}{\partial f}\p{\cdot,f_0(\cdot)}}} \\
		& = \int T\p{x,f(x)}dx + \Pro_n \c{\frac{\partial T}{\partial f}\p{\cdot,f(\cdot)}} - \int f(x) \frac{\partial T}{\partial f}\p{x,f(x)}dx - \b{\int T\p{x,f_0(x)}dx + \p{\Pro_n - \Pro_0}\c{\frac{\partial T}{\partial f}\p{\cdot,f_0(\cdot)}}} \\
		& + \Pro_n \c{ \p{\pi_p - I}\c{\frac{\partial T}{\partial f}\p{\cdot,f(\cdot)}}} \\
		& = \underbrace{\int \b{T\p{x,f(x)} - T(x,f_0(x)) - (f(x)-f_0(x)) \frac{\partial T}{\partial f}(x,f(x))}dx}_{=I} + \underbrace{\Pro_0\c{ \p{\pi_p - I}\c{\frac{\partial T}{\partial f}\p{\cdot,f(\cdot)}}}}_{=II} \\
		& + \underbrace{\p{\Pro_n - \Pro_0} \c{\frac{\partial T}{\partial f}\p{\cdot,f(\cdot)} - \frac{\partial T}{\partial f}\p{\cdot,f_0(\cdot)} + \p{\pi_p - I}\c{\frac{\partial T}{\partial f}\p{\cdot,f(\cdot)}}}}_{=III}.
	\end{align*}
	For $f$ such that $\norm{f-f_0}_\infty < \eta$ the term $I$ can be bounded by a constant multiple of $\varepsilon_n^2$ using assumption \ref{assumption:T} and a second order Taylor expansion of $T$ in its second argument. Moreover by $L^2$ orthogonality we have
	\[
	\Pro_0\c{ \p{\pi_p - I}\c{\frac{\partial T}{\partial f}\p{\cdot,f(\cdot)}}} = \int \p{\pi_p - I}\c{\frac{\partial T}{\partial f}\p{\cdot,f(\cdot)}} f_0 = \int \p{\pi_p - I}\c{\frac{\partial T}{\partial f}\p{\cdot,f(\cdot)}} \p{f_0 - \pi_p\c{f_0}},
	\]
	so that
	\[
	\abs{\Pro_0\c{ \p{\pi_p - I}\c{\frac{\partial T}{\partial f}\p{\cdot,f(\cdot)}}}} \leq \norm{\p{\pi_p - I}\c{\frac{\partial T}{\partial f}\p{\cdot,f(\cdot)}}}_\infty \norm{f_0 - \pi_p\c{f_0}}_\infty.
	\]
	First because $f_0 \in \CC^s$ we have $\norm{f_0 - \pi_p\c{f_0}}_\infty \lesssim p^{-s}$. Moreover, for any $\mu \in \b{1,\ldots,p}$ and $x \in I_\mu$ we have
	\[
	\p{\pi_p - I}\c{\frac{\partial T}{\partial f}\p{\cdot,f(\cdot)}}(x) = p \int_{I_\mu} \frac{\partial T}{\partial f}\p{y,p\omega_\mu}dy - \frac{\partial T}{\partial f}\p{x,p\omega_\mu} = p \int_{I_\mu} \b{\frac{\partial T}{\partial f}\p{y,p\omega_\mu} - \frac{\partial T}{\partial f}\p{x,p\omega_\mu}} dy,
	\]
	so that using assumption \ref{assumption:T} we obtain
	\[
	\norm{\p{\pi_p - I}\c{\frac{\partial T}{\partial f}\p{\cdot,f(\cdot)}}}_\infty \leq \max_{1 \leq \mu \leq p} \max_{x,y \in I_\mu} \abs{\frac{\partial T}{\partial f}\p{y,p\omega_\mu} - \frac{\partial T}{\partial f}\p{x,p\omega_\mu}} \leq Cp^{-\gamma},
	\]
	which implies that  the term $II$ can be bounded by a multiple of $p^{-(s+\gamma)}$. Finally we bound the empirical process term $III$ using a suitable maximal inequality, for which a simple $\epsilon$-net argument suffices here : for $f \in B_n$ define $u(f)$ as
	\[
	u(f) = \frac{\partial T}{\partial f}\p{\cdot,f(\cdot)} - \frac{\partial T}{\partial f}\p{\cdot,f_0(\cdot)} + \p{\pi_p - I}\c{\frac{\partial T}{\partial f}\p{\cdot,f(\cdot)}}.
	\]
	Then using assumption \ref{assumption:T} along with the definition of $B_n$ we have $\sup_{f \in B_n} \norm{\frac{\partial T}{\partial f}\p{\cdot,f(\cdot)} - \frac{\partial T}{\partial f}\p{\cdot,f_0(\cdot)}}_\infty \lesssim \varepsilon_n$, and as we have shown in the calculations above that $\sup_{f \in B_n} \norm{\p{\pi_p - I}\c{\frac{\partial T}{\partial f}\p{\cdot,f(\cdot)}}}_\infty \lesssim p^{-\gamma}$, we obtain
	\[
	\sup_{f \in B_n} \norm{u(f)}_\infty \lesssim \varepsilon_n' := \varepsilon_n + p^{-\gamma} \asymp p^{- \gamma \wedge s} + \sqrt{\frac{p \ln n}{n}}.
	\]
	Moreover using assumption \ref{assumption:T} and the bound $\norm{\pi_p\c{g}}_\infty \leq \norm{g}_\infty$ for any $g \in L^\infty$, we obtain $\norm{u(f) - u(g)}_\infty \leq \norm{f-g}_\infty$ for any $f,g \in B_n$. As a result if $0 < \delta < 1$ and $f_1,\ldots,f_L$ form a $\delta$-net of $B_n$ with respect to the supremum norm the functions $u(f_1),\ldots,u(f_L)$ are themselves a $\delta$-net of $u\p{B_n} = \b{u(f) : f \in B_n}$ with respect to the supremum norm. Because $B_n \subset B_n' := \b{f \in \FF_p : \norm{f-f_0}_\infty < C \varepsilon_n}$ and that $B_n'$ has a $\norm{\cdot}_\infty$-covering number bounded by $\p{C'\varepsilon_n/\delta}^p$ where $C'>0$ is a fixed constant (take for instance elements of the form $f_0 + \sum_{\mu = 1}^p f^\mu \ind{I_\mu}$ where $f^\mu \in \b{l \delta : l \in \mathbb Z, \abs{l} < C\varepsilon_n/\delta}$), this implies that $L$ can be chosen such that $\ln L \asymp p \ln \p{\varepsilon_n/\delta}$. Thus we obtain
	\[
	\sup_{f \in B_n} \abs{\p{\Pro_n - \Pro_0} \c{u(f)}} \leq \delta + \max_{1 \leq l \leq L} \abs{\p{\Pro_n - \Pro_0}\c{u(f_l)}}.
	\]
	Because the variables $\sqrt{n}\p{\Pro_n - \Pro_0} \c{u(f_l)}, \quad 1 \leq l \leq L$ are all subgaussian with variance proxy bounded by a multiple of $\max_{1 \leq l \leq L} \norm{u(f_l)}_\infty^2 \lesssim \varepsilon_n'^2$, the maximum $\max_{1 \leq l \leq L} \sqrt{n} \abs{\p{\Pro_n - \Pro_0}\c{u(f_l)}}$ is itself subgaussian with variance proxy bounded by a multiple of $\varepsilon_n'^2 \ln L$ (see e.g. \cite{vershyninHighDimensionalProbabilityIntroduction2018}) and therefore, for a universal constant $c>0$,
	\[
	\Pro_0 \c{\max_{1 \leq l \leq L} \sqrt{n} \abs{\p{\Pro_n - \Pro_0}\c{u(f_l)}} > t} \leq \exp\p{-c \frac{t^2}{\varepsilon_n'^2 \ln L}}, \quad t > 0.
	\]
	This proves that $\max_{f \in B_n} \abs{\p{\Pro_n - \Pro_0} \c{u(f)}} = O_p\p{\delta + t/\sqrt{n}}$ whenever $t \gtrsim \varepsilon_n' \sqrt{\ln L}$. Notice that $\varepsilon_n' \sqrt{p\ln n} = o(1)$ since $\p{\ln n}^{\frac{1}{2(\gamma \wedge s) - 1}} \ll p \ll \sqrt{n/\ln^2 n}$; thus choosing $\delta = t/\sqrt{n}$ and $t = \varepsilon_n' \sqrt{p \ln n}$ yields $t = o(1)$ and $t \geq \varepsilon_n' \sqrt{p \ln n} \asymp \varepsilon_n' \sqrt{\ln L}$, because $\ln L \asymp p \ln n$. We thus get $\max_{f \in B_n} \abs{\p{\Pro_n - \Pro_0} \c{u(f)}} = O_p\p{\delta + t/\sqrt{n}} = O_p\p{t/\sqrt{n}} = O_p\p{\varepsilon_n' \sqrt{\frac{p \ln n}{n}}}$.
	Combining the bounds obtained for the three different terms we obtain
	\[
	\sup_{f \in B_n} \abs{\hat \Phi - \hat \Psi_0} = O_p\p{\varepsilon_n^2 + p^{-(s+\gamma)} + \varepsilon_n' \sqrt{\frac{p \ln n}{n}}} = O_p\p{p^{-2s} + p^{-(s+\gamma)} + p^{-s \wedge \gamma} \sqrt{\frac{p \ln n}{n}} + \frac{p \ln n}{n}}.
	\]
\end{proof}

\subsection{Additional technical proofs}\label{section:justifications_lemma_laplace_transform}

\begin{proof}{Additional technical justifications for the proof of Lemma \ref{lemma:contraction_rates_white_noise}}\\
First, given a fixed choice of $\chi : \Theta \to \c{0,1}$ of class $\CC^1$ that is compactly supported and identically equal to $1$ on $\b{\theta : \norm{\theta}_2 \leq 1}$ we define $\chi_l := \chi\p{2^{-l} \cdot}$. We then have $\chi_l \to 1$ pointwise and we now prove that, for any $t>0$,
\begin{align*}
	& e^{t\abs{\Phi_a}} \b{1 + \abs{\Div_\pi V} + \abs{\nabla \Phi_a} + \abs{V} + \abs{\inner{V|\nabla \Phi_a}} + \abs{\hat \Phi_a - \hat \Psi_{a0}} + \abs{\inner{\nabla \chi_l | V}}} \\
	& = e^{t\abs{\Phi_a}} \b{1 + \abs{\Delta_\pi \Phi_a} + 2\abs{\nabla \Phi_a} + \abs{\nabla \Phi_a}^2 + \abs{\hat \Phi_a - \hat \Psi_{a0}} + \abs{\inner{\nabla \chi_l | \nabla \Phi_a}}} \in L^1\p{\Pi_n}.
\end{align*}
First Proposition \ref{proposition:linear_functionals_white_noise} yields $\hat \Phi_a = \hat \Psi_{a0}$ and $\abs{\nabla \Phi_a}^2 = N^{-1} \norm{a}_2^2$, so that it suffices to show that
\[
e^{t\abs{\Phi_a}} \b{1 + \abs{\Delta_\pi \Phi_a} + \abs{\inner{\nabla \chi_l | \nabla \Phi_a}}} \in L^1\p{\Pi_n} \text{ and } \Pi_n \c{e^{t\tau} \inner{\nabla \chi_l | \nabla \Phi_a}} \to 0.
\]
First
\[
\abs{\inner{\nabla \chi_l | \nabla \Phi_a}} = \abs{N^{-1} \sum_{\mu = 1}^p \p{\partial_\mu \chi_l}\p{\partial_\mu \Phi_a}} \leq N^{-1} 2^{-l} \norm{\abs{\nabla \chi}}_\infty \norm{a}_2,
\]
moreover for priors of type \ref{prior:independent_series_white_noise} Proposition \ref{proposition:linear_functionals2_white_noise} together with Cauchy--Schwarz inequality yield
\[
\abs{\Delta_\pi \Phi_a} = N^{-1} \abs{\sum_{\mu = 1}^p a^\mu \sigma_\mu^{-1} H'\p{\theta^\mu/\sigma_\mu}} \leq N^{-1} \norm{H'}_\infty \norm{a}_2 \p{\sum_{\mu = 1}^p \sigma_\mu^{-2}}^{1/2},
\]
so that $e^{t\abs{\Phi_a}} \b{1 + \abs{\Delta_\pi \Phi_a} + \abs{\inner{\nabla \chi_l | \nabla \Phi_a}}}$ is bounded by a constant multiple of $e^{t\abs{\Phi_a}}$ which itself is an element of $L^1\p{\Pi_n}$ by the same reasoning made in Section \ref{section:white_noise} to show that $\Pi_n$ is well defined for every $x_1,\ldots,x_n$, i.e. that $e^{\ell_n} \in L^1\p{\Pi}$, using the quadratic decay of the log-likelihood. This also implies that $\abs{\Pi_n \c{e^{t\tau} \inner{\nabla \chi_l | \nabla \Phi_a}}}$ is bounded by a constant multiple of $N^{-1} 2^{-l} \norm{\abs{\nabla \chi}}_\infty \norm{a}_2$ which converges to $0$ as $l \to \infty$, thus proving that Lemma \ref{lemma:laplace_transform} applies in that case. For priors of type \ref{prior:gaussian} by Proposition \ref{proposition:linear_functionals2_white_noise} and Cauchy--Schwarz' again,
\[
\abs{\Delta_\pi \Phi_a} = \abs{N^{-1} \sum_{\mu = 1}^p \sigma_\mu^{-2} \theta^\mu a^\mu} \leq N^{-1} \norm{a}_2 \p{\max_{1 \leq \mu \leq p} \sigma_\mu^{-2}} \norm{\theta}_2,
\]
so that the same reasoning applies here using instead $e^{t\abs{\Phi_a}}\p{1 + \norm{\theta}_2} \in L^1\p{\Pi_n}$, which follows again by using the quadratic decay of either the log-likelihood or, in this case, the prior. We conclude that Lemma \ref{lemma:laplace_transform} applies in both cases.
\end{proof}

\begin{proof}{Additional technical justifications for the proof of Lemma \ref{lemma:supremum_norm_contraction_rates_histograms}}\\
For a fixed $\CC^1$ function $\chi : \RR \to \c{0,1}$ such that $\chi \equiv 1$ on $\c{-1,1}$ and $\Supp\p{\chi} \subset ]-2,2[$ define
\[
\chi_l = \prod_{\mu = 1}^{p-1} \chi\p{2^{-l}\theta^\mu}.
\]
Then $\chi_l$ is compactly supported, converges to $1$ pointwise and, using the $\theta-\eta$ coordinate system,
\[
\inner{\nabla \chi_l | V} = \inner{\nabla \chi_l | \nabla \Phi_a} = \p{\partial_\mu \chi_l}\p{\partial^\mu \Phi_a} = \sum_{\mu = 1}^{p-1} 2^{-l} \chi'\p{2^{-l}\theta^\mu} \p{\prod_{\nu \neq \mu} \chi\p{2^{-l} \theta^\nu}} \inner{u_\mu | a}_2,
\]
so that
\[
\abs{\inner{\nabla \chi_l | V}} \leq 2^{-l} \norm{\chi'}_\infty \sum_{\mu = 1}^{p-1} \abs{\inner{u_\mu | a}_2}.
\]
To conclude we distinguish between the two priors.
\begin{enumerate}
	\item We start with a prior of type \ref{prior:independent_haar_series_histograms}. Let $\mu_0 \in \b{1,\ldots,p}, \quad a = \ind{I_{\mu_0}} \in \FF_p, \quad \Phi_a$ the functional $\Phi = \Phi_a = \int a f_\theta = P_\theta \c{I_{\mu_0}}$ and $V = \nabla \Phi_a$. Then by Proposition \ref{proposition:linear_functionals_histograms} we have
	\[
	\abs{\nabla \Phi_a}^2 = P_\theta \c{\p{\ind{I_{\mu_0}} - P_\theta \c{I_{\mu_0}}}^2} = \Phi_a \p{1-\Phi_a} \leq 1
	\]
	and
	\[
	\hat \Phi_a = \Phi_a + \frac{1}{n} \inner{\nabla \Phi_a | \nabla \ell_n} = \Pro_n \c{a} =: \hat \Psi_0.
	\]
	Moreover using Proposition \ref{proposition:linear_functionals_histograms_2} we have
	\[
	\abs{\Div_\pi V} = \abs{\Delta_\pi \Phi_a} = \abs{- \sum_{\mu = 1}^{p-1} \sigma_\mu^{-1} H'\p{\theta^\mu/\sigma_\mu} \inner{u_\mu | a}_2} \leq \sigma_-^{-1} \norm{H'}_\infty \sum_{\mu = 1}^{p-1} \abs{\inner{u_\mu |a}_2},
	\]
	therefore, for any $t>0$,
	\begin{align*}
	& e^{t\abs{\Phi}} \b{1 + \abs{\Div_\pi V} + \abs{\nabla \Phi} + \abs{V} + \abs{\inner{V|\nabla \Phi}} + \abs{\hat \Phi - \hat \Psi_0} + \abs{\inner{\nabla \chi_l | V}}} \\
	& \leq e^t \b{4 + \sigma_-^{-1} \norm{H'}_\infty \sum_{\mu = 1}^{p-1} \abs{\inner{u_\mu |a}_2} + 2^{-l} \norm{\chi'}_\infty \sum_{\mu = 1}^{p-1} \abs{\inner{u_\mu | a}_2}} \in L^1\p{\Pi_n}
	\end{align*}
	as the latter is a constant. The same bound on $\inner{\nabla \chi_l | V}$ also yields, for any $t \in \RR$,
	\[
	\abs{\Pi_n \c{e^{t\tau}\inner{\nabla \chi_l | \nabla \Phi_a}}} \leq e^{2\abs{t}\sqrt{n}}2^{-l} \norm{\chi'}_\infty \sum_{\mu = 1}^{p-1} \abs{\inner{u_\mu | a}_2} \to 0
	\]
	as $l \to \infty$ so that Lemma \ref{lemma:laplace_transform} applies here.
	\item For Dirichlet priors \ref{prior:dirichlet} we verify that Lemma \ref{lemma:laplace_transform} applies but to the functional $\Phi = \Phi_a - \frac{1}{n} \Delta_\pi \Phi_a$ and the vector field $V = \nabla \Phi_a$. We still have $\abs{V} = \abs{\nabla \Phi_a} = \sqrt{\Phi_a\p{1-\Phi_a}} \leq 1$, and using Proposition \ref{proposition:linear_functionals_histograms_2} we have $\Div_\pi V = \Delta_\pi \Phi_a = \norm{m}_1 \int a\p{h_{\bar m} - h_\omega}$, therefore $\abs{\Div_\pi V} = \abs{\Delta_\pi \Phi_a} \leq 2\norm{m}_1 \norm{a}_\infty$ and
	\[
	\Phi = \Phi_a - \frac{1}{n} \Delta_\pi \Phi_a = \int a h_\omega - \frac{\norm{m}_1}{n} \int a\p{h_{\bar m} - h_\omega} = \p{1 + \frac{\norm{m}_1}{n}} \Phi_a - \frac{\norm{m}_1}{n} \int ah_{\bar m},
	\]
	which also implies
	\[
	\nabla \Phi = \p{1 + \frac{\norm{m}_1}{n}} \nabla \Phi_a, \quad \abs{\nabla \Phi} = \p{1 + \frac{\norm{m}_1}{n}} \abs{\nabla \Phi_a} \leq 1 + \frac{\norm{m}_1}{n}.
	\]
	We deduce (using also Cauchy--Schwarz inequality) that for any $t > 0$ we have
	\begin{align*}
		& e^{t\abs{\Phi}} \b{1 + \abs{\Div_\pi V} + \abs{\nabla \Phi} + \abs{V} + \abs{\inner{V|\nabla \Phi}} + \abs{\hat \Phi - \hat \Psi_0} + \abs{\inner{\nabla \chi_l | V}}} \\
		& \leq e^{t \b{1 + \frac{\norm{m}_1}{n} \norm{a}_\infty}} \b{4 + 2\norm{m}_1 \norm{a}_\infty + \frac{\norm{m}_1}{n} + 2^{-l} \norm{\chi'}_\infty \sum_{\mu = 1}^{p-1} \abs{\inner{u_\mu | a}_2}} \in L^1\p{\Pi_n}
	\end{align*}
	as the latter is a constant. The same bound on $\inner{\nabla \chi_l | V}$ also yields, for any $t \in \RR$,
	\[
	\abs{\Pi_n \c{e^{t\sqrt{n}\p{\Phi - \hat \Psi_0}} \inner{\nabla \chi_l | V}}} \leq e^{\abs{t}\sqrt{n}\p{1 + \frac{\norm{m}_1}{n} \norm{a}_\infty}} 2^{-l} \norm{\chi'}_\infty \sum_{\mu = 1}^{p-1} \abs{\inner{u_\mu | a}_2} \to 0
	\]
	as $l \to \infty$ so that Lemma \ref{lemma:laplace_transform} applies here.
\end{enumerate}
\end{proof}

\begin{proposition}\label{proposition:liouvilles_formula}{Liouville's formula}\\
	If $V$ is a vector field and $\p{\varphi_t}$ its generated flow (that we assume to be smooth and globally defined for simplicity), then for any distribution $\Pi$ that is absolutely continuous with respect to the Riemannian volume form with relative density $\pi$ we have
	\[
	\frac{d \p{\Pi \circ \varphi_t^{-1}}}{d\Pi} = \exp \p{-\int_0^t \p{\Div_\pi V} \p{\varphi_{-s}\p{\cdot}} ds}.
	\]
	Moreover, if $\p{t,\theta} \mapsto \varphi_t\p{\theta}$ is any family of diffeomorphisms (not necessarily a vector flow) that satisfies $\varphi_0 = I$ and $\dot \varphi_0 = V$, then
	\[
	\p{\frac{d}{dt}}_{t = 0} \frac{d \p{\Pi \circ \varphi_t^{-1}}}{d\Pi} = - \Div_\pi V.
	\]
\end{proposition}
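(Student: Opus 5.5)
Both claims are local and coordinate-invariant, so I will work in an arbitrary chart $\theta$. Write $\Pi(\theta)$ for the Lebesgue density of $\Pi$, so that $\Pi(\theta) = \pi(\theta)\sqrt{\abs{g(\theta)}}$. Recall from Section \ref{section:information_geometry} that $\Div_\pi V = \partial_\mu V^\mu + (\partial_\mu \ln \Pi) V^\mu$, which is exactly the Euclidean divergence of the vector field $\Pi V$ divided by $\Pi$. This reduces both statements to the flat Liouville/Jacobi computation in coordinates.

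\textbf{Flow case.} Let $\rho_t$ be the Lebesgue density of $\Pi\circ\varphi_t^{-1}$, i.e. the pushforward of $\Pi$ by $\varphi_t$. By the change of variables formula,
\[
\rho_t(\theta) = \Pi(\varphi_{-t}(\theta))\,\abs{\det D\varphi_{-t}(\theta)}.
\]
Since $\rho_t$ is the pushforward of $\Pi$ under the flow, it satisfies the continuity equation $\partial_t \rho_t + \partial_\mu(\rho_t V^\mu) = 0$. Set $L_t := \rho_t/\Pi$. Substituting $\rho_t = L_t \Pi$ into the continuity equation and dividing by $\Pi$ gives the transport equation
\[
\partial_t L_t + V^\mu \partial_\mu L_t = -L_t \Div_\pi V .
\]
I would integrate this along characteristics. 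Define $u(s) := L_s(\varphi_{s-t}(\theta))$ for $s\in[0,t]$. Then
\[
\frac{d}{ds}\ln u(s) = -(\Div_\pi V)(\varphi_{s-t}(\theta)),
\]
and $u(0) = L_0(\varphi_{-t}(\theta)) = 1$. Integrating from $0$ to $t$ and substituting $s \mapsto t-s$ yields
\[
L_t(\theta) = \exp\Big(-\int_0^t (\Div_\pi V)(\varphi_{-s}(\theta))\,ds\Big),
\]
which is the claimed formula.

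If one prefers to avoid the PDE, the same identity follows from Jacobi's formula: $\frac{d}{dt}\ln\det D\varphi_t = (\partial_\mu V^\mu)\circ\varphi_t$, combined with $\frac{d}{dt}\ln \Pi(\varphi_t) = (\partial_\mu \ln\Pi)\,V^\mu\circ\varphi_t$. Adding these gives $\frac{d}{dt}\ln\big(\Pi(\varphi_t)\det D\varphi_t\big) = (\Div_\pi V)\circ\varphi_t$, and inverting (replacing $t$ by $-t$ and evaluating at $\varphi_{-t}$) recovers the formula.

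\textbf{General family.} Here I differentiate $L_t = \Pi(\varphi_t^{-1})\abs{\det D\varphi_t^{-1}}/\Pi$ at $t=0$. Since $\varphi_0 = I$ and $\dot\varphi_0 = V$, differentiating $\varphi_t\circ\varphi_t^{-1} = I$ gives $\frac{d}{dt}\big|_0\varphi_t^{-1} = -V$. Jacobi's formula at the identity then gives
\[
\frac{d}{dt}\Big|_0 \det D\varphi_t^{-1} = -\partial_\mu V^\mu .
\]
The prior factor contributes $\frac{d}{dt}\big|_0 \Pi(\varphi_t^{-1}) = -\partial_\mu\Pi\, V^\mu$. Combining the two terms and dividing by $\Pi$ gives $\dot L_0 = -\partial_\mu V^\mu - (\partial_\mu\ln\Pi)V^\mu = -\Div_\pi V$.

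The only step needing care is regularity: differentiating under the determinant and the identity $\partial_t(\varphi_t^{-1})|_0=-V$ both require $(t,\theta)\mapsto\varphi_t(\theta)$ to be $\CC^1$ jointly, together with the smooth, global flow assumed in the statement. With that granted, everything is a pointwise computation in one chart. Since $L_t$ is the ratio of two scalar densities, it is a true scalar, so the result is independent of the chart chosen.
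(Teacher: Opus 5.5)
Your proof is correct, but your main argument for the flow formula goes through the continuity equation and the method of characteristics, whereas the paper works directly with the Jacobian of the flow map. The paper never uses a PDE. It writes the pushforward density explicitly as $\Pi(\varphi_{-t}(\theta))\,\abs{\det D\varphi_{-t}(\theta)}$ and differentiates its logarithm in $t$ at fixed $\theta$. Jacobi's formula (with $\partial_t \varphi_{-t} = -V\circ\varphi_{-t}$) and the chain rule for $\ln \Pi(\varphi_{-t})$ give $-(\Div_\pi V)(\varphi_{-t}(\theta))$, which it then integrates from $0$ to $t$.

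Your ``avoid the PDE'' paragraph is essentially that argument run forward in time. Replacing $t$ by $-t$ in your forward identity already gives the claim, since that identity holds for all real $t$. Evaluating at $\varphi_{-t}$ is an alternative route, via $\det D\varphi_{-t}(\theta) = 1/\det D\varphi_t(\varphi_{-t}(\theta))$, not an extra step. Your second part is the paper's argument made explicit: the paper only says the same computation works at $t=0$, and your identity $\partial_t \varphi_t^{-1}|_{t=0} = -V$ is exactly the step it leaves implicit for families that are not flows.

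The paper's route is more self-contained, using only Jacobi's formula and the chain rule. You take the continuity equation as given, and its standard justification is no shorter: differentiate $\int F\circ\varphi_t \, d\Pi$ for compactly supported $F$, then upgrade the weak form using the smoothness of $\rho_t$ read off from the explicit formula.

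In exchange, your route buys three things:
\begin{enumerate}
\item It shows where $\Div_\pi V = \Pi^{-1}\partial_\mu(\Pi V^\mu)$ comes from: it is the source term in the transport equation for the likelihood ratio $L_t$.
\item That equation involves only scalars, so integrating along characteristics never requires the trajectory $s \mapsto \varphi_{-s}(\theta)$, $s \in \c{0,t}$, to stay in one chart. Contrary to your opening sentence, the flow claim is not local, but your method does not need it to be.
\item It covers both claims at once. For any smooth family of diffeomorphisms the continuity equation holds with velocity $W_t = \dot\varphi_t \circ \varphi_t^{-1}$. At $t=0$ one has $L_0 \equiv 1$ and $W_0 = V$, so $\dot L_0 = -\Div_\pi V$ follows immediately.
\end{enumerate}

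One small correction to your regularity remark: joint $\CC^1$ regularity of $(t,\theta)\mapsto\varphi_t(\theta)$ does not make $t \mapsto D\varphi_t^{-1}$ differentiable. You also need the mixed derivative $\partial_t D_\theta$ (e.g. joint $\CC^2$), which the smoothness assumption supplies.
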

\begin{proof}{Proposition \ref{proposition:liouvilles_formula}}\\
	Let $F : \Theta \to \RR$ be an arbitrary nonnegative and measurable function and $\Pi_t := \Pi \circ \varphi_t^{-1}$. Then by definition and by the change of variable formula we have
	\[
	\Pi_t \c{F} = \int F\p{\varphi_t\p{\theta'}} \Pi\p{\theta'}d\theta' = \int F\p{\theta} \Pi \p{\varphi_{-t}\p{\theta}} \abs{\frac{\partial \varphi_{-t}(\theta)}{\partial \theta}}d\theta,
	\]
	(we remind the reader that as in Section \ref{section:information_geometry} $\Pi\p{\cdot}$ denotes the Lebesgue density of $\Pi$ in local coordinates while $\pi$ is the one with respect to the Riemannian volume form), but using Jacobi's formula for the derivative of the determinant we obtain
	\[
	\frac{\partial}{\partial t} \ln J(t,\theta) = \Tr \c{ \c{ \frac{\partial}{\partial t} \frac{\partial \varphi_{-t}}{\partial \theta} } \c{\p{\frac{\partial \varphi_{-t}}{\partial \theta}}^{-1}}}
	\]
	where
	\[
	J\p{t,\theta} := \abs{\frac{\partial \varphi_{-t}\p{\theta}}{\partial \theta}}
	\]
	is the Jacobian determinant. Since
	\[
	\frac{\partial}{\partial t} \varphi_{-t}^\mu\p{\theta} = - V^\mu\p{\varphi_t\p{\theta}}
	\]
	which implies
	\[
	\frac{\partial}{\partial t} \frac{\partial}{\partial \theta^\nu} \varphi_{-t}^\mu\p{\theta} = \frac{\partial}{\partial \theta^\nu} \frac{\partial}{\partial t} \varphi_{-t}^\mu\p{\theta} = - \frac{\partial}{\partial \theta^\nu} V^\mu\p{\varphi_{-t}\p{\theta}} = - \frac{\partial V^\mu}{\partial \theta^\tau}\p{\varphi_{-t}\p{\theta}} \frac{\partial \varphi_{-t}^\tau}{\partial \theta^\nu}\p{\theta}
	\]
	and therefore
	\[
	\p{\c{ \frac{\partial}{\partial t} \frac{\partial \varphi_{-t}}{\partial \theta} } \c{\p{\frac{\partial \varphi_{-t}}{\partial \theta}}^{-1}}}_{\mu,\nu} = - \frac{\partial V^\mu}{\partial \theta^\nu}\p{\varphi_{-t}\p{\theta}}.
	\]
	As a result
	\[
	\frac{\partial}{\partial t} \ln J(t,\theta) = - \sum_{\tau = 1}^p \frac{\partial V^\tau}{\partial \theta^\tau}\p{\varphi_{-t}\p{\theta}} = - \b{ \Div V - \Gamma_{~\mu \nu}^\nu V^\mu } \p{\varphi_{-t}\p{\theta}}.
	\]
	Furthermore by definition of $\pi$
	\[
	\frac{\partial}{\partial t} \ln \Pi \p{\varphi_{-t}\p{\theta}} = - \b{V^\mu \partial_\mu \ln \Pi} \p{\varphi_{-t}\p{\theta}} = - \b{\inner{\nabla \ln \pi | V} + \Gamma_{~\mu \nu}^\nu V^\mu} \p{\varphi_{-t}\p{\theta}},
	\]
	therefore
	\[
	\frac{\partial}{\partial t} \b{\ln J(t,\theta) + \ln \Pi \p{\varphi_{-t}\p{\theta}}} = - \b{\Div V + \inner{\nabla \ln \pi | V}} \p{\varphi_{-t}\p{\theta}} = - \p{\Div_\pi V} \p{\varphi_{-t}\p{\theta}}.
	\]
	Finally since $J(0) = 1$ we obtain
	\[
	J\p{t,\theta} \Pi \p{\varphi_t\p{\theta}} = \Pi \p{\theta} \exp \p{- \int_0^t \p{\Div_\pi V} \p{\varphi_{-s}\p{\theta}} ds}.
	\]
	As a result
	\[
	\Pi_t \c{F} = \int F\p{\theta} \exp \p{- \int_0^t \p{\Div_\pi V} \p{\varphi_{-s}\p{\theta}} ds} \Pi \c{d\theta},
	\]
	which concludes the proof of the first statement since $F$ is arbitrary. The second one follows by the same reasoning, using the fact that $\dot \varphi_t = V\p{\varphi_t\p{\cdot}}$ but for $t = 0$ only.
\end{proof}

\end{document}